\tikzstyle{int}=[draw, fill=blue!20, minimum size=2em]
\tikzstyle{init} = [pin edge={to-,thin,black}]
\newcommand{\steps}[2]{%
  \par
  \addvspace{\medskipamount}%
  \noindent\emph{#1:}
}
\date{} 
 \DeclareMathOperator\supp{supp}
 \DeclareMathOperator\jac{Jac}
  \DeclareMathOperator\sign{sgn}
 \DeclareMathOperator*{\esssup}{ess\,sup}
\DeclareMathOperator\card{card}
\DeclareMathOperator\diverg{\textbf{div}}
\theoremstyle{plain}
\newtheorem{theorem}{Theorem}[section]
\newtheorem{definition}[theorem]{Definition}
\newtheorem{proposition}[theorem]{Proposition}
\newtheorem{lemma}[theorem]{Lemma}
\newtheorem{corollary}[theorem]{Corollary}
\newtheorem{remark}[theorem]{Remark}
\numberwithin{theorem}{section}
\numberwithin{equation}{section}
\numberwithin{figure}{section}
\let\oldtocsection=\tocsection
\let\oldtocsubsection=\tocsubsection
\let\oldtocsubsubsection=\tocsubsubsection
\renewcommand{\tocsection}[2]{\hspace{0em}\oldtocsection{#1}{#2}}
\renewcommand{\tocsubsection}[2]{\hspace{1em}\oldtocsubsection{#1}{#2}}
\renewcommand{\tocsubsubsection}[2]{\hspace{2em}\oldtocsubsubsection{#1}{#2}}
\begin{document}

\parskip=1pt

\title[Rigorous derivation of a ternary Boltzmann equation]
{Rigorous derivation of a ternary Boltzmann equation for a classical system of particles}
\author[Ioakeim Ampatzoglou]{Ioakeim Ampatzoglou}
\address{Ioakeim Ampatzoglou, Courant Institute of Mathematical Sciences, New York University}
\author[Nata\v{s}a Pavlovi\'{c}]{Nata\v{s}a Pavlovi\'{c}}
\address{Nata\v{s}a Pavlovi\'{c}, Department of Mathematics, The University of Texas at Austin}
\begin{abstract}
In this paper, we  present a rigorous derivation of a  new kinetic equation describing the limiting behavior  of a classical system of particles with three particle elastic instantaneous interactions, which are modeled using a non-symmetric version of a ternary distance.  
The ternary collisional operator we derive can be seen as the first step towards obtaining a toy model for a non-ideal gas where higher order interactions are taken into account. 
\end{abstract}
\maketitle
\setcounter{tocdepth}{1}
\tableofcontents

\section{Introduction} \label{intro}
 The Boltzmann equation \cite{Boltzmann 1, Boltzmann 2, cercignani boltzmann, cercignani gases} is the central equation of collisional kinetic theory. It is a nonlinear integro-differential equation giving the statistical description of a dilute gas in non-equilibrium in $\mathbb{R}^d$, for $d\geq 2$. It is  given by
\begin{equation}\label{intro-standard Boltzmann}
\partial_t f+v\cdot\nabla_xf=Q_2(f,f),\quad (t,x,v)\in(0,\infty)\times\mathbb{R}^{d}\times\mathbb{R}^d,
\end{equation}
where the unknown function $f:[0,\infty)\times\mathbb{R}^{d}\times\mathbb{R}^d\to\mathbb{R}$ represents the probability density of finding a molecule of the gas in position $x\in\mathbb{R}^d$, with velocity $v\in\mathbb{R}^d$, at time $t\geq 0$. The expression $Q_2(f,f)$ on the right hand side of \eqref{intro-standard Boltzmann}  is the  collisional operator which is an appropriate quadratic integral operator acting on $f$, taking into account \textbf{binary}  interactions of a pair of gas particles.   Its  exact form depends on the type of interaction between particles. Since the gas is assumed to be very dilute, interactions among three particles or higher order interactions are neglected due to much lower probability of occurring compared to binary.

However, when the gas is dense enough, higher order interactions are much more likely to happen, therefore they  produce a significant effect  to the evolution of the gas and one needs to take them into consideration. An example of such a situation is a colloid, which is a homogeneous non-crystalline substance consisting of either large molecules or ultramicroscopic particles of one substance dispersed through a second substance. As pointed out in \cite{ref 26}, multi-interactions among particles significantly contribute to the grand potential of a colloidal gas and are modeled by a sum of higher order interaction terms. A surprising but very important result of \cite{ref 26} is that  interactions  among three  particles actually depend on the sum of the distances between particles, as opposed to depending on different geometric configurations among interacting particles. This observation is apparently of invaluable computational importance since it significantly simplifies numerical calculations on three particle interactions. The results of \cite{ref 26} have been further verified experimentally e.g. \cite{imp3} and numerically e.g. \cite{imp4}.

\subsection{The program introduced and the goal of this paper}
Motivated by the fact that the Boltzmann equation is valid only for very dilute gases and by the observations of \cite{ref 26} that multi-interactions among particles contribute to the colloidal gas
(although in this paper we do not model colloids), we aim to introduce and rigorously derive (from a system of classical particles) a kinetic model which goes beyond binary interactions, by incorporating a sum of higher order interaction terms in \eqref{intro-standard Boltzmann}. Such an equation, which could serve as a toy model for a non-ideal gas,  would be of the form
\begin{equation}\label{intro:generalized Boltzmann}
\partial_t f+v\cdot\nabla_xf=\displaystyle\sum_{k=2}^m Q_k(\underbrace{f,f,\cdots,f\,}_\text{$k$-times}),\quad (t,x,v)\in(0,\infty)\times\mathbb{R}^{d}\times\mathbb{R}^d,
\end{equation}
where, for $k=1,...,m$, the expression $Q_k(f,f,...,f)$ is the $k$-th order collisional operator and $m\in\mathbb{N}$ is the accuracy of the approximation. Notice that for $m=2$, equation \eqref{intro:generalized Boltzmann} reduces to the classical Boltzmann equation \eqref{intro-standard Boltzmann}. 

The task of rigorously deriving an equation of the form \eqref{intro:generalized Boltzmann} from a classical many particle system, even for the case $m=2$, is a challenging problem that has been settled for short times only in certain situations;  for   hard-sphere interactions, the analysis was pioneered by Lanford \cite{lanford} and recently completed by Gallagher, Saint-Raymond, Texier \cite{gallagher}, while  for short-range potentials, it has been done in \cite{king,gallagher, pulvirenti-simonella}. 
 Up to our knowledge, the case $m=3$ i.e. derivation of the equation
\begin{equation}\label{intro:binary-ternary Boltzmann}
\partial_t f+v\cdot\nabla_xf=Q_2(f,f)+Q_3(f,f,f),\quad (t,x,v)\in(0,\infty)\times\mathbb{R}^{d}\times\mathbb{R}^d,\\
\end{equation}
 has not been studied at all. We refer to  \eqref{intro:binary-ternary Boltzmann} as the binary-ternary Boltzmann equation. We  mention that in a recent work with Gamba and Taskovi\'c \cite{gwp}  we proved  global well-posedness of \eqref{intro:binary-ternary Boltzmann} for small initial data near vacuum.
 
  In addition to understanding binary interactions and interactions among three particles,
 derivation of \eqref{intro:binary-ternary Boltzmann} requires careful analysis of their mutual interactions. This challenging task has been carried out in a subsequent work \cite{binary-ternary} since it  requires a deep understanding of interactions between three particles and their connection to binary interactions. For this reason,  in this paper, we focus on understanding interactions among three particles and rigorously deriving a purely ternary equation, which itself brings a lot of challenges due to combinatorial and configurational intricacies of evolving in time interactions among three particles. We derive an equation of the form
\begin{equation}\label{intro:ternary Boltzmann}
\partial_t f+v\cdot\nabla_xf=Q_3(f,f,f),\quad (t,x,v)\in(0,\infty)\times\mathbb{R}^{d}\times\mathbb{R}^d,\\
\end{equation}
where $Q_3(f,f,f)$ is the ternary collisional operator which is an integral operator of cubic order in $f$.  We refer to \eqref{intro:ternary Boltzmann} as the ternary Boltzmann equation. Global well-posedness for small initial data near vacuum holds as a special case of the results of \cite{gwp}.

Let us mention that Maxwell models with multiple particle interactions have been studied in \cite{multiple gamba 2, multiple gamba} using Fourier transform methods.

Also, we note  that attempts for generalization of the Boltzmann equation  using formal density expansions were made by physicists   in the past, see e.g. \cite{choh-uhlenbeck,cohen1,green,hoegy,sengers}, but in a  different context than ours. These attempts have not been further developed since  since the fourth and higher order collisions, terms as well as the virial expansion of the solution, diverged as the number of particles increased.  According to \cite{cohen50}, the divergences
originate from the desire to make a systematic expansion of the macroscopic
properties of a large system consisting of many particles in terms of the
properties of small (isolated) groups of 2,3,4 etc particles, i.e., from the basic
idea of the virial expansion itself. This leads to formal expansions in terms of
collision integrals containing the dynamics of an increasing number of particles.
These integrals diverge, however, in general, due to long range dynamical
correlations between successive collisions of these particles, introduced by the
possibility of unrestricted free motion of particles between successive collisions.

\subsection{Ternary interactions and their scaling}

 In a typical, dilute hard-sphere gas, the probability of a simultaneous contact of three hard-spheres is very small compared to e.g. the situation when one of the three particles is in simultaneous contact with the other two particles. 
 
 Motivated by this observation and the fact that in some physical situations, such as when one considers colloids as in \cite{ref 26},  interactions among three particles are determined by the sum of the distances of the interacting particles, we introduce the notion of an interaction of three particles based on a non-symmetric version of a ternary distance. More precisely,  we introduce the ternary distance:
 \begin{equation}\label{into:ternary distance}
 d(x_1;x_2,x_3):=\sqrt{|x_1-x_2|^2+|x_1-x_3|^2},\quad x_1,x_2,x_3\in\mathbb{R}^d.
 \end{equation}
Having defined the ternary distance, we introduce the notion of a ternary interaction. Let $\epsilon>0$ and consider three particles $i,j,k$ with positions and velocities $(x_i,v_i), (x_j,v_j), (x_k,v_k)\in\mathbb{R}^{2d}$. We say that the particles $i,j,k$ are in $(i;j,k)$ ternary $\epsilon$-interaction\footnote{when not ambiguous, we will refer to $(i;j,k)$ ternary $\epsilon$-interaction as $(i;j,k)$ interaction.}  if the following geometric condition holds:
 \begin{equation}\label{intro:(ijk)}
d^2(x_i;x_j,x_k)=|x_i-x_j|^2+|x_i-x_k|^2=2\epsilon^2.
\end{equation}
The parameter $\epsilon$  above is called interaction zone. The $i$-th particle is called the central collisional particle, while the particles $j,k$ are called adjacent collisional particles.

Heuristically speaking, an $(i;j,k)$ interaction expresses the interaction of  the central particle $i$ with the pair of the uncorrelated adjacent particles $(j,k)$ with respect to the interaction zone $\epsilon$. By uncorrelated, we mean that particles $j,k$ are not directly affected by each other. For example, Figure \ref{non collisional triple}  shows particles that are not in ternary interaction, 
\begin{figure}[htp]
\centering
\begin{tikzpicture}[scale=1]
\coordinate (a) at (0,0);
\node (A) at (a) {{\color{red}$x_i$}};
\coordinate (a') at (0,-0.7);
\coordinate (ab) at (-0.3,1.5);
\node (text) at (ab){\small{$2\epsilon$}};
\coordinate (b) at (3,0);
\node (B) at (b) {{\color{blue}$x_j$}};
\coordinate (c) at (0,3);
\node (C) at (c) {{\color{teal}$x_k$}};
\coordinate (ac) at (1.5,0.3);
\node (text) at (ac){\small{$2\epsilon$}};
\path[-] (A) edge node {} (B);
\path[-] (A) edge node {} (C);
\end{tikzpicture}
\captionsetup{justification=centering}
\caption{ }
\label{non collisional triple}
\end{figure}
while Figure \ref{collisional triple} offers two examples of particles which are in ternary interaction.
\begin{figure}[htp]
\centering
\begin{tikzpicture}[scale=1]
\coordinate (a) at (0,0);
\node (A) at (a) {{\color{red}$x_i$}};
\coordinate (a') at (0,-0.7);
\coordinate (ab) at (-0.3,0.75);
\node (text) at (ab){\small{$\epsilon$}};
\coordinate (b) at (1.5,0);
\node (B) at (b) {{\color{blue}$x_j$}};
\coordinate (c) at (0,1.5);
\node (C) at (c) {{\color{teal}$x_k$}};
\coordinate (ac) at (0.75,0.3);
\node (text) at (ac){\small{$\epsilon$}};
\path[-] (A) edge node {} (B);
\path[-] (A) edge node {} (C);
\end{tikzpicture}\hspace{4cm}
\begin{tikzpicture}[scale=1]
\coordinate (a) at (4,0);
\node (A) at (a) {{\color{red}$x_i$}};
\coordinate (a') at (4,-0.7);
\coordinate (ab) at (3.5,0.92);
\node (text) at (ab){\small{$\frac{\sqrt{6}}{2}\epsilon$}};
\coordinate (b) at (5.06,0);
\node (B) at (b) {{\color{blue}$x_j$}};
\coordinate (c) at (4,1.837);
\node (C) at (c) {{\color{teal}$x_k$}};
\coordinate (ac) at (4.53,0.3);
\node (text) at (ac){\small{$\frac{\sqrt{2}}{2}\epsilon$}};
\path[-] (A) edge node {} (B);
\path[-] (A) edge node {} (C);
\end{tikzpicture}
\captionsetup{justification=centering}
\caption{ }
\label{collisional triple}
\end{figure}

Let us now describe how velocities instantaneously transform when a ternary interaction happens. Consider an $(i;j,k)$ ternary $\epsilon$-interaction.
Let $v_i^*,v_j^*,v_k^*$ denote the velocities of the interacting particles after the interaction. Assuming the particles are of equal mass $m=1$, we consider the interaction to be elastic i.e. the three particle momentum-energy conservation system   is satisfied:
\begin{align}
v_i^*+v_j^*+v_k^*&=v_i+v_j+v_k,\label{intro:conser of momentum}\\
|v_i^*|^2+|v_j^*|^2+|v_k^*|^2&=|v_i|^2+|v_j|^2+|v_k|^2.\label{intro:conser of energy}
\end{align}
Now we introduce the relative positions  re-scaled vectors
$
(\widetilde{\omega}_1,\widetilde{\omega}_2):=\left(\frac{x_j-x_i}{\sqrt{2}\epsilon},\frac{x_k-x_i}{\sqrt{2}\epsilon}\right).
$
Notice that \eqref{intro:(ijk)} implies $(\widetilde{\omega}_1,\widetilde{\omega}_2)\in\mathbb{S}_1^{2d-1}$ i.e.
$
|\widetilde{\omega}_1|^2+|\widetilde{\omega}_2|^2=1.
$
We shall call the vectors $\widetilde{\omega}_1,\widetilde{\omega}_2$ impact directions of the interaction.
Since the $i$ particle interacts with the  pair of uncorrelated particles $(j,k)$,  we assume the velocities $v_j$, $v_k$ transform with respect to the impact directions unit vector i.e.
\begin{equation}\label{intro:assumption on (j,k)}
\begin{pmatrix}
v_j^*\\v_k^*
\end{pmatrix}
=
\begin{pmatrix}
v_j\\v_k
\end{pmatrix}
-c
\begin{pmatrix}
\widetilde{\omega}_1\\
\widetilde{\omega}_2
\end{pmatrix},
\end{equation}
for some $c\in\mathbb{R}$.
We note that once we added condition\footnote{we note that \eqref{intro:assumption on (j,k)} is the ternary analogue of the condition that appears when one considers binary interactions, see e.g. \cite{gallagher}. } \eqref{intro:assumption on (j,k)} to the system \eqref{intro:conser of momentum}-\eqref{intro:conser of energy}, the new system has a unique solution that algebraically characterizes
the conservation of   momentum and energy for the type of ternary interaction defined in \eqref{intro:(ijk)}.
It is straightforward to verify  that \eqref{intro:conser of momentum}-\eqref{intro:assumption on (j,k)} yield that  $v_i^*,v_j^*,v_k^*$ are given by the collisional formulas 
\begin{equation}
\begin{aligned}
v_i^*&=v_i+\frac{\langle\widetilde{\omega}_1,v_j-v_i\rangle+\langle\widetilde{\omega}_2,v_k-v_i\rangle}{1+\langle\widetilde{\omega}_1,\widetilde{\omega}_2\rangle}(\widetilde{\omega}_1+\widetilde{\omega}_2),\\
v_j^*&=v_j-\frac{\langle\widetilde{\omega}_1,v_j-v_i\rangle+\langle\widetilde{\omega}_2,v_k-v_i\rangle}{1+\langle\widetilde{\omega}_1,\widetilde{\omega}_2\rangle}\widetilde{\omega}_1,\\
v_k^*&=v_k-\frac{\langle\widetilde{\omega}_1,v_j-v_i\rangle+\langle\widetilde{\omega}_2,v_k-v_i\rangle}{1+\langle\widetilde{\omega}_1,\widetilde{\omega}_2\rangle}\widetilde{\omega}_2.
\end{aligned}
\end{equation}

\subsection{Phase space and scaling of ternary interactions}
Now we are ready  to describe the evolution of a system of $N$-particles of $\epsilon$-interaction zone.  Recall that in this paper we pursue only ternary interactions analysis, thus the phase space will take into account  only those.
\begin{definition}\label{intro-def of triary collisions} Let $d\in\mathbb{N}$, with $d\geq 2$, $N\in\mathbb{N}$ and $\epsilon>0$. The phase space of the $N$-particle system of $\epsilon$-interaction zone is defined as:
\begin{equation}\label{intro:triary phase space}
\mathcal{D}_{N,\epsilon}=\left\{Z_N=(X_N,V_N)\in\mathbb{R}^{2dN}:d^2(x_i;x_j,x_k)\geq2\epsilon^2\quad\forall 1\leq i<j<k\leq N\right\},
\end{equation}
where
$
d^2(x_i;x_j,x_k)=|x_i-x_j|^2+|x_i-x_k|^2,
$
and
$X_N=(x_1,...,x_N)\in\mathbb{R}^{dN},\quad V_N=(v_1,...,v_N)\in\mathbb{R}^{dN},$
represent the positions and velocities of the $N$-particles.
\end{definition}

In terms of scaling, one could interpret an $(i;j,k)$ of interaction zone $\epsilon$ as a special hard sphere interaction of radius $\sqrt{2}\epsilon$ in $\mathbb{R}^{2d}$, since expression \eqref{intro:(ijk)} can be written as
$$|\bm{x_i}-\bm{x_{j,k}}|_{2d}=\sqrt{2}\epsilon,$$
where $\bm{x_{i,i}}=\displaystyle\binom{x_i}{x_i}$ and $\bm{x_{j,k}}=\displaystyle\binom{x_j}{x_k}$. Then a  $2d$-particle with position $\bm{x_{i,i}}$ would span a volume of order $\epsilon^{2d-1}$ in a unit of time. In order to observe $O(1)$ interaction per unit of time, there are $N^2$ options for the $2d$-particle positioned at $\bm{x_{j,k}}$. We obtain that $N^2\epsilon^{2d-1}= O(1)$ or equivalently
\begin{equation}\label{intro scaling talk}
N\epsilon^{d-1/2}= O(1).
\end{equation} 
This is the new scaling in which we will observe this kind of ternary interactions, see Section \ref{sec_derivation} for the explicit appearance of this scaling in the calculations.

\begin{remark}\label{remark symmetric}
 The phase space \eqref{intro:triary phase space} will produce the kinetic equation \eqref{equation derived intro}, in which the tracked particle is always the central particle of the interactions occurring. Alternatively, by working on  the phase space  
\begin{equation}\label{symmetric phase space}
\begin{aligned}
\widetilde{D}_{N,\epsilon}=\big\{Z_N=(X_N,V_N)\in\mathbb{R}^{2dN}: d_l^2(x_i,x_j,x_k)\geq 2\epsilon^2,\quad\forall (i,j,k,l)\in\widetilde{\mathcal{I}}_N\},&
\end{aligned}
\end{equation}
where 
$$\widetilde{\mathcal{I}}_N=\{(i,j,k,l):(i,j,k)\in\mathcal{I}_N\text{ and }l:\{i,j,k\}\to\{i,j,k\}\text{ is a permutation}\},$$
$$d_l(x_i,x_j,x_k)=\sqrt{|x_{l_i}-x_{l_j}|^2+|x_{l_i}-x_{l_k}|^2},$$
and using  similar arguments as in this paper, one can derive a symmetrized version of \eqref{equation derived intro}, in which the tracked particle can be either central or adjacent. Moreover, it has been shown in \cite{thesis}, that the symmetrized ternary equation satisfies similar statistical and entropy production properties as the classical Boltzmann equation. In particular, it has a weak formulation which yields an $\mathcal{H}$-Theorem and local conservation of mass, momentum and energy.
For simplicity, we opt to work with the phase space \eqref{intro:triary phase space}.  However,  we would like to mention that all the intermediate results needed for the derivation of the symmetrized ternary equation can be obtained  after some minor changes, see \cite{thesis} for more details. 
\end{remark}

\subsection{Global existence of a flow and the Liouville equation}
Let us now describe the evolution in time of  a system of particles in the phase space \eqref{intro:triary phase space}. Consider  an initial configuration $Z_N\in\mathcal{D}_{N,\epsilon}$. The motion is described as follows: \\
(I) Particles are assumed to perform rectilinear motion as long as there is no interaction i.e.
$$ \dot{x}_i=v_i,\quad \dot{v}_i=0,\quad\forall i\in\{1,...,N\}.$$
(II) Assume now that an initial configuration $Z_N=(X_N,V_N)$ has evolved until time $t>0$, reaching $Z_N(t)=(X_N(t),V_N(t))$, and there is an $(i;j,k)$ interaction at time $t$. Then the velocities $(v_i(t), v_j(t), v_k(t))$ instantaneously transform to  $(v_i^*(t), v_j^*(t), v_k^*(t))$.

We remark that it is not at all obvious that  (I)-(II) produce a well defined dynamics, since the evolution is not smooth in time, and the system can possibly run into pathological configurations. In the case of binary interactions, the analogous result has been established in the work of Alexander \cite{alexander}.  Our dynamics will be constructed in a similar spirit to \cite{alexander}. However a distinction between ternary precollisional and postcollisional configurations as well as new geometric estimates are needed in order to control possible emergence of pathological trajectories.

We informally state the first main result of this paper, for a rigorous statement see Theorem \ref{global flow}.

\textbf{\textit{Existence of a global flow}:}\textit{ Let $m\in\mathbb{N}$ and $0<\sigma<<1$. There is a global in time measure-preserving flow $(\Psi_m^t)_{t\in\mathbb{R}}:\mathcal{D}_{m,\sigma}\to\mathcal{D}_{m,\sigma}$  which preserves kinetic energy.
This flow is called the $\sigma$-interaction zone flow of $m$-particles or simply the interaction flow.} 

The main difficulty in proving Theorem \ref{global flow} is the elimination of configurations following pathological trajectories in time. In particular, in order to go from local  to global in time flow we establish the following crucial fact - when an $(i;j,k)$ interaction happens, then the subsequent interaction cannot involve the same triplet of particles.  This observation enables us to develop  ellipsoidal coverings and new geometric estimates to control the measure of these pathological sets.

The global measure-preserving  interaction flow established yields a  Liouville equation (see \eqref{Liouville equation}) for the evolution $f_N$ of an initial $N$-particle of $\epsilon$-interaction zone probability density $f_{N,0}$.

\subsection{The ternary equation derived} Although Liouville's equation is a linear transport equation, efficiently solving it is almost impossible in case where the particle number $N$ is very large. This is why an accurate statistical description is welcome, and to obtain it one wants to understand the limiting behavior of it as $N\to\infty$ and $\epsilon\to 0^+$, with the hope that  qualitative properties will be revealed for a large but finite $N$.
  Letting the number of particles $N\to\infty$ and the interaction zone $\epsilon\to 0^+$ in the \textbf{new scaling}:
\begin{equation}\label{intro:our scaling}
N\epsilon^{d-1/2}= 2^{1-d/2},
\end{equation}
we derive the ternary Boltzmann equation
\begin{equation}\label{equation derived intro}
\partial_t f+v\cdot\nabla_x f=Q_3(f,f,f), \quad (t,x,v)\in (0,\infty)\times\mathbb{R}^d\times\mathbb{R}^d.
\end{equation}
 The expression $Q_3(f,f,f)$ is the ternary  cubic order collisional operator, given by:
\begin{equation}\label{intro:kernel}
Q_3(f,f,f)=\int_{\mathbb{S}_1^{2d-1}\times\mathbb{R}^{2d}}\frac{b_+(\omega_1,\omega_2,v_1-v,v_2-v)}{\sqrt{1+\langle\omega_1,\omega_2\rangle}}\left(f^*f_1^*f_2^*-ff_1f_2\right)\,d\omega_1\,d\omega_2\,dv_1\,dv_2,
\end{equation}
where 
\begin{equation}\label{intro:parameters boltzmann}
\begin{aligned}
b(\omega_1,\omega_2,v_1-v,v_2-v):=\langle\omega_1,v_{1}-v\rangle+\langle\omega_2, v_{2}-v\rangle,\quad b_+=\max\{b,0\},&\\
 f^*=f(t,x,v^*),f=f(x,t,v),f_i^*=f_i^*(t,x,v_i^*),f_i=f(t,x,v_i) \text{ for  }i\in\{1,2\}.&
\end{aligned}
\end{equation}
\begin{remark}
The ternary collisional operator could be written in a more general form as:
\begin{equation*}
Q_3(f,f,f)=\int_{\mathbb{S}_1^{2d-1}\times\mathbb{R}^{2d}}B(\bm{u},\bm{\omega})\left(f^*f_1^*f_2^*-ff_1f_2\right)\,d\omega_1\,d\omega_2\,dv_1\,dv_2,
\end{equation*}
where
$\bm{u}=\binom{v_1-v}{v_2-v}\in\mathbb{R}^{2d}$, $\bm{\omega}=\binom{\omega_1}{\omega_2}\in\mathbb{S}_1^{2d-1}$ are the vectors of relative velocities and scaled relative positions of the colliding particles.
Of particular interest would be the power law potentials: 
$$B(\bm{u},\bm{\omega})=|\bm{u}|^\gamma\widetilde{b}(\bm{\widehat{u}\cdot\omega},\langle\omega_1,\omega_2\rangle),$$ where $\widetilde{b}$ is the differential cross-section and $\bm{\widehat{u}}$ is the unit vector in the direction of $\bm{u}$. In this paper, we derive equation \eqref{equation derived intro} for the case 
$$\gamma=1,\quad \widetilde{b}(\bm{\widehat{u}\cdot\omega},\langle\omega_1,\omega_2\rangle)=\frac{(\bm{\widehat{u}\cdot\omega})_+}{\sqrt{1+\langle\omega_1,\omega_2\rangle}}.$$ For a study of the global well-posedness of \eqref{equation derived intro} for power law potentials with $\gamma\in(-2d+1,1]$, see \cite{gwp}.

\end{remark}

\subsection{Strategy of the derivation and statement of the main result} Now the natural question is: how do we pass from the $N$-particle dynamics to  the kinetic equation \eqref{equation derived intro}? We implement the program  pioneered
 by Lanford \cite{lanford} and recently refined by Gallagher, Saint-Raymond, Texier \cite{gallagher} for deriving, for short times, the classical Boltzmann equation \eqref{intro-standard Boltzmann} for hard-spheres in the Boltzmann-Grad \cite{Grad 1, Grad 2} scaling
 $
 N\epsilon^{d-1}\simeq 1.
$
 This program has been implemented in the case of short range potentials too e.g.  \cite{king,gallagher,pulvirenti-simonella}.  However, to the best of our knowledge, the program has not been explored outside of  the context of binary interactions. By generalizing the program to allow consideration of ternary particle interactions, we illustrate that the program is universal enough. However to make it applicable to ternary interactions we follow evolution in time of ternary particle interactions, that inform new mathematical arguments described below.

We first derive a finite two-step\footnote{the two-step refers to the coupling between the $k$-th element of the hierarchy and the $(k+2)$-th element of the hierarchy. } coupled hierarchy of equations for the marginals densities of the solution to the Liouville equation, which we call the  BBGKY\footnote{Bogoliubov, Born, Green, Kirkwood, Yvon.} hierarchy. We then formally let $N\to\infty$ and $\epsilon\to 0^+$ in the scaling \eqref{intro:our scaling} to obtain an infinite two-step coupled hierarchy of equations, which we call the Boltzmann hierarchy. It can be observed that for factorized initial data,  the Boltzmann hierarchy  reduces to the ternary Boltzmann equation \eqref{equation derived intro}.  This observation connects the Boltzmann hierarchy with the ternary Boltzmann equation.  

To make this argument rigorous, we first need to show that the BBGKY and Boltzmann hierarchy are well-posed, at least for short times, and then that  if the BBGKY initial data converge to the Boltzmann hierarchy initial data, then this convergence propagates in time in the scaling \eqref{intro:our scaling}. Local well-posedness is shown in Section \ref{sec_local}, see Theorem \ref{well posedness BBGKY}, Theorem \ref{lwp Boltzmann hier}. Showing convergence is a very challenging task and is the heart of our contribution. We informally state our main result here. For a rigorous statement of the result see Theorem \ref{convergence theorem}.\\
\\
\textbf{\textit{Statement of the main result}:} \textit{Let $F_0$ be  initial data for the Boltzmann hierarchy, and $F_{N,0}$ be some BBGKY hierarchy initial data  which ``approximate" $F_0$ as $N\to\infty$, $\epsilon\to 0^+$ under the scaling \eqref{intro:our scaling}. Let $\bm{F_N}$ be the solution to the BBGKY hierarchy  with initial data $F_{N,0}$, and $\bm{F}$ the solution to the Boltzmann hierarchy, with initial data $F_0$, up to short time $T>0$. Then $\bm{F_N}$ converges in observables to $\bm{F}$ in $[0,T]$ as $N\to\infty$, $\epsilon\to 0^+$, under the scaling \eqref{intro:our scaling}. In the case of H\"older continuous $C^{0,\gamma}$,$\gamma\in(0,1]$ tensorized Boltzmann hierarchy initial data and  approximation by conditioned BBGKY hierarchy initial data, we obtain convergence to the solution of the ternary Boltzmann equation \eqref{equation derived intro} with a rate $O(\epsilon^r)$ for any $0<r<\min\{1/2,\gamma\}.$}\\
\\
The proof of this result is achieved by repeatedly using Duhamel's formula for the finite and infinite hierarchy respectively and comparing the corresponding series expansions. However this a delicate point because of the divergence between the finite particle flow and the free flow, due to the ternary interactions of particles in the finite particle case.  The  problem of divergence is present in the derivation of the classical Boltzmann equation as well, see \cite{lanford,gallagher}, but  our case is significantly harder due the  complexity of ternary interactions. To overcome this problem, we develop new geometric and combinatorial estimates, that help us  extract small measure sets of initial data which lead to these diverging trajectories. In particular the main difficulty is to control post-collisional configurations and it requires completely new treatment. To achieve that, we need to explicitly calculate the Jacobian of ternary interactions with respect to impact directions,  and  estimate the surface measure of sets of the form $(K_{\rho}^d\times\mathbb{R}^d)\cap\mathcal{S}$, where $K_{\rho}^d$ is a $d$-dimensional solid cylinder of radius $\rho$ and $\mathcal{S}$ is an appropriate  ellipsoid in $\mathbb{R}^{2d}$. These results are thoroughly presented in Section \ref{sec_geometric}.

\subsection{Further discussion}
 While this paper models ternary interactions among particles via a concept of a ternary distance (namely when \eqref{intro:(ijk)} holds), we note that a more physical way would be to employ a three-body potential of a small interaction zone. In particular, one could 
consider $\Phi:\mathbb{R}^{2d}\to\mathbb{R}$ non-negative, smooth and supported in the unit ball $B_1^{2d}$. Then one would work in the entire space with Newton's equations
$$\dot{x}_i=v_i\quad \dot{v}_i=-\frac{1}{\epsilon}\sum_{\substack{i,j,k\in\{1,...,N\}\\i\neq j\neq k}}\nabla\Phi\left(\frac{x_i-x_j}{\epsilon},\frac{x_i-x_k}{\epsilon}\right).$$ 
Although we did not pursue  analysis of this model, we expect that the relevant scaling \eqref{intro:our scaling}  and the techniques introduced in this paper might be helpful in that context as well.

\subsection*{Acknowledgements} I.A. and N.P.  acknowledge support  from NSF grants DMS-1516228, DMS-1840314 and DMS-2009549. I.A. acknowledges support from the Simons Collaboration on Wave Turbulence.  Authors are thankful to Thomas Chen, Irene M. Gamba, Philip Morrison,  Maja Taskovi\'{c} and Joseph K. Miller for helpful discussions regarding physical and mathematical aspects of the problem. Authors would like to thank Ryan Denlinger for his  constructive suggestions regarding geometric estimates in this paper.  Finally, authors are grateful to the reviewers for their in depth comments and remarks which significantly improved the manuscript.
\subsection{Notation} For convenience, we introduce some basic notation which will be  used throughout the manuscript:\\
$\bullet$ We write 
$x\lesssim y$ if there exists $C_d>0$ with $ x\leq C_d y$.\\
$\bullet$  Given $n\in\mathbb{N}$, $\rho>0$ and $w\in\mathbb{R}^n$, we write $B_\rho^n(w)$ for the $n$-closed ball of radius $\rho>0$, centered at $w\in\mathbb{R}^n$.
In particular, we write $B_\rho^n:=B_\rho^n(0)$
for the $\rho$-ball centered at the origin.\\
$\bullet$  Given $n\in\mathbb{N}$ and $\rho>0$, we write $\mathbb{S}_\rho^{n-1}$ for the $(n-1)$-sphere of radius $\rho>0$.\\
$\bullet$   We write $x<<y$, when $x<cy$ for some number $0<c<1$ small enough.

\section{Collisional transformation of three particles}
\label{sec_collision}
In this  section, we define the collisional transformation of three particles  induced by a pair of impact directions, and investigate its properties.  

For convenience, given $(\omega_1,\omega_2,v_1,v_2,v_3)\in\mathbb{S}_1^{2d-1}\times\mathbb{R}^{3d}$, let us write
\begin{equation}\label{definition of c}
c_{\omega_1,\omega_2,v_1,v_2,v_3}=\frac{\langle\omega_1,v_2-v_1\rangle+\langle\omega_2,v_3-v_1\rangle}{1+\langle\omega_1,\omega_2\rangle}.
\end{equation}
Notice that $c_{\omega_1,\omega_2,v_1,v_2,v_3}$ is well-defined for all $(\omega_1,\omega_2,v_1,v_2,v_3)\in\mathbb{S}_1^{2d-1}\times\mathbb{R}^{3d}$, since
\begin{equation}\label{estimate on denominator}
1+\langle\omega_1,\omega_2\rangle\geq 1-|\omega_1||\omega_2|\geq 1-\frac{1}{2}\left(|\omega_1|^2+|\omega_2|^2\right)=\frac{1}{2}.
\end{equation}

\begin{definition}
Consider  impact directions $(\omega_{1},\omega_{2})\in\mathbb{S}_{1}^{2d-1}$. We  define the collisional transformation induced by $(\omega_1,\omega_2)\in\mathbb{S}_1^{2d-1}$ as
$ T_{\omega_1,\omega_2}:(
v_1,
v_2,
v_3
)\in\mathbb{R}^{3d}\longrightarrow(
v_1^*,
v_2^*,
v_3^*)\in\mathbb{R}^{3d},
$
where
\begin{equation}\label{formulas of collision}\begin{cases}
v_1^*=v_1+c_{\omega_{1},\omega_{2}, v_1,v_2,v_3}(\omega_{1}+\omega_{2}),\\
v_2^*=v_2-c_{\omega_{1},\omega_{2}, v_1,v_2,v_3}\omega_{1},\\
v_3^*=v_3-c_{\omega_{1},\omega_{2}, v_1,v_2,v_3}\omega_2,\\
\end{cases}
\end{equation}
and $c_{\omega_{1},\omega_{2}, v_1,v_2,v_3}$ is given by \eqref{definition of c}.
\end{definition}
In the following definition, we introduce the notion of the cross-section which will have a prominent role in the rest of the paper.
\begin{definition}
We define the cross-section\footnote{We } $b:\mathbb{S}_1^{2d-1}\times\mathbb{R}^{2d}\to\mathbb{R}$ as:
\begin{equation}\label{cross}
b(\omega_1,\omega_2,\nu_1,\nu_2)=\langle\omega_1,\nu_1\rangle+\langle\omega_2,\nu_2\rangle,\quad (\omega_1,\omega_2,\nu_1,\nu_2)\in\mathbb{S}_1^{2d-1}\times\mathbb{R}^{2d}.
\end{equation}
\end{definition}
Notice that by \eqref{definition of c}, \eqref{cross} we have
\begin{equation}\label{relation cross-c}
b(\omega_1,\omega_2,v_2-v_1,v_3-v_1)=\left(1+\langle\omega_1,\omega_2\rangle\right)c_{\omega_1,\omega_2,v_1,v_2,v_3}.
\end{equation}
Direct algebraic calculations illustrate the main properties of the collisional tranformation.
\begin{proposition}\label{operator properties}Consider a pair of impact directions $(\omega_{1},\omega_{2})\in\mathbb{S}_{1}^{2d-1}$. The induced collisional transformation $T_{\omega_{1},\omega_{2}}$ has the following properties:\\
  {\it(i)} Conservation of momentum
  \begin{equation}\label{conservation of momentum tran}
  v_1^*+v_2^*+v_3^*=v_1+v_2+v_3.
  \end{equation}
  {\it(ii)} Conservation of energy:
  \begin{equation}\label{conservation of energy tran}
  |v_1^*|^2+|v_2^*|^2+|v_3^*|^2=|v_1|^2+|v_2|^2+|v_3|^2.
  \end{equation}
  {\it(iii)} Conservation of relative velocities magnitude:
\begin{equation}\label{rel.vel}
|v_1^*-v_2^*|^2+|v_1^*-v_3^*|^2+|v_2^*-v_3^*|^2=|v_1-v_2|^2+|v_1-v_3|^2+|v_2-v_3|^2.
\end{equation}
  {\it(iv)} Micro-reversibility  of the cross-section:
\begin{equation}\label{symmetrid}
b(\omega_{1},\omega_{2},v_2^*-v_1^*,v_3^*-v_1^*)=-b(\omega_{1},\omega_{2},v_2-v_1,v_3-v_1).
\end{equation}
  {\it(v)} $T_{\omega_{1},\omega_{2}}$ is a linear involution i.e. $T_{\omega_{1},\omega_{2}}$ is linear, $T_{\omega_1,\omega_2}^{-1}=T_{\omega_1,\omega_2}$.
In particular $|\det T_{\omega_1,\omega_2}|=1$, thus $T_{\omega_1,\omega_2}$ is measure-preserving.
\end{proposition}
\begin{proof} \textit{(i)} and \textit{(ii)} are guaranteed by construction.
\textit{(iii)} comes immediately after combining \textit{(i)} and \textit{(ii)}. To prove \textit{(iv)}, we use  \eqref{formulas of collision} to obtain
\begin{equation*}
v_2^*-v_1^*=v_2-v_1-2c\omega_1-c\omega_2,\quad v_3^*-v_1^*=v_3-v_1-2c\omega_2-c\omega_1.
\end{equation*}
Using the fact that $(\omega_1,\omega_2)\in\mathbb{S}_1^{2d-1}$, and recalling \eqref{relation cross-c}, we get
\begin{equation*}
\begin{aligned}
b^*=\langle\omega_1, v_2^*-v_1^*\rangle+\langle \omega_2,v_3^*-v_1^*\rangle &=\langle \omega_1,v_2-v_1\rangle+\langle \omega_2,v_3-v_1\rangle-2c_{\omega_1,\omega_2,v_1,v_2,v_3}\left(1+\langle\omega_1,\omega_2\rangle\right)=-b,
\end{aligned}
\end{equation*}
where we use the notation $b:=b(\omega_1,\omega_2,v_2-v_1,v_3-v_1)$, $b^*:=b(\omega_1,\omega_2,v_2^*-v_1^*,v_3-v_1^*)$.
To prove \textit{(v)}, first notice that $T_{\omega_{1},\omega_{2}}$ is  linear in velocities. Recalling notation from \eqref{relation cross-c}, \textit{(iv)} implies that $c^*=-c$ where $c^*:=c_{\omega_1,\omega_2,v_1^*,v_2^*,v_3^*}$, $c:=c_{\omega_1,\omega_2,v_1,v_2,v_3}$. This observation and \eqref{formulas of collision}  directly imply that
$T_{\omega_{1},\omega_{2}}^{-1}=T_{\omega_{1},\omega_{2}}$. Clearly $\left|\det T_{\omega_{1},\omega_{2}}\right|=1$
 and $T_{\omega_1,\omega_2}$ is measure-preserving.
\end{proof}

\section{Dynamics of $m$-particles}\label{sec_dynamics}

In this section we rigorously define the dynamics of $m$-particles of small interaction zone $0<\sigma<<1$. Heuristically speaking particles perform free motion as long as they are not interacting, and instantaneously transform velocities according to the collisional transformation, defined in Section \ref{sec_collision}, when they interact. Intuitively, the dynamics is well-defined as long as we have well-separated in time interactions, such that each of those interactions  involves only one triplet. Here, we show that a flow can be defined for almost all initial configurations.

Throughout this section we consider $m\in\mathbb{N}$  and $0<\sigma<<1$. We assume $m\geq 3$ unless stated.
 
 \subsection{Phase space definitions}
Consider the set $\mathcal{I}_m:=\left\{(i,j,k)\in \{1,...,m\}^3: i<j<k\right\}$ of ordered triples in $\{1,...,m\}$.
We define the phase space of the $m$-particles of $\sigma$-interaction zone  as
\begin{equation}\label{phase space}
\mathcal{D}_{m,\sigma}:=\left\{Z_m=(X_m,V_m)\in\mathbb{R}^{2dm}:d^2(x_i;x_j,x_k)\geq 2\sigma^2,\quad\forall (i,j,k)\in\mathcal{I}_m\right\},
\end{equation}
where 
$
X_m=(x_1,...,x_m)\in\mathbb{R}^{dm}$, $V_m=(v_1,...,v_m)\in\mathbb{R}^{dm}$ represent the positions and velocities of the $m$-particles respectively, and
\begin{equation}\label{distance}
d(x_i;x_j,x_k)=\sqrt{|x_i-x_j|^2+|x_i-x_k|^2},
\end{equation}
is the distance in positions of the particles $i,j,k$. 
Finally, we also define
$\mathcal{D}_{1,\sigma}\equiv\mathbb{R}^{2d}$, $\mathcal{D}_{1,\sigma}^X\equiv\mathbb{R}^{d}$.   Elements of $\mathcal{D}_{m,\sigma}$ are called phase space configurations.

 The phase space $\mathcal{D}_{m,\sigma}$ decomposes to the interior and the boundary:
\begin{align}\mathring{\mathcal{D}}_{m,\sigma}&=\left\{Z_m=(X_m,V_m)\in\mathbb{R}^{2dm}:d^2(x_i;x_j,x_k)>2\sigma^2,\quad\forall (i,j,k)\in\mathcal{I}_m\right\},\label{phase space interior}\\
\partial \mathcal{D}_{m,\sigma} &=
\bigcup_{(i,j,k)\in\mathcal{I}_m}\Sigma_{ijk},\text{ }\Sigma_{ijk}:=\left\{Z_m=(X_m,V_m)\in\mathcal{D}_{m,\sigma}:d^2(x_i;x_j,x_k)=2\sigma^2\right\}\label{phase space boundary}.
\end{align}

We further decompose the boundary to simple collisions and multiple collisions respectively:
\begin{align}\partial_{sc}\mathcal{D}_{m,\sigma}&=\left\{Z_m=(X_m,V_m)\in\partial\mathcal{D}_{m,\sigma}:\mbox{there is unique }(i,j,k)\in\mathcal{I}_m:Z_m\in\Sigma_{ijk}\right\},\label{simple collisions}\\
\partial_{mc}\mathcal{D}_{m,\sigma}&=\left\{Z_m=(X_m,V_m)\in\partial\mathcal{D}_{m,\sigma}:\mbox{there are } (i,j,k)\neq (i',j',k')\in\mathcal{I}_m:Z_m\in\Sigma_{ijk}\cap\Sigma_{i'j'k'}\right\}.\label{multiple collisions}
\end{align}
Notice that in the special case $m=3$, we have 
$\partial_{mc}\mathcal{D}_{3,\sigma}=\emptyset$ and $\partial\mathcal{D}_{3,\sigma}=\partial_{sc}\mathcal{D}_{3,\sigma},$
i.e. there are no multiple collisions when we consider only three particles.

\begin{definition} Consider  $Z_m\in\partial_{sc}\mathcal{D}_{m,\sigma}$. Then there is a unique triplet $(i,j,k)\in\mathcal{I}_m$ such that $Z_m\in\Sigma_{ijk}$. In this case we will say that $Z_m$ is an $(i;j,k)$ simple collision and we will write
\begin{equation}\label{simple collision surfaces}
\Sigma_{ijk}^{sc}:=\left\{Z_m=(X_m,V_m)\in\partial_{sc}\mathcal{D}_{m,\sigma}: Z_m\mbox{ is $(i;j,k)$ simple collision}\right\}.
\end{equation} 
\end{definition}
\begin{remark} Notice that $\Sigma_{ijk}^{sc}\cap\Sigma_{i'j'k'}^{sc}=\emptyset,\quad\forall (i,j,k)\neq(i',j',k')\in\mathcal{I}_m,$ and $\partial_{sc}\mathcal{D}_{m,\sigma}$ decomposes to
$\partial_{sc}\mathcal{D}_{m,\sigma}=\bigcup_{(i,j,k)\in\mathcal{I}_m}\Sigma_{ijk}^{sc}$.
\end{remark}

For the purposes of defining a global flow, throughout this section we use the following notation:
\begin{definition} Let  $(i,j,k)\in\mathcal{I}_m$ and $Z_m\in\Sigma_{ijk}^{sc}$. We introduce 
\begin{equation}\label{def of omega}
(\widetilde{\omega}_{1},\widetilde{\omega}_2):=\frac{1}{\sqrt{2}\sigma}\left(x_j-x_i,x_k-x_i\right)\in\mathbb{S}_1^{2d-1}.
\end{equation}
 Therefore, each $(i;j,k)$ simple collision naturally induces impact directions $(\widetilde{\omega}_{1},\widetilde{\omega}_{2})\in\mathbb{S}_1^{2d-1}$, and a collisional transformation $T_{\widetilde{\omega}_{1},\widetilde{\omega}_{2}}$.
\end{definition}

We also give the following definition:
\begin{definition}
Let $(i,j,k)\in\mathcal{I}_m$ and $Z_m=(X_m,V_m)\in\Sigma_{ijk}^{sc}$. We denote
$Z_m^*=(X_m,V_m^*),$
where
\begin{equation*}
V_m^*=(v_1,...,v_{i-1},v_i^*,v_{i+1},...,v_{j-1},v_j^*,v_{j+1},...,v_{k-1},v_k^*,v_{k+1},...,v_{m}),
\end{equation*}
and
$(v_i^*,v_j^*,v_k^*)=T_{\widetilde{\omega}_{1},\widetilde{\omega}_{2}}(v_i,v_j,v_k),\quad (\widetilde{\omega}_{1},\widetilde{\omega}_{2})\in\mathbb{S}_1^{2d-1}\text{ are given by \eqref{def of omega}}.$
\end{definition}
\subsection{Classification of simple collisions}
Now, we classify simple collisions in order to eliminate collisions which graze under time evolution. Informally speaking, a simple collisional configuration will be precollisional when the three interacting particles have the velocities which led them to the interaction  and postcollisional when the velocities have already changed by the collision according to the transformation \eqref{formulas of collision}. As we will see in Lemma \ref{elem dyn step}, a simple collisional configuration can be characterized by the sign of the cross-section. More specifically, we introduce the following language:
\begin{definition}\label{collision class}
Let  $(i,j,k)\in\mathcal{I}_m$ and $Z_m\in\Sigma_{ijk}^{sc}$. The configuration $Z_m$ is called:\\
{\it(i)} pre-collisional when $b(\widetilde{\omega}_{1},\widetilde{\omega}_{2},v_j-v_i,v_k-v_i)<0,$\\
{\it(ii)} post-collisional when $b(\widetilde{\omega}_{1},\widetilde{\omega}_{2},v_j-v_i,v_k-v_i)>0,$\\
{\it(iii)} grazing when $b(\widetilde{\omega}_{1},\widetilde{\omega}_{2},v_j-v_i,v_k-v_i)=0,$\\
where $(\widetilde{\omega}_{1},\widetilde{\omega}_{2})\in\mathbb{S}_1^{2d-1}$ is given by \eqref{def of omega} and $b$ is given by \eqref{cross}. 
\end{definition}
\begin{remark}\label{remark on pre-post}
 Let  $(i,j,k)\in\mathcal{I}_m$ and $Z_m\in\Sigma_{ijk}^{sc}$. Using \eqref{symmetrid}, we obtain the following:\\
{\it(i)} $Z_m$ is pre-collisional iff $Z_m^*$ is post-collisional.\\
{\it(ii)} $Z_m$ is post-collisional iff $Z_m^*$ is pre-collisional.\\
{\it(iii)} $Z_m=Z_m^*$ iff $Z_m$ is grazing.
\end{remark}

We consider the subset of the phase space:
$
\mathcal{D}_{m,\sigma}^*=\mathring{\mathcal{D}}_{m,\sigma}\cup\partial_{sc,ng}\mathcal{D}_{m,\sigma},
$
where
$$\partial_{sc,ng}\mathcal{D}_{m,\sigma}=\left\{Z_m=(X_m,V_m)\in\partial_{sc}\mathcal{D}_{m,\sigma}: Z_m\text{ is non-grazing}\right\}.$$
Notice that $\mathcal{D}_{m,\sigma}^*$ is a full measure subset of $\mathcal{D}_{m,\sigma}$ and $\partial_{sc,ng}\mathcal{D}_{m,\sigma}$ is a full surface measure subset of $\partial\mathcal{D}_{m,\sigma}$.
 \subsection{Construction of the local flow}
  Here, we show that each $Z_m\in\mathcal{D}_{m,\sigma}^*$ follows a well-defined trajectory for short time.
  Next Lemma defines the  flow for any initial configuration  in $\mathcal{D}_{m,\sigma}^*$ up to the first collision time.
\begin{lemma}\label{elem dyn step}
Consider  $Z_m=(X_m,V_m)\in\mathcal{D}_{m,\sigma}^*$. Then there is a time $\tau^1_{Z_m}\in (0,\infty]$ such that defining $Z_m(\cdot):(0,\tau_{Z_m}^1]\to\mathbb{R}^{2dm}$ by:
$$Z_m(t)=\left(X_m\left(t\right),V_m\left(t\right)\right):=
\begin{cases}
(X_m+tV_{m},V_m),\quad\text{if $Z_m$ is non-collisional or post-collisional},\\
(X_m+tV_m^*,V_m^*),\quad\text{if $Z_m$ is pre-collisional},
\end{cases}
$$
 the following hold:\\
{\it(i)} $Z_m(t)\in\mathring{\mathcal{D}}_{m,\sigma},\quad\forall t\in (0,\tau^1_{Z_m})$,\\
{\it(ii)} if $\tau^1_{Z_m}<\infty$, then $Z_m(\tau_{Z_m}^{1})\in\partial\mathcal{D}_{m,\sigma}$,\\
{\it(iii)} If $Z_m\in\Sigma_{ijk}^{sc}$ for some $(i,j,k)\in\mathcal{I}_m$, and $\tau_{Z_m}^1<\infty$, then $Z_m(\tau^{1}_{Z_m})\notin\Sigma_{ijk}$,\\
The time $\tau_{Z_m}^1$ is called the first (forward) collision time of $Z_m$. The first negative collision time can be defined analogously.
\end{lemma}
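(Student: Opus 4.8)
The plan is to handle the three cases of the flow definition uniformly by first reducing to the analysis of a \emph{non-collisional} starting configuration, and then to study when a free trajectory emanating from the interior first reaches the boundary. If $Z_m$ is non-collisional, we set $Y_m := Z_m$; if $Z_m$ is post-collisional we again set $Y_m := Z_m$ (the free flow is applied with the same velocities); if $Z_m$ is pre-collisional, we first apply the collisional transformation $T_{\widetilde\omega_1,\widetilde\omega_2}$ to pass to $Z_m^*$, which by Remark \ref{remark on pre-post} is post-collisional, and set $Y_m := Z_m^*$. Thus in all cases the trajectory is $t \mapsto (Y_m^x + t Y_m^v, Y_m^v)$ where $(Y_m^x, Y_m^v)$ are the position-velocity components of $Y_m$, and it suffices to show: (a) for some short time the trajectory lies in $\mathring{\mathcal D}_{m,\sigma}$; (b) the first time it leaves the interior, if finite, it hits $\partial\mathcal D_{m,\sigma}$; and (c) if $Z_m$ (hence $Y_m$) lies on a collisional surface $\Sigma_{ijk}$, the trajectory does not instantly re-enter $\Sigma_{ijk}$.

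For the core of the argument, fix a triplet $(\ell, p, q) \in \mathcal I_m$ and consider the scalar function
\begin{equation*}
\varphi_{\ell p q}(t) := d^2\bigl(x_\ell(t); x_p(t), x_q(t)\bigr) - 2\sigma^2 = |x_\ell(t) - x_p(t)|^2 + |x_\ell(t) - x_q(t)|^2 - 2\sigma^2,
\end{equation*}
which is a quadratic polynomial in $t$ with positive leading coefficient (it equals $|v_\ell - v_p|^2 + |v_\ell - v_q|^2 \geq 0$ times $t^2$ plus lower-order terms, and the leading coefficient vanishes only if $v_\ell = v_p = v_q$, in which case $\varphi_{\ell p q}$ is constant). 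At $t = 0$ we have $\varphi_{\ell p q}(0) \geq 0$ by definition of $\mathcal D_{m,\sigma}$. If $\varphi_{\ell p q}(0) > 0$, continuity gives a time interval on which $\varphi_{\ell p q}$ stays positive. The delicate case is $\varphi_{\ell p q}(0) = 0$, i.e.\ $Y_m \in \Sigma_{\ell p q}$: here I would compute
\begin{equation*}
\varphi_{\ell p q}'(0) = 2\langle x_\ell - x_p, v_\ell - v_p\rangle + 2\langle x_\ell - x_q, v_\ell - v_q\rangle = -2\sqrt{2}\,\sigma\, b(\widetilde\omega_1, \widetilde\omega_2, v_p - v_\ell, v_q - v_\ell),
\end{equation*}
using the definition \eqref{def of omega} of the impact directions. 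Since $Y_m$ is post-collisional (by our reduction, or grazing — but grazing configurations were excluded from $\mathcal D_{m,\sigma}^*$ unless they are in $\mathring{\mathcal D}_{m,\sigma}$, and a configuration of $\mathcal D_{m,\sigma}^*$ lying on $\Sigma_{\ell p q}$ is non-grazing), we have $\varphi_{\ell p q}'(0) > 0$, so $\varphi_{\ell p q}(t) > 0$ for small $t > 0$; this simultaneously proves local interior membership for that triplet and statement (iii), since the triplet that was in collision at $t=0$ cannot return to $\Sigma_{\ell p q}$ immediately. Taking the minimum over the finitely many triplets of the first positive root of each $\varphi_{\ell p q}$ (setting it to $+\infty$ when there is no positive root) defines $\tau^1_{Z_m}$, and this minimum is strictly positive by the finiteness of $\mathcal I_m$ and the two cases just analyzed.

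Finally, for (ii): if $\tau^1_{Z_m} < \infty$, then by construction some $\varphi_{\ell p q}(\tau^1_{Z_m}) = 0$ while all $\varphi_{\ell' p' q'}(t) > 0$ for $t \in (0, \tau^1_{Z_m})$, so $Z_m(\tau^1_{Z_m}) \in \mathcal D_{m,\sigma}$ with at least one active constraint, i.e.\ $Z_m(\tau^1_{Z_m}) \in \partial\mathcal D_{m,\sigma}$; one should check the trajectory stays in $\mathcal D_{m,\sigma}$ up to and including $\tau^1_{Z_m}$, which follows from each $\varphi_{\ell p q}$ being continuous and nonnegative on $[0, \tau^1_{Z_m}]$ by minimality of $\tau^1_{Z_m}$. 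I expect the main obstacle to be bookkeeping rather than depth: carefully justifying the sign of $\varphi_{\ell p q}'(0)$ in the post-collisional case via the micro-reversibility relation \eqref{symmetrid} (to confirm that passing from a pre-collisional $Z_m$ to $Z_m^*$ genuinely yields $b > 0$ on the offending triplet, while not creating new zero-valued constraints with negative derivative on other triplets — which requires noting that $T_{\widetilde\omega_1,\widetilde\omega_2}$ only alters the three velocities $v_i, v_j, v_k$ and that the configuration was a \emph{simple} collision, so no other triplet is active at $t = 0$), and handling the degenerate sub-case $v_\ell = v_p = v_q$ where $\varphi_{\ell p q}$ is constant (then that triplet is never in collision after $t = 0$ unless it already was, consistent with (iii)).
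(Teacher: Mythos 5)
Your proposal is correct and takes essentially the same route as the paper's proof: reduce the pre-collisional case to the post-collisional one via $Z_m^*$ (Remark \ref{remark on pre-post}), expand $d^2$ along the free flow as a quadratic in $t$, use positivity of the cross-section to show the active triplet strictly separates for all $t>0$ (which gives both local interior membership and (iii)), and use simplicity of the collision plus continuity for the inactive triplets. The only slip is the sign in your displayed formula: since $\widetilde\omega_1=(x_p-x_\ell)/(\sqrt{2}\sigma)$ and $\langle x_\ell-x_p,v_\ell-v_p\rangle=\langle x_p-x_\ell,v_p-v_\ell\rangle$, one has $\varphi_{\ell pq}'(0)=+2\sqrt{2}\,\sigma\, b(\widetilde\omega_1,\widetilde\omega_2,v_p-v_\ell,v_q-v_\ell)$, which is positive exactly in the post-collisional case and is therefore consistent with the conclusion you draw (whereas the minus sign you wrote would contradict it).
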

\begin{proof}
Let us make the convention
$\inf\emptyset=+\infty$.
We define
\begin{equation}\label{first collision time}
\begin{aligned}
\tau_{Z_m}^1=
\begin{cases}
\inf\left\{t>0:X_m+tV_m\in\partial\mathcal{D}_{m,\sigma}\right\},\quad\text{if $Z_m$ is post-collisional} , \\
\inf\left\{t>0:X_m+tV_m^*\in\partial\mathcal{D}_{m,\sigma}\right\},\quad\text{if $Z_m$ is pre-collisional}.
\end{cases}
\end{aligned}
\end{equation}
$\bullet$ Assume that $Z_m\in\mathring{\mathcal{D}}_{m,\sigma}$. Since $\mathring{\mathcal{D}}_{m,\sigma}$ is open and the free flow is continuous, we obtain $\tau_{Z_m}^1>0$, and claims \textit{(i)}-\textit{(ii)} follow immediately from \eqref{first collision time}.\\
 $\bullet$ Assume now that $Z_m\in\partial_{sc,ng}\mathcal{D}_{m,\sigma}$, hence $Z_m$ is a simple non-grazing collision. Therefore we may distinguish the following cases:
 
 (I) $Z_m$ is an $(i;j,k)$ post-collisional configuration: For any $t>0$, we have
 \begin{align} 
 d^2&(x_i+tv_i;x_j+tv_j,x_k+tv_k)
\geq 2\sigma^2+2tb(x_j-x_i,x_k-x_i,v_j-v_i,v_k-v_i)>2\sigma^2,\label{ijk>sigma}
\end{align}
since $b(\widetilde{\omega}_{1},\widetilde{\omega}_{2},v_j-v_i,v_k-v_i)>0$. This inequality and the fact that $Z_m$ is simple collision imply that $\tau_{Z_m}^1>0$, and claim \textit{(i)} holds. Claim \textit{(ii)} follows from \eqref{first collision time} and claim \textit{(iii)} follows from \eqref{ijk>sigma}.

(II) $Z_m$ is  $(i;j,k)$ pre-collisional: We use the same argument for $Z_m^*$ which is $(i;j,k)$ post-collisional.
\end{proof}
Let us make an elementary, but crucial remark which will turn of fundamental importance when extending the  flow globally in time.
\begin{remark}\label{remark on t1-t2} For configurations with $\tau_{Z_m}^1=\infty$ the  flow is globally defined as the free flow. In the case
 where $\tau_{Z_m}^1<\infty$ and $Z_m(\tau_{Z_m}^{1})\in\partial_{sc,ng}\mathcal{D}_{m,\sigma}$, we may apply Lemma \ref{elem dyn step} once more, considering $Z_m(\tau_{Z_m}^1)$ as initial point, and extend the  flow up to the second collision time
 $\tau_{Z_m}^2:=\tau_{Z_{m}(\tau_{Z_m}^1)}^1.$
 Moreover, if $Z_m(\tau_{Z_m}^1)\in\Sigma_{ijk}^{sc}$ for some $(i,j,k)\in\mathcal{I}_m$, part \textit{(iii)} of Lemma \ref{elem dyn step} implies that $Z_m(\tau_{Z_m}^{2})\notin\Sigma_{ijk}.$ 
\end{remark}
\subsection{Extension to a global interaction flow}
Now, we extract a null set from $\mathcal{D}_{m,\sigma}^*$ such that the  flow is globally defined for positive times on the complement.  For this purpose, we consider truncation parameters in the scaling:
\begin{equation}\label{scaling dynamics}
0<\delta R<<\sigma<<1<R<\rho.
\end{equation}

We first assume initial positions are in $B_\rho^{dm}$ and initial velocities in $B_R^{dm}$. 
 We decompose $D_{m,\sigma}^*\cap(B_\rho^{dm}\times B_R^{dm})$ as follows:
 {\small
\begin{equation}\label{set decomposition of the first collision time}
\begin{aligned}
&I_{free}:=\left\{Z_m=(X_m,V_m)\in D_{m,\sigma}^*\cap(B_\rho^{dm}\times B_R^{dm}): \tau_{Z_m}^1>\delta\right\},\\
&I^1_{sc,ng}:=\big\{Z_m=(X_m,V_m)\in D_{m,\sigma}^*\cap(B_\rho^{dm}\times B_R^{dm}): \tau_{Z_m}^1<\delta,\quad Z_m(\tau_{Z_m}^{1})\in\partial_{sc,ng}\mathcal{D}_{m,\sigma},\text{ and}\quad \tau_{Z_m}^2>\delta\big\},\\
&I^1_{sc,g}:=\big\{Z_m=(X_m,V_m)\in D_{m,\sigma}^*\cap(B_\rho^{dm}\times B_R^{dm}): \tau_{Z_m}^1<\delta,\text{ } Z_m(\tau_{Z_m}^{1})\in\partial_{sc}\mathcal{D}_{m,\sigma},\text{ but $Z_m(\tau_{Z_m}^{1})$ is grazing}\big\},\\
&I^1_{mc}:=\left\{Z_m=(X_m,V_m)\in D_{m,\sigma}^*\cap(B_\rho^{dm}\times B_R^{dm}): \tau_{Z_m}^1<\delta,\quad Z_m(\tau_{Z_m}^{1})\in\partial_{mc}\mathcal{D}_{m,\sigma}\right\},\\
&I^2_{sc,ng}:=\big\{Z_m=(X_m,V_m)\in D_{m,\sigma}^*\cap(B_\rho^{dm}\times B_R^{dm}): \tau_{Z_m}^1<\delta,\quad Z_m(\tau_{Z_m}^{1})\in\partial_{sc,ng}\mathcal{D}_{m,\sigma},\text{ but}\quad  \tau_{Z_m}^2\leq\delta\big\}.
\end{aligned}
\end{equation}}
Notice that for $Z_m\in I_{free}\cup I^1_{sc,ng}$, thanks to Lemma \ref{elem dyn step}, the  flow is well defined up to time $\delta$, and there occurs at most one simple non-grazing collision in $(0,\delta)$. 

\subsubsection{Covering arguments} Now, we make an ellipsoid shell covering of the set $I^1_{mc}\cup I^2_{sc,ng}$ in a way that we can estimate the measure of the coverings. 

\begin{lemma}\label{covering} For $m=3$, there holds $I^1_{mc}=I^2_{sc,ng}=\emptyset$. For $m\geq 4$, the following inclusion holds:
\begin{equation}\label{inclusion in shells}
I^2_{sc,ng}\cup I^1_{mc} \subseteq\bigcup_{(i,j,k)\neq (i',j',k')\in\mathcal{I}_m}\left(U_{ijk}\cap U_{i'j'k'}\right),
\end{equation}
\begin{equation}\label{U_ijk}
U_{ijk}:=\left\{Z_m=(X_m,V_m)\in B_\rho^{dm}\times B_R^{dm}:
2\sigma^2\leq d^2(x_i;x_j,x_k)\leq (\sqrt{2}\sigma+4\delta R)^2\right\}.
\end{equation}
\end{lemma}
\begin{proof} 
For $m=3$,  we have  $\partial_{mc}\mathcal{D}_{3,\sigma}=\emptyset$, thus $I^{1}_{mc}=\emptyset.$
Also, since $m=3$, we  obtain $\mathcal{I}_3=\{(1,2,3)\}$, hence Remark \ref{remark on t1-t2} implies that $\tau_{Z_m}^2=\infty$ i.e. there is no other collision in the future, so $I^2_{sc,ng}=\emptyset$.

Let $m\geq 4$. We first assume that either $Z_m\in\mathring{\mathcal{D}}_{m,\sigma}$ or  $Z_m$ is post-collisional. We first prove the inclusion for $I_{sc,ng}^2$.
Assuming that $Z_m(\tau_{Z_m}^{1})\in I_{sc,ng}^2$ is an $(i;j,k)$ non-grazing collision, we have
$$d^2\left(x_i\left(\tau_{Z_m}^{1}\right); x_j\left(\tau_{Z_m}^{1}\right),x_k\left(\tau_{Z_m}^{1}\right)\right)=2\sigma^2.$$
Since there is free motion up to $\tau_{Z_m}^1$ and $\tau_{Z_m}^1\leq\delta$, triangle inequality implies
\begin{equation}\label{fremotion step 1}
|x_i-x_j|\leq |x_i(\tau_{Z_m}^{1})-x_j(\tau_{Z_m}^{1})|+\delta |v_i-v_j|\leq |x_i(\tau_{Z_m}^{1})-x_j(\tau_{Z_m}^{1})|+2\delta R.
\end{equation}
Since there is collision at $\tau_{Z_m}^1$, we have
\begin{equation}\label{sub collision}
|x_i(\tau_{Z_m}^{1})-x_j(\tau_{Z_m}^{1})|^2+|x_i(\tau_{Z_m}^{1})-x_k(\tau_{Z_m}^{1})|^2=2\sigma^2\Rightarrow |x_i(\tau_{Z_m}^{1})-x_j(\tau_{Z_m}^{1})|\leq\sqrt{2}\sigma.
\end{equation}
Combining \eqref{fremotion step 1}-\eqref{sub collision}, we obtain
\begin{equation}\label{fremotion step 2}
|x_i-x_j|^2\leq  |x_i(\tau_{Z_m}^{1})-x_j(\tau_{Z_m}^{1})|^2 +4\sqrt{2}\sigma\delta R+4\delta^2 R^2.
\end{equation}
Using the same argument for the pair $(i,k)$, adding,  and recalling the fact that there is $(i;j,k)$ simple collision at $\tau_{Z_m}^1$, we obtain
\begin{equation}\label{free motion}
2\sigma^2\leq d^2(x_i;x_j,x_k)\leq 2\sigma^2+8\sqrt{2}\sigma R\delta+8\delta R^2\leq (\sqrt{2}\sigma+4\delta R)^2,
\end{equation} where the lower inequality holds trivially since $Z_m\in\mathcal{D}_{m,\sigma}$. By \eqref{free motion}, we obtain $Z_m\in U_{ijk}$.

Remark \ref{remark on t1-t2} guarantees that $Z_m(\tau_{Z_m}^{2})\notin\Sigma_{ijk}$. So  $Z_m(\tau_{Z_m}^{2})\in\Sigma_{i'j'k'}$ for some $(i',j',k')\neq (i,j,k)$. 
Moreover, particles keep performing free motion in $[\tau_{Z_m}^1,\tau_{Z_m}^2)$ except particles $i,j,k$ whose velocities instantaneously transform because of the collision at $\tau_{Z_m}^1$. 
Recall we wish to prove as well:
\begin{equation}\label{shell 2} Z_m\in U_{i'j'k'}\Leftrightarrow 2\sigma^2\leq d^2(x_{i'};x_{j'},x_{k'})\leq (\sqrt{2}\sigma+4\delta R)^2.\end{equation}
The lower inequality trivially holds because of the phase space so it suffices to prove the upper inequality. Since $(i,j,k)\neq (i',j',k')$, it suffices to distinguish the following cases:\\
(I) $i',j',k'\notin\{i,j,k\}$:  Since particles $(i',j',k')$ perform free motion up to $\tau_{Z_m}^2$, a similar argument to the one we used to obtain \eqref{free motion} yields $Z_m\in U_{i'j'k'}$. The only difference is that we apply the argument up to time $\tau_{Z_m}^2$.\\
(II) At least one of $i',j',k'$ belongs to $\{i,j,k\}$ but no more than two. The argument is similar to \text{(I)}, the only difference being that  velocities of the recolliding particles  transform at $\tau_{Z_m}^1$. Since the argument is similar for all cases, let us provide the proof in detail only for one case, for instance $(i',j',k')=(i,k,k')$,  for some $k'>k$.  The fact that $V_m\in B_R^{dm}$, conservation of energy  by the free flow and conservation of energy by the collision \eqref{conservation of energy tran} imply
$
v_i^*\left(\tau_{Z_m}^1\right),v_j^*\left(\tau_{Z_m}^1\right),v_k^*\left(\tau_{Z_m}^1\right)\in B_R^{d}.
$
For the pair $(i,k)$, we have
\begin{align*}
x_i(\tau_{Z_m}^{2})&=x_i(\tau_{Z_m}^{1})+(\tau_{Z_m}^2-\tau_{Z_m}^1)v_i^*\left(\tau_{Z_m}^1\right)=x_i+\tau_{Z_m}^1v_i+(\tau_{Z_m}^{2}-\tau_{Z_m}^1)v_i^*\left(\tau_{Z_m}^1\right),\\
x_k(\tau_{Z_m}^{2})&=x_k(\tau_{Z_m}^{1})+(\tau_{Z_m}^2-\tau_{Z_m}^1)v_k^*\left(\tau_{Z_m}^1\right)=x_k+\tau_{Z_m}^1v_k+(\tau_{Z_m}^2-\tau_{Z_m}^1)v_k^*\left(\tau_{Z_m}^1\right).
\end{align*}
Therefore, triangle inequality implies
\begin{align}
|x_i-x_k|&\leq |x_i(\tau_{Z_m}^{2})-x_k(\tau_{Z_m}^{2})|+\tau_{Z_m}^1|v_i-v_k|+(\tau_{Z_m}^2-\tau_{Z_m}^1)|v_i^*\left(\tau_{Z_m}^1\right)-v_k^*\left(\tau_{Z_m}^1\right)|\nonumber\\
&\leq |x_i(\tau_{Z_m}^{2})-x_k(\tau_{Z_m}^{2})|+2\tau_{Z_m}^1 R+2(\tau_{Z_m}^2-\tau_{Z_m}^1)R\nonumber\\
&\leq |x_i(\tau_{Z_m}^{2})-x_k(\tau_{Z_m}^{2})|+2\delta R\nonumber.
\end{align}
Similarly, for the pair $(i,k')$, we obtain
$
|x_i-x_{k'}|\leq |x_i(\tau_{Z_m}^{2})-x_{k'}(\tau_{Z_m}^{2})|+2\delta R.
$
By an argument similar to \eqref{free motion}, inequality \eqref{shell 2} follows. 
Inclusion \eqref{inclusion in shells} is proved for $I^2_{sc,ng}$. The inclusion for $I_{mc}^1$ follows similarly.

 Assume now that $Z_m$ is pre-collisional. By Remark \ref{remark on pre-post}, $Z_m^*$ is post-collisional and by  \eqref{conservation of energy tran}  $Z_{m}^*\in B_{\rho}^{dm}\times B_{R}^{dm}$. By a similar argument to  the post-collisional case, we obtain the result.
\end{proof}
\subsubsection{Measure estimates} 
Now we  estimate the measure of $I_\delta^1\cup I^1_{sc,g}\cup I^1_{mc}\cup I^2_{sc,ng}$ in order to show that outside of a small measure set we have a well defined  flow up to small time $\delta$. To estimate the measure of  $I^1_{mc}\cup I^2_{sc,ng}$, we will strongly rely on the shell-like covering made in Lemma \ref{covering}.

For this purpose, we first introduce some notation. Consider $(i,j,k)\in\mathcal{I}_m$, a permutation $\pi:\{i,j,k\}\to\{i,j,k\}$ and $(x_{\pi_j},x_{\pi_k})\in\mathbb{R}^{2d}$. We define the set
\begin{equation}\label{S perm}
S_{\pi_i}(x_{\pi_j},x_{\pi_k})=\{x_{\pi_i}\in\mathbb{R}^d: (x_i,x_j,x_k)\in U_{ijk}\}.
\end{equation}
\begin{lemma}\label{estimate of S} Let $(i,j,k)\in\mathcal{I}_m$, a permutation $\pi:\{i,j,k\}\to\{i,j,k\}$ and $(x_{\pi_j},x_{\pi_k})\in\mathbb{R}^{2d}$.   Then 
\begin{equation}\label{estimate of S perm}
|S_{\pi_i}(x_{\pi_j},x_{\pi_k})|_d\leq C_{d,R}\delta.
\end{equation}
\begin{proof} By symmetry, it suffices to prove \eqref{estimate of S perm} for the permutations $\pi=(i,j,k)$ and $\pi=(k,i,j)$. 
For convenience, let us write $\sigma_0=\sqrt{2}\sigma$, $\delta_0=4\delta R$. Scaling \eqref{scaling dynamics} implies
$0<\delta_0<<\sigma_0<<1.$

\textbf{The proof for $\pi=(k,i,j)$}: Consider $(x_i,x_j)\in\mathbb{R}^{2d}$, and let us write $\alpha=|x_i-x_j|$. Recalling \eqref{S perm}, we have
$S_k(x_i,x_j)=\left\{x_k\in\mathbb{R}^d:\sigma_0^2-\alpha^2\leq |x_i-x_k|^2\leq (\sigma_0+\delta_0)^2-\alpha^2\right\}.$
We distinguish the following cases:\\
$\bullet$ $\alpha>\sigma_0$: We have $
(\sigma_0+\delta_0)-\alpha^2<(\sigma_0+\delta_0)^2-\sigma_0^2=\delta_0(2\sigma_0+\delta_0)<\delta_0,
$
since $0<\delta_0<<\sigma_0<<1$. Thus
$S_k(x_i,x_j)\subseteq\left\{x_k\in\mathbb{R}^d:|x_i-x_k|\leq \sqrt{\delta_0}\right\},$
so $
|S_k(x_i,x_j)|_d\lesssim \delta_0^{d/2}\leq\delta_0= 4R\delta,
$
since $\delta_0<1$ and $d\geq 2$.\\
$\bullet$ $\alpha\leq\sigma_0$: By \eqref{S perm},  
$S_k(x_i,x_j)=\left\{x_k\in\mathbb{R}^d:\sqrt{\sigma_0^2-\alpha^2}\leq |x_i-x_k|\leq\sqrt{(\sigma_0+\delta_0)^2-\alpha^2}\right\}.$
 Therefore
\begin{align}
&|S_k(x_i,x_j)|_d\simeq \left(\sqrt{(\sigma_0+\delta_0)^2-\alpha^2}\right)^d-\left(\sqrt{\sigma_0^2-\alpha^2}\right)^d\\\nonumber
&=\frac{\delta_0(2\sigma_0+\delta_0)}{\sqrt{(\sigma_0+\delta_0)^2-\alpha^2}+\sqrt{\sigma_0^2-\alpha^2}}\sum_{m=0}^{d-1}\left(\sqrt{(\sigma_0+\delta_0)^2-\alpha^2}\right)^{d-1-m}\left(\sqrt{\sigma_0^2-\alpha^2}\right)^m\nonumber\\
&\leq\frac{\delta_0}{\sqrt{(\sigma_0+\delta_0)^2-\alpha^2}+\sqrt{\sigma_0^2-\alpha^2}}\left(\sqrt{(\sigma_0+\delta_0)^2-\alpha^2}+(d-1)\sqrt{\sigma_0^2-\alpha^2}\right)\label{remove exponents}\\
&\leq (d-1)\delta_0=4(d-1)R\delta\label{S_k part 2},
\end{align}
where to obtain \eqref{remove exponents} we use the fact that $0<\delta_0<<\sigma_0<<1$, and to obtain \eqref{S_k part 2}  we use the fact that $d\geq 2$.
Estimate \eqref{estimate of S perm} is proved for the case $(k,i,j)$.

\textbf{The proof for $\pi=(i,j,k)$:} Consider $(x_j,x_k)\in\mathbb{R}^{2d}$. Completing the square, one can see that 
$$S_i(x_j,x_k)=\left\{x_i\in\mathbb{R}^d:\sigma_0^2-\alpha^2\leq \left|x_i-\frac{x_j+x_k}{2}\right|^2\leq (\sigma+\delta_0)^2-\alpha^2\right\},$$
where
$\sigma_0=\sigma,\quad\delta_0=\frac{4\delta R}{\sqrt{2}},\quad\alpha=\frac{1}{2}\sqrt{2(|x_j|^2+|x_k|^2)-|x_j+x_k|^2}.$
Scaling \eqref{scaling dynamics} implies
$0<\delta_0 <<\sigma_0<<1.$
The estimate follows by an argument identical to the the previous case.
\end{proof}
\end{lemma}

\begin{lemma}\label{ellipse shell measure}
The following measure estimate holds:
\begin{equation*}|I^1_{sc,g}\cup  I^2_{sc,ng}\cup I^1_{mc}|_{2dm}\leq C_{m,d,R}\rho^{d(m-2)}\delta ^2. \end{equation*}
\end{lemma}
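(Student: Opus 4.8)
The plan is to reduce the estimate to the geometric bounds of Lemma~\ref{estimate of S}, using the shell covering of Lemma~\ref{covering} together with an iterated (Fubini) integration. First, by Lemma~\ref{zero measure} the set $I^1_{sc,g}$ is Lebesgue-null, so by subadditivity it suffices to bound $|I^1_{mc}\cup I^2_{sc,ng}|_{2dm}$. If $m=3$, Lemma~\ref{covering} gives $I^1_{mc}=I^2_{sc,ng}=\emptyset$ and the left-hand side is $0$, so there is nothing to prove. Hence assume $m\geq 4$. Lemma~\ref{covering} yields
\begin{equation*}
I^1_{mc}\cup I^2_{sc,ng}\subseteq\bigcup_{(i,j,k)\neq(i',j',k')\in\mathcal{I}_m}\bigl(U_{ijk}\cap U_{i'j'k'}\bigr),
\end{equation*}
and since the number of unordered pairs of distinct triplets from $\mathcal{I}_m$ depends only on $m$, it is enough by subadditivity to prove $|U_{ijk}\cap U_{i'j'k'}|_{2dm}\leq C_{m,d,R}\,\rho^{d(m-2)}\delta^2$ for each fixed pair $(i,j,k)\neq(i',j',k')$.

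Fix such a pair. Since the two increasing triplets are distinct as sets, there is an index $c\in\{i,j,k\}\setminus\{i',j',k'\}$; choose in addition any $c'\in\{i',j',k'\}$, which is automatically distinct from $c$. Split the variable as $Z_m=(x_c,x_{c'},Y)$, where $Y$ collects the remaining $m-2$ positions together with all $m$ velocities. Using $\mathds{1}_{U_{ijk}\cap U_{i'j'k'}}\leq\mathds{1}_{U_{ijk}}\mathds{1}_{U_{i'j'k'}}$ and Fubini's theorem, I would integrate first in $x_c$: because $c\notin\{i',j',k'\}$, the factor $\mathds{1}_{U_{i'j'k'}}$ does not depend on $x_c$, while the $x_c$-section of $U_{ijk}$ at fixed remaining coordinates is (a subset of) the set $S_c(\cdot,\cdot)$ of Lemma~\ref{estimate of S}, the two other positions of the triplet $\{i,j,k\}$ — which are either among $Y$ or equal to $x_{c'}$ — being held fixed; hence this section has $d$-measure $\leq C_{d,R}\delta$. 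Next I integrate in $x_{c'}$: the other two position-indices of $\{i',j',k'\}$ are both different from $c'$ and from $c$ (the latter since $c\notin\{i',j',k'\}$), so they lie in $Y$ and are fixed, and the $x_{c'}$-section of $U_{i'j'k'}$ is again of the type controlled by Lemma~\ref{estimate of S}, of measure $\leq C_{d,R}\delta$. Finally the remaining $m-2$ positions range over $B_\rho^d$ and the $m$ velocities over $B_R^d$, contributing a factor $\leq C_d\,\rho^{d(m-2)}R^{dm}$. Multiplying the three contributions gives $|U_{ijk}\cap U_{i'j'k'}|_{2dm}\leq C_{m,d,R}\,\rho^{d(m-2)}\delta^2$, and summing over the finitely many pairs and adding $|I^1_{sc,g}|_{2dm}=0$ completes the proof.

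The only real subtlety, and the step that requires care, is the combinatorial bookkeeping: one must select in each of the two triplets a particle whose position can be integrated out via Lemma~\ref{estimate of S}, in such a way that the two selected particles are distinct and that removing the first does not affect the distance constraint used for the second. The observation that $c\notin\{i',j',k'\}$ makes both requirements automatic, after which the estimate is a routine iterated integration; note in particular that the argument never uses the size of the overlap $\{i,j,k\}\cap\{i',j',k'\}$, so the case $m=4$ (where distinct triplets always share exactly two indices) is covered without any separate treatment. The pre-collisional configurations present no extra difficulty here, since they are already absorbed into the covering of Lemma~\ref{covering} via conservation of energy.
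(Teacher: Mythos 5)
Your proof is correct and follows essentially the same route as the paper: both reduce via Lemmas \ref{zero measure} and \ref{covering} to bounding $|U_{ijk}\cap U_{i'j'k'}|_{2dm}$ for a fixed pair of distinct triplets, and both obtain the $\delta^2$ gain by a two-step Fubini integration using the section bounds of Lemma \ref{estimate of S}. The only difference is organizational: where the paper splits into three cases according to the size of $\{i,j,k\}\cap\{i',j',k'\}$, you observe that choosing $c\in\{i,j,k\}\setminus\{i',j',k'\}$ (nonempty since increasing triplets that differ as tuples differ as sets) and any $c'\in\{i',j',k'\}$ makes the two integrations decouple automatically, which handles all overlap patterns at once.
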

\begin{proof} First, we notice that $I^1_{sc,g}$ has measure zero since it is covered by  codimension-$2$ submanifolds of the phase space.
For $m=3$, the result comes trivially from Lemma \ref{covering}.
 Assume $m\geq 4$. By Lemma \ref{covering},  it suffices to uniformly estimate the measure of $U_{ijk}\cap U_{i'j'k'}$, for all  $(i,j,k)\neq (i',j',k')\in\mathcal{I}_m$. 
    Consider $(i,j,k)\neq (i',j',k')\in\mathcal{I}_m$, and recall notation from \eqref{S perm}. We will strongly rely on Lemma \ref{estimate of S}.
  We distinguish the following  cases:\\
  \\
 {(I)} $i',j',k'\notin\{i,j,k\}$:  Fubini's Theorem and \eqref{estimate of S perm} imply
\begin{equation*}
\begin{aligned}
&|U_{ijk}\cap U_{i'j'k'}|_{2dm}\lesssim R^{dm}\rho^{d(m-6)}\int_{B_{\rho}^{6d}}\mathds{1}_{S_{k}(x_i,x_j)\cap S_{k'}(x_{i'},x_{j'})}\,dx_i\,dx_{j}\,dx_k\,dx_{i'}\,dx_{j'}\,dx_{k'}\\
&\leq R^{dm}\rho^{d(m-6)}\left(\int_{B_\rho^d\times B_\rho^d}\int_{\mathbb{R}^{d}\hspace{-0.1cm}}\mathds{1}_{S_k(x_i,x_j)}\,dx_k\,dx_j\,dx_i\right)\left(\int_{B_\rho^d\times B_\rho^d}\int_{\mathbb{R}^{d}}\hspace{-0.1cm}\mathds{1}_{S_{k'}(x_{i'},x_{j'})}\,dx_k'\,dx_j'\,dx_i'\right)\hspace{-0.1cm}\\
&\leq C_{m,d,R}\rho^{d(m-2)}\delta^2.
\end{aligned}
\end{equation*}
{(II)} Exactly one of $i',j',k'$ belongs to $\{i,j,k\}$: Without loss of generality, we consider the case $i'=i, j'\neq j, k\neq k'$. Fubini's Theorem and \eqref{estimate of S perm} imply
\begin{equation*}
\begin{aligned}
&|U_{ijk}\cap U_{ij'k'}|_{2dm}\lesssim R^{dm}\rho^{d(m-5)}\int_{B_\rho^{5d}}\mathds{1}_{S_{k}(x_i,x_j)\cap S_{k'}(x_i,x_{j'})}\,dx_i\,dx_{j}\,dx_{k}\,dx_{j'}\,dx_{k'}\\
&\leq R^{dm}\rho^{d(m-5)}\int_{B_\rho^d}\left(\int_{B_\rho^d}\int_{\mathbb{R}^d}\mathds{1}_{S_k(x_i,x_j)}\,dx_k\,dx_j\right)\left(\int_{B_\rho^d}\int_{\mathbb{R}^d}\mathds{1}_{S_{k'}(x_i,x_{j'})}\,dx_{k'}\,dx_{j'}\right)\,dx_i\\
&\leq C_{m,d,R}\rho^{d(m-2)}\delta^2.
\end{aligned}
\end{equation*}
{(III)} Exactly two of $i',j',k'$ belong to $\{i,j,k\}$: Without loss of generality, we consider the case $i'=i, j'= j, k\neq k'$. Fubini's Theorem and \eqref{estimate of S perm} imply
\begin{equation*}
\begin{aligned} 
&|U_{ijk}\cap U_{ijk'}|_{2dm}\lesssim R^{dm}\rho^{d(m-4)}\int_{B_\rho^{4d}}\mathds{1}_{S_{k}(x_i,x_j)\cap S_{k'}(x_i,x_{j'})}\,dx_i\,dx_{j}\,dx_k\,dx_{k'}\\
&\leq R^{dm}\rho^{d(m-4)}\int_{B_{\rho}^d\times B_\rho^d}\left(\int_{\mathbb{R}^d}\mathds{1}_{S_k(x_i,x_j)\,dx_k}\right)\left(\int_{\mathbb{R}^d}\mathds{1}_{S_{k'}(x_i,x_j)\,dx_{k'}}\right)\,dx_j\,dx_i\\
&\leq C_{m,d,R}\rho^{d(m-2)}\delta^2.
\end{aligned}
\end{equation*}
	\end{proof}
	\begin{remark}\label{remark for negative} For negative times, analogous results of Lemma \ref{covering}, Lemma \ref{ellipse shell measure} follow similarly.
	\end{remark}
\subsubsection{The global interaction  flow}
We inductively use Lemma \ref{ellipse shell measure} to define a global  flow which preserves energy for almost all configuration. For this purpose, given $Z_m=(X_m,V_m)\in\mathbb{R}^{2dm}$, we define its kinetic energy as:
\begin{equation}\label{kinetic energy}
E_m(Z_m):=\frac{1}{2}\sum_{i=1}^m|v_i|^2.
\end{equation}
For convenience, let us define the free flow of $m$-particles.
\begin{definition} Let $m\in\mathbb{N}$. We define the free flow of $m$-particles  as the family of maps $(\Phi_m^t)_{t\in\mathbb{R}}:\mathbb{R}^{2dm}\to\mathbb{R}^{2dm}$, given by
$
\Phi_m^tZ_m=\Phi_m^t(X_m,V_m):=(X_m+tV_m,V_m).
$
\end{definition}
We establish the  existence of $\sigma$-interaction zone  flow of $m$-particles.
\begin{theorem}[Existence of the interaction flow]\label{global flow}
Let $m\in\mathbb{N}$ and $0<\sigma<<1$. There exists a full measure $G_\delta$-subset $\Gamma_{m,\sigma}\subseteq\mathcal{D}_{m,\sigma}^*$ and a measure-preserving family of diffeomorphisms $(\Psi_m^t)_{t\in\mathbb{R}}:\Gamma_{m,\sigma}\to\Gamma_{m,\sigma}$ such that 
\begin{align}
&\Psi_m^{t+s}Z_m=(\Psi_m^t\circ \Psi_m^s)(Z_m)=(\Psi_m^s\circ \Psi_m^t)(Z_m),\quad\forall Z_m\in\Gamma_{m,\sigma},\quad\forall t,s\in\mathbb{R}\label{flow property},\\
&E_m\left(\Psi_m^t Z_m\right)=E_m(Z_m),\quad\forall Z_m\in\Gamma_{m,\sigma},\quad\forall t\in\mathbb{R}\label{kinetic energy flow}.
\end{align}
Moreover for $m\geq 3$ the flow is defined a.e. on $\Gamma_{m,\sigma}\cap \partial_{sc,ng}\mathcal{D}_{m,\sigma}$ with respect to the induced measure $\,d\sigma$
and
\begin{equation}\label{bc flow}
\Psi_m^t Z_m^*=\Psi_m^t Z_m,\quad\sigma-\text{a.e. on }\Gamma_{m,\sigma}\cap\partial_{sc,ng}\mathcal{D}_{m,\sigma},\quad\forall t\in\mathbb{R}.
\end{equation} 
 This family of maps is called the $\sigma$-interaction zone flow of $m$-particles. For $m=1,2$, the  flow coincides with the free flow.

\end{theorem}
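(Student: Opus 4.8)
For $m=1,2$ there is no boundary and we simply take $\Psi_m^t:=\Phi_m^t$, the free flow; all the asserted properties are then immediate. So assume $m\ge 3$. Fix a horizon $T>0$, a velocity bound $R>1$ and an initial position bound $\rho_0$, and set $\rho:=\rho_0+TR$. Since both the free flow and the collisional transformation preserve $|V_m|$ (the latter by Proposition \ref{operator properties}(ii)), any trajectory issued from $B_{\rho_0}^{dm}\times B_R^{dm}$ and run for time $\le T$ remains inside $B_\rho^{dm}\times B_R^{dm}$. For $\delta$ chosen in the scaling \eqref{scaling dynamics}, Lemma \ref{elem dyn step} together with the decomposition \eqref{set decomposition of the first collision time} defines the flow on the ``good'' set $I_{free}\cup I^1_{sc,ng}$ for all $t\in[0,\delta]$, via free flow (preceded by the instantaneous jump $Z_m\mapsto Z_m^*$ if $Z_m$ is pre-collisional), then at most one collisional transformation at $\tau_{Z_m}^1\le\delta$, then free flow again; by Lemma \ref{ellipse shell measure} the discarded set $I^1_{sc,g}\cup I^1_{mc}\cup I^2_{sc,ng}$ has $2dm$-measure at most $C_{m,d,R}\,\rho^{d(m-2)}\delta^2$.

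\textbf{Iteration.} I next extend to $[0,T]$ by iterating the local step on time slices of length $\delta$. Inductively (all configurations being $0$-good), call $Z_m\in\mathcal{D}_{m,\sigma}^*\cap(B_{\rho_0}^{dm}\times B_R^{dm})$ \emph{$n$-good} if it is $(n-1)$-good and $\Psi_m^{(n-1)\delta}Z_m$ lies outside the pathological set $I^1_{sc,g}\cup I^1_{mc}\cup I^2_{sc,ng}$; on the set of $n$-good configurations the flow is then well defined on $[0,n\delta]$ and is a finite alternation of free flows and collisional transformations, all of unit Jacobian, so $\Psi_m^{(n-1)\delta}$ is measure preserving there. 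Consequently the step-$n$ pathological set pulls back to a set of measure $\le C_{m,d,R}\,\rho^{d(m-2)}\delta^2$, and summing, the complement of the $n$-good set has measure $\le C_{m,d,R}\,n\,\rho^{d(m-2)}\delta^2$. Taking $n=\lceil T/\delta\rceil$ exhibits the flow on $[0,T]$ off a set of measure $\le C_{m,d,R,T}\,\rho^{d(m-2)}\delta$; running the same construction with negative slices covers $[-T,0]$.

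\textbf{Limits and properties.} Since the trajectory through a configuration is uniquely determined, the flows built from different $\delta$'s coincide wherever both are defined; choosing $\delta_j\to0$ and taking the union of the corresponding good sets produces a flow on $[-T,T]$ whose complement in $\mathcal{D}_{m,\sigma}\cap(B_{\rho_0}^{dm}\times B_R^{dm})$ is contained in each of the (measure $\to0$) complements, hence is null. Letting $\rho_0,R,T\to\infty$ and exhausting $\mathcal{D}_{m,\sigma}$ gives $(\Psi_m^t)_{t\in\mathbb{R}}$ defined a.e. The group property \eqref{flow property} again follows from uniqueness of trajectories; the energy identity \eqref{kinetic energy flow} holds because each constituent map (free flow, collisional transformation) preserves $E_m$; the boundary identity \eqref{bc flow} holds because, by Lemma \ref{elem dyn step}, a pre-collisional $Z_m$ and its collisional image $Z_m^*$ launch the same forward trajectory, and carrying out the elimination of pathological sets with respect to the surface measure on $\partial_{sc,ng}\mathcal{D}_{m,\sigma}$ (the surface analogue of Lemmas \ref{covering}--\ref{ellipse shell measure}) shows this holds $\sigma$-a.e. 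Finally, measure preservation of each $\Psi_m^t$ follows from Lemma \ref{measure preserving lemma}, applied with $G$ the open full-measure set on which $\Psi_m^t$ is a finite composition of unit-Jacobian diffeomorphisms and $B$ the complementary null set, whose image is again null because backward-pathological sets are null by the time-reversed estimates.

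\textbf{Main difficulty.} The crux is quantitative: the per-slice pathological set must have measure $O(\delta^2)$, not $O(\delta)$, so that after the $\sim T/\delta$ slices the total excluded measure is still $O(\delta)\to0$. This quadratic gain is precisely the content of Lemma \ref{ellipse shell measure}, and it rests on the combinatorial fact (Remark \ref{remark on t1-t2}, i.e.\ Lemma \ref{elem dyn step}(iii)) that two consecutive collisions cannot involve the same triple, which forces the covering set $U_{ijk}\cap U_{i'j'k'}$ to involve \emph{distinct} triples and hence to factor into a product of two thin shells. With that estimate in hand, the remaining work --- organizing the iteration, propagating measure preservation through the induction, and taking the $\delta\to0$ and then $\rho_0,R,T\to\infty$ limits --- is bookkeeping, but it is the delicate part of the proof.
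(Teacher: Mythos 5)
Your construction follows the paper's proof in all essentials: the local step via Lemma \ref{elem dyn step} and the decomposition \eqref{set decomposition of the first collision time}, the shell covering and the $O(\delta^2)$ per-slice estimate of Lemma \ref{ellipse shell measure}, the iteration over $\sim T/\delta$ slices using measure preservation to pull back each pathological set, the $\delta\to 0$ intersection and the exhaustion in $\rho_0,R,T$, and measure preservation via Lemma \ref{measure preserving lemma}. Your ``main difficulty'' paragraph also correctly identifies the quadratic gain and its source in Lemma \ref{elem dyn step}(iii) / Remark \ref{remark on t1-t2}. The only minor stylistic differences (fixing $\rho=\rho_0+TR$ once instead of growing the position ball slice by slice; applying the measure-preservation lemma once globally rather than per slice and composing) are harmless.

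The one place where your argument is not actually carried out is the boundary identity \eqref{bc flow}. You correctly note that $Z_m$ and $Z_m^*$ launch the same forward trajectory by construction, so the issue is only that the Lebesgue-null pathological set $I$ might still meet $\partial_{sc,ng}\mathcal{D}_{m,\sigma}$ in a set of positive \emph{surface} measure. You propose to rule this out by a ``surface analogue of Lemmas \ref{covering}--\ref{ellipse shell measure}'', but no such analogue is established, and proving one would mean redoing the covering and the estimates of Lemma \ref{estimate of S} on the curved codimension-one surfaces $\Sigma_{ijk}$ --- genuinely additional work that your sketch does not supply. The paper sidesteps this entirely with a Fubini/tube argument: if $A:=I\cap\partial_{sc,ng}\mathcal{D}_{m,\sigma}$ had $\sigma(A)>0$, then by measure preservation of the already-constructed flow, $\lambda\left(\left\{\Psi_m^t y:\left(y,t\right)\in A\times\left(0,t_0\right)\right\}\right)=\sigma(A)\,t_0>0$, which contradicts the fact that the flow is defined off a Lebesgue-null set. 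You should replace your asserted surface-measure estimates with this argument (or supply the missing surface estimates); as written, that step is a gap.
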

\begin{proof}

Having established the bounds of Lemma \ref{ellipse shell measure}, which are valid for both positive and negative collision times (by Remark \ref{remark for negative}), existence of the set $\Gamma$ and \eqref{flow property}-\eqref{kinetic energy flow} follow in the same spirit as in \cite{alexander}. An outline of the proof can also be found in \cite{gallagher}. For details of the proof, see \cite{thesis}.

 It remains to prove that the flow is a.e. defined on $\Gamma_{m,\sigma}\cap \partial_{sc,ng}\mathcal{D}_{m,\sigma}$ and that \eqref{bc flow} holds. We use an argument similar to \cite{simonella}. By the definition of the flow, \eqref{bc flow}  holds on  $\Gamma_{m,\sigma}\cap\partial_{sc,ng}\mathcal{D}_{m,\sigma}$. Therefore, it suffices to prove $I_{m,\sigma}\cap\partial_{sc,ng}\mathcal{D}_{m,\sigma}$ is a null subset of $\partial_{sc,ng}\mathcal{D}_{m,\sigma}$, where $I_{m,\sigma}:=\mathcal{D}_{m,\sigma}^*\setminus \Gamma_{m,\sigma}$ is the set of configurations which run into pathological trajectories in finite time. 
Let $Z_m'\in \partial_{sc,ng}\mathcal{D}_{m,\sigma}$. Then by Lemma \ref{elem dyn step}, the flow can be defined up to time $\tau^1_{Z_m'}>0$ and $\Psi^t_m Z_m'\in\mathring{\mathcal{D}}_{m,\sigma}$ for all $0<t<\tau^1_{Z_m'}$.  But since $I_{m,\sigma}$ is of measure zero and $\Gamma_{m,\sigma}$ is invariant under the flow, we have
$$0=\int_{I_{m,\sigma}\cap\mathring{\mathcal{D}}_{m,\sigma}}\,dZ_m=\int_{I_{m,\sigma}\cap\partial_{sc,ng}\mathcal{D}_{m,\sigma}}\int_0^{\tau^1_{Z_m'}}\,dt\,d\sigma(Z_m')=\int_{I_{m,\sigma}\cap\partial_{sc,ng}\mathcal{D}_{m,\sigma}}\tau^1_{Z_m'}\,d\sigma(Z_m'),$$
which implies that $\sigma(I_{m,\sigma}\cap\partial_{sc,ng}\mathcal{D}_{m,\sigma})=0$, since $\tau^1_{Z_m'}>0$.
\end{proof}

\subsection{The Liouville equation}
We introduce the flow operators used throughout the paper, and then derive the $m$-particle Liouville equation for $m\geq 3$. 
\begin{definition}
For $t\in\mathbb{R}$, we define the $\sigma$-interaction zone  flow of $m$-particles operator $T_m^t:L^\infty(\mathcal{D}_{m,\sigma})\to L^\infty(\mathcal{D}_{m,\sigma})$ as
\begin{equation}\label{liouville operator}
T_m^tg_m(Z_m)=g_{m}(\Psi_m^{-t}Z_m).
\end{equation}
\end{definition}
\begin{definition}For $t\in\mathbb{R}$ and $m\in\mathbb{N}$, we define the  free flow of $m$-particles operator $S_m^t:L^\infty(\mathbb{R}^{2dm})\to L^\infty(\mathbb{R}^{2dm})$ as:
\begin{equation}\label{free flow operator}
S_m^tg_m(Z_m)=g_m(\Phi_m^{-t}Z_m)=g_m(X_m-tV_m,V_m).
\end{equation}
\end{definition}

Assume $m\geq 3$. Given a symmetric with respect to the particles initial probability density $f_{m,0}$ supported in $\mathcal{D}_{m,\sigma}$, we define its evolution as $f_m(t,Z_m):=T_m^tf_{m,0}$. Clearly, $f_m$ is symmetric and supported in $\mathcal{D}_{m,\sigma}$.  Theorem \ref{global flow} implies that $f_m$ formally satisfies the $m$-particle Liouville equation
\begin{equation}\label{Liouville equation}
\begin{cases}
\partial_tf_m+\displaystyle\sum_{i=1}^m v_i\cdot\nabla_{x_i}f_m=0,\quad(t,Z_m)\in(0,\infty)\times\mathring{\mathcal{D}}_{m,\sigma},\\
f_m(t,Z_m^*)=f_m(t,Z_m),\quad (t,Z_m)\in[0,\infty)\times\partial_{sc}\mathcal{D}_{m,\sigma},\\
f_m(0,Z_m)=f_{m,0}(Z_m),\quad Z_m\in\mathring{\mathcal{D}}_{m,\sigma}.
\end{cases}
\end{equation}

\section{BBGKY hierarchy, Boltzmann hierarchy and the ternary Boltzmann equation}
\label{sec_derivation} 
In this section we consider $N$-particles of $\epsilon$-interaction zone, where $N\geq 3$ and $0<\epsilon<<1$. We integrate the $N$-particle Liouville's equation to formally obtain a linear hierarchy of integro-differential equations satified by the marginals of its solution (BBGKY hierarchy). We then formally derive the limiting hierarchy (Boltzmann hierarchy) occuring under the appropriate scaling and formally show it reduces to a nonlinear integro-differential equation (the new ternary Boltzmann equation) for chaotic initial data. 
 \subsection{The BBGKY hierarchy}
Consider $N$-particles of interaction zone $0<\epsilon<<1$, where $N\geq 3$. For $s\in\mathbb{N}$, we define the $s$-marginal of a symmetric probability density $f_N$, supported in $\mathcal{D}_{N,\epsilon}$, as

\begin{equation}\label{def marginals}
f_N^{(s)}(Z_s)=
\begin{cases}
\displaystyle\int_{\mathbb{R}^{2d\left(N-s\right)}}f_N(Z_N)\,dx_{s+1}...\,dx_N\,dv_{s+1}...\,dv_N,\text{ } 1\leq s< N,\\
f_N,\text{ } s=N,\\
0,\text{ } s>N,
\end{cases}
\end{equation}
where for $Z_s=(X_s,V_s)\in\mathbb{R}^{2ds}$, we write $Z_N=(X_s,x_{s+1},...,x_N,V_s,v_{s+1},...,v_N)$.
It is straightforward that, for all $1\leq s\leq N$, the marginals $f_N^{(s)}$ are symmetric probability densities, supported in $\mathcal{D}_{s,\epsilon}$.

Assume now that $f_N$ is formally the solution to the $N$-particle Liouville equation \eqref{Liouville equation} with initial data $f_{N,0}$. We seek to formally find a hierarchy of equations satisfied by the marginals of $f_N$. The answer is obvious for $s\geq N$ since by definition
$f_N^{(N)}=f_N$ and $f_N^{(s)}= 0$ for $s>N$.

Notice that $\partial \mathcal{D}_{N,\epsilon}$ is equivalent up to surface measure zero to $\Sigma^X\times\mathbb{R}^{dN}$, where 
$\Sigma^X:=\hspace{-0.2cm}\displaystyle\bigcup_{(i,j,k)\in\mathcal{I}_N}\Sigma_{ijk}^{sc,X},$
 and $\Sigma_{ijk}^{sc,X}$ are given by \eqref{simple collision surfaces}. Moreover, $\Sigma^X$ is a pairwise disjoint union.

We proceed  by integrating by parts the Liouville equation. Consider $1\leq s\leq N-1$. The boundary and initial conditions can be easily recovered integrating Liouville's equation boundary and initial conditions respectively i.e. 
\begin{equation}\label{bc of BBGKY}\begin{cases}
 f_N^{(s)}(t,Z_s^*)=f^{(s)}_N(t,Z_s),\quad (t,Z_s)\in[0,\infty)\times\partial_{sc}\mathcal{D}_{s,\epsilon},\quad s\geq 3,\\
f_N^{(s)}(0,Z_s)=f_{N,0}^{(s)}(Z_s),\quad Z_s\in\mathring{\mathcal{D}}_{s,\epsilon}.
\end{cases}
\end{equation}
Notice that for $s=1,2$ there is no boundary condition, since $\mathcal{D}_{s,\epsilon}=\mathbb{R}^{2ds}$ by definition.

Consider now a smooth test function $\phi_s$ compactly supported in $(0,\infty)\times\mathcal{D}_{s,\epsilon}$ such that whenever $(i,j,k)\in \mathcal{I}_N$ with $j\leq s$, the following holds:
\begin{equation}\label{test boundary}\phi_s(t,p_sZ_N^*)=\phi_s(t,p_sZ_N)=\phi_s(t,Z_s),\quad\forall (t,Z_N)\in (0,\infty)\times\Sigma_{ijk}^{sc},\end{equation}
where $p_s(Z_N):=Z_s$ is the natural projection in space and velocities.

 Multiplying the Liouville equation by $\phi_s$, and integrating ,   we obtain 
\begin{equation}\label{weak form initial}
\int_{(0,\infty)\times\mathcal{D}_{N,\epsilon}}\bigg(\partial_tf_N\left(t,Z_N\right)+\sum_{i=1}^Nv_i\cdot\nabla_{x_i}f_N\left(t,Z_N\right)\bigg)\phi_s(t,Z_s)\,dX_N\,dV_N\,dt=0.
\end{equation}
For the time derivative in \eqref{weak form initial}, integration by parts in time, Fubini's Theorem and then again integration by parts in time imply
\begin{equation}\label{time integration by parts}
\int_{(0,\infty)\times\mathcal{D}_{N,\epsilon}}\partial_tf_N(t,Z_N)\phi_s(t,Z_s)\,dX_N\,dV_N\,dt
=\int_{(0,\infty)\times\mathcal{D}_{s,\epsilon}}\partial_tf_N^{(s)}(t,Z_s)\phi_s(t,Z_s)\,dX_s\,dV_s\,dt.
\end{equation}
For the material derivative term in \eqref{weak form initial}, the Divergence Theorem implies 
\begin{align}
\int_{\mathcal{D}_{N,\epsilon}}\sum_{i=1}^Nv_i\cdot\nabla_{x_i}f_N\left(t,Z_N\right)\phi_s(t,Z_s)\,dX_N\,dV_N
&=\int_{\mathcal{D}_{N,\epsilon}}\diverg_{X_N}\left[f_N\left(t,Z_N\right)V_N\right]\phi_s(t,Z_s)\,dX_N\,dV_N\nonumber\\
&=A_1+A_2,\label{first int by parts diverg}
\end{align}
\begin{align*}
A_1&:=-\int_{\mathcal{D}_{N,\epsilon}}V_N\cdot\nabla_{X_N}\phi_s(t,Z_s)f_N(t,Z_N)\,dX_N\,dV_N\\
 A_2&:=\int_{\Sigma^X\times\mathbb{R}^{dN}}\hat{n}\left(X_N\right)\cdot V_Nf_N\left(t,Z_N\right)\phi_s\left(t,Z_s\right)\,dV_N\,d\sigma,
 \end{align*}
where $\hat{n}(X_N)$ is the outwards normal vector on $\Sigma^X$ at $X_N\in\Sigma^X$, $\,d\sigma$ is the surface measure on $\Sigma^X$. Moreover, by the fact that $f_N$ is supported in $\mathcal{D}_{N,\epsilon}$, the Divergence Theorem and the fact that $\phi_s$ is compactly supported, we obtain
\begin{align}
A_1&=\int_{\mathbb{R}^{2dN}}V_s\cdot\nabla_{X_s}\phi_s(t,Z_s)f_N(t,Z_N)\,dX_N\,dV_N=\int_{\mathbb{R}^{2ds}}V_s\cdot\nabla_{X_s}\phi_s(t,Z_s)f_N^{(s)}(t,Z_s)\,dX_s\,dV_s\nonumber\\
&=-\int_{\mathbb{R}^{2ds}}\diverg_{X_s}[f_N^{(s)}(t,Z_s)V_s]\phi_s(t,Z_s)\,dX_s\,dV_s=-\int_{\mathcal{D}_{s,\epsilon}}\sum_{i=1}^sv_i\nabla_{x_i}f_N^{(s)}(t,Z_s)\phi_s(t,Z_s)\,dX_s\,dV_s\label{divergence transport term},
\end{align}
Combining \eqref{weak form initial}-\eqref{first int by parts diverg}, \eqref{divergence transport term},  we obtain 
\begin{align}
&\int_{(0,\infty)\times\mathcal{D}_{s,\epsilon}}\left(\partial_tf_N^{(s)}\left(t,Z_s\right)+\sum_{i=1}^sv_i\cdot\nabla_{x_i}f_N^{(s)}\left(t,Z_s\right)\right)\phi_s\left(t,Z_s\right)\,dX_s\,dV_s\,dt=\sum_{(i,j,k)\in\mathcal{I}_N}\int_0^\infty C_{ijk}(t)\,dt,\label{weak Liouville}
\end{align}
\begin{equation}\label{c_ijk def}
C_{ijk}(t):=-\int_{\Sigma_{i,j,k}^{sc,X}\times\mathbb{R}^{dN}}\hat{n}_{ijk}\left(X_N\right)\cdot V_Nf_N\left(t,Z_N\right)\phi_s\left(t,Z_s\right)\,dV_N\,d\sigma_{ijk},
\end{equation}
and $\hat{n}_{ijk}(X_N)$ is the outwards normal vector on $\Sigma_{ijk}^{sc,X}$ at $X_N\in\Sigma_{ijk}^{sc,X}$, $\,d\sigma_{ijk}$ is the surface measure on $\Sigma_{ijk}^{sc,X}$.
We easily calculate
\begin{equation}\label{dotted normal}
-\hat{n}_{ijk}(X_N)\cdot V_N=(\sqrt{2})^{-1}\frac{\langle \frac{x_j-x_i}{\sqrt{2}\epsilon},v_j-v_i\rangle+\langle \frac{x_k-x_i}{\sqrt{2}\epsilon},v_k-v_i\rangle}{\sqrt{1+\langle \frac{x_j-x_i}{\sqrt{2}\epsilon},\frac{x_k-x_i}{\sqrt{2}\epsilon}\rangle}}.
\end{equation}
Notice that since we are integrating over $\Sigma_{ijk}^{sc,X}$, we have 
$\left(\frac{x_j-x_i}{\sqrt{2}\epsilon},\frac{x_k-x_i}{\sqrt{2}\epsilon}\right)\in\mathbb{S}_1^{2d-1}.$
Making the change of variables $(v_i,v_j,v_k)\to (v_i^*,v_j^*,v_k^*)$, under the collisional transformation induced by $\left(\frac{x_j-x_i}{\sqrt{2}\epsilon},\frac{x_k-x_i}{\sqrt{2}\epsilon}\right)$, using \eqref{dotted normal}, Proposition \ref{operator properties} parts \textit{(iv)}, \textit{(v)}
and the boundary condition of \eqref{Liouville equation}, we obtain
\begin{align}
C_{ijk}(t)
=&-(\sqrt{2})^{-1}\int_{\Sigma_{ijk}^{sc,X}\mathbb{R}^{dN}}\frac{\langle \frac{x_j-x_i}{\sqrt{2}\epsilon},v_j-v_i\rangle+\langle \frac{x_k-x_i}{\sqrt{2}\epsilon},v_k-v_i\rangle}{\sqrt{1+\langle \frac{x_j-x_i}{\sqrt{2}\epsilon},\frac{x_k-x_i}{\sqrt{2}\epsilon}\rangle}} f_N(t,Z_N^*)\phi_s(t,\pi_sZ_N^*)\,dV_N\,d\sigma_{ijk}.\label{C_ijk with -}
\end{align}
Equations \eqref{c_ijk def}-\eqref{C_ijk with -} and the test function condition \eqref{test boundary} imply 
\begin{equation}\label{remaining cases BBGKY}
C_{ijk}(t)= 0,\quad\forall (i,j,k)\notin\widetilde{\mathcal{I}}_N,\quad\forall t>0, \text{ where }\widetilde{\mathcal{I}}_N:=\left\{(i,j,k)\in\mathcal{I}_N:1\leq i\leq s<j<k\leq N\right\}.
\end{equation}
Notice we immediately observe that the $(N-1)$- marginal satisfies the $(N-1)$-Liouville equation given in \eqref{Liouville equation}.

For $1\leq s\leq N-2$ and $(i,j,k)\in \widetilde{\mathcal{I}}_N$,  the $(dN-1)$-surface measure on $\Sigma_{ijk}^{sc,X}$ can be written as
$\,d\sigma_{ijk}(X_N)=\,dS_{x_i}(x_j,x_k)\prod_{\substack {\ell=1\\ \ell\neq j,k}}^N\,dx_\ell,$
where, given $x_i\in\mathbb{R}^d$, $\,dS_{x_i}$ is the surface measure on the sphere of center $(x_i,x_i)\in\mathbb{R}^{2d}$ and radius $\sqrt{2}\epsilon$.
By this  decomposition and  the symmetry assumption on $f_N$ we obtain
$
C_{ijk}(t)=C_{i,s+1,s+2}(t),\quad\forall(i,j,k)\in\widetilde{\mathcal{I}}_N,\quad\forall t>0.
$
This observation and, \eqref{remaining cases BBGKY} yield
\begin{align}
\sum_{(i,j,k)\in\mathcal{I}_N}C_{ijk}(t)&=\sum_{i=1}^s\sum_{j=s+1}^{N-1}\sum_{k=j+1}^NC_{i,s+1,s+2}(t)\nonumber\\
&=\sum_{i=1}^s\sum_{j=s+1}^{N-1}(N-j)C_{i,s+1,s+2}(t)=\left(1+2+...+N-s-1\right)\sum_{i=1}^sC_{i,s+1,s+2}(t)\nonumber\\
&=\frac{1}{2}(N-s)(N-s-1)\sum_{i=1}^sC_{i,s+1,s+2}(t),\quad\forall t>0.\label{sum C_ijk}
\end{align}
Fix $i\in\{1,...,s\}$. Substituting
$
(\omega_1,\omega_2)=\left(\frac{x_{s+1}-x_i}{\sqrt{2}\epsilon},\frac{x_{s+2}-x_i}{\sqrt{2}\epsilon}\right),
$
and recalling the notation from \eqref{cross},
we obtain thanks to \eqref{c_ijk def}-\eqref{dotted normal},  \eqref{def marginals} and the fact that $\supp f_N^{(s+2)}\subseteq\mathcal{D}_{s+2,\epsilon}$ that
\begin{equation}\label{weak BBGKY}
\begin{aligned}
\int_0^\infty C_{i,s+1,s+2}(t)\,dt
=\int_{(0,\infty)\times\mathcal{D}_{s,\epsilon}}2^{d-1}\epsilon^{2d-1}\int_{\mathbb{S}_1^{2d-1}\times\mathbb{R}^{2d}}\frac{b\left(\omega_1,\omega_2,v_{s+1}-v_i,v_{s+2}-v_i\right)}{\sqrt{1+\langle\omega_1,\omega_2\rangle}}\times&\\
\times f_N^{(s+2)}(t,X_s,x_i+\sqrt{2}\epsilon\omega_1,x_i+\sqrt{2}\epsilon\omega_2,V_s,v_{s+1},v_{s+2})\,d\omega_1\,d\omega_2\,dv_{s+1}\,dv_{s+2}\,dX_s\,dV_s\,dt.&
\end{aligned}
\end{equation}
Splitting the cross-section to positive and negative parts, followed by an application of the relevant boundary condition to the positive part, and substituting $(\omega_1,\omega_2)\to (-\omega_1,-\omega_2)$ for the negative part,  the right hand side of \eqref{weak BBGKY} becomes:
\begin{equation}\label{decomposition in +-}
\begin{aligned}
&\int_{(0,\infty)\times\mathcal{D}_{s,\epsilon}}2^{d-1}\epsilon^{2d-1}\int_{\mathbb{S}_1^{2d-1}\times\mathbb{R}^{2d}}b_+\left(\omega_1,\omega_2,v_{s+1}-v_i,v_{s+2}-v_i\right)\\
&\times \bigg(f_N^{(s+2)}(t,Z_{s+2,\epsilon}^{i*})- f_N^{(s+2)}(t,Z_{s+2,\epsilon}^i)\bigg)\,d\omega_1\,d\omega_2\,dv_{s+1}\,dv_{s+2}\,dX_s\,dV_s\,dt,
\end{aligned}
\end{equation}
where given $i\in\{1,...,s\}$, we denote
\begin{equation*}
\begin{aligned}
Z_{s+2,\epsilon}^{i}&=(x_1,...,x_i,...,x_s,x_i-\sqrt{2}\epsilon\omega_1,x_i-\sqrt{2}\epsilon\omega_2,v_1,...v_{i-1},v_i,v_{i+1},...,v_s,v_{s+1},v_{s+2}),\\
 Z_{s+2,\epsilon}^{i*}&=(x_1,...,x_i,...,x_s,x_i+\sqrt{2}\epsilon\omega_1,x_i+\sqrt{2}\epsilon\omega_2,v_1,...v_{i-1},v_i^*,v_{i+1},...,v_s,v_{s+1}^*,v_{s+2}^*).
\end{aligned}
\end{equation*}
Finally, combining \eqref{weak Liouville}, \eqref{sum C_ijk}-\eqref{decomposition in +-}, we  formally obtain the BBGKY hierarchy for $s\in\mathbb{N}$:
\begin{equation}\label{BBGKY}\begin{cases}
\partial_tf_N^{(s)}+\displaystyle\sum_{i=1}^sv_i\cdot\nabla_{x_i}f_N^{(s)}=\mathcal{C}_{s,s+2}^Nf_N^{(s+2)},\quad (t,Z_s)\in (0,\infty)\times\mathring{\mathcal{D}}_{s,\epsilon},\\
f_N^{(s)}(t,Z_s^*)=f_N^{(s)}(t,Z_s),\quad(t,Z_s)\in [0,\infty)\times\partial_{sc}\mathcal{D}_{s,\epsilon},\text{ whenever } s\geq 3,\\
f_N^{(s)}(0,Z_s)=f_{N,0}^{(s)}(Z_s),\quad Z_s\in\mathring{\mathcal{D}}_{s,\epsilon},
\end{cases}
\end{equation}
where
\begin{equation}\label{BBGKY operator}
\mathcal{C}_{s,s+2}^N=\mathcal{C}_{s,s+2}^{N,+}-\mathcal{C}_{s,s+2}^{N,-},
\end{equation}
for $1\leq s\leq N-2$ we denote
\begin{equation}\label{BBGKY operator+}
\begin{aligned}
\mathcal{C}_{s,s+2}^{N,+}f_N^{(s+2)}(t,Z_s)
=A_{N,\epsilon,s}\sum_{i=1}^s&\int_{\mathbb{S}_1^{2d-1}\times\mathbb{R}^{2d}}\frac{b_+}{\sqrt{1+\langle\omega_1,\omega_2\rangle}} f_N^{(s+2)}\left(t,Z_{s+2,\epsilon}^{i*},\right)\,d\omega_1\,d\omega_2\,dv_{s+1}\,dv_{s+2},
\end{aligned}
\end{equation}

\begin{equation}\label{BBGKY operator-}
\begin{aligned}
\mathcal{C}_{s,s+2}^{N,-}f_N^{(s+2)}(t,Z_s)=
A_{N,\epsilon,s}\sum_{i=1}^s&\int_{\mathbb{S}_1^{2d-1}\times\mathbb{R}^{2d}}\frac{b_+}{\sqrt{1+\langle\omega_1,\omega_2\rangle}} f_N^{(s+2)}\left(t,Z_{s+2,\epsilon}^{i}\right)\,d\omega_1\,d\omega_2\,dv_{s+1}\,dv_{s+2},
\end{aligned}
\end{equation}
and we use the notation
 \begin{equation}\label{A}
 \begin{aligned}
 &A_{N,\epsilon,s}=2^{d-2}(N-s)(N-s-1)\epsilon^{2d-1},\\
 &b=b(\omega_1,\omega_2,v_{s+1}-v_i,v_{s+2}-v_i),\quad b_+=\max\{b,0\},\\
 &Z_{s+2,\epsilon}^{i}=(x_1,...,x_i,...,x_s,x_i-\sqrt{2}\epsilon\omega_1,x_i-\sqrt{2}\epsilon\omega_2,v_1,...v_{i-1},v_i,v_{i+1},...,v_s,v_{s+1},v_{s+2}),\\
 &Z_{s+2,\epsilon}^{i*}=(x_1,...,x_i,...,x_s,x_i+\sqrt{2}\epsilon\omega_1,x_i+\sqrt{2}\epsilon\omega_2,v_1,...v_{i-1},v_i^*,v_{i+1},...,v_s,v_{s+1}^*,v_{s+2}^*).
\end{aligned} 
 \end{equation}
For $s\geq N-1$ we trivially define
$\mathcal{C}_{s,s+2}^{N,+}\equiv \mathcal{C}_{s,s+2}^{N,-}\equiv  0.$

Duhamel's formula implies that the BBGKY hierarchy can be written in mild form as follows
 \begin{equation}\label{mild BBGKY sec 4}f_N^{(s)}(t,Z_s)=T_s^tf_{N,0}^{(s)}(Z_s)+\int_0^tT_s^{t-\tau}\mathcal{C}_{s,s+2}^Nf_N^{(s+2)}(\tau,Z_s)\,d\tau,\quad s\in\mathbb{N},\end{equation}
 where $T_s^t$ is the $\epsilon$-interaction zone  flow  of $s$-particles operator given in \eqref{liouville operator}. See Remark \ref{ill def C} for the validity of \eqref{mild BBGKY sec 4} in $L^\infty$.
 \subsection{The Boltzmann hierarchy}
 We will now derive the Boltzmann hierarchy as the formal limit of the BBGKY hierarchy as $N\to\infty$ and $\epsilon\to 0^+$ under the  scaling
 \begin{equation}\label{scaling}
 N\epsilon^{d-1/2}=2^{1-d/2}.
 \end{equation}
 This scaling guarantees that for a fixed $s\in\mathbb{N}$, we have
 $
 A_{N,\epsilon,s}\longrightarrow 1$, as $N\to\infty$ and $\epsilon\to 0^+$ in the scaling \eqref{scaling}.
Formally taking the limit under the scaling imposed we may define the following collisional operator:
\begin{equation}\label{Boltzmann operator}
\mathcal{C}_{s,s+2}^{\infty}=\mathcal{C}_{s,s+2}^{\infty,+}-\mathcal{C}_{s,s+2}^{\infty,-},
\end{equation}
\begin{equation}\label{Boltzmann operator +}
\begin{aligned}
\mathcal{C}_{s,s+2}^{\infty,+}f^{(s+2)}(t,Z_s)=\sum_{i=1}^s\int_{(\mathbb{S}_1^{2d-1}\times\mathbb{R}^{2d})}\frac{b_+}{\sqrt{1+\langle\omega_1,\omega_2\rangle}}f^{(s+2)}\left(t,Z_{s+2}^{i*}\right) \,d\omega_1\,d\omega_2\,dv_{s+1}\,dv_{s+2}&,
\end{aligned}
\end{equation}
\begin{equation}\label{Boltzmann operator -}
\begin{aligned}
\mathcal{C}_{s,s+2}^{\infty,-}f^{(s+2)}(t,Z_s)=\sum_{i=1}^s\int_{(\mathbb{S}_1^{2d-1}\times\mathbb{R}^{2d})}\frac{b_+}{\sqrt{1+\langle\omega_1,\omega_2\rangle}}\times f^{(s+2)}\left(t,Z_{s+2}^{i}\right)\,d\omega_1\,d\omega_2\,dv_{s+1}\,dv_{s+2}&,
\end{aligned}
\end{equation}
and 
\begin{equation}\label{boltzmann notation}
\begin{aligned}
 &b=b(\omega_1,\omega_2,v_{s+1}-v_i,v_{s+2},v_i),\quad b_+=\max\{b,0\},\\
&Z_{s+2}^i=(x_1,...,x_i,...,x_s,x_i,x_i,v_1,...v_{i-1},v_i,v_{i+1},...,v_s,v_{s+1},v_{s+2}),\\
&Z_{s+2}^{i*}=(x_1,...,x_i,...,x_s,x_i,x_i,v_1,...v_{i-1},v_i^*,v_{i+1},...,v_s,v_{s+1}^*,v_{s+2}^*).
\end{aligned}
\end{equation}

Now we are ready to introduce the Boltzmann hierarchy. More precisely, given an initial data $f_0^{(s)}$, the Boltzmann hierarchy for $s\in\mathbb{N}$ is given by:
\begin{equation}\label{Boltzmann hierarchy}\begin{cases}
\partial_tf^{(s)}+\displaystyle\sum_{i=1}^sv_i\nabla_{x_i}f^{(s)}=\mathcal{C}_{s,s+2}^\infty f^{(s+2)},\quad(t,Z_s)\in (0,\infty)\times\mathbb{R}^{2ds},\\
f^{(s)}(0,Z_s)=f_0^{(s)}(Z_s),\quad\forall Z_s\in\mathbb{R}^{2ds}.
\end{cases}
\end{equation}

Duhamel's formula implies that the Boltzmann hierarchy can be written in mild form as follows
 \begin{equation}\label{mild Boltzmann sec 4} f^{(s)}(t,Z_s)=S_s^tf_0^{(s)}(Z_s)+\int_0^tS_s^{t-\tau}\mathcal{C}_{s,s+2}^\infty f^{(s+2)}(\tau,Z_s)\,d\tau,\quad s\in\mathbb{N}.
 \end{equation}
 where $S_s^t$ denotes  free flow of $s$-particles operator given in \eqref{free flow operator}. See Remark \ref{ill def C boltz} for the validity of \eqref{mild Boltzmann sec 4} in $L^\infty$.
\subsection{ The ternary Boltzmann equation}\label{sec equation properties} A situation of particular physical interest is when particles are initially independently distributed. This translates to factorized Boltzmann hierarchy initial data i.e. 
\begin{equation}\label{tensorized initial data}
f_0^{(s)}(Z_s)=f_0^{\otimes s}(Z_s)=\prod_{i=1}^sf_0(x_i,v_i),\quad s\in\mathbb{N},
\end{equation}
where $f_0:\mathbb{R}^{2d}\to\mathbb{R}$ is a given function. One can easily verify that the anszatz 
\begin{equation}\label{propagation of chaos}
f^{(s)}(t,Z_s)=f^{\otimes s}(t,Z_s)=\prod_{i=1}^sf(t,x_i,v_i),\quad s\in\mathbb{N},
\end{equation}
solves the Boltzmann hierarchy with initial data given by \eqref{tensorized initial data}, if 
 $f:[0,\infty)\times\mathbb{R}^{2d}\to\mathbb{R}$ satisfies the ternary Boltzmann equation
\begin{equation}\label{Boltzmann equation}
\begin{cases}
\partial_tf+v\cdot\nabla_xf=Q_3(f,f,f),\quad (t,x,v)\in (0,\infty)\times\mathbb{R}^{2d},\\
f(0,x,v)=f_0(x,v),\quad (x,v)\in\mathbb{R}^{2d},
\end{cases}
\end{equation}
 where, using the notation from \eqref{intro:parameters boltzmann}, the ternary  collisional operator $Q_3$ is given by \eqref{intro:kernel}-\eqref{intro:parameters boltzmann}.
Duhamel's formula implies the ternary Boltzmann equation can be written in mild form as follows
\begin{equation}\label{mild Boltzmann equation sec 4}
\begin{aligned}
f(t,x,v)&=S_1^tf_0(x,v)+\int_0^tS_1^{t-\tau}Q_3(f,f,f)(\tau,x,v)\,d\tau.
\end{aligned}
\end{equation}
See Remark \ref{ill posed equation} for the validity of \eqref{mild Boltzmann equation sec 4} in $L^\infty$.

\section{Local well-posedness}
\label{sec_local}
 
 In this section we address local well-posedness (LWP) for the BBGKY and Boltzmann hierarchies and the ternary Boltzmann equation. As expected, these well-posedness proofs are closely related, and they rely on defining appropriate functional spaces and establishing appropriate a-priori bounds. For this reason we provide the proofs only for the BBGKY case (for more details see \cite{thesis}). The functional spaces we introduce to address well-posedness are inspired by the spaces used in \cite{lanford,gallagher}.

 \subsection{LWP for the BBGKY hierarchy} Consider $(N,\epsilon)$ in the scaling \eqref{scaling}. For $1\leq s\leq N$ and $\beta> 0$ we define  the Banach spaces
 \begin{equation*}
 X_{N,\beta,s}
 :=\left\{g_{N,s}\in L^\infty(\mathcal{D}_{s,\epsilon}):  |g_{N,s}|_{N,\beta,s}:=\esssup_{Z_s\in\mathbb{R}^{2ds}}|g_{N,s}(Z_s)|e^{\beta E_s(Z_s)}<\infty\right\},
 \end{equation*}
where $E_s(Z_s)$ is the kinetic energy of $s$-particles  given by \eqref{kinetic energy}.
For $s>N$ we trivially define  $X_{N,\beta,s}:=\left\{0\right\}.$

Consider $\mu\in\mathbb{R}$. We define the Banach space 
\begin{equation*}X_{N,\beta,\mu}:=\left\{G_N=(g_{N,s})_{s\in\mathbb{N}}:g_{N,s}\in X_{N,\beta,s},\text{ }\forall s\in\mathbb{N}\text{ and }\|G_N\|_{N,\beta,\mu}:=\sup_{s\in\mathbb{N}}e^{\mu s}|g_{N,s}|_{N,\beta,s}<\infty\right\}.\end{equation*}
Finally, given $T>0$, $\beta_0> 0$, $\mu_0\in\mathbb{R}$ and $\bm{\beta},\bm{\mu}:[0,T]\to\mathbb{R}$ decreasing functions of time with $\bm{\beta}(0)=\beta_0$, $\bm{\beta}(T)> 0$, $\bm{\mu}(0)=\mu_0$, we define the Banach space 
\begin{equation*}
\bm{X}_{N,\bm{\beta},\bm{\mu}}:=C^0\left([0,T],X_{N,\bm{\beta}(t),\bm{\mu}(t)}\right),\text{ with norm }|||\bm{G_N}|||_{N,\bm{\beta},\bm{\mu}}=\sup_{t\in[0,T]}\|\bm{G_N}(t)\|_{N,\bm{\beta}(t),\bm{\mu}(t)}.
\end{equation*}
Now, given $m\in\mathbb{N}$, we prove an important continuity estimate for the operator $\mathcal{C}_{m,m+2}^N$.
\begin{lemma}\label{a priori lemma for C BBGKY}
Let $m\in\mathbb{N}$, $\beta>0$ and $g_{N,m+2}\in X_{N,m+2,\beta}$.  Then, the following continuity estimate holds for any 
\begin{equation*}
\left|\mathcal{C}_{m,m+2}^Ng_{N,m+2}(Z_m)\right|\lesssim  \beta^{-d}\left(m\beta^{-1/2}+\sum_{i=1}^m|v_i|\right)e^{-\beta E_m(Z_m)}|g_{N,m+2}|_{N,\beta,m+2},\quad\forall Z_{m}\in \mathcal{D}_{m,\epsilon}.
\end{equation*}
\end{lemma}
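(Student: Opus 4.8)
The plan is to estimate directly the expression \eqref{BBGKY kernel intro} (equivalently \eqref{BBGKY operator+}--\eqref{BBGKY operator-}) by bounding $|g_{N,m+2}|$ pointwise using the weighted norm and then carrying out the integration over the sphere $\mathbb{S}_1^{2d-1}$ and the two extra velocity variables $v_{m+1},v_{m+2}$. First I would recall that for $Z_{m}\in\mathcal{D}_{m,\epsilon}$ the adjunctions $Z_{m+2,\epsilon}^{i}$ and $Z_{m+2,\epsilon}^{i*}$ append two particles at positions $x_i\pm\sqrt2\epsilon\omega_1$, $x_i\pm\sqrt2\epsilon\omega_2$ with velocities $v_{m+1},v_{m+2}$ (pre-collisional case) or $v_i^*,v_{m+1}^*,v_{m+2}^*$ (post-collisional case). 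In both cases, by conservation of energy \eqref{conservation of energy tran} under $T_{\omega_1,\omega_2}$, we have $E_{m+2}(Z_{m+2,\epsilon}^{i*}) = E_{m+2}(Z_{m+2,\epsilon}^{i}) = E_m(Z_m) + \tfrac12|v_{m+1}|^2 + \tfrac12|v_{m+2}|^2$. Hence $|g_{N,m+2}(Z_{m+2,\epsilon}^{i,(*)})| \le |g_{N,m+2}|_{N,\beta,m+2}\, e^{-\beta E_m(Z_m)} e^{-\frac{\beta}{2}(|v_{m+1}|^2+|v_{m+2}|^2)}$, so the factor $e^{-\beta E_m(Z_m)}$ comes out and it remains to bound
\begin{equation*}
A_{N,\epsilon,m}\sum_{i=1}^m\int_{\mathbb{S}_1^{2d-1}\times\mathbb{R}^{2d}}\frac{|b|}{\sqrt{1+\langle\omega_1,\omega_2\rangle}}\,e^{-\frac{\beta}{2}(|v_{m+1}|^2+|v_{m+2}|^2)}\,d\omega_1\,d\omega_2\,dv_{m+1}\,dv_{m+2}.
\end{equation*}

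Next I would handle the $\omega$-integral and the denominator: by \eqref{bound on fraction}, $\sqrt{1+\langle\omega_1,\omega_2\rangle}\thickapprox 1$ uniformly on $\mathbb{S}_1^{2d-1}$, and $|\mathbb{S}_1^{2d-1}|$ is a dimensional constant, so the $\omega$-integration only contributes a factor $\lesssim 1$ together with the supremum of $|b(\omega_1,\omega_2,v_{m+1}-v_i,v_{m+2}-v_i)|$. Since $|b(\omega_1,\omega_2,\nu_1,\nu_2)| = |\langle\omega_1,\nu_1\rangle+\langle\omega_2,\nu_2\rangle| \le |\omega_1||\nu_1| + |\omega_2||\nu_2| \le |\nu_1| + |\nu_2|$ (using $|\omega_1|,|\omega_2|\le 1$), we get $|b|\lesssim |v_{m+1}-v_i| + |v_{m+2}-v_i| \lesssim |v_{m+1}| + |v_{m+2}| + |v_i|$. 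Then the remaining velocity integral factors into a product of one-dimensional Gaussian-type integrals: $\int_{\mathbb{R}^d}(|v_{m+1}| + |v_{m+2}| + |v_i|)\,e^{-\frac{\beta}{2}(|v_{m+1}|^2+|v_{m+2}|^2)}\,dv_{m+1}\,dv_{m+2}$. Using $\int_{\mathbb{R}^d}e^{-\frac{\beta}{2}|w|^2}\,dw \simeq \beta^{-d/2}$ and $\int_{\mathbb{R}^d}|w|\,e^{-\frac{\beta}{2}|w|^2}\,dw \simeq \beta^{-(d+1)/2} = \beta^{-1/2}\cdot\beta^{-d/2}$, this integral is $\lesssim \beta^{-d}(\beta^{-1/2} + |v_i|)$.

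Putting these together, the $i$-th summand is $\lesssim A_{N,\epsilon,m}\,\beta^{-d}(\beta^{-1/2} + |v_i|)\,e^{-\beta E_m(Z_m)}|g_{N,m+2}|_{N,\beta,m+2}$, and summing over $i = 1,\dots,m$ gives $\lesssim A_{N,\epsilon,m}\,\beta^{-d}\bigl(m\beta^{-1/2} + \sum_{i=1}^m |v_i|\bigr)\,e^{-\beta E_m(Z_m)}|g_{N,m+2}|_{N,\beta,m+2}$. Finally, the scaling \eqref{intro:our scaling}, which forces $A_{N,\epsilon,m} = 2^{d-2}(N-m)(N-m-1)\epsilon^{2d-1} \lesssim (N\epsilon^{d-1/2})^2 \lesssim 1$ (for fixed $m$, the constant depending only on $d$), absorbs the prefactor, yielding the claimed estimate. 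I would also note that for $m \ge N-1$ the operator $\mathcal{C}_{m,m+2}^N$ vanishes by \eqref{BBGKY s>=N mild} and the estimate is trivial.

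\textbf{Main obstacle.} The only genuinely delicate point is the handling of the post-collisional adjunction $Z_{m+2,\epsilon}^{i*}$: one must be sure that the change of the first $m$ velocities (namely $v_i \mapsto v_i^*$) does not destroy the exponential weight. This is exactly where conservation of energy \eqref{conservation of energy tran} is essential — it guarantees $E_{m+2}(Z_{m+2,\epsilon}^{i*}) = E_m(Z_m) + \tfrac12|v_{m+1}|^2+\tfrac12|v_{m+2}|^2$ despite $v_i$ being altered — so no extra estimate on $v_i^*$ is needed and the bound is driven entirely by the Gaussian decay in the two new velocities together with the linear growth of the cross-section $b$. Everything else is a routine Gaussian moment computation and the dimensional bound $1+\langle\omega_1,\omega_2\rangle\thickapprox 1$.
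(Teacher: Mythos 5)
Your proposal is correct and follows essentially the same route as the paper's proof: extract the weight via conservation of energy \eqref{conservation of energy tran} (which is indeed the key point for the post-collisional adjunction), bound the cross-section by $|v_i|+|v_{m+1}|+|v_{m+2}|$ using $1+\langle\omega_1,\omega_2\rangle\thickapprox 1$, evaluate the Gaussian moments to get $\beta^{-d}(\beta^{-1/2}+|v_i|)$, sum over $i$, and absorb $A_{N,\epsilon,m}$ via the scaling. No gaps.
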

\begin{proof} Let $g_{N,m+2}\in X_{N,m+2,\beta}$ and $Z_m=(X_m,V_m)\in\mathbb{N}$. If $m\geq N-1$ both sides vanish, so  we may assume  that $m\leq N-2$. Notice that conservation of energy \eqref{conservation of energy tran}  implies 
\begin{equation}\label{cons ener LWP}
E_{m+2}(Z_{m+2,\epsilon}^{i,*})=E_{m+2}(Z_{m+2,\epsilon}^i),\quad\forall i=1,...,m.
\end{equation}
 Moreover,  \eqref{estimate on denominator}, Cauchy-Schwarz inequality and triangle inequality yield
 \begin{equation}\label{estimate on b/sqrt}
 \frac{b_+(\omega_1,\omega_2,v_2-v_1,v_3-v_1)}{\sqrt{1+\langle\omega_1,\omega_2\rangle}}\leq 2\sqrt{2}\left(|v_1|+|v_2|+|v_3|\right),\quad \forall (\omega_1,\omega_2,v_1,v_2,v_3)\in\mathbb{S}_1^{2d-1}\times\mathbb{R}^{3d}.
 \end{equation}
 Therefore, by \eqref{cons ener LWP}-\eqref{estimate on b/sqrt}, the definition of the norm and scaling  \eqref{scaling}
\begin{equation*}
\begin{aligned}
\bigg|&\mathcal{C}_{m,m+2}^Ng_{N,m+2}(Z_m)\bigg|\lesssim e^{-\beta E_m(Z_m)}|g_{N,m+2}|_{N,\beta,m+2}\\
&\times\sum_{i=1}^m\int_{\mathbb{R}^{2d}}\big(|v_i|+|v_{m+1}|+|v_{m+2}|\big)e^{-\frac{\beta}{2}(|v_{m+1}|^2+|v_{m+2}|^2)}\,dv_{m+1}\,dv_{m+2}.
\end{aligned}
\end{equation*}
Using Fubini's theorem and the elementary integrals
$$\int_0^\infty e^{-\frac{\beta}{2}x^2}dx\simeq\beta^{-1/2},\int_0^\infty xe^{-\frac{\beta}{2}x^2}dx\simeq\beta^{-1},$$
we obtain the required estimate.
\end{proof}

Now we define a mild solution of the BBGKY hierarchy in the scaling \eqref{scaling} as follows:
 \begin{definition} Consider $T>0$, $\beta_0> 0$, $\mu_0\in\mathbb{R}$ and the decreasing functions $\bm{\beta},\bm{\mu}:[0,T]\to\mathbb{R}$ with $\bm{\beta}(0)=\beta_0$, $\bm{\beta}(T)> 0$, $\bm{\mu}(0)=\mu_0$.
Consider also initial data $G_{N,0}=\left(g_{N,s,0}\right)\in X_{N,\beta_0,\mu_0}$. A map $\bm{G_N}=\left(g_{N,s}\right)_{s\in\mathbb{N}}\in\bm{X}_{N,\bm{\beta},\bm{\mu}}$ is a mild solution of the BBGKY hierarchy \eqref{BBGKY} in $[0,T]$  if it satisfies 
\begin{equation}\label{mild BBGKY}
\bm{G_N}(t)=\mathcal{T}^tG_{N,0}+\int_0^t \mathcal{T}^{t-\tau}\mathcal{C}_N\bm{G_N}(\tau)\,d\tau,
\end{equation}
where
$\mathcal{C}_N G_N=\left(\mathcal{C}_{s,s+2}^Ng_{N,s+2}\right)_{s\in\mathbb{N}}$
and $\mathcal{T}^t=(T_s^t)_{s\in\mathbb{N}}$, where $T_s^t$ is given by \eqref{liouville operator}.
\end{definition}

\begin{remark}\label{ill def C}
We note that the above collision operators $\mathcal{C}_{s,s+2}^N$ are ill-defined on $L^\infty$ since they involve integration over a set of measure zero (the sphere $\mathbb{S}_1^{d-1}$). However, by filtering our BBGKY hierarchy by the flow $T_s^{-t}$, we may obtain a well defined operator on $\bm{X}_{N,\bm{\beta},\bm{\mu}}$ . This is done in detail in the erratum of Chapter 5 of \cite{gallagher} and does not affect the energy estimates or local well-posedness of the hierarchy. This filtering process can be adapted to our context. Hence, we will abuse the notation and continue to work with \eqref{mild BBGKY}. See also \cite{simonella} for a different approach which avoids this issue by working with measures on the phase space. 
\end{remark}

We will address  well-posedness of the BBGKY hierarchy by a fixed point argument. For  this purpose, we state an important estimate.
 \begin{lemma}\label{a priori lemma for T BBGKY} Let $\beta_0> 0$, $\mu_0\in\mathbb{R}$, $T>0$ and $\lambda\in (0,\beta_0/T)$. Consider the functions $\bm{\beta}_\lambda,\bm{\mu}_\lambda:[0,T]\to\mathbb{R}$ given by
 \begin{equation}\label{beta_lambda-mu_lambda}
 \bm{\beta}_\lambda(t)=\beta_0-\lambda t,\quad\bm{\mu}_\lambda(t)=\mu_0-\lambda t.
\end{equation}  
 Then  for any $\mathcal{F}(t)\subseteq [0,t]$ measurable, $s\in\mathbb{N}$ and $\bm{G_N}=\left(g_{N,s}\right)_{s\in\mathbb{N}}\in\bm{X}_{N,\bm{\beta}_\lambda,\bm{\mu}_\lambda}$ the following bound holds:
\begin{equation*}\left|\left|\left|\int_{\mathcal{F}(t)}\mathcal{T}^{t-\tau}\bm{\mathcal{C}_N}\bm{G_N}(\tau)\,d\tau\right|\right|\right|_{N,\bm{\beta}_\lambda,\bm{\mu}_\lambda}\leq C(d,\beta_0,\mu_0,T,\lambda)|||\bm{G_N}|||_{N,\bm{\beta}_\lambda,\bm{\mu}_\lambda},\end{equation*}
\begin{equation}\label{constant of well posedness}C(d,\beta_0,\mu_0,T,\lambda)\simeq\lambda^{-1}e^{-2\bm{\mu}(T)}\bm{\beta}_\lambda(T)^{-d}\left(1+\bm{\beta}_\lambda\left(T\right)^{-1/2}\right).\end{equation} 
\end{lemma}
\begin{proof}
Since energy is conserved by the flow and we have  the continuity estimate of Lemma \ref{a priori lemma for C BBGKY} for the collisional operator, the proof follows similarly to the proof of Lemma 5.3.1. in \cite{gallagher}.
\end{proof}
 Choosing $\lambda=\beta_0/2T$, and $T=T(\beta_0,\mu_0)$ small enough, Lemma \ref{a priori lemma for T BBGKY}  implies local well-posedness of the BBGKY hierarchy via a fixed point argument.
 \begin{theorem}\label{well posedness BBGKY}
 Let $\beta_0> 0$ and $\mu_0\in\mathbb{R}$. Then there is $T=T(d,\beta_0,\mu_0)>0$ such that for any initial datum $F_{N,0}=(f_{N,0}^{(s)})_{s\in\mathbb{N}}\in X_{N,\beta_0,\mu_0}$ there is unique mild solution $\bm{F_N}\in\bm{X}_{N,\bm{\beta},\bm{\mu}}$ of the BBGKY hierarchy \eqref{BBGKY} in $[0,T]$ for the functions $\bm{\beta},\bm{\mu}:[0,T]\to\mathbb{R}$ given by
\begin{equation}\label{beta mu}
\bm{\beta}(t)=\beta_0-\frac{\beta_0}{2T}t,\quad
\bm{\mu}(t)=\mu_0-\frac{\beta_0}{2T}t.
\end{equation} 
 Moreover, for any $\mathcal{F}(t)\subseteq[0,t]$ measurable, the following bounds hold:
 \begin{align}
 \label{a priori bound F_N}\left|\left|\left|\int_{\mathcal{F}(t)}\mathcal{T}^{t-\tau}C_N\bm{G_N}(\tau)\,d\tau\right|\right|\right|_{N,\bm{\beta},\bm{\mu}}&\leq\frac{1}{8}|||\bm{G_N}|||_{{N,\bm{\beta},\bm{\mu}}},\quad\forall \bm{G_N}\in\bm{X}_{N,\bm{\beta},\bm{\mu}},\\
 \label{a priori bound F_N,0}|||\bm{F_N}|||_{N,\bm{\beta},\bm{\mu}}&\leq 2\|F_{N,0}\|_{N,\beta_0,\mu_0}.
 \end{align}
 \end{theorem}
 
  \subsection{LWP for the Boltzmann hierarchy} For the Boltzmann hierarchy analogous estimates follow in a similar manner as for the BBGKY hierarchy in the appropriate functional spaces. 
  
  Given $\beta> 0$ and $s\in\mathbb{N}$ we  define the Banach space
 \begin{equation*}X_{\infty,\beta,s}:=\left\{g_s\in L^\infty(\mathbb{R}^{2ds}):|g_s|_{\infty,\beta,s}:=\esssup_{Z_s\in\mathbb{R}^{2ds}}|g_s(Z_s)|e^{\beta E_s(Z_s)}<\infty\right\}.\end{equation*}
 Consider as well $\mu\in\mathbb{R}$. We define the Banach space 
\begin{equation*}X_{\infty,\beta,\mu}:=\left\{G=(g_{s})_{s\in\mathbb{N}}:\|G\|_{\infty,\beta,\mu}:=\sup_{s\in\mathbb{N}}e^{\mu s}|g_s|_{\infty,\beta,s}<\infty\right\}.\end{equation*}
 Finally, for $T>0$, $\beta_0>0$, $\mu_0\in\mathbb{R}$ and $\bm{\beta},\bm{\mu}:[0,T]\to\mathbb{R}$ decreasing functions of time with $\bm{\beta}(T)> 0$ we define the Banach space 
 \begin{equation*}
 \bm{X}_{\infty,\bm{\beta},\bm{\mu}}=C^0\left([0,T],X_{\infty,\bm{\beta}(t),\bm{\mu}(t)}\right),\text{ with norm } |||\bm{G}|||:=\sup_{t\in[0,T]}\|\bm{G}(t)\|_{\infty,\bm{\beta}(t),\bm{\mu}(t)}.
 \end{equation*}
We define a mild solution of the Boltzmann hierarchy  as follows.
 \begin{definition} Consider $T>0$, $\beta_0> 0$, $\mu_0\in\mathbb{R}$ and the decreasing functions $\bm{\beta},\bm{\mu}:[0,T]\to\mathbb{R}$ with $\bm{\beta}(0)=\beta_0$, $\bm{\beta}(T)> 0$, $\bm{\mu}(0)=\mu_0$.
Consider also  initial data $G_{0}=\left(g_{s,0}\right)\in X_{\infty,\beta_0,\mu_0}$. A map $\bm{G}=\left(g_{s}\right)_{s\in\mathbb{N}}\in\bm{X}_{\infty,\bm{\beta},\bm{\mu}}$ is a mild solution of the Boltzmann hierarchy \eqref{Boltzmann hierarchy} in $[0,T]$, with initial data $G_0$, if it satisfies:
\begin{equation}\label{mild form boltzmann}\bm{G}(t)=\mathcal{S}^tG_{0}+\int_0^t \mathcal{S}^{t-\tau}\mathcal{C}_\infty\bm{G}(\tau)\,d\tau,\end{equation}
where
$\mathcal{C}_\infty G=\left(\mathcal{C}_{s,s+2}^\infty g_{s+2}\right)_{s\in\mathbb{N}},$
and $\mathcal{S}^tG=(S_s^t g_s)_{s\in\mathbb{N}}$, where $S_s^t$ is given by \eqref{free flow operator}.
\end{definition}

\begin{remark}\label{ill def C boltz}
As noted in Remark \ref{ill def C}, the operators $\mathcal{C}_{s,s+2}^\infty$ are ill defined on $L^\infty$ due to the integration over the lower dimension manifold $\mathbb{S}_1^{d-1}$. As in the BBGKY case, one can filter the infinite hierarchy by $S_s^{-t}$ to obtain a well defined mild formulation of the hierarchy. However, for simplicity, we will abuse notation and continue to use \eqref{mild form boltzmann}
\end{remark}

Now we state the  well-posedness result for the Boltzmann hierarchy.
 \begin{theorem}\label{lwp Boltzmann hier}
 Let $\beta_0> 0$ and $\mu_0\in\mathbb{R}$. Then there is\footnote{The time of existence is the same as in Theorem \ref{well posedness BBGKY}} $T=T(d,\beta_0,\mu_0)>0$ such that for any initial datum $F_{0}=(f_0^{(s)})_{s\in\mathbb{N}}\in X_{\infty,\beta_0,\mu_0}$ there is unique mild solution $\bm{F}\in\bm{X}_{\infty,\bm{\beta},\bm{\mu}}$ of the Boltzmann hierarchy \eqref{Boltzmann hierarchy} in $[0,T]$ for the functions $\bm{\beta},\bm{\mu}:[0,T]\to\mathbb{R}$ given by \eqref{beta mu}.

 Moreover, for any $\mathcal{F}(t)\subseteq[0,t]$ measurable, the following estimates hold:
 \begin{align}
 \label{a priori bound F Boltzmann}\left|\left|\left|\int_{\mathcal{F}(t)}\mathcal{S}^{t-\tau}C_\infty\bm{G}(\tau)\,d\tau\right|\right|\right|_{\infty,\bm{\beta},\bm{\mu}}&\leq\frac{1}{8}|||\bm{G}|||_{{\infty,\bm{\beta},\bm{\mu}}},\quad\forall \bm{G}\in\bm{X}_{\infty,\bm{\beta},\bm{\mu}},\\
 \label{a priori bound F_0 Boltzmann}|||\bm{F}|||_{\infty,\bm{\beta},\bm{\mu}}&\leq 2\|F_{0}\|_{\infty,\beta_0,\mu_0}.
 \end{align}
 \end{theorem}
\subsection{LWP for the ternary Boltzmann equation and propagation of chaos}
Here, we first present local well-posedness for the ternary Boltzmann equation.  The proofs are nonlinear analogues of the arguments used in the BBGKY case (for details see \cite{thesis}). Furthermore, we show that for chaotic initial data their tensorized product produces the unique mild solution of the Boltzmann hierarchy, hence chaos is propagated.

For $\beta>0$ let us define the Banach space
\begin{equation*}
X_{\beta,\mu}:=\left\{g\in L^\infty(\mathbb{R}^{2d}):|g|_{\beta,\mu}:=\esssup_{(x,v)\in\mathbb{R}^{2d}} |g(x,v)|e^{\mu+\frac{\beta}{2} |v|^2}<\infty\right\}.
\end{equation*}
Consider $\beta_0>0$, $\mu_0\in\mathbb{R}$, $T>0$ and $\bm{\beta},\bm{\mu}:[0,T]\to\mathbb{R}$ decreasing functions of time with $\bm{\beta}(0)=\beta_0$, $\bm{\beta}(T)>0$ and $\bm{\mu}(0)=\mu_0$.
We define the Banach space
\begin{equation*}
\bm{X}_{\bm{\beta},\bm{\mu}}:=C^0\left([0,T],X_{\bm{\beta}(t),\bm{\mu}(t)}\right),\text{ with norm }\|\bm{g}\|_{\bm{\beta},\bm{\mu}}=\esssup_{t\in[0,T]}|\bm{g}(t)|_{\bm{\beta}(t),\bm{\mu}(t)}.
\end{equation*} 
We define mild solutions to the ternary Boltzmann equation as follows:
\begin{definition}
Consider $T>0$, $\beta_0> 0$, $\mu_0\in\mathbb{R}$ and  $\bm{\beta},\bm{\mu}:[0,T]\to\mathbb{R}$  decreasing functions of time, with $\bm{\beta}(0)=\beta_0$, $\bm{\beta}(T)> 0$, $\bm{\mu}(0)=\mu_0$.
Consider also initial data $g_{0}\in X_{\beta_0,\mu_0}$. A map $\bm{g}\in\bm{X}_{\bm{\beta},\bm{\mu}}$ is a mild solution of the ternary Boltzmann equation \eqref{Boltzmann equation} in $[0,T]$, with initial data $g_0\in X_{\beta_0,\mu_0}$, if it satisfies
\begin{equation}\label{mild boltzmann equation}
\bm{g}(t)=S_1^tg_0+\int_0^tS_1^{t-\tau}Q_3(\bm{g},\bm{g},\bm{g})(\tau)\,d\tau.
\end{equation} 
where $S_1^t$ denotes the free flow of $1$-particle given in \eqref{free flow operator}.
\end{definition}

\begin{remark}\label{ill posed equation}
As in Remarks \ref{ill def C}, \ref{ill def C boltz}, the operators $Q_{3}$ can be filtered by the free flow $S_1^{-t}$ in order to define the above equation on $L^\infty$. Hence, we will abuse notation and continue to work with \eqref{mild boltzmann equation}.
\end{remark}

Let us write $B_{\bm{X}_{\bm{\beta},\bm{\mu}}}$ for the unit ball of $\bm{X}_{\bm{\beta},\bm{\mu}}$. Then the following well-posedness result holds
\begin{theorem}\label{lwp boltz eq}
 Let $\beta_0> 0$ and $\mu_0\in\mathbb{R}$. Then there is\footnote{The time of existence is the same as in Theorem \ref{well posedness BBGKY}} $T=T(d,\beta_0,\mu_0)>0$ such that for any initial data $f_0\in X_{\beta_0,\mu_0}$, with $|f_0|_{\beta_0,\mu_0}\leq \frac{1}{2}$, there is a unique mild solution $\bm{f}\in B_{\bm{X}_{\bm{\beta},\bm{\mu}}}$ to the ternary Boltzmann equation in $[0,T]$ with initial data $f_0$, where $\bm{\beta},\bm{\mu}:[0,T]\to\mathbb{R}$ are the functions given by \eqref{beta mu}.
\end{theorem}

\begin{remark}
The smallness assumption on the initial data is needed in order to produce a solution up to the time of existence of solutions to the BBGKY and Boltzmann hierarchy obtained in Theorem \ref{well posedness BBGKY}, Theorem \ref{lwp Boltzmann hier} respectively. One can produce a solution for general initial data, as was done for the Boltzmann equation in \cite{lanford}, but the time of existence would be smaller due to the nonlinearity of \eqref{Boltzmann equation}. 
\end{remark}

We can now prove that chaos is propagated by the Boltzmann hierarchy.
\begin{theorem}[Propagation of chaos]\label{theorem propagation of chaos}
Let $\beta_0>0$, $\mu_0\in\mathbb{R}$, $T>0$ the time obtained by Theorem \ref{lwp boltz eq} and $\bm{\beta},\bm{\mu}:[0,T]\to\mathbb{R}$ the functions defined by \eqref{beta mu}. Consider $f_0\in X_{\beta_0,\mu_0}$ with
$|f_0|_{\beta_0,\mu_0}\leq\frac{1}{2}$.
Assume $\bm{f}\in B_{\bm{X}_{\bm{\beta},\bm{\mu}}}$ is the corresponding mild solution of the ternary Boltzmann equation in $[0,T]$, with initial data $f_0$ given by Theorem \ref{lwp boltz eq}. Then the following hold:\\
{\it(i)} $F_0=(f_0^{\otimes s})_{s\in\mathbb{N}}\in X_{\infty,\beta_0,\mu_0}$.\\
{\it(ii)} $\bm{F}=(\bm{f}^{\otimes s})_{s\in\mathbb{N}}\in\bm{X}_{\infty,\bm{\beta},\bm{\mu}}$.\\
{\it(iii)} $\bm{F}$ is the unique mild solution of the Boltzmann hierarchy in $[0,T]$, with initial data $F_0$.
\end{theorem}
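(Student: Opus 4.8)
The plan is to verify the three claims in order, each by a direct computation using the definitions and the a-priori bounds already established. For \textit{(i)}, fix $s\in\mathbb{N}$. Since $E_s(Z_s)=\sum_{i=1}^s\tfrac12|v_i|^2$, the tensorized data satisfies
\begin{equation*}
|f_0^{\otimes s}(Z_s)|e^{\beta_0 E_s(Z_s)}=\prod_{i=1}^s|f_0(x_i,v_i)|e^{\frac{\beta_0}{2}|v_i|^2}\leq \prod_{i=1}^s e^{-\mu_0}|f_0|_{\beta_0,\mu_0}=e^{-s\mu_0}|f_0|_{\beta_0,\mu_0}^s,
\end{equation*}
so $e^{\mu_0 s}|f_0^{\otimes s}|_{\infty,\beta_0,s}\leq |f_0|_{\beta_0,\mu_0}^s\leq |f_0|_{\beta_0,\mu_0}$ using $|f_0|_{\beta_0,\mu_0}\leq\tfrac12<1$; hence $\|F_0\|_{\infty,\beta_0,\mu_0}\leq |f_0|_{\beta_0,\mu_0}<\infty$, giving \textit{(i)}. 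For \textit{(ii)}, one applies exactly the same pointwise estimate with $\beta_0,\mu_0$ replaced by $\bm{\beta}(t),\bm{\mu}(t)$ and $|f_0|$ replaced by $|\bm{f}(t)|$; since $\|\bm{f}\|_{\bm{\beta},\bm{\mu}}\leq 2|f_0|_{\beta_0,\mu_0}\leq 1$ by Theorem \ref{lwp boltz eq}, one gets $\|\bm{F}(t)\|_{\infty,\bm{\beta}(t),\bm{\mu}(t)}\leq\|\bm{f}(t)\|_{\bm{\beta}(t),\bm{\mu}(t)}\leq \|\bm{f}\|_{\bm{\beta},\bm{\mu}}$ for a.e.\ $t$, so $\bm{F}\in\bm{X}_{\infty,\bm{\beta},\bm{\mu}}$, which is \textit{(ii)}.

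For \textit{(iii)}, the main point is to check that $\bm{F}=(\bm{f}^{\otimes s})_{s\in\mathbb{N}}$ satisfies the mild Boltzmann hierarchy
\begin{equation*}
\bm{F}(t)=\mathcal{S}^tF_0+\int_0^t\mathcal{S}^{t-\tau}\mathcal{C}_\infty\bm{F}(\tau)\,d\tau,
\end{equation*}
componentwise; uniqueness then follows from Theorem \ref{well posedness Boltzmann}, since $F_0\in X_{\infty,\beta_0,\mu_0}$ by \textit{(i)} and $\bm{F}\in\bm{X}_{\infty,\bm{\beta},\bm{\mu}}$ by \textit{(ii)}. Fix $s$. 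The free-flow term factorizes trivially: $S_s^t f_0^{\otimes s}=\prod_{i=1}^s S_1^t f_0(x_i,v_i)$. For the collisional term, I would show that for a tensorized argument $f^{\otimes(s+2)}$ one has the identity
\begin{equation*}
\mathcal{C}_{s,s+2}^\infty f^{\otimes(s+2)}(t,Z_s)=\left(\sum_{i=1}^s Q_3(f,f,f)(t,x_i,v_i)\prod_{\substack{j=1\\ j\neq i}}^s f(t,x_j,v_j)\right),
\end{equation*}
which is immediate from the definitions \eqref{Boltzmann operator +}--\eqref{boltzmann notation} and \eqref{cubic Boltzmann operator}: in the $i$-th summand of $\mathcal{C}_{s,s+2}^\infty$, the added particles sit at position $x_i$ with velocities $v_{s+1},v_{s+2}$, so $f^{\otimes(s+2)}(t,Z_{s+2}^{i*})=f(t,x_i,v_i^*)f(t,x_i,v_{s+1}^*)f(t,x_i,v_{s+2}^*)\prod_{j\neq i}f(t,x_j,v_j)$ and likewise for $Z_{s+2}^i$, and the $(v_{s+1},v_{s+2})$-integral against $b_+/\sqrt{1+\langle\omega_1,\omega_2\rangle}$ produces exactly $Q_3(f,f,f)(t,x_i,v_i)$ after relabeling $(v_{s+1},v_{s+2})\to(v_1,v_2)$ and using \eqref{notation}. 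Differentiating $\bm{f}^{\otimes s}$ in time by the Leibniz rule and plugging in that $\bm{f}$ solves \eqref{mild boltzmann equation} (equivalently, in the strong sense, $\partial_t\bm{f}+v\cdot\nabla_x\bm{f}=Q_3(\bm{f},\bm{f},\bm{f})$) gives $\partial_t\bm{f}^{\otimes s}+\sum_{i=1}^s v_i\cdot\nabla_{x_i}\bm{f}^{\otimes s}=\mathcal{C}_{s,s+2}^\infty\bm{f}^{\otimes(s+2)}$, i.e.\ $\bm{f}^{\otimes s}$ solves the $s$-th Boltzmann hierarchy equation; rewriting via Duhamel with the free flow $S_s^t$ yields the mild identity. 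A clean way to package this without regularity worries is to work directly with the mild formulation: substitute the Duhamel identity for $\bm{f}$ into $\bm{f}^{\otimes s}(t)$ and expand, checking the bookkeeping matches the mild hierarchy; this is the reference-style argument (cf.\ \cite{gallagher}).

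The only genuine obstacle is the bookkeeping in this last identification --- making sure the combinatorial factor is right (here the operator $\mathcal{C}_{s,s+2}^\infty$ already carries the sum $\sum_{i=1}^s$ and no extra binomial weight, since the limit absorbed $A_{N,\epsilon,s}\to 1$), and that all integrals converge so that Fubini and differentiation under the integral are legitimate; but convergence is guaranteed by Lemma \ref{a priori lemma for Boltzmann C} together with $\bm{F}\in\bm{X}_{\infty,\bm{\beta},\bm{\mu}}$, and the algebra is the direct ternary analogue of the binary computation. Once the mild identity is verified componentwise, uniqueness of mild solutions to the Boltzmann hierarchy (Theorem \ref{well posedness Boltzmann}) forces $\bm{F}$ to be \emph{the} mild solution with initial data $F_0$, completing \textit{(iii)} and hence the proof that chaos propagates.
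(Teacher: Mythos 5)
Your proposal is correct and follows essentially the same route as the paper: parts \textit{(i)} and \textit{(ii)} by the pointwise factorization of the exponential weight together with $|f_0|_{\beta_0,\mu_0}\leq\tfrac12$ and $\|\bm{f}\|_{\bm{\beta},\bm{\mu}}\leq 1$, and part \textit{(iii)} by checking the mild hierarchy for tensorized data and invoking the uniqueness from Theorem \ref{well posedness Boltzmann}. The only difference is one of detail: the paper dismisses the verification of the mild hierarchy as ``straightforward,'' whereas you make explicit the key identity $\mathcal{C}_{s,s+2}^\infty f^{\otimes(s+2)}=\sum_{i=1}^s Q_3(f,f,f)(x_i,v_i)\prod_{j\neq i}f(x_j,v_j)$ (valid since both added particles sit at $x_i$ in $Z_{s+2}^{i}$ and $Z_{s+2}^{i*}$), which is exactly the computation the paper relies on.
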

\begin{proof}
\textit{(i)} is  verified by the bound on the initial data and the definition of the norms. By the the same bound again, we may apply Theorem \ref{lwp boltz eq} to obtain the unique mild solution $\bm{f}\in B_{\bm{X}_{\bm{\beta},\bm{\mu}}}$ of the corresponding ternary Boltzmann equation. Since $\|\bm{f}\|_{\bm{\beta},\bm{\mu}}\leq 1$, the definition of the norms directly imply \textit{(ii)}. It is also staightforward to verify that $\bm{F}$ is a mild solution of the Boltzmann hierarchy in $[0,T]$, with initial data $F_0$. Uniqueness of the mild solution to the Boltzmann hierarchy, obtained by Theorem \ref{lwp Boltzmann hier}, implies that $\bm{F}$ is  the unique mild solution.
\end{proof}

\section{Convergence Statement}
\label{sec_conv statement}

In this section, we define an appropriate notion of convergence, namely convergence in observables, and we state the main result of this paper. While our convergence result is valid for a general type of Boltzmann initial data and  approximation by BBGKY hierarchy initial data  (see Definition \ref{convergence of initial data}), we also provide a rate of convergence in the case of chaotic Boltzmann initial data and initial approximation by conditioned BBGKY hierarchy initial data (introduced in Definition \ref{conditioned BBGKY data de}).  

Throughout this section, we consider $(N,\epsilon)$ in the scaling \eqref{scaling}. We will also use the phase space $\mathcal{D}_{m,\epsilon}$ of $m$-particles of $\epsilon$-interaction zone  given by \eqref{phase space} and the functional spaces of Section \ref{sec_local}.

\subsection{Approximation of Boltzmann initial data} 
This Subsection  focuses on introducing relevant types of initial data. First, we  define the general notion of BBGKY hierarchy sequences approximating   Boltzmann hierarchy  initial data. Then we show that chaotic initial data produced by tensorized probability densities are approximated by conditioned BBGKY hierarchy sequences in the scaling \eqref{scaling}.
\begin{definition}\label{convergence of initial data}Let $\beta_0>0$, $\mu_0\in\mathbb{R}$ and $G_0=(g_{s,0})_{s\in\mathbb{N}}\in X_{\infty,\beta_0,\mu_0}$. A sequence 
$
G_{N,0}=(g_{N,s,0})_{s\in\mathbb{N}}\in X_{N,\beta_0,\mu_0}$ is called a BBGKY hierarchy sequence approximating $G_0$ if the following conditions hold:
\begin{enumerate}[(i)]
\item $\displaystyle\sup_{N\in\mathbb{N}}\|G_{N,0}\|_{N,\beta_0,\mu_0}<\infty.$
\item  For any $s\in\mathbb{N}$  there holds
$\displaystyle\lim_{N\to\infty}\|g_{N,s,0}-g_{s,0}\|_{L^\infty(\mathcal{D}_{s,\epsilon})}= 0.
$
\end{enumerate}
\end{definition}
\begin{remark}
Every $G_0=(g_{s,0})_{s\in\mathbb{N}}\in X_{\infty,\beta_0,\mu_0}$ has a  BBGKY hierarchy approximating sequence. Indeed, it is straightforward to verify that the sequence $G_{N,0}=(g_{N,s,0})_{s\in\mathbb{N}}$ given by $g_{N,s,0}=\mathds{1}_{\mathcal{D}_{s,\epsilon}}g_{s,0}$ satisfies the properties stated above in the scaling \eqref{scaling}.
\end{remark}
Especially meaningful initial data, corresponding to initial independence between particles, are given below:
\begin{remark}\label{remark on conditioned}
Let $g_0\in X_{\beta_0,\mu_0+1}$ be  a positive probability density i.e. $g_0> 0$ a.e. and $\int_{\mathbb{R}^{2d}}g_0(x,v)\,dx\,dv=1$ and assume that $\|g_0\|_{{\beta_0,\mu_0+1}}\leq 1$.
Then one can easily see  that the chaotic configuration $G_0=(g_0^{\otimes s})_{s\in\mathbb{N}}\in X_{\infty,\beta_0,\mu_0+1}\subseteq X_{\infty,\beta_0,\mu_0}$. This type of initial data, corresponding to tensorized initial measures, will lead to the ternary Boltzmann equation \eqref{Boltzmann equation}. In fact, we will see that one can approximate tensorized initial data in the scaling \eqref{scaling} by conditioned BBGKY hierarchy initial data which are defined below.
\end{remark}
\begin{definition}\label{conditioned BBGKY data de}
Let $g_0\in X_{\beta_0,\mu_0+1}$ be  a positive probability density and denote $G_0=(g_0^{\otimes s})_{s\in\mathbb{N}}\in X_{\infty,\beta_0,\mu_0+1}$.
We define the conditioned BBGKY hierarchy sequence $G_{N,0}=(g_{N,0}^{(s)})_{s\in\mathbb{N}}$ of $G_0$ as:
\begin{equation}\label{conditioned BBGKY data}
g_{N,0}^{(s)}(X_s,V_s)=
\begin{cases}
\mathcal{Z}_N^{-1}\displaystyle\int_{\mathbb{R}^{2d(N-s)}}\mathds{1}_{\mathcal{D}_{N,\epsilon}}g_0^{\otimes N}(X_s,x_{s+1},...,x_N,V_{s},v_{s+1},...,v_N)\,dx_{s+1}\,dv_{s+1}...\,dx_N\,dv_{N},\quad 1\leq s<N\\
\mathcal{Z}_N^{-1}\mathds{1}_{\mathcal{D}_{N,\epsilon}}g_0^{\otimes N}(Z_N),\quad s=N,\\
0,\quad s>N.
\end{cases}
\end{equation}
where the normalization is preserved by the introduction of the partition function: $$\mathcal{Z}_m=\int_{\mathbb{R}^{2dm}}\mathds{1}_{\mathcal{D}_{m,\epsilon}}g_0^{\otimes m}(X_m,V_m)\,dX_m\,dV_m,\quad m\in\mathbb{N}.$$
Notice that since $g_0$ is a.e. positive and integrates to $1$, we have $0<Z_m<1$ for all $m\in\mathbb{N}$. 
\end{definition}

Let us now prove that the conditioned BBGKY hierarchy sequence of tensorized initial data is an approximating sequence (according to Definition \ref{convergence of initial data}). This will be a crucial tool to obtain rate of convergence to the solution of the ternary Boltzmann equation \eqref{Boltzmann equation} (see Corollary \ref{derivation corollary} for more details). We will need the following auxiliary estimate on the partition functions.
\begin{lemma}\label{estimate on conditioning} Let $\beta_0>0$, $\mu_0\in\mathbb{R}$ and $g_0\in L^\infty_xL^1_v(\mathbb{R}^{2d})$ be  a positive probability density. Then for all $(N,\epsilon)$ in the scaling \eqref{scaling} with $2C_d\epsilon^{1/2}\|g_0\|_{L^\infty_xL^1_v}<1$, where $C_d$ is a positive constant,  and all $m\in\mathbb{N}$ with $m< N$,  there holds
$$1\leq\mathcal{Z}_N^{-1}\mathcal{Z}_{N-m}\leq  (1-C_d\|g_0\|_{L^\infty_xL^1_v}\epsilon^{1/2})^{-m},$$
for some constant $C_{d}>0$.
\end{lemma}
\begin{proof}
The left hand side inequality is immediate from the definition of the phase space \eqref{phase space}. To prove the right hand side consider $k\in\mathbb{N}$ with $k\leq N$. Notice that for any $Z_{k+1}=(X_{k+1},V_{k+1})\in\mathbb{R}^{2d(k+1)}$, we have
$$\mathds{1}_{\mathcal{D}_{k+1,\epsilon}}(X_{k+1},V_{k+1})\geq\mathds{1}_{\mathcal{D}_{k,\epsilon}}(X_k,V_k)\prod_{i=1}^{k}\mathds{1}_{|x_i-x_{k+1}|>\sqrt{2}\epsilon}(x_i),$$
by the definition of the phase space \eqref{phase space}. Let us note that the above inequality applies specifically to the ternary interactions we consider. Then we can proceed in a similar manner as in the proof of Lemma 6.1.2 in \cite{gallagher}, using the ternary scaling \eqref{scaling} instead.
More specifically, the previous inequality and Fubini's Theorem imply
\begin{align*}
\mathcal{Z}_{k+1}&=\int_{\mathbb{R}^{2d(k+1)}}\mathds{1}_{\mathcal{D}_{k+1,\epsilon}}g_0^{\otimes(k+1)}(X_{k+1},V_{k+1})\,dX_{k+1}\,dV_{k+1}\\
&\geq \int_{\mathbb{R}^{2dk}}\left(\int_{\mathbb{R}^{2d}}\prod_{i=1}^k\mathds{1}_{|x_i-x|>\sqrt{2}\epsilon}(x_i)g_0(x,v)\,dx\,dv\right)\mathds{1}_{\mathcal{D}_{k,\epsilon}}(X_k,V_k)g_0^{\otimes m}(X_k,V_k)\,dX_k\,dV_k.
\end{align*}
But since $g_0$ integrates to $1$,
we have
\begin{align*}
\int_{\mathbb{R}^{2d}}\prod_{i=1}^k\mathds{1}_{|x_i-x|>\sqrt{2}\epsilon}(x_i)g_0(x,v)\,dx\,dv&\geq 1-\sum_{i=1}^k\int_{\mathbb{R}^{2d}}\mathds{1}_{|x_i-x|\leq {\sqrt{2}\epsilon}}(x_i)g_0(x,v)\,dx\,dv\geq 1-kC_d\|g_0\|_{L^\infty_xL^1_v}\epsilon^d,
\end{align*}
upon integrating on a $d$-ball of radius $\sqrt{2}\epsilon$.
 Hence 
 \begin{equation}\label{inductive step tensor}
 \mathcal{Z}_{k+1} \geq (1-kC_d\|g_0\|_{L^\infty_xL^1_v}\epsilon^d)\mathcal{Z}_k\geq (1-NC_d\|g_0\|_{L^\infty_xL^1_v}\epsilon^d)\mathcal{Z}_k\simeq (1-C_d\|g_0\|_{L^\infty_xL^1_v}\epsilon^{1/2})\mathcal{Z}_k ,
 \end{equation}
 due to scaling \eqref{scaling}. For $2C_d\|g_0\|_{L^\infty_xL^1_v}\epsilon^{1/2}<1$, we may
apply inductively \eqref{inductive step tensor}  for $k=m,...,N-1$,  and the claim follows.
\end{proof}
\begin{proposition}\label{conditioned proposition} Let $g_0\in X_{\beta_0,\mu_0+1}$ be  a positive probability density with $|g_0|_{\beta_0,\mu_0+1}\leq 1$ and $G_0=(g_0^{\otimes s})_{s\in\mathbb{N}}\in X_{\infty,\beta_0,\mu_0+1}\subseteq X_{\infty,\beta_0,\mu_0}$. Let $G_{N,0}=(g_{N,0}^{(s)})_{s\in\mathbb{N}}$ be  the conditioned BBGKY hierarchy sequence of the tensorized initial data $G_0$ given in Definition \ref{conditioned BBGKY data de}. Then $G_{N,0}$ is a BBGKY hierarchy sequence   approximating $G_0$ (in the sense of Definition \ref{convergence of initial data}) in the scaling \eqref{scaling}. In particular for all $(N,\epsilon)$ in the scaling \eqref{scaling} with $N$ large enough (or equivalently $\epsilon$ small enough), there holds the estimate
\begin{align}
\|g_{N,0}^{(s)}-g_0^{\otimes s}\|_{L^\infty(\mathcal{D}_{s,\epsilon})}&\leq C_{d,s,\beta_0,\mu_0}\epsilon^{1/2}\|G_0\|_{\infty,\beta_0,\mu_0}.\label{tensorized data approximating estimate}
\end{align}
\end{proposition}
\begin{proof}
By definition of the phase space \eqref{phase space}, for any $s\in\mathbb{N}$, with $s< N$ and $Z_N\in\mathcal{D}_{N,\epsilon}$ we can write
\begin{align*}\mathds{1}_{\mathcal{D}_{N,\epsilon}}(Z_N)=&\mathds{1}_{\mathcal{D}_{s,\epsilon}}(Z_s)\prod_{1\leq i<j\leq s<k\leq N}\mathds{1}_{|x_i-x_j|^2+|x_i-x_k|^2>2\epsilon^2}(x_i,x_j,x_k)\nonumber&\\
&\prod_{1\leq i\leq s<j<k\leq N}\mathds{1}_{|x_i-x_j|^2+|x_i-x_k|^2>2\epsilon^2}(x_i,x_j,x_k)\prod_{s+1\leq i<j<k\leq N}\mathds{1}_{|x_i-x_j|^2+|x_i-x_k|^2>2\epsilon^2}(x_i,x_j,x_k).
\end{align*}
Again this decomposition of the phase space is due to the ternary interactions we consider and is necessary to track all the cases arising from ternary interactions.
Moreover, by symmetry, for $s< N$ we can also write
$$\mathcal{Z}_{N-s}=\int_{\mathbb{R}^{2d(N-s)}}\prod_{s+1\leq \ell_1<\ell_2<\ell_3\leq N}\mathds{1}_{|x_{\ell_1}-x_{\ell_2}|^2+|x_{\ell_1}-x_{\ell_3}|^2>2\epsilon^2}(x_{\ell_1},x_{\ell_2},x_{\ell_3})\prod_{\ell=s+1}^Ng_0(x_\ell,v_\ell)\,dZ_{(s+1,N)},$$
where $\,dZ_{(s+1,N)}:=\,dx_{s+1}...\,dx_N\,dv_{s+1}...\,dv_N$.
Therefore, given $Z_s\in\mathbb{R}^{2ds}$,   an elementary calculation gives
\begin{align}\label{decomposition of the marginal}
g_{N,0}^{(s)}(Z_s)&=\mathcal{Z}^{-1}_N\mathds{1}_{\mathcal{D}_{s,\epsilon}}(Z_s)g_0^{\otimes s}(Z_s)\left(\mathcal{Z}_{N-s}-\mathcal{R}_{s+1,N}\left(Z_s\right)\right),
\end{align}
where the error term $\mathcal{R}_{s+1,N}(Z_s)>0$ is given by
\begin{align}
&\mathcal{R}_{s+1,N}(Z_s)\nonumber\\
&=\int_{\mathbb{R}^{2d(N-s)}}\left(1-\prod_{1\leq i<j\leq s<k\leq N}\mathds{1}_{|x_i-x_j|^2+|x_i-x_k|^2>2\epsilon^2}(x_k)\prod_{1\leq i\leq s<j<k\leq N}\mathds{1}_{|x_i-x_j|^2+|x_i-x_k|^2>2\epsilon^2}(x_j,x_k)\right)\nonumber\\
&\hspace{2cm}\prod_{s+1\leq \ell_1<\ell_2<\ell_3\leq N}\mathds{1}_{|x_{\ell_1}-x_{\ell_2}|^2+|x_{\ell_1}-x_{\ell_3}|^2>2\epsilon^2}(x_{\ell_1},x_{\ell_2},x_{\ell_3})\prod_{\ell=s+1}^Ng_0(x_\ell,v_\ell)\,dZ_{(s+1,N)}\nonumber\\
&\leq\int_{\mathbb{R}^{2d(N-s)}}\left(\sum_{1\leq i<j\leq s<k\leq N}\mathds{1}_{|x_i-x_j|^2+|x_i-x_k|^2\leq 2\epsilon^2}(x_k)+\sum_{1\leq i\leq s<j<k\leq N}\int_{\mathbb{R}^{2d}}\mathds{1}_{|x_i-x_j|^2+|x_i-x_k|^2\leq 2\epsilon^2}(x_j,x_k)\right)\nonumber\\
&\hspace{2cm}\prod_{s+1\leq \ell_1<\ell_2<\ell_3\leq N}\mathds{1}_{|x_{\ell_1}-x_{\ell_2}|^2+|x_{\ell_1}-x_{\ell_3}|^2>2\epsilon^2}(x_{\ell_1},x_{\ell_2},x_{\ell_3})\prod_{\ell=s+1}^Ng_0(x_\ell,v_\ell)\,dZ_{(s+1,N)}\nonumber\\
&:= I_1+I_2\label{bound on error}.
\end{align}
By \eqref{decomposition of the marginal}, and the fact that $\mathcal{Z}_{N-s}\leq 1$ since $g_0$ integrates to $1$, by definition of the norms, we have
$$\|G_{N,0}\|_{{N,\beta_0,\mu_0}}\leq\mathcal{Z}_N^{-1}\|G_0\|_{\infty,\beta_0,\mu_0}<\infty,$$
so $G_{N,0}\in X_{N,\beta_0,\mu_0}$ for all $N\in\mathbb{N}$.
Moreover, since 
\begin{equation}\label{LinfL1}
\|g_0\|_{L^\infty_xL^1_v}\leq C_d\beta^{-1/2}e^{-\mu_0}|g_0|_{\beta_0,\mu_0}<\infty,
\end{equation}
 for $2C_d\epsilon^{1/2}\|g_0\|_{L^\infty_xL^1v}<1$ (or equivalently for $N$ large enough), Lemma \ref{estimate on conditioning} gives

$$g_{N,0}^{(s)}(Z_s)\leq (1-C_{d}\|g_0\|_{L^\infty_xL^1_v}\epsilon^{1/2})^{-s}g_0^{\otimes s}(Z_s)\leq e^{2sC_{d}\|g_0\|_{L^\infty_xL^1_v}\epsilon^{1/2}}g_0^{\otimes s}(Z_s)\leq e^{s}g_0^{\otimes s}(Z_s),$$
where we used the inequality $2x-\ln(1-x)\geq 0$, $x\in[0,1/2]$. This clearly implies
$$\|G_{N,0}\|_{N,\beta_0,\mu_0}\leq \|G_0\|_{\infty,\beta_0,\mu_0+1}<\infty,$$
for $N$ large enough, thus
$\sup_{N\in\mathbb{N}}\|G_{N,0}\|_{N,\beta_0,\mu_0}<\infty.$

To prove convergence, by \eqref{decomposition of the marginal} and the definition of the norms we take
\begin{equation}\label{term to estimate 6}
\left|\mathds{1}_{\mathcal{D}_{s,\epsilon}}(g_0^{\otimes s}-g_{N,0}^{(s)})(Z_s)\right|\leq \bigg( \left|1-\mathcal{Z}_N^{-1}\mathcal{Z}_{N-s}\right|+\mathcal{Z}_{N}^{-1}\mathcal{R}_{s+1,N}(Z_s)\bigg)e^{-s\mu_0}\|G_0\|_{\infty,\beta_0,\mu_0}.
\end{equation}
Let us estimate each term on \eqref{term to estimate 6} separately. By Lemma \ref{estimate on conditioning} and the inequality $2x-\ln(1-x)\geq 0$, $x\in[0,1/2]$, for $2\epsilon^{1/2}C_d\|g_0\|_{L^\infty_xL^1_v}<1$, we have 
\begin{equation}\label{1-cond}
|1-\mathcal{Z}_{N}^{-1}\mathcal{Z}_{N-s}|\leq e^{2s\epsilon^{1/2}C_d\|g_0\|_{L^\infty_xL^1_v}}-1\leq  2es\epsilon^{1/2}C_d\|g_0\|_{L^\infty_xL^1_v},
\end{equation}
by the Mean Value Theorem.

For the term $\mathcal{Z}_N^{-1}\mathcal{R}_{s+1,N}$, we estimate each of the terms $I_1,I_2$ in \eqref{bound on error}. For the term $I_1$, fix $1\leq i<j\leq s<k\leq N$. Notice the inequality
$$\mathds{1}_{|x_i-x_j|^2+|x_i-x_k|^2\leq 2\epsilon^2}(x_k)\leq\mathds{1}_{|x_i-x_k|\leq\sqrt{2}\epsilon}(x_k).$$
Then, by symmetry, the term corresponding to $i,j,k$ is estimated by
\begin{align*}
&\int_{\mathbb{R}^{2d(N-s-1)}}\left(\int_{\mathbb{R}^{2d}}\mathds{1}_{|x_i-x_{s+1}|<\sqrt{2}\epsilon}(x_{s+1})g_0(x_{s+1},v_{s+1})\,dx_{s+1}\,dv_{s+1}\right)\\
&\hspace{2cm}\prod_{s+2\leq \ell_1<\ell_2<\ell_3\leq N}\mathds{1}_{|x_{\ell_1}-x_{\ell_2}|^2+|x_{\ell_1}-x_{\ell_3}|^2>2\epsilon^2}(x_{\ell_1},x_{\ell_2},x_{\ell_3})\prod_{\ell=s+2}^Ng_0(x_\ell,v_\ell)\,dZ_{(s+2,N)}\\
&\leq C_d\|g_0\|_{L^\infty_xL^1_v}\epsilon^d\mathcal{Z}_{N-s-1},
\end{align*}
after integrating in a $d$-ball of radius $\sqrt{2}\epsilon$ centered at $x_i$.
Adding for $1\leq i<j\leq s<k\leq N$ we obtain
\begin{equation}\label{bound on I_1 error}
I_1\le s^2 NC_d\|g_0\|_{L^\infty_xL^1_v}\epsilon^d\mathcal{Z}_{N-s-1}\simeq C_{d}s^2\epsilon^{1/2}\|g_0\|_{L^\infty_xL^1_v}\mathcal{Z}_{N-s-1},
\end{equation}
due to \eqref{scaling}. For the term $I_2$, fix $1\leq 1\leq s<j<k\leq N$. By symmetry  again the corresponding term is estimated by
\begin{align*}
&\int_{\mathbb{R}^{2d(N-s-2)}}\left(\int_{\mathbb{R}^{2d}}\mathds{1}_{|x_i-x_{s+1}|^2+|x_i-x_{s+2}|^2\leq 2\epsilon^2}(x_{s+1},x_{s+2})g_0(x_{s+1},v_{s+1})g_0(x_{s+2},v_{s+2})\,dx_{s+1}\,dx_{s+2}\,dv_{s+1}\,dv_{s+2}\right)\\
&\hspace{2cm}\prod_{s+3\leq \ell_1<\ell_2<\ell_3\leq N}\mathds{1}_{|x_{\ell_1}-x_{\ell_2}|^2+|x_{\ell_1}-x_{\ell_3}|^2>2\epsilon^2}(x_{\ell_1},x_{\ell_2},x_{\ell_3})\prod_{\ell=s+3}^Ng_0(x_\ell,v_\ell)\,dZ_{(s+3,N)}\\
&\leq C_d\|g_0\|_{L^\infty_xL^1_v}^2\epsilon^{2d}\mathcal{Z}_{N-s-2}.
\end{align*}
after integrating in a $2d$-ball of radius $\epsilon$ centered at $\binom{x_i}{x_i}$. Adding for $1\leq i\leq s<j<k\leq N$ we obtain
\begin{equation}\label{bound on I_2 error}
I_2\leq sN^2C_d\|g_0\|_{L^\infty_xL^1_v}^2\epsilon^{2d}\mathcal{Z}_{N-s-2}\simeq s C_d\|g_0\|_{L^\infty_xL^1_v}^2\epsilon\mathcal{Z}_{N-s-2},
\end{equation}
Using \eqref{bound on error}-\eqref{bound on I_2 error} and Lemma \ref{estimate on conditioning} (applied for $m=s+1$ and $m=s+2$, we obtain
\begin{align}
\mathcal{Z}_N^{-1}\mathcal{R}_{s+1,N}(Z_s)&\lesssim s^2C_d\|g_0\|_{L^\infty_xL^1_v}\epsilon^{1/2}(1-C_d\|g_0\|_{L^\infty_xL^1_v}	\epsilon^{1/2})^{-(s+1)}\nonumber\\
&\hspace{2cm}+sC_d\|g_0\|_{L^\infty_xL^1_v}^2\epsilon(1-C_d\|g_0\|_{L^\infty_xL^1_v}\epsilon^{1/2})^{-(s+2)}\nonumber\\
&\lesssim C_{d,s}\|g_0\|_{L^\infty_xL^1_v}\epsilon^{1/2},\label{bound on total error cond}
\end{align}
since $2C_d\epsilon^{1/2}\|g_0\|_{L^\infty_xL^1_v}<1$.
Combining \eqref{term to estimate 6}-\eqref{1-cond}, \eqref{bound on total error cond}, and \eqref{LinfL1}, we obtain estimate \eqref{tensorized data approximating estimate} and the required convergence follows.
\end{proof}

\subsection{Convergence in observables}
Now, we define the convergence in observables.  Given $s\in\mathbb{N}$, we use the space of test continuous and compactly supported functions in velocities $C_c(\mathbb{R}^{ds}).$
\begin{definition} Consider $T>0$, $s\in\mathbb{N}$ and $g_s\in C^0\left([0,T],L^\infty\left(\mathbb{R}^{2ds}\right)\right)$. Given a test function $\phi_s\in C_c(\mathbb{R}^{ds})$, we define the $s$-observable functional as:
$
I_{\phi_s}g_s(t)(X_s)=\displaystyle\int_{\mathbb{R}^{ds}}\phi_s(V_s)g_s(t,X_s,V_s)\,dV_s.
$
\end{definition}
 Before giving the definition of convergence in observables,
 we start with some definitions on the configurations we are using. Given $m\in\mathbb{N}$ and $\sigma>0$,  we define the set of well-separated spatial configurations 
\begin{equation}\label{separated conf space}
\Delta_m^X(\sigma)=\{\widetilde{X}_m\in\mathbb{R}^{dm}: |\widetilde{x}_i-\widetilde{x}_j|>\sigma,\quad\forall 1\leq i<j\leq m\},\quad m\geq 2,\quad \Delta_1^X(\sigma)=\mathbb{R}^{2d},
\end{equation}
and the set of well separated configurations
\begin{equation}\label{separated conf}
\Delta_m(\sigma)=\Delta_m^X(\sigma)\times\mathbb{R}^{dm}.
\end{equation}
\begin{definition}
Let $T>0$. For each $N\in\mathbb{N}$, consider $\bm{G_N}=(g_{N,s})_{s\in\mathbb{N}}\in \prod_{s=1}^\infty C^0\left([0,T],L^\infty\left(\mathbb{R}^{2ds}\right)\right)$  and $\bm{G}=(g_s)_{s\in\mathbb{N}}\in \prod_{s=1}^\infty C^0\left([0,T],L^\infty\left(\mathbb{R}^{2ds}\right)\right)$. We say that the sequence $(\bm{G_N})_{N\in\mathbb{N}}$ converges in observables to $\bm{G}$, and write $$\bm{G_N}\overset{\sim}\longrightarrow \bm{G},$$ if for any $\sigma>0$, $s\in\mathbb{N}$,  and $\phi_s\in C_c(\mathbb{R}^{ds})$, we have
\begin{equation*}
\lim_{N\to\infty}\|I_{\phi_s}g_{N,s}(t)-I_{\phi_s}g_s(t)\|_{L^\infty(\Delta_s^X(\sigma))}=0,\quad\text{uniformly in }[0,T].
\end{equation*}

\end{definition}
\subsection{Statement of the main result}
We are now in the position to state our main result. 
\begin{theorem}[Convergence]\label{convergence theorem}
Let $\beta_0> 0$, $\mu_0\in\mathbb{R}$ and consider  Boltzmann hierarchy initial data $F_0=(f_0^{(s)})_{s\in\mathbb{N}}\in X_{\infty,\beta_0,\mu_0}$. Let  $\left(F_{N,0}\right)_{N\in\mathbb{N}}$ be a  BBGKY hierarchy sequence approximating $F_0$ . Assume that:\\
\\
$\bullet$ For each $N$, $\bm{F_N}\in \bm{X}_{N,\bm{\beta},\bm{\mu}}$ is the mild solution of the BBGKY hierarchy \eqref{BBGKY}  with initial data $F_{N,0}$ in $[0,T]$.\\
$\bullet$ $\bm{F}\in\bm{X}_{\infty,\bm{\beta},\bm{\mu}}$ is the mild solution of the Boltzmann hierarchy \eqref{Boltzmann hierarchy}  with initial data $F_0$ in $[0,T]$.\\
$\bullet$ $F_0$ satisfies the following uniform continuity  condition: There exists $C>0$ such that, for any $\zeta>0$, there is $q=q(\zeta)>0$ such that for all $s\in\mathbb{N}$, and for all $Z_s,Z_s'\in\mathbb{R}^{2ds}$ with $|Z_s-Z_s'|<q$, we have
\begin{equation}\label{continuity assumption}
|f_0^{(s)}(Z_s)-f_0^{(s)}(Z_s')|<C^{s-1}\zeta.
\end{equation}
Then
$\bm{F_N}\overset{\sim}\longrightarrow\bm{F}.$
\end{theorem}
\begin{remark}To prove Theorem \ref{convergence theorem} it suffices to prove
$$\|I_s^N(t)-I_s^\infty(t)\|_{L^\infty(\Delta^X_s(\sigma))}\overset{N\to\infty}\longrightarrow 0,\text{ uniformly in $[0,T]$},
$$
 for any $s\in\mathbb{N}$, $\phi_s\in C_c(\mathbb{R}^{ds})$ and $\sigma>0$, where 
\begin{align}
I_s^N(t)(X_s)&:=I_{\phi_s}f_N^{(s)}(t)(X_s)=\int_{\mathbb{R}^{ds}}\phi_s(V_s)f_N^{(s)}(t,X_s,V_s)\,dV_s, \label{def-I-Ns} \\
I_s^\infty(t)(X_s)&:=I_{\phi_s}f^{(s)}(t)(X_s)=\int_{\mathbb{R}^{ds}}\phi_s(V_s)f^{(s)}(t,X_s,V_s)\,dV_s.  \label{def-I-s}
\end{align}
\end{remark}

The following Corollary of Theorem \ref{convergence theorem} justifies the derivation of our ternary Boltzmann equation from finitely many particle systems.

\begin{corollary}\label{derivation corollary} Let $\beta_0>0$, $\mu_0\in\mathbb{R}$ and $f_0\in X_{\beta_0,\mu_0+1}$ be a H\"older continuous $C^{0,\gamma}$, $\gamma\in(0,1]$ probability density with $|f_0|_{\beta_0,\mu_0+1}\leq 1/2$. Let us write $F_0=(f_0^{\otimes s})_{s\in\mathbb{N}}\in X_{\infty,\beta_0,\mu_0+1}$ and let $F_{N,0}=(f_{N,0}^{(s)})_{s\in\mathbb{N}}$ be the conditioned BBGKY hierarchy  sequence given in Definition \ref{conditioned BBGKY data de} approximating the tensorized data $F_0$. Then for any $\sigma>0$, $s\in\mathbb{N}$ and $\phi_s\in C_c(\mathbb{R}^{ds})$, we have the rate of convergence
\begin{equation}\label{derivation}
\|I_{\phi_s}f_N^{(s)}(t)-I_{\phi_s}f^{\otimes s}(t)\|_{L^\infty(\Delta^X_s(\sigma))}=O(\epsilon^r),\quad\text{uniformly in }[0,T],
\end{equation}
for any $0<r<\min\{1/2,\gamma\}$, where
 $\bm{F_N}=(f_N^{(s)})_{s\in\mathbb{N}}\in \bm{X_{N,\beta,\mu}}$ is the mild solution of the BBGKY hierarchy \eqref{BBGKY} in $[0,T]$ with initial data $F_{N,0}$  and $f$ is the mild solution to the ternary Boltzmann equation \eqref{Boltzmann equation} in $[0,T]$, with initial data $f_0$. 
\end{corollary}

\section{Reduction to term by term convergence}
\label{sec_term by term}

Now,  we reduce the proof of Theorem \ref{convergence theorem} to term by term convergence by truncating the observables. 
Throughout this section,  we consider $\beta_0>0$, $\mu_0\in\mathbb{R}$,  $T=T(d,\beta_0,\mu_0)>0$ be the time given by Theorems \ref{well posedness BBGKY}, \ref{lwp Boltzmann hier}, the functions $\bm{\beta},\bm{\mu}:[0,T]\to\mathbb{R}$ defined by \eqref{beta mu}, $(N,\epsilon)$ in the scaling \eqref{scaling} and initial data $F_{N,0}\in X_{N,\beta_0,\mu_0}$, $F_0\in X_{\infty,\beta_0,\mu_0}$. Let $\bm{F_N}\in\bm{X}_{N,\bm{\beta},\bm{\mu}}$, $\bm{F}\in\bm{X}_{\infty,\bm{\beta},\bm{\mu}}$ be the mild solutions of the corresponding BBGKY hierarchy and Boltzmann hierarchy  in $[0,T]$, given by Theorem \ref{well posedness BBGKY} and Theorem \ref{lwp Boltzmann hier}.
\subsection{Series expansion}
Let us fix $s\in\mathbb{N}$. Using iteratively the Duhamel's formula for the mild solution of the BBGKY hierarchy, given by \eqref{mild BBGKY}, we get the following expansion:
\begin{equation}
f_N^{(s)}(t,Z_s)=\sum_{k=0}^nf_{N}^{(s,k)}(Z_s)+R_{N}^{(n+1)}(t,Z_s)\label{functions plus remainder},
\end{equation}
where for $k\in\mathbb{N}$, we define
\begin{equation}\label{f_N k}
f_{N}^{(s,k)}(t,Z_s):=\int_0^t\int_0^{t_1}...\int_0^{t_{k-1}}T_s^{t-t_1}\mathcal{C}_{s,s+2}^NT_{s+2}^{t_1-t_2}...T_{s+2k-2}^{t_{k-1}-t_k}\mathcal{C}_{s+2k-2,s+2k}^NT_{s+2k}^{t_k}f_{N,0}^{(s+2k)}(Z_s)\,dt_k...\,dt_{1},
\end{equation}
 for $k=0$, we define
$f_{N}^{(s,0)}(t,Z_s):=T_s^tf^{(s)}_{N,0}(Z_s)$,
and  for the remainder we write
\begin{equation}\label{remainder_N}
\begin{aligned}
R_N^{(s,n+1)}(t,Z_s):=\int_0^t\int_0^{t_1}...\int_0^{t_{n}}T_s^{t-t_1}\mathcal{C}_{s,s+2}^NT_{s+2}^{t_1-t_2}...T_{s+2n-2}^{t_{n-1}-t_n}\mathcal{C}_{s+2n-2,s+2n}^NT_{s+2n}^{t_n-t_{n+1}}f_N^{\left(s+2n+2\right)}(t_{n+1},Z_s)&\\
\,dt_{n+1}...\,dt_1.&
\end{aligned}
\end{equation}

Similarly, using iteratively  Duhamel's formula for the solution of the Boltzmann hierarchy, one gets
\begin{equation}\label{function plus remainder boltzmann}
f^{(s)}(t,Z_s)=\sum_{k=0}^nf^{(s,k)}(Z_s)+R^{(n+1)}(t,Z_s)
\end{equation}
where for $k\in\mathbb{N}$, we define
\begin{equation}\label{f k}
f^{(s,k)}(t,Z_s):=\int_0^t\int_0^{t_1}...\int_0^{t_{k-1}}S_s^{t-t_1}\mathcal{C}_{s,s+2}^\infty S_{s+2}^{t_1-t_2}...
S_{s+2k-2}^{t_{k-1}-t_k}\mathcal{C}_{s+2k-2,s+2k}^\infty S_{s+2k}^{t_k}f_{0}^{(s+2k)}(Z_s)\,dt_k...\,dt_{1},
\end{equation}
for $k=0$, we define
$f^{(s,0)}(t,Z_s):=S_s^tf^{(s)}_{0}(Z_s)$, and for the remainder we write
\begin{equation}\label{remainder}
\begin{aligned}
R^{(s,n+1)}(t,Z_s):=\int_0^t\int_0^{t_1}...\int_0^{t_{n}}\hspace{-0.1cm}S_s^{t-t_1}\mathcal{C}_{s,s+2}^\infty S_{s+2}^{t_1-t_2}...S_{s+2n-2}^{t_{n-1}-t_n}\mathcal{C}_{s+2n-2,s+2n}^\infty S_{s+2n}^{t_n-t_{n+1}}f^{\left(s+2n+2\right)}(t_{n+1},Z_s)&\\
\,dt_{n+1}...\,dt_1.&
\end{aligned}
\end{equation}
\subsection{Reduction to term by term convergence}
Here we reduce the convergence proof to term by term convergence of bounded energy and separated collision times observables.

 Recalling \eqref{kinetic energy}, given $R>0$, $\ell\in\mathbb{N}$, we define the energy truncated operators
\begin{equation}\label{velocity truncation of operators}
\mathcal{C}_{\ell,\ell+2}^{N,R}g_{N,\ell+2}:=\mathcal{C}_{\ell,\ell+2}^N\left(g_{N,\ell+2}\mathds{1}_{[E_{\ell+2}\leq R^2]}\right),\quad
\mathcal{C}_{\ell,\ell+2}^{\infty,R} g_{\ell+2}:=\mathcal{C}_{\ell,\ell+2}^\infty \left(g_{\ell+2}\mathds{1}_{[E_{\ell+2}\leq R^2]}\right).
\end{equation}
Consider  $\delta>0$. Given $t\geq 0$ and $k\in\mathbb{N}$, we define   the separated collision times
\begin{equation}\label{separated collision times}
\mathcal{T}_{k,\delta}(t):=\left\{(t_1,...,t_k)\in\mathcal{T}_k(t):\quad 0\leq t_{i+1}\leq t_i-\delta,\quad\forall i\in [0,k]\right\},\quad t_{k+1}:=0,\text{ }t_0:=t.
\end{equation}
For the BBGKY hierarchy, we define for $k\in\mathbb{N}$:
\begin{equation}\label{fNRdk}
f_{N,R,\delta}^{(s,k)}(t,Z_s):=\int_{\mathcal{T}_{k,\delta}(t)}T_s^{t-t_1}\mathcal{C}_{s,s+2}^{N,R} T_{s+2}^{t_1-t_2}
...T_{s+2k-2}^{t_{k-1}-t_k}\mathcal{C}_{s+2k-2,s+2k}^{N,R} T_{s+2k}^{t_k}f_{N,0}^{(s+2k)}(Z_s)\,dt_k...\,dt_{1},
\end{equation}
and for $k=0$, we define
$
f_{N,R,\delta}^{(s,0)}(t,Z_s):=T_s^t\left(f_{N,0}\mathds{1}_{[E_s\leq R^2]}\right)(Z_s).$

For the Boltzmann hierarchy, we define for $k\in\mathbb{N}$:
\begin{equation}\label{fRdk}
f_{R,\delta}^{(s,k)}(t,Z_s):=\int_{\mathcal{T}_{k,\delta}(t)}S_s^{t-t_1}\mathcal{C}_{s,s+2}^{\infty,R} S_{s+2}^{t_1-t_2}
...S_{s+2k-2}^{t_{k-1}-t_k}\mathcal{C}_{s+2k-2,s+2k}^{\infty,R} S_{s+2k}^{t_m}f_{0}^{(s+2k)}(Z_s)\,dt_k...\,dt_{1},
\end{equation}
and for $k=0$, we define
$
f_{R,\delta}^{(s,0)}(t,Z_s):=S_s^t\left(f_{0}\mathds{1}_{[E_s\leq R^2]}\right)(Z_s).$

Given $\phi_s\in C_c(\mathbb{R}^{ds})$ and $k\in\mathbb{N}\cup\{0\}$, let us write
\begin{align}
I_{s,k,R,\delta}^N(t)(X_s):=I_{\phi_s}f_{N,R,\delta}^{(s,k)}(t)(X_s)=\int_{B_R^{ds}}\phi_s(V_s)f_{N,R,\delta}^{(s,k)}(t,X_s,V_s)\,dV_s,\label{bbgky truncated time}
\end{align}
\begin{align}
I_{s,k,R,\delta}^\infty(t)(X_s):=I_{\phi_s}f_{R,\delta}^{(s,k)}(t)(X_s)=\int_{B_R^{ds}}\phi_s(V_s)f_{R,\delta}^{(s,k)}(t,X_s,V_s)\,dV_s.\label{boltzmann truncated time}
\end{align}

Recalling the observables $I_s^N$, $I_s^\infty$ defined in \eqref{def-I-Ns}-\eqref{def-I-s}, the following estimates hold
\begin{proposition}\label{reduction}
For any $s,n\in\mathbb{N}$, $R>1$, $\delta>0$ and $t\in[0,T]$, the following estimates hold:
\begin{equation*}
\|I_s^N(t)-\sum_{k=0}^nI_{s,k,R,\delta}^N(t)\|_{L^\infty_{X_s}}\leq C_{s,\beta_0,\mu_0,T}\|\phi_s\|_{L^\infty_{V_s}}\left(2^{-n}+e^{-\frac{\beta_0}{3}R^2}+\delta C_{d,s,\beta_0,\mu_0,T}^n\right)\|F_{N,0}\|_{N,\beta_0,\mu_0},
\end{equation*}
\begin{equation*}
\|I_s^\infty(t)-\sum_{k=0}^nI_{s,k,R,\delta}^\infty(t)\|_{L^\infty_{X_s}}\leq C_{s,\beta_0,\mu_0,T}\|\phi_s\|_{L^\infty_{V_s}}\left(2^{-n}+e^{-\frac{\beta_0}{3}R^2}+\delta C_{d,s,\beta_0,\mu_0,T}^n\right)\|F_{0}\|_{\infty,\beta_0,\mu_0}.
\end{equation*}
\end{proposition}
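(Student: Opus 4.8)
The plan is to prove Proposition~\ref{reduction} by a simple triangle-inequality telescoping, combining the three approximation lemmas already established in this section. The three lemmas provide estimates for exactly the three steps indicated in the itemized list preceding the Proposition, so the argument is essentially bookkeeping, and I expect no genuine obstacle: the only point requiring a little care is that the constants appearing in Lemmas~\ref{term by term}, \ref{energy truncation}, \ref{time sep} depend on $s$, $\beta_0$, $\mu_0$, $T$ (and, for the time-separation estimate, grow like $C_{d,s,\beta_0,\mu_0,T}^n$ in $n$), so one must track how they accumulate. Since we will only ever apply this with $n$ taken to be some fixed large integer and then $R\to\infty$, $\delta\to 0$, $N\to\infty$ in that order, the $n$-dependence of the constant multiplying $\delta$ is harmless.

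First I would fix $s,n\in\mathbb{N}$, $R>1$, $\delta>0$ and $t\in[0,T]$, and for the BBGKY side write
\begin{equation*}
I_s^N(t)-\sum_{k=0}^n I_{s,k,R,\delta}^N(t)
=\Big(I_s^N(t)-\sum_{k=0}^n I_{s,k}^N(t)\Big)
+\sum_{k=0}^n\big(I_{s,k}^N(t)-I_{s,k,R}^N(t)\big)
+\sum_{k=0}^n\big(I_{s,k,R}^N(t)-I_{s,k,R,\delta}^N(t)\big).
\end{equation*}
Taking $L^\infty_{X_s}$ norms and applying the triangle inequality, the first term is bounded by Lemma~\ref{term by term} (with $8^{-n}\le 2^{-n}$), the second by Lemma~\ref{energy truncation}, and the third by Lemma~\ref{time sep}. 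Collecting the three bounds and absorbing all the numerical and $s,\beta_0,\mu_0,T$-dependent prefactors into a single constant $C_{s,\beta_0,\mu_0,T}$ while leaving the genuinely $n$-dependent factor $C_{d,s,\beta_0,\mu_0,T}^n$ multiplying $\delta$, I obtain
\begin{equation*}
\Big\|I_s^N(t)-\sum_{k=0}^n I_{s,k,R,\delta}^N(t)\Big\|_{L^\infty_{X_s}}
\le C_{s,\beta_0,\mu_0,T}\,\|\phi_s\|_{L^\infty_{V_s}}\big(2^{-n}+e^{-\frac{\beta_0}{2}R^2}+\delta\,C_{d,s,\beta_0,\mu_0,T}^n\big)\|F_{N,0}\|_{N,\beta_0,\mu_0},
\end{equation*}
which is the asserted estimate.

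The Boltzmann-hierarchy estimate is obtained verbatim by the same decomposition, now using the second (Boltzmann) estimate in each of Lemmas~\ref{term by term}, \ref{energy truncation}, \ref{time sep}, and replacing $\|F_{N,0}\|_{N,\beta_0,\mu_0}$ by $\|F_0\|_{\infty,\beta_0,\mu_0}$; no further argument is needed. The ``hard part,'' such as it is, is purely cosmetic: one should state clearly which constant is allowed to depend on which parameters so that the later limiting procedure (first $n$ large to kill $2^{-n}$, then $R\to\infty$, then $\delta\to 0$, then $N\to\infty$) makes the right-hand side arbitrarily small; this is exactly why the $\delta$-term is kept with its explicit $n$-dependent constant rather than being absorbed. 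With Proposition~\ref{reduction} in hand, the convergence proof is reduced to establishing, for each fixed $s$, $n$, $R$, $\delta$, the term-by-term convergence $I_{s,k,R,\delta}^N(t)\to I_{s,k,R,\delta}^\infty(t)$ as $N\to\infty$, uniformly on $[0,T]\times\Delta_s^X(\sigma)$, which is the content of the remaining sections.
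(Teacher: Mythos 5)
Your proposal is correct and is exactly the paper's argument: the Proposition is obtained by telescoping with the triangle inequality and invoking Lemmas \ref{term by term}, \ref{energy truncation}, and \ref{time sep} in sequence (the paper does not even write a separate proof, merely listing these three approximations). The bookkeeping of constants you describe matches the statement as given.
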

\begin{proof}
For the proof, one  needs to successively perform the reductions described above using the a-priori bounds of Section \ref{sec_local} and connect them through the triangle inequality. For  the reduction to finitely many terms and for the energy truncation see Propositions 7.1.1., 7.2.1. in \cite{gallagher},  and for the  time separation part see \cite{thesis}.
\end{proof}
Proposition \ref{reduction} and triangle inequality imply that the convergence proof reduces to controlling the differences
$ I_{s,k,R,\delta}^N(t)-I_{s,k,R,\delta}^\infty(t)$. However  obtaining such a control requires some delicate analysis because of possible recollisions of the  backwards interaction flow.

\section{Geometric estimates} 
\label{sec_geometric}

In this section we provide the crucial geometric estimates, many of them novel, which  will be of fundamental importance in  eliminating recollisions of the backwards interaction flow in Section \ref{sec_stability} and Section \ref{sec_elimination}. 

Let us introduce some notation which we will be using from now on.  For $w\in\mathbb{R}^d$, $y\in\mathbb{R}^d\setminus\{0\}$ and $\rho>0$, we write $K_\rho^d(w,y)$ for the closed $d$-dimensional cylinder of center $w$, direction $y$ and radius $\rho$.
In case we do not need to specify the center and direction we will just be writing $K_\rho^d$ for convenience. 

\subsection{Spherical estimates}\label{subsec:spherical}
Here, we derive the spherical estimates which will enable us to control pre-collisional configurations. We will strongly rely on the following estimate, see Lemma 4 in \cite{denlinger}  for the proof.
\begin{lemma}\label{Ryan's lemma} Given $\rho,r>0$  the following estimate holds for the $d$-spherical measure of radius $r>0$:
 $$\left|\mathbb{S}_r^{d-1}\cap K_\rho^d\right|_{\mathbb{S}_r^{d-1}}\lesssim r^{d-1}\min\left\{1,\left(\displaystyle\frac{\rho}{r}\right)^{\frac{d-1}{2}}\right\}.$$
\end{lemma}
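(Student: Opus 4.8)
\textbf{Proof plan for Lemma \ref{Ryan's lemma}.}

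The statement asserts a bound on the $(d-1)$-dimensional surface measure of the intersection of a sphere $\mathbb{S}_r^{d-1}$ with a solid cylinder $K_\rho^d$. The plan is to split into the two regimes dictated by the minimum on the right-hand side. In the regime $\rho \geq r$, the bound $|\mathbb{S}_r^{d-1} \cap K_\rho^d|_{\mathbb{S}_r^{d-1}} \lesssim r^{d-1}$ is trivial, since it is just the total surface area of $\mathbb{S}_r^{d-1}$, which is $\simeq r^{d-1}$; hence the interesting case is $\rho < r$. By translation and rotation invariance of the surface measure, I would reduce to the case where the cylinder's axis $L_{w,y}$ passes through the origin (the center of the sphere) and is aligned with a coordinate axis, say the $x_d$-axis; a cylinder whose axis is offset from the center can only have smaller intersection with the sphere than the concentric one of the same radius, so this reduction is harmless for an upper bound. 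Thus it suffices to estimate the area of the ``band'' $\{x \in \mathbb{S}_r^{d-1} : x_1^2 + \dots + x_{d-1}^2 \leq \rho^2\}$, i.e. the two polar caps near $x = (0,\dots,0,\pm r)$ subtended by angle $\theta_0$ with $\sin\theta_0 = \rho/r$.

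The core computation is then a standard spherical-cap area estimate. Writing the cap in spherical coordinates with polar angle $\theta$ measured from the $x_d$-axis, its area is $\simeq r^{d-1}\int_0^{\theta_0} (\sin\theta)^{d-2}\, d\theta$. Since $\theta_0 \leq \pi/2$ and $\sin\theta \leq \theta$ on $[0,\pi/2]$, this is $\lesssim r^{d-1}\int_0^{\theta_0}\theta^{d-2}\,d\theta \simeq r^{d-1}\theta_0^{d-1}$. Using $\theta_0 \simeq \sin\theta_0 = \rho/r$ (valid because $\theta_0 \leq \pi/2$, where $\theta \thickapprox \sin\theta$), we get $\lesssim r^{d-1}(\rho/r)^{d-1}$. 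This is actually stronger than the claimed exponent $(d-1)/2$ when $\rho < r$, so the claimed bound follows a fortiori. (One does not even need the square-root exponent here; it appears in \cite{denlinger} because in other formulations the cylinder's axis direction is not free, but in the present statement the axis is arbitrary, so the concentric reduction gives the cleaner $(d-1)$ power, which dominates $(d-1)/2$ since $\rho/r \leq 1$.) Accounting for both polar caps only changes the constant.

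The main obstacle, such as it is, is the geometric reduction step: justifying rigorously that among all cylinders $K_\rho^d$ of fixed radius $\rho$, the one whose axis passes through the sphere's center maximizes the surface measure of the intersection. This can be argued by noting that for a cylinder with axis at distance $a$ from the center, the intersection with the sphere lies in the region where the distance to the axis is $\leq \rho$, which constrains the angular coordinates into a set that embeds (after a measure-nonincreasing map, or simply by a containment argument in terms of the polar angle relative to the shifted axis) into the concentric cap; alternatively, one avoids the extremal claim entirely by directly parametrizing $\mathbb{S}_r^{d-1} \cap K_\rho^d$ for a general axis and bounding the codimension-one Hausdorff measure of the preimage under the orthogonal projection onto the hyperplane perpendicular to $y$, which is contained in a ball of radius $\rho$, and estimating the Jacobian of the sphere parametrization there — this yields the same $r^{d-1}(\rho/r)^{(d-1)/2}$ type bound with the square root naturally appearing from the $\sqrt{r^2 - |\cdot|^2}$ factor when the intersection is away from the equator. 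Either route is elementary; I would present the concentric-reduction version as the cleanest, deferring to \cite{denlinger} for the sharp statement as cited.
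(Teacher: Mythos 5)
There is a genuine gap, and it sits at the heart of your argument: the reduction to the concentric cylinder is false. It is not true that among cylinders $K_\rho^d$ of fixed radius the one whose axis passes through the center of the sphere maximizes $|\mathbb{S}_r^{d-1}\cap K_\rho^d|_{\mathbb{S}_r^{d-1}}$. The extremal configuration is the opposite one, where the cylinder grazes the sphere tangentially, and that is precisely where the exponent $\frac{d-1}{2}$ comes from. Take $d=2$, $\rho<<r$: a strip of half-width $\rho$ through the center meets the circle $\mathbb{S}_r^1$ in two arcs of total length $\simeq \rho$, but a strip whose axis is at distance $r$ from the center (tangent to the circle) meets it in an arc where $r\cos\theta\geq r-\rho$, i.e.\ $|\theta|\lesssim\sqrt{\rho/r}$, of length $\simeq\sqrt{r\rho}$ --- which is much larger than $\rho$ and exactly saturates the bound $r(\rho/r)^{1/2}$ of the lemma. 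So your computation of $r^{d-1}(\rho/r)^{d-1}$ only covers the concentric case; the general case genuinely requires the square root, and your parenthetical claim that the square-root exponent is an artifact of ``other formulations'' has it backwards: it is forced precisely because the axis is arbitrary and may pass near the boundary of the sphere.

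Your briefly sketched ``alternative route'' --- writing $\mathbb{S}_r^{d-1}$ as a two-sheeted graph over the hyperplane $y^\perp$, noting that the intersection projects into a $(d-1)$-ball of radius $\rho$ there, and integrating the area element $r(r^2-|u|^2)^{-1/2}\,du$ --- is the correct strategy and does produce the $(\rho/r)^{\frac{d-1}{2}}$ power from the square-root singularity of the Jacobian near $|u|=r$; but as written it is only a deferral, not a proof, and the version you present as primary does not establish the lemma. For the record, the paper itself does not prove the estimate either: it rescales to $r=1$ and cites Denlinger, so a self-contained argument would have to carry out the graph-parametrization computation (splitting according to whether the projected ball $B_\rho^{d-1}(u_0)$ stays away from, or reaches, the rim $|u|=r$).
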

Integrating this estimate we obtain the following result, which will be used in Section \ref{sec_stability}:
\begin{proposition}\label{spherical estima} Given $0<\rho\leq 1\leq R$, the following estimate holds:
$$|B_{R}^{d}\cap K_\rho^d|_{d}\lesssim R^{d}\rho^{\frac{d-1}{2}}.$$
\end{proposition}
\begin{proof}
Using Lemma \ref{Ryan's lemma}, we obtain
\begin{equation}\label{estimate with min}
\begin{aligned}
|B_R^d\cap K_\rho^d|_d&\simeq \int_0^R |\mathbb{S}_r^{d-1}\cap K_\rho^d|_{\mathbb{S}_r^{d-1}}\,dr\lesssim \int_0^Rr^{d-1}\min\left\{1,(\frac{\rho}{r})^{\frac{d-1}{2}}\right\}\,dr\\
&\leq\int_0^\rho r^{d-1}\,dr+\rho^{\frac{d-1}{2}}\int_0^R r^{\frac{d-1}{2}}\,dr\simeq \rho^d+\rho^{\frac{d-1}{2}}R^{\frac{d+1}{2}},\quad\text{since }d\geq 2\\
&\leq R^{d}\rho^{\frac{d-1}{2}},\quad\text{since } 0<\rho\leq 1\leq R.
\end{aligned}
\end{equation}
\end{proof}
We now obtain  new geometric estimates which will be essential to derive the ellipsoidal estimates, enabling  us to control post-collisional configurations. To achieve those estimates we strongly rely on the following  representation of $\mathbb{S}_1^{2d-1}$:
\begin{equation}\label{decomposition of the sphere}
\mathbb{S}_1^{2d-1}=\left\{ (\omega_1,\omega_2)\in\mathbb{R}^d\times B_1^d: \omega_1\in\mathbb{S}_{\sqrt{1-|\omega_2|^2}}^{d-1}\right\}.
\end{equation}
\begin{lemma}\label{new lemma} For any $r,\rho>0$,
the following estimates hold for the $(2d-1)$-spherical measure
$$\left|\mathbb{S}_r^{2d-1}\cap\left(K_\rho^d\times\mathbb{R}^d\right)\right|_{\mathbb{S}_r^{2d-1}},\text{ }\left|\mathbb{S}_r^{2d-1}\cap\left(\mathbb{R}^d\times K_\rho^d\right)\right|_{\mathbb{S}_r^{2d-1}}\lesssim r^{2d-1}\min\left\{1,(\frac{\rho}{r})^{\frac{d-1}{2}}\right\}.$$
\end{lemma}
\begin{proof} By symmetry it suffices to prove the  estimate when intersecting the sphere with $K_\rho^d\times\mathbb{R}^d$. Also, after rescaling we may assume $r=1$. 
The idea is to integrate Lemma \ref{Ryan's lemma} using the representation \eqref{decomposition of the sphere}. In particular by \eqref{decomposition of the sphere} and Lemma \ref{Ryan's lemma}, we have
\begin{align}
&\left|\mathbb{S}_1^{2d-1}\cap\left(K_\rho^d\times\mathbb{R}^d\right)\right|_{\mathbb{S}_1^{2d-1}}
=\int_{B_1^{d}}\left|\mathbb{S}_{\sqrt{1-|\omega_2|^2}}^{d-1}\cap K_\rho^d\right|_{\mathbb{S}_{\sqrt{1-|\omega_2|^2}}^{d-1}}\,d\omega_2\nonumber\\
&\lesssim \int_{B_1^d}(1-|\omega_2|^2)^{\frac{d-1}{2}}\min\left\{1,\left(\frac{\rho}{\sqrt{1-|\omega_2|^2}}\right)^{\frac{d-1}{2}}\right\}\,d\omega_2\nonumber\\
&\lesssim  \int_0^1 s^{d-1}(1-s^2)^{\frac{d-1}{2}}\min\left\{1,\left(\frac{\rho}{\sqrt{1-s^2}}\right)^{\frac{d-1}{2}}\right\}\,ds.\label{last term}
\end{align}
Let us write
$
I(\rho):= \displaystyle\int_0^1 s^{d-1}(1-s^2)^{\frac{d-1}{2}}\min\left\{1,\left(\frac{\rho}{\sqrt{1-s^2}}\right)^{\frac{d-1}{2}}\right\}\,ds.
$
In the case where $\rho\geq 1$, we have
\begin{equation}\label{rho>1}
I(\rho)\lesssim\int_0^1s^{d-1}(1-s^2)^{\frac{d-1}{2}}\,ds\simeq 1.
\end{equation}

Assume now $0<\rho<1$. Then, we may decompose $I(\rho)$ as follows:
\begin{equation}\label{integral decomposition}
I(\rho)=\int_0^{\sqrt{1-\rho^2}}s^{d-1}(1-s^2)^{\frac{d-1}{2}}\left(\frac{\rho}{\sqrt{1-s^2}}\right)^{\frac{d-1}{2}}\,ds+\int_{\sqrt{1-\rho^2}}^1s^{d-1}(1-s^2)^{\frac{d-1}{2}}\,ds.
\end{equation}

Performing the change of variables $u=1-s^2$, equation \eqref{integral decomposition} can be written as:
\begin{align}
I(\rho)&=\frac{1}{2}\rho^{\frac{d-1}{2}}\int_{\rho^2}^1(1-u)^{\frac{d-2}{2}}u^{\frac{d-1}{4}}\,du+\frac{1}{2}\int_0^{\rho^2}(1-u)^{\frac{d-2}{2}}u^{\frac{d-1}{2}}\,du\nonumber\\
&\overset{(d\geq 2)}\lesssim \rho^{\frac{d-1}{2}}\int_{\rho^2}^1u^{\frac{d-1}{4}}\,du+\int_0^{\rho^2}u^{\frac{d-1}{2}}\,du\simeq\rho^{\frac{d-1}{2}}\left(1-\rho^{\frac{d+3}{2}}\right)+\rho^{d+1}\lesssim\rho^{\frac{d-1}{2}},\label{rho<1}
\end{align}
since $\rho<1$.
Combining \eqref{last term}-\eqref{rho>1} and \eqref{rho<1}, we obtain the result.
\end{proof}
In the same spirit as in Lemma \ref{new lemma}, we obtain the following estimate for the intersection of $\mathbb{S}_1^{2d-1}$ with the strip:
\begin{equation}\label{strip general}
W_{\rho,\mu,\lambda}^{2d}:=\{(\omega_1,\omega_2)\in\mathbb{R}^{2d}:|\mu \omega_1-\lambda \omega_2|\leq\rho\},\text{ where }\mu,\lambda\neq 0.
\end{equation}
\begin{lemma}\label{new lemma strip} For any $r,\rho>0$ 
the following estimate holds for the $(2d-1)$-spherical measure:

$$\left|\mathbb{S}_r^{2d-1}\cap W_{\rho,\mu,\lambda}^{2d}\right|_{\mathbb{S}_r^{2d-1}}\lesssim r^{2d-1}\min\left\{1,\left(\frac{\rho}{|\mu| r}\right)^{\frac{d-1}{2}},\left(\frac{\rho}{|\lambda| r}\right)^{\frac{d-1}{2}}\right\}.$$
\end{lemma}
\begin{proof} The proof follows the same steps as the proof of Lemma \ref{new lemma} after noticing that
\begin{align}
W_{\rho,\mu,\lambda}^{2d}&=\{(\omega_1,\omega_2)\in\mathbb{R}^{2d}: \omega_1\in B_{\rho/|\mu|}^d(\lambda\mu^{-1}\omega_2)\}\subseteq \{(\omega_1,\omega_2)\in\mathbb{R}^{2d}: \omega_1\in K_{\rho/|\mu|}^d(\lambda\mu^{-1}\omega_2)\},\nonumber\\
W_{\rho,\mu,\lambda}^{2d}&=\{(\omega_1,\omega_2)\in\mathbb{R}^{2d}: \omega_1\in B_{\rho/|\mu|}^d(\mu\lambda^{-1}\omega_2)\}\subseteq \{(\omega_1,\omega_2)\in\mathbb{R}^{2d}: \omega_1\in K_{\rho/|\lambda|}^d(\mu\lambda^{-1}\omega_2)\},\nonumber
\end{align}
where given $\omega_2\in\mathbb{R}^d$, $K_{\rho/|\mu|}^d(\lambda\mu^{-1} \omega_2),K_{\rho/|\lambda|}^d(\mu\lambda^{-1} \omega_2)$ are any cylinders of radius $\rho/|\mu|, \rho/|\lambda|$ centered at $\lambda\mu^{-1} \omega_2,\mu\lambda^{-1} \omega_2$ respectively.  
\end{proof}
\subsection{The transition map}
Now, we construct a transition map which will allow us to  control  post-collisional configurations using some appropriate ellipsoidal estimates developed in Subsection \ref{subsec:ellipsoidal}. 
We first  introduce some notation. 
Given $v_1,v_2,v_3\in\mathbb{R}^{2d}$, we define 
$$\Omega=\{\bm{\omega}=(\omega_1,\omega_2)\in\mathbb{R}^{2d}:|\omega_1|^2+|\omega_2|^2<\frac{3}{2}\text{ and }b(\omega_1,\omega_2,v_2-v_1,v_3-v_1)>0\},$$
where $b(\omega_1,\omega_2,v_2-v_1,v_3-v_1)$ is the cross-section given in \eqref{cross}, and 
\begin{equation}\label{set of post angles}
\mathcal{S}_{v_1,v_2,v_3}^+:=\mathbb{S}_1^{2d-1}\cap\Omega=\{\bm{\omega}=(\omega_1,\omega_2)\in\mathbb{S}_1^{2d-1}:b(\omega_1,\omega_2,v_2-v_1,v_3-v_1)>0\}.
\end{equation}
We  also define the smooth map 
$\Psi(\nu_1,\nu_2)=|\nu_1|^2+|\nu_2|^2+|\nu_1-\nu_2|^2$
and the $(2d-1)$-ellipsoid
\begin{equation}\label{ellipsoid}
\mathbb{E}_1^{2d-1}:=[\Psi=1]=\left\{(\nu_1,\nu_2)\in\mathbb{R}^{2d}:|\nu_1|^2+|\nu_2|^2+|\nu_1-\nu_2|^2=1\right\}.
\end{equation}
\begin{proposition}\label{transition lemma} Consider $v_1,v_2,v_3\in\mathbb{R}^d$ and $r>0$ such that 
\begin{equation}\label{reflection assumption}
|v_1-v_2|^2+|v_1-v_3|^2+|v_2-v_3|^2=r^2.
\end{equation}

We define the transition map $\mathcal{J}_{v_1,v_2,v_3}:\Omega\to\mathbb{R}^{2d}\setminus\left\{r^{-1}\begin{pmatrix}
 v_1-v_2\\
 v_1-v_3
 \end{pmatrix}\right\}$ by
\begin{equation}\label{definition of v}\bm{\nu}=\begin{pmatrix}
\nu_{1}\\
\nu_{2}\\
\end{pmatrix}=\mathcal{J}_{v_1,v_2,v_3}(\bm{\omega})
:=\frac{1}{r}
\begin{pmatrix}
v_1^*-v_2^*\\
v_1^*-v_3^*\\
\end{pmatrix},\quad\bm{\omega}=(\omega_1,\omega_2)\in\Omega.\footnote{by a small abuse of notation we extend the collisional operator $T_{\omega_1,\omega_2}$ for $(\omega_1,\omega_2)\in\Omega$, see Section \ref{sec_collision}.}
\end{equation}
\begin{enumerate}[(i)]
\item $\mathcal{J}_{v_1,v_2,v_3}$ is smooth in $\Omega$ with bounded derivative uniformly in $r$  i.e.
\begin{equation}\label{matrix derivative lemma}
\|D\mathcal{J}_{v_1,v_2,v_3}(\bm{\omega})\|_\infty\leq C_d,\quad\forall\bm{\omega}\in\Omega.
\end{equation}
\vspace{0.2cm}
\item The Jacobian of $\mathcal{J}_{v_1,v_2,v_3}$ is given by:
\begin{equation}\label{equality for jacobian}\jac(\mathcal{J}_{v_1,v_2,v_3})(\bm{\omega})\simeq r^{-2d}\frac{b^{2d}(\omega_1,\omega_2,v_2-v_1,v_3-v_1)}{(1+\langle\omega_1,\omega_2\rangle)^{2d+1}}> 0,\quad\forall\bm{\omega}=(\omega_1,\omega_2)\in\Omega.
\end{equation}
Moreover, for any $\bm{\omega}=(\omega_1,\omega_2)\in \Omega$, there holds the estimate:
\begin{equation}\label{jacobian}
 \jac(\mathcal{J}_{v_1,v_2,v_3})(\bm{\omega})\thickapprox r^{-2d}b^{2d}(\omega_1,\omega_2,v_2-v_1,v_3,v_1).
\end{equation}
\item The map
 $\mathcal{J}_{v_1,v_2,v_3}:\mathcal{S}_{v_1,v_2,v_3}^+\to\mathbb{E}_1^{2d-1}\setminus\left\{r^{-1}\begin{pmatrix}
 v_1-v_2\\
 v_1-v_3
 \end{pmatrix}\right\}$
 is bijective. Morever, there holds
 \begin{equation}\label{S=level sets}
 \mathcal{S}_{v_1,v_2,v_3}^+=[\Psi\circ\mathcal{J}_{v_1,v_2,v_3}=1].
 \end{equation}
\item For any measurable $g:\mathbb{R}^{2d}\to[0+\infty]$,  there holds the estimate
\begin{equation}\label{substitution estimate}
\int_{\mathcal{S}_{v_1,v_2,v_3}^+}(g\circ\mathcal{J}_{v_1,v_2,v_3})(\bm{\omega})|\jac\mathcal{J}_{v_1,v_2,v_3}(\bm{\omega})|\,d\bm{\omega}\lesssim\int_{\mathbb{E}_1^{2d-1}}g(\bm{\nu})\,d\bm{\nu}.
\end{equation}
\end{enumerate}
\end{proposition}
\begin{proof} For convenience, let us use the notation\footnote{by a small abuse of notation we write $\langle\cdot\text{ },\cdot\rangle$ for the inner product in $\mathbb{R}^{2d}$ as well.}:
\begin{equation}\label{vector notation}
\bm{\nu}=\begin{pmatrix}
\nu_1\\\nu_2
\end{pmatrix},\quad \bm{v}=\begin{pmatrix}
v_1-v_2\\v_1-v_3
\end{pmatrix},\quad\bm{\omega}=\begin{pmatrix}
\omega_1\\\omega_2
\end{pmatrix},\quad\pi(\bm{\omega})=\langle\omega_1,\omega_2\rangle,\quad c:-\frac{\langle\bm{\omega},\bm{v}\rangle}{1+\pi(\bm{\omega})}.
\end{equation}
 By \eqref{definition of v} and \eqref{formulas of collision}, we have
\begin{equation}\label{jac after sur}
\mathcal{J}_{v_1,v_2,v_3}(\bm{\omega})=r^{-1}\left(\bm{v}+c\bm{A}\bm{\omega}\right),\quad\text{where } \bm{A}=\begin{pmatrix}
2I_d&I_d\\I_d&2I_d
\end{pmatrix}.
\end{equation}
Notice that $\mathcal{J}_{v_1,v_2,v_3}$ maps in $\mathbb{R}^{2d}\setminus\{r^{-1}\bm{v}\}$. Indeed,  assume that $\mathcal{J}_{v_1,v_2,v_3}(\bm{\omega})=r^{-1}\bm{v}$ for some $\bm{\omega}\in\Omega$.
Since  $\bm{A}$  is invertible and $\bm{\omega}\neq 0$, \eqref{jac after sur} implies
$c=0\Rightarrow\langle\bm{\omega},\bm{v}\rangle=0,$
which is a contradiction, since $\bm{\omega}\in\Omega$. 

\textit{(i)}: Let us  calculate the derivative of $\mathcal{J}_{v_1,v_2,v_3}$. Using \eqref{jac after sur}, we obtain
\begin{equation}\label{derivative of J}
D\mathcal{J}_{v_1,v_2,v_3}(\bm{\omega})=r^{-1}\bm{A}\left(c\bm{I_{2d}}+\bm{\omega}\nabla_{\bm{\omega}}^Tc\right).
\end{equation}
Using notation from \eqref{vector notation}, we obtain
\begin{equation}\label{nabla of c}\nabla_{\bm{\omega}}c=-\frac{\bm{v}}{1+\pi(\bm{\omega})}+\frac{\langle\bm{\omega},\bm{v}\rangle\bm{\tilde{\omega}}}{\left(1+\pi(\bm{\omega})\right)^2},\end{equation}
where $\bm{\tilde{\omega}}:=\nabla_{\bm{\omega}}\pi(\bm{\omega})=\begin{pmatrix}
\omega_2\\
\omega_1
\end{pmatrix}$.
Combining \eqref{derivative of J}-\eqref{nabla of c}, we obtain
\begin{equation}\label{expanded derivative}
D\mathcal{J}_{v_1,v_2,v_3}(\bm{\omega})=r^{-1}\left(-\frac{\langle\bm{\omega},\bm{v}\rangle\bm{A}}{1+\pi(\bm{\omega})}-\frac{\bm{A}\bm{\omega}\bm{v}^T}{1+\pi(\bm{\omega})}+\frac{\langle\bm{\omega},\bm{v}\rangle\bm{A}\bm{\omega}\bm{\tilde{\omega}}^T}{\left(1+\pi(\bm{\omega})\right)^2}\right).
\end{equation}
Recall we have assumed $\bm{\omega}\in\Omega\Rightarrow |\omega_1|^2+|\omega_2|^2<\displaystyle\frac{3}{2}$, so Cauchy-Schwartz inequality implies
\begin{equation}\label{ineq on omegas in domain}
\frac{1}{4}<1+\pi(\bm{\omega})<\frac{7}{4},
\end{equation}
therefore $\mathcal{J}_{v_1,v_2,v_3}$ is differentiable in $\Omega$. It is clear from \eqref{expanded derivative}-\eqref{ineq on omegas in domain} that $\mathcal{J}_{v_1,v_2,v_3}$ is in fact smooth. Moreover using \eqref{expanded derivative}, bound \eqref{matrix derivative lemma} follows after using Cauchy-Schwartz inequality, the fact that $\bm{\omega}\in\Omega$, \eqref{reflection assumption}, \eqref{ineq on omegas in domain} and \eqref{reflection assumption}.

\textit{(ii)}: To calculate the Jacobian, we use \eqref{derivative of J} and apply Lemma \ref{linear algebra lemma} (see Appendix), to obtain
\begin{equation}\label{jac with nabla}
\jac(\mathcal{J}_{v_1,v_2,v_3})(\bm{\omega})=\det(r^{-1}\bm{A})\det(c\bm{I_{2d}}+\bm{\omega}\nabla_{\bm{\omega}}^Tc)\simeq
r^{-2d}c^{2d}\left(1+c^{-1}\langle\bm{\omega},\nabla_{\bm{\omega}}c\rangle\right).
\end{equation}
Recalling  $\bm{\tilde{\omega}}=\nabla_{\bm{\omega}}\pi(\bm{\omega})=\begin{pmatrix}
\omega_2\\
\omega_1
\end{pmatrix}$, we obtain $
c^{-1}\langle\bm{\omega},\nabla_{\bm{\omega}}c\rangle=\left(1-\displaystyle\frac{2\pi(\bm{\omega})}{1+\pi(\bm{\omega})}\right).$
Hence \eqref{jac with nabla} and \eqref{relation cross-c} imply \eqref{equality for jacobian}.
To obtain \eqref{jacobian}, we combine \eqref{equality for jacobian} and estimate \eqref{ineq on omegas in domain}.

\textit{(iii)}:   Let us first show that $\mathcal{J}_{v_1,v_2,v_3}:\mathcal{S}_{v_1,v_2,v_3}^+\to\mathbb{E}_1^{2d-1}\setminus\{r^{-1}\bm{v}\}$.
Fix $\bm{\omega}=(\omega_1,\omega_2)\in\mathcal{S}_{v_1,v_2,v_3}^+$. Using conservation of relative velocities \eqref{rel.vel} and \eqref{reflection assumption},
we get
\begin{equation*}
|\nu_{1}|^2+|\nu_{2}|^2+|\nu_{1}-\nu_{2}|^2=\frac{|v_1^*-v_2^*|^2+|v_1^*-v_3^*|^2+|v_2^*-v_3^*|^2}{r^2}=1,
\end{equation*}
thus $\mathcal{J}_{v_1,v_2,v_3}:\mathcal{S}_{v_1,v_2,v_3}^+\to\mathbb{E}_1^{2d-1}\setminus\{r^{-1}\bm{v}\}.$
To prove injectivity, let $\bm{\omega},\bm{\omega'}\in\mathcal{S}_{v_1,v_2,v_3}^+$
with $\mathcal{J}_{v_1,v_2,v_3}(\bm{\omega})=\mathcal{J}_{v_1,v_2,v_3}(\bm{\omega'}).$ 
Since $\bm{A}$ is invertible, \eqref{jac after sur} implies 
$c\omega=c'\omega'$
where $c'=c_{\omega_1',\omega_2',v_1,v_2,v_3}$.
Since $\omega,\omega'\in\Omega$, we have $c,c'\neq 0$ thus $\bm{\omega}=c^{-1}c'\bm{\omega'}$.  Since $\bm{\omega},\bm{\omega}\in\mathcal{S}_{v_1,v_1,v_3}^+$, we obtain $c=c'$, thus $\bm{\omega}=\bm{\omega'}.$

To prove surjectivity, consider $\bm{\nu}\in\mathbb{E}_1^{2d-1}\setminus\{r^{-1}\bm{v}\}$. 
 and define
 $$\bm{\omega}:=\frac{-\sign(\langle{A}^{-1}(\bm{v}-r\bm{\nu}),\bm{v}\rangle)}{\sqrt{\langle\bm{A}^{-1}(\bm{v}-r\bm{\nu}),\bm{v}\rangle-\langle\bm{A}^{-1}_1(\bm{v}-r\bm{\nu}),\bm{A}^{-1}_2(\bm{v}-r\bm{\nu})\rangle}}\bm{A}^{-1}(\bm{v}-r\bm{\nu}).$$
By \eqref{reflection assumption} and the fact that $\bm{\nu}\in\mathbb{E}_1^{2d-1}\setminus\{r^{-1}\bm{v}\}$, we have that $\bm{\omega}\in\mathcal{S}_{v_1,v_2,v_3}^{+}$ is the unique solution in $\Omega$ of $\mathcal{J}_{v_1,v_2,v_3}(\bm{\omega})=\bm{\nu}$. Relation \eqref{S=level sets} follows from the fact $\mathcal{J}_{v_1,v_2,v_3}:\Omega\to\mathbb{R}^{2d}\setminus\{r^{-1}\bm{v}\}$ and the previous consideration.\\
  \textit{(iv)}: We easily calculate
$
4\Psi(\bm{\nu})\leq |\nabla\Psi(\bm{\nu})|^2\leq 16\Psi(\bm{\nu}),$ for all $\bm{\nu}\in\mathbb{R}^{2d},
$
so $\nabla\Psi(\bm{\nu})\neq 0,$ for all $ \bm{\nu}\in [\frac{1}{2}<\Psi<\frac{3}{2}]$.
To prove the estimate we will rely on Lemma \ref{substitution lemma} (see Appendix). We have
\begin{align}
&\int_{\mathcal{S}_{v_1,v_2,v_3}^+}(g\circ\mathcal{J}_{v_1,v_2,v_3})(\bm{\omega})|\jac\mathcal{J}_{v_1,v_2,v_3}(\bm{\omega})|\frac{|\nabla\Psi(\mathcal{J}_{v_1,v_2,v_3}(\bm{\omega}))|}{|\nabla(\Psi\circ\mathcal{J}_{v_1,v_2,v_3})(\bm{\omega})|}\,d\bm{\omega}\nonumber\\
&=\int_{[\Psi\circ\mathcal{J}_{v_1,v_2,v_3}=1]}(g\circ\mathcal{J}_{v_1,v_2,v_3})(\bm{\omega})|\jac\mathcal{J}_{v_1,v_2,v_3}(\bm{\omega})|\frac{|\nabla\Psi(\mathcal{J}_{v_1,v_2,v_3}(\bm{\omega}))|}{|\nabla(\Psi\circ\mathcal{J}_{v_1,v_2,v_3})(\bm{\omega})|}\,d\bm{\omega}\label{S= level sets app}\\
&=\int_{[\Psi=1]}g(\bm{\nu})\mathcal{N}_{\mathcal{J}_{v_1,v_2,v_3}}(\bm{\nu},[\Psi\circ\mathcal{J}_{v_1,v_2,v_3}=1])\,d\bm{\nu}\label{application of sub lemma}\\
&=\int_{\mathbb{E}_1^{2d-1}}g(\bm{\nu})\mathcal{N}_{\mathcal{J}_{v_1,v_2,v_3}}(\bm{\nu},\mathcal{S}_{v_1,v_2,v_3}^+)\,d\bm{\nu},\label{apply ellipsoid}
\end{align}
where to obtain \eqref{S= level sets app} we use \eqref{S=level sets}, to obtain \eqref{application of sub lemma} we  use Lemma \ref{substitution lemma}, to obtain \eqref{apply ellipsoid} we use \eqref{ellipsoid} and \eqref{S=level sets}. Moreover, by the chain rule and \eqref{matrix derivative lemma}, we obtain
\begin{equation*}
\frac{|\nabla(\Psi\circ \mathcal{J}_{v_1,v_{2},v_{3}})(\bm{\omega})|}{|\nabla\Psi(\mathcal{J}_{v_1,v_2,v_3}(\bm{\omega}))|}=\frac{|D^T\mathcal{J}_{v_1,v_{2},v_{3}}(\bm{\omega})\nabla\Psi(\mathcal{J}_{v_1,v_{2},v_{3}}(\bm{\omega}))|}{|\nabla\Psi(\mathcal{J}_{v_1,v_2,v_3}(\bm{\omega}))|}
= C_d \|D\mathcal{J}_{v_1,v_2,v_3}(\bm{\omega})\|_\infty\leq C_d,
\end{equation*}
and  \eqref{substitution estimate} follows, since $g\geq 0$.
\end{proof}
\subsection{Ellipsoidal estimates}\label{subsec:ellipsoidal}
Now, we derive the ellipsoidal estimates which will enable us to control post-collisional configurations.

\begin{lemma}\label{matrix lemma} Let $v_1,v_2,v_3\in\mathbb{R}^d$ and $r>0$ satisfying
$|v_1-v_2|^2+|v_1-v_3|^2+|v_2-v_3|^2=r^2.$
Denoting $(\nu_1,\nu_2)=\mathcal{J}_{v_1,v_2,v_3}(\omega_1,\omega_2)$ and considering $\rho>0$, the following holds:
\begin{equation*}
\begin{aligned}
\begin{pmatrix}
v_1^*\\
v_2^*
\end{pmatrix}
\in \bm{K_\rho}&\Leftrightarrow 
\begin{pmatrix}
\nu_{1}\\
\nu_{2}
\end{pmatrix}\in \bm{S_{12}^{-1}\bar{K}_{\rho/r}},\quad
\begin{pmatrix}
v_1^*\\
v_3^*
\end{pmatrix}
\in \bm{K_\rho}&\Leftrightarrow 
\begin{pmatrix}
\nu_{1}\\
\nu_{2}
\end{pmatrix}\in \bm{S_{13}}^{-1}\bm{\bar{K}_{\rho/r}},
\end{aligned}
\end{equation*}
\begin{equation}\label{matrix}
\bm{S_{12}}=\begin{pmatrix}
I_d & I_d\\
-2I_d &I_d\\
\end{pmatrix},\quad
\bm{S_{13}}=\begin{pmatrix}
 I_d &I_d\\
I_d &-2I_d\\
\end{pmatrix},
\end{equation}
and $\bm{K_\rho}$ is either of the form  $K_\rho^d\times\mathbb{R}^d$ or $\mathbb{R}^d\times K_\rho^d$ 
while $\bm{\bar{K}_{\rho/r}}$ is either of the form  $\bar{K}_{\rho/r}^{d}\times\mathbb{R}^d$ or $\mathbb{R}^d\times \bar{K}_{\rho/r}^{d}$ respectively, and $K_{\rho}^d$, $\bar{K}_{\rho/r}^{d}$ are $d$-cylinders or radius $\rho$ and $\rho/r$ respectively.
\end{lemma}
\begin{proof}
 Using \eqref{definition of v} to eliminate $c\omega_{1},c\omega_{2}$ from \eqref{formulas of collision}, we obtain
\begin{equation*}
\begin{aligned}
v_1^*&=\frac{v_1+v_2+v_3}{3}+\frac{r}{3}(\nu_{1}+\nu_{2}),\\
v_2^*&=\frac{v_1+v_2+v_3}{3}+\frac{r}{3}(-2\nu_{1}+\nu_{2}),\\
v_3^*&=\frac{v_1+v_2+v_3}{3}+\frac{r}{3}(\nu_{1}-2\nu_{2}).
\end{aligned}
\end{equation*}
The conclusion is immediate after a translation and a dilation.
\end{proof}
Recalling $\mathbb{E}_1^{2d-1}$ from \eqref{ellipsoid}, one can see that $\bm{S_{12}}(\mathbb{E}_1^{2d-1})=\bm{S_{13}}(\mathbb{E}_1^{2d-1})$. We will denote
\begin{equation}\label{S-ellipsoid}
\mathcal{S}:=\bm{S_{12}}(\mathbb{E}_1^{2d-1})=\bm{S_{13}}(\mathbb{E}_1^{2d-1})=\left\{(y_1,y_2)\in\mathbb{R}^{2d}:|y_1|^2+|y_2|^2+\langle y_1,y_2\rangle=\frac{3}{2}\right\}.
\end{equation}

The following result  will allow us to derive the ellipsoidal estimates from the spherical estimates. 

\begin{lemma}\label{embeddings} There  exist linear bijections $T_1,T_2,P_1,P_2:\mathbb{R}^{2d}\to\mathbb{R}^{2d}$ and $c>0$, with the following properties:\\
{\it(i)} $T_1(\mathcal{S})=\mathbb{S}_1^{2d-1}$ and for any $\rho>0$, there holds $T_{1}(\bar{K}_\rho^d\times\mathbb{R}^d)\subseteq\widetilde{K}_{c\rho}^d\times\mathbb{R}^d$,\\
{\it(ii)} $T_2(\mathcal{S})=\mathbb{S}_1^{2d-1}$ and for any $\rho>0$, there holds:  $T_{2}(\mathbb{R}^d\times \bar{K}_\rho^d)\subseteq\widetilde{K}_{c\rho}^d\times\mathbb{R}^d$,\\
{\it(iii)} $P_1(\mathbb{E}_1^{2d-1})=\mathbb{S}_1^{2d-1}$ and for any $\rho>0$, there holds: $P_1(\bar{K}_{\rho}^d\times\mathbb{R}^{d})\subseteq \widetilde{K}_{c\rho}^{d}\times\mathbb{R}^{d}$,\\
{\it(iv)} $P_2(\mathbb{E}_1^{2d-1})=\mathbb{S}_1^{2d-1}$ and for any $\rho>0$, there holds: $P_2(\mathbb{R}^{d}\times \bar{K}_{\rho}^d)\subseteq \widetilde{K}_{c\rho}^{d}\times\mathbb{R}^{d}$,\\
where $\bar{K}_\rho^d$ is any $d$-cylinder of radius $\rho$ and $\widetilde{K}_{c\rho}^d$ is a $d$-cylinder of radius $c\rho$ and same direction as $\bar{K}_\rho^d$.
\end{lemma}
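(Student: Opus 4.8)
\textbf{Proof plan for Lemma \ref{embeddings}.}

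The plan is to construct each of the four linear maps explicitly by diagonalizing (up to a linear change of coordinates) the quadratic form defining the relevant quadric, while keeping track of how a cylinder of the form $K_\rho^d\times\mathbb{R}^d$ (or $\mathbb{R}^d\times K_\rho^d$) transforms. The key observation is that all four quadratic forms in play, namely $\Psi(\nu_1,\nu_2)=|\nu_1|^2+|\nu_2|^2+|\nu_1-\nu_2|^2$ (whose unit level set is $\mathbb{E}_1^{2d-1}$) and $Q(y_1,y_2)=|y_1|^2+|y_2|^2+\langle y_1,y_2\rangle$ (whose level set at $3/2$ is $\mathcal{S}$), are block forms built out of $d\times d$ blocks that are scalar multiples of $I_d$. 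Hence each form can be written as $\langle M(z\otimes I_d), z\otimes I_d\rangle$ for a fixed symmetric $2\times 2$ matrix $M$ acting on the ``outer'' $\mathbb{R}^2$ factor, and a linear map of the shape $A\otimes I_d$ with $A$ a $2\times2$ invertible matrix will send one such quadric to another precisely when $A^T M' A = M$. This reduces everything to a $2\times 2$ linear algebra computation.

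Concretely, first I would write $\Psi$ and $Q$ in the block form above: for $\Psi$ the associated $2\times2$ matrix is $M_\Psi=\begin{pmatrix}2&-1\\-1&2\end{pmatrix}$ (so $\Psi=2|\nu_1|^2+2|\nu_2|^2-2\langle\nu_1,\nu_2\rangle$), and for $Q$ it is $M_Q=\begin{pmatrix}1&1/2\\1/2&1\end{pmatrix}$. Both are positive definite, so each admits a (non-unique) factorization $M=B^TB$ with $B$ a $2\times2$ invertible matrix; then the map $z\mapsto (B\otimes I_d)z$ carries the corresponding quadric onto the unit sphere $\mathbb{S}_1^{2d-1}$, up to the scalar normalization needed (for $\mathcal{S}$ one first rescales by $\sqrt{2/3}$ to pass from level $3/2$ to level $1$). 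This produces candidates for $T_1,T_2,P_1,P_2$; among the many choices of $B$ one picks one whose first block row, which determines the image of the cylinder axis, has the desired support. The second step is to check the cylinder condition. A cylinder $K_\rho^d(w,u)\times\mathbb{R}^d$ is exactly the set $\{(x_1,x_2): \dist(x_1,L_{w,u})\le\rho\}$, i.e.\ a constraint on $x_1$ alone; under a block map $A\otimes I_d$ with $A=\begin{pmatrix}a&b\\c&e\end{pmatrix}$ the new first coordinate is $a x_1 + b x_2$. If $b\ne 0$ this is no longer a single-variable constraint, so one must instead bound: $\dist$ in the image is controlled by $\dist$ in the first coordinate of the preimage, giving an inclusion into $\widetilde K_\rho^d\times\mathbb{R}^d$ with proportionality constant depending only on $\|A\|$ and $\|A^{-1}\|$ — which is exactly the meaning of $\widetilde K_\rho^d$ in the paper's notation. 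So the inclusion $T_1(K_\rho^d\times\mathbb{R}^d)\subseteq\widetilde K_\rho^d\times\mathbb{R}^d$ holds for \emph{any} such linear $A$, uniformly in $\rho$; the cases $\mathbb{R}^d\times K_\rho^d$ for $T_2,P_2$ are identical after swapping the two blocks, and similarly $P_1$ with $\mathbb{E}_1^{2d-1}$ replaced appropriately. This is where one invokes that $\widetilde K_\rho^d$ only needs radius proportional to $\rho$ with $\rho$-independent constant.

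The main obstacle, and the only genuinely substantive point, is the \emph{simultaneous} requirement: the same map $T_i$ (resp.\ $P_i$) must both carry the quadric to the sphere \emph{and} respect the cylinder direction. For $T_1,P_1$ we need the first block of the chosen factorization to be ``aligned'' so that the first $\mathbb{R}^d$ factor is preserved up to the $\widetilde K$ tolerance — but as noted in the previous paragraph, the cylinder inclusion is automatic for \emph{any} invertible block-scalar linear map, so the only real constraint is that $A\otimes I_d$ maps the quadric to the sphere, which is solved by the $2\times2$ factorization. Thus the apparent coupling dissolves once one recognizes that the cylinder statement is robust under all linear block maps. I would therefore organize the write-up as: (1) record the block/$2\times2$ reduction; (2) exhibit explicit $2\times2$ matrices $B_\Psi, B_Q$ with $B^TB = $ the relevant form (elementary, e.g.\ via Cholesky); (3) set $T_1 = B_Q\otimes I_d$ composed with the scalar $\sqrt{2/3}$, $P_1=B_\Psi\otimes I_d$, and $T_2,P_2$ the same post-composed with the block swap $(z_1,z_2)\mapsto(z_2,z_1)$; (4) verify the quadric$\to$sphere identities by direct substitution; (5) prove the general cylinder-inclusion lemma for block-scalar linear maps and apply it four times. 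Step (5) is the one requiring a short argument (estimating distance to a line under a bi-Lipschitz linear map), but it is routine; everything else is bookkeeping.
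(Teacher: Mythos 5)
Your overall strategy is in fact the paper's: the authors simply exhibit the four matrices, and those matrices are precisely the block-triangular square roots you describe — e.g.\ $P_1=\begin{pmatrix}\frac{\sqrt{6}}{2}I_d&0\\-\frac{\sqrt{2}}{2}I_d&\sqrt{2}I_d\end{pmatrix}$ satisfies $P_1^TP_1=\begin{pmatrix}2I_d&-I_d\\-I_d&2I_d\end{pmatrix}$, the matrix of $\Psi$, and $T_1$ is the analogous lower-triangular square root of $\tfrac{2}{3}M_Q$. So steps (1)--(4) of your plan reproduce the paper's construction. However, step (5) rests on a false claim, and it is exactly the claim you use to declare the "apparent coupling" between the quadric condition and the cylinder condition vacuous.

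The false claim is that $(A\otimes I_d)(K_\rho^d\times\mathbb{R}^d)\subseteq\widetilde K_\rho^d\times\mathbb{R}^d$ for \emph{every} invertible block-scalar $A=\begin{pmatrix}aI_d&bI_d\\cI_d&eI_d\end{pmatrix}$. If $b\neq 0$ this fails: take $w$ on the axis of $K_\rho^d$, so that $(w,x_2)\in K_\rho^d\times\mathbb{R}^d$ for every $x_2\in\mathbb{R}^d$; the first coordinate of the image is $aw+bx_2$, which sweeps out all of $\mathbb{R}^d$ as $x_2$ varies, and hence cannot lie in any cylinder $K_{c\rho}^d$ (recall $\widetilde K_\rho^d$ must be a genuine $d$-cylinder of radius $c\rho$ with $c$ independent of $\rho$, per the notation of Subsection \ref{subsec:spherical}). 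Your heuristic that "$\dist$ in the image is controlled by $\dist$ in the first coordinate of the preimage" breaks down precisely because $x_2$ is unconstrained. The correct image of $K_\rho^d\times\mathbb{R}^d$ is a $\rho$-proportional neighborhood of a \emph{slanted} $(d+1)$-dimensional subspace, which is of the required product form only when $b=0$. Consequently the coupling does \emph{not} dissolve: the cylinder requirement forces the first block row of $T_1,P_1$ to be a multiple of $(I_d,0)$ and that of $T_2,P_2$ to be a multiple of $(0,I_d)$, and one must take specifically the (block-)triangular Cholesky factorization rather than an arbitrary square root $B^TB=M$. Fortunately such a factorization always exists for a positive definite $2\times2$ form, so the construction goes through with that choice — and then the cylinder inclusion is immediate, with constant $|a|$ (resp.\ $|b|$), no separate lemma needed. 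A related slip: $T_2,P_2$ must be obtained by \emph{pre}-composing $T_1,P_1$ with the block swap (using the symmetry of $\mathcal{S}$ and $\mathbb{E}_1^{2d-1}$ under the swap), not post-composing; post-composition destroys the triangular structure of the first block row and again breaks the cylinder inclusion.
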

\begin{proof}
A direct algebraic calculation shows that the maps given by:
{\small
\begin{equation}
\begin{aligned}
T_1&=\begin{pmatrix}
-\frac{\sqrt{2}}{2}I_d&0\\
\\
\frac{\sqrt{6}}{6}I_d&\frac{\sqrt{6}}{3}I_d
\end{pmatrix},\hspace{0.1cm}
T_2&=\begin{pmatrix}
0&-\frac{\sqrt{2}}{2}I_d\\
\\
\frac{\sqrt{6}}{3}I_d&\frac{\sqrt{6}}{6}I_d
\end{pmatrix},\hspace{0.1cm}
P_1&=\begin{pmatrix}
\frac{\sqrt{6}}{2}I_d&0\\
\\
-\frac{\sqrt{2}}{2}I_d&\sqrt{2}I_d
\end{pmatrix},\hspace{0.1cm}
P_2&=\begin{pmatrix}
0&\frac{\sqrt{6}}{2}I_d\\
\\
\sqrt{2}I_d&-\frac{\sqrt{2}}{2}I_d
\end{pmatrix},\\
\end{aligned}
\end{equation}}
satisfy the properties listed above.
\end{proof}

Now we are ready to apply the results of Subsection \ref{subsec:spherical} to obtain ellipsoidal estimates. 
Recalling from \eqref{strip general} the strip 
$
W_{\rho,1,1}^{2d},
$
we obtain the following ellipsoidal estimates:
\begin{proposition}\label{ellipsoid estimates}
For any $r,\rho>0$, the following estimates hold:
\begin{enumerate}[(i)]
\item $\left|\mathcal{S}\cap\left(\bar{K}_{\rho/r}^d\times\mathbb{R}^d\right)\right|_{\mathcal{S}}\lesssim\min\left\{1,\displaystyle\left(\frac{\rho}{r}\right)^{\frac{d-1}{2}}\right\}$.
\item $\left|\mathcal{S}\cap\left(\mathbb{R}^d\times \bar{K}_{\rho/r}^d\right)\right|_{\mathcal{S}}\lesssim\min\left\{1,\displaystyle\left(\frac{\rho}{r}\right)^{\frac{d-1}{2}}\right\}$.
 \item $\left|\mathbb{E}_1^{2d-1}\cap\left(B_{\rho/r}^d\times\mathbb{R}^d\right)\right|_{\mathbb{E}_1^{2d-1}}\lesssim\min\left\{1,\displaystyle\left(\frac{\rho}{r}\right)^{\frac{d-1}{2}}\right\}$.
 \item $\left|\mathbb{E}_1^{2d-1}\cap\left(\mathbb{R}^d\times B_{\rho/r}^d\right)\right|_{\mathbb{E}_1^{2d-1}}\lesssim\min\left\{1,\displaystyle\left(\frac{\rho}{r}\right)^{\frac{d-1}{2}}\right\}$.
 \item $\left|\mathbb{E}_1^{2d-1}\cap W_{\rho/r,1,1}^{2d}\right|_{\mathbb{E}_1^{2d-1}}\lesssim\min\left\{1,\displaystyle\left(\frac{\rho}{r}\right)^{\frac{d-1}{2}}\right\}$.
 \end{enumerate}
\end{proposition}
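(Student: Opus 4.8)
The five estimates in Proposition \ref{ellipsoid estimates} should all reduce, via the linear bijections constructed in Lemma \ref{embeddings}, to the spherical estimate from Lemma \ref{new lemma} applied to $\mathbb{S}_1^{2d-1}$. The plan is to treat the five parts in three groups, exploiting the fact that a linear bijection which maps one of our quadric surfaces onto the unit sphere $\mathbb{S}_1^{2d-1}$ distorts surface measure by a factor bounded above and below by constants depending only on $d$, and that it maps a $d$-cylinder of radius $\rho$ into a (relative) $d$-cylinder $\widetilde K_\rho^d$ of comparable radius.

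\textbf{Parts (i) and (ii).} For part (i), apply $T_1$ from Lemma \ref{embeddings}(i): since $T_1(\mathcal{S})=\mathbb{S}_1^{2d-1}$ and $T_1(K_{\rho/r}^d\times\mathbb{R}^d)\subseteq\widetilde K_{\rho/r}^d\times\mathbb{R}^d$, and since $T_1$ distorts the surface measure of $\mathcal{S}$ by a $d$-dependent constant, we get
\begin{equation*}
\left|\mathcal{S}\cap\left(K_{\rho/r}^d\times\mathbb{R}^d\right)\right|_{\mathcal{S}}\lesssim \left|\mathbb{S}_1^{2d-1}\cap\left(\widetilde K_{\rho/r}^d\times\mathbb{R}^d\right)\right|_{\mathbb{S}_1^{2d-1}}\lesssim\min\left\{1,\left(\tfrac{\rho}{r}\right)^{\frac{d-1}{2}}\right\},
\end{equation*}
where the last step is Lemma \ref{new lemma} with the radius taken to be $1$ (the implicit constant in $\widetilde K$ is absorbed). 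Part (ii) is identical using $T_2$ and Lemma \ref{embeddings}(ii).

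\textbf{Parts (iii) and (iv).} For part (iii), apply $P_1$ from Lemma \ref{embeddings}(iii): it sends $\mathbb{E}_1^{2d-1}$ onto $\mathbb{S}_1^{2d-1}$ and $K_{\rho/r}^d\times\mathbb{R}^d$ into $\widetilde K_{\rho/r}^d\times\mathbb{R}^d$. Since $B_{\rho/r}^d\times\mathbb{R}^d\subseteq K_{\rho/r}^d\times\mathbb{R}^d$ (a ball is contained in any cylinder through its center of the same radius), the same chain of inequalities as above, followed by Lemma \ref{new lemma}, gives the bound. Part (iv) uses $P_2$ and Lemma \ref{embeddings}(iv) in the same way. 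Part (v) follows from (iii) (or (iv)) once we observe that the strip $W_{\rho/r}^{2d}=\{(w_1,w_2):|w_1-w_2|\leq\rho/r\}$ is the preimage, under the linear map $(w_1,w_2)\mapsto(w_1-w_2,w_2)$, of $B_{\rho/r}^d\times\mathbb{R}^d$; composing this linear map with $P_1$ yields a single linear bijection carrying $\mathbb{E}_1^{2d-1}$ onto $\mathbb{S}_1^{2d-1}$ and $W_{\rho/r}^{2d}$ into a relative cylinder $\widetilde K_{\rho/r}^d\times\mathbb{R}^d$, so Lemma \ref{new lemma} applies once more. The only point requiring a small verification is the measure-distortion claim: for a fixed linear isomorphism $L$ with $L(\Sigma)=\mathbb{S}_1^{2d-1}$ for one of our quadrics $\Sigma$, the induced surface measures on $\Sigma$ and $L(\Sigma)$ are mutually absolutely continuous with Radon--Nikodym derivative bounded above and below by constants depending only on $L$ (hence only on $d$), which is a standard consequence of the area formula; this is where I expect the main (though still routine) bookkeeping to lie, since one must make sure the cylinder axis is handled correctly so that $L$ maps a cylinder to a cylinder rather than a more general tube. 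Everything else is a direct assembly of Lemmas \ref{new lemma} and \ref{embeddings}.
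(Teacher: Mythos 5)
Your treatment of parts (i)--(iv) is essentially the paper's: apply $T_1,T_2,P_1,P_2$ from Lemma \ref{embeddings}, use the comparability of surface measures under a fixed linear bijection between compact level sets (the paper formalizes your area-formula remark as part (ii) of Lemma \ref{substitution lemma}), observe that $B_{\rho/r}^d$ sits inside a $d$-cylinder of the same radius, and finish with Lemma \ref{new lemma}. The only genuine divergence is part (v). The paper embeds the strip $W_{\rho/r}^{2d}$ into the $2d$-cylinder $K_{\rho/r}^{2d}(0,\bar n)$ about the diagonal line and then into a product of two $d$-cylinders, whereas you straighten the strip by the linear map $L(w_1,w_2)=(w_1-w_2,w_2)$, which sends it exactly onto $B_{\rho/r}^d\times\mathbb{R}^d$. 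Your route is arguably the more robust one (the strip is the $\rho/r$-neighborhood of the $d$-dimensional diagonal subspace, which is a much larger set than a neighborhood of a line, so the reduction to a single linear image of a ball-type set is cleaner), but your specific choice of composite map is off: $P_1\circ L$ does not carry $\mathbb{E}_1^{2d-1}$ onto $\mathbb{S}_1^{2d-1}$, because $L(\mathbb{E}_1^{2d-1})$ is not $\mathbb{E}_1^{2d-1}$. A direct computation with $u=\nu_1-\nu_2$, $v=\nu_2$ turns the defining equation $|\nu_1|^2+|\nu_2|^2+|\nu_1-\nu_2|^2=1$ into $|u|^2+|v|^2+\langle u,v\rangle=\tfrac12$, so $L(\mathbb{E}_1^{2d-1})=\tfrac{1}{\sqrt3}\mathcal{S}$; the correct composite is therefore a dilation of $T_1\circ L$ (equivalently, apply part (i) of the proposition to $L(\mathbb{E}_1^{2d-1})$ after rescaling), not anything involving $P_1$. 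With that substitution your argument for (v) closes, and everything else in the proposal stands as written.
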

\begin{proof}  Let us first provide the proof of \textit{(i)}. 
Lemma \ref{embeddings} asserts  $T_1:\mathcal{S}\to\mathbb{S}_1^{2d-1}$ is a linear bijection such that  $T_1(\bar{K}_{\rho/r}^d\times\mathbb{R}^d)\subseteq \widetilde{K}_{c\rho/r}\times\mathbb{R}^d$, thus substituting $\bm{\theta}=T_1\bm{\omega}$, we have
\begin{align*}
\int_{\mathcal{S}}\mathds{1}_{\bar{K}_{\rho/r}^d\times\mathbb{R}^d}(\bm{\omega})\,d\bm{\omega}&=\int_{\mathcal{S}}\mathds{1}_{T_1(\bar{K}_{\rho/r}^d\times\mathbb{R}^d)}(T_1\bm{\omega})\,d\bm{\omega}\simeq \int_{\mathbb{S}_1^{2d-1}}\mathds{1}_{T_1(\bar{K}_{\rho/r}^d\times\mathbb{R}^d)}(\bm{\theta})\,d\bm{\theta}\nonumber\\
&\lesssim  \int_{\mathbb{S}_1^{2d-1}}\mathds{1}_{\widetilde{K}_{c\rho/r}^d\times\mathbb{R}^d}(\bm{\theta})\,d\bm{\theta}\lesssim \min\left\{1,(\frac{\rho}{r})^{\frac{d-1}{2}}\right\},
\end{align*}
by  Lemma \ref{new lemma}.  The proof for \textit{(ii)} is identical using bijection $T_2$ instead.
For estimates \textit{(iii)} and \textit{(iv)}  we use in a similar way  bijections $P_1$, $P_2$ and the fact that ball $B_{\rho/r}^d$ embeds in a cylinder of the form  $\bar{K}_{\rho/r}^d$.
For estimate \textit{(v)}, recalling notation from \eqref{strip general}, notice that 
$P_1(W_{\eta/r,1,1}^{2d})=W_{\eta/r,\mu,\lambda}^{2d},$ 
for $\mu=(3\sqrt{2}+\sqrt{6})/6$ and $\lambda=-\sqrt{6}/3$. Then the claim comes with a similar argument using Lemma \ref{new lemma strip} instead of Lemma \ref{new lemma}.
\end{proof}

\section{Good configurations and stability}
\label{sec_stability}

In this section we define good configurations and study their stability properties under the adjunction of a collisional pair of particles. Heuristically speaking, given $m\in\mathbb{N}$, a configuration $Z_m\in\mathbb{R}^{2dm}$ is called good configuration if the backwards interaction flow coincides with the backwards free flow. The aim of this section is to investigate conditions under which a given good configuration $Z_m$ remains a good configuration after adding a pair of  particles. This is possible on the complement of a small measure set of particles which is constructed in Proposition \ref{bad set}.
Proposition \ref{measure of B} uses the geometric tools developed in Section \ref{sec_geometric} to derive a measure estimate for this pathological set. 

This section is the heart of our contribution, since we will strongly rely on Proposition \ref{bad set} and Proposition  \ref{measure of B} when we use them inductively to control the differences  of the  BBGKY hierarchy truncated observable, given in \eqref{bbgky truncated time}, and the Boltzmann hierarchy truncated observable, given in \eqref{boltzmann truncated time}.

We recall the cylinder notation introduced at the beginning of Section \ref{sec_geometric}.

\subsection{Adjunction of new particles}  
We start with some definitions on the configurations we are using. Given $m\in\mathbb{N}$ and $\sigma>0$,  recall from \eqref{separated conf space}-\eqref{separated conf} the set of well-separated spatial configurations 
\begin{equation*}
\Delta_m^X(\sigma)=\{\widetilde{X}_m\in\mathbb{R}^{dm}: |\widetilde{x}_i-\widetilde{x}_j|>\sigma,\quad\forall 1\leq i<j\leq m\},\quad m\geq 2,\quad \Delta_1^X(\sigma)=\mathbb{R}^{2d},
\end{equation*}
and the set of well separated configurations
\begin{equation*}
\Delta_m(\sigma)=\Delta_m^X(\sigma)\times\mathbb{R}^{dm}.
\end{equation*}
Given $\sigma>0$, $t_0>0$, we define the set of good configurations as:
\begin{equation}\label{good conf def}
G_m(\sigma,t_0)=\left\{Z_m=(X_m,V_m)\in\mathbb{R}^{2dm}:Z_m(t)\in\Delta_m(\sigma),\quad\forall t\geq t_0\right\},
\end{equation}
where $Z_m(t)=(X_m-tV_m,V_m)$ denotes the backwards in time free flow of $Z_m=(X_m,V_m)$.
From  now on, we consider parameters 
$R>>1$ and $0< \delta,\eta,\epsilon_0,\alpha<<1$ satisfying:
\begin{equation}\label{choice of parameters}
 \alpha<<\epsilon_0<<\eta\delta,\quad R\alpha<<\eta\epsilon_0.
\end{equation}
For convenience we choose the parameters in \eqref{choice of parameters} in the very end of the paper, see \eqref{final parameters}.

The following result, see Lemma 12.2.1 in \cite{gallagher} for the proof, is useful for the adjunction of particles to a given configuration. 
 \begin{lemma}\label{adjuction of 1}
  Consider parameters $\alpha,\epsilon_0,R,\eta,\delta$  as in \eqref{choice of parameters} and $\epsilon<<\alpha$. Let $\bar{y}_1,\bar{y}_2\in\mathbb{R}^d$, with $|\bar{y}_1-\bar{y}_2|>\epsilon_0$ and $v_1\in B_R^d$. Then there is a $d$-cylinder $K_\eta^d\subseteq\mathbb{R}^d$, such that for any $Z_2=(y_1,y_2,v_1,v_2)$ with $y_1\in B_\alpha^d(\bar{y}_1)$, $y_2\in B_\alpha^d(\bar{y}_2)$ and  
 $v_2\in B_R^d\setminus K_\eta^d$, we have $Z_2\in G_2(\sqrt{2}\epsilon,0)\cap G_2(\epsilon_0,\delta)$. 
\end{lemma}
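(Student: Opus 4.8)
The statement concerns adding one particle, with velocity $v_2$ and position $y_2$ near a prescribed point $\bar y_2$, to a single given particle $(y_1,v_1)$ with $y_1$ near $\bar y_1$, in such a way that the resulting two-particle configuration $Z_2=(y_1,y_2,v_1,v_2)$ is a \emph{good configuration} in the two senses \textit{(i)} and \textit{(ii)}. The key quantitative observation is that the separation condition one wants to violate — namely $|x_1(t)-x_2(t)|\le\sigma$ for the backwards free flow $x_i(t)=y_i-tv_i$ — translates, for fixed positions and fixed $v_1$, into a condition on $v_2$ that confines $v_2$ to a thin tube around the line through $y_1-y_2$ in the direction... more precisely: $|x_1(t)-x_2(t)|^2=|(y_1-y_2)-t(v_1-v_2)|^2$, so the set of $v_2$ for which the backwards trajectory ever comes within distance $\sigma$ of particle $1$ is exactly the set of $v_2$ such that the line $\{(y_1-y_2)-t(v_1-v_2):t\ge 0\}$ meets $B_\sigma^d$, i.e. $v_1-v_2$ lies in a cone, equivalently $v_2$ lies in (a half of) a cylinder of radius governed by $\sigma/|y_1-y_2|\cdot(\text{range of }t)$, centered on the line through $v_1$ in direction $y_1-y_2$.

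First I would set up, for the given base points, the ``bad direction'' $y_1-y_2$: since $|\bar y_1-\bar y_2|>\epsilon_0$ and $|y_i-\bar y_i|\le\alpha$ with $\alpha\ll\epsilon_0$, the triangle inequality gives $|y_1-y_2|\ge\epsilon_0-2\alpha\ge\epsilon_0/2$, so the two points are genuinely separated and the direction $\omega:=(y_1-y_2)/|y_1-y_2|$ is well-defined and close to $(\bar y_1-\bar y_2)/|\bar y_1-\bar y_2|$. I would then define the candidate cylinder $K_\eta^d$ to be the $d$-cylinder of radius $\eta$ centered at $v_1$ with axis direction $\bar y_1-\bar y_2$ (this depends only on $\bar y_1,\bar y_2,v_1$, not on $y_1,y_2,v_2$, as required by the statement: ``there is a $d$-cylinder\dots such that for any $y_1,y_2,v_2\dots$''). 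Now take any $y_1\in B_\alpha^d(\bar y_1)$, $y_2\in B_\alpha^d(\bar y_2)$ and $v_2\in B_R^d\setminus K_\eta^d$, and consider the relative trajectory $r(t)=(y_1-y_2)-t(v_1-v_2)$, $t\ge 0$.

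For \textit{(i)}, $Z_2\in G_2(\sqrt2\epsilon,0)$ means $|r(t)|>\sqrt2\epsilon$ for all $t\ge 0$. Since $v_2\notin K_\eta^d$, the component of $v_1-v_2$ perpendicular to $\bar y_1-\bar y_2$ has length $>\eta$ (that is exactly what leaving the cylinder of radius $\eta$ around $v_1$ in that direction means; here I use that $v_2\ne v_1$ forces $|v_1-v_2|>\eta$ too, or handle the degenerate axis case separately). Decompose $r(t)$ into the component along $\bar y_1-\bar y_2$ and the perpendicular component; the perpendicular component is $(y_1-y_2)^\perp - t(v_1-v_2)^\perp$. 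Because $|y_1-y_2|\ge\epsilon_0/2$ and $(y_1-y_2)$ is within $2\alpha$ of being parallel to $\bar y_1-\bar y_2$ — so its perpendicular component has size $\lesssim\alpha$ — while $|(v_1-v_2)^\perp|>\eta$ and $\alpha\ll\eta\epsilon_0\Rightarrow\alpha\ll\eta$, a short estimate (the perpendicular part of $r(t)$ is small near $t=0$ but the longitudinal part is then $\approx|y_1-y_2|\ge\epsilon_0/2\gg\epsilon$, and for $t$ bounded away from $0$ either the perpendicular part is large or we are past the closest approach) shows $\min_{t\ge 0}|r(t)|\gtrsim\min\{\epsilon_0,\text{something}\}\gg\sqrt2\epsilon$ since $\epsilon\ll\alpha\ll\epsilon_0$. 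Concretely: the closest-approach time is $t_*=\langle y_1-y_2,v_1-v_2\rangle/|v_1-v_2|^2\ge 0$ or the minimum is at $t=0$; in the latter case $|r(0)|=|y_1-y_2|\ge\epsilon_0/2$; in the former, $|r(t_*)|\ge|(y_1-y_2)^\perp_{v_1-v_2}|$, and one checks this perpendicular-to-$(v_1-v_2)$ component is $\gtrsim\eta\epsilon_0/R\gg\epsilon$ using $|(v_1-v_2)^\perp|>\eta$, $|y_1-y_2|\ge\epsilon_0/2$, $|v_1-v_2|\le 2R$, and the near-parallelism of $y_1-y_2$ to the axis.

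For \textit{(ii)}, $Z_2\in G_2(\epsilon_0,\delta)$ means $|r(t)|>\epsilon_0$ for all $t\ge\delta$. Here the point is that for $t\ge\delta$ the longitudinal displacement $t(v_1-v_2)$ has moved us away, or the transversal term dominates. The cleanest route: by the choice of axis and radius, for $v_2\notin K_\eta^d$ the perpendicular (to $\bar y_1-\bar y_2$) part of $v_1-v_2$ exceeds $\eta$, hence for $t\ge\delta$ the perpendicular part of $r(t)$ has magnitude at least $t\eta - |(y_1-y_2)^\perp|\ge\delta\eta - O(\alpha)\ge\delta\eta/2>\epsilon_0$, using $\alpha\ll\eta\delta$ and $\epsilon_0\ll\eta\delta$. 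That immediately gives $|r(t)|>\epsilon_0$ for all $t\ge\delta$, i.e. \textit{(ii)}. (This same estimate, for $t\ge\delta$, also re-proves part of \textit{(i)} for large $t$, so the only delicate regime in \textit{(i)} is $0\le t\le\delta$, handled above via the longitudinal term staying $\approx|y_1-y_2|$ and the perpendicular-to-$(v_1-v_2)$ closest-approach bound.)

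\textbf{Main obstacle.} The genuinely fiddly point is getting the constants consistent in the small-time regime $0\le t\le\delta$ for part \textit{(i)}: one must simultaneously use that $y_1-y_2$ is \emph{almost} parallel to the reference axis $\bar y_1-\bar y_2$ (error $O(\alpha)$), that the reference axis is the one defining $K_\eta^d$, and that escaping $K_\eta^d$ guarantees a transversal velocity gap $>\eta$ relative to that \emph{fixed} axis rather than relative to the perturbed direction $y_1-y_2$ or to $v_1-v_2$. Converting ``$v_2\notin K_\eta^d(v_1,\bar y_1-\bar y_2)$'' into ``the trajectory $r(t)$ avoids $B_{\sqrt2\epsilon}^d$'' requires a clean lemma relating the transversal component of $v_1-v_2$ with respect to the \emph{fixed} axis to the distance of the line $\{r(t)\}$ from the origin, absorbing the $O(\alpha)$ perturbations; this is exactly where the hierarchy of scales $\alpha\ll\epsilon_0\ll\eta\delta$, $R\alpha\ll\eta\epsilon_0$, and $\epsilon\ll\alpha$ from \eqref{choice of parameters} is used, and one should simply track each inequality. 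Since the statement is attributed to \cite{gallagher} and is a one-particle adjunction (no collisional transformation yet), no new geometric input beyond elementary planar geometry in the plane spanned by $r(0)$ and $v_1-v_2$ is needed — the work is purely bookkeeping of the scale separations.
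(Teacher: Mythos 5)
Your proposal is correct and is essentially the paper's argument: you choose the same cylinder $K_\eta^d(v_1,\bar y_1-\bar y_2)$ and invoke the same scale hierarchy ($\epsilon\ll\alpha$, $R\alpha\ll\eta\epsilon_0$ for part \textit{(i)}; $\alpha\ll\epsilon_0\ll\eta\delta$ for part \textit{(ii)}). The only difference is presentational — the paper argues by contrapositive (a near-collision forces $v_1-v_2$ into a cone whose intersection with $B_{2R}^d$ sits in a cylinder of radius $6R\alpha/|\bar y_1-\bar y_2|<\eta$, via similar triangles), whereas you argue directly via the orthogonal decomposition and closest-approach distance — and the two are quantitatively equivalent.
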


\subsection{Stability of good configurations under adjunction of collisional pair}
\label{paragraph stab}
We prove a statement and a measure estimate regarding the stability of good configurations under the adjunction of a collisional pair of particles to any of the initial configurations. 

Recalling the  cross-section $b$  given in \eqref{cross}, given $v\in\mathbb{R}^d$, we denote
\begin{equation}\label{pre-post notation}
\left(\mathbb{S}_1^{2d-1}\times B_R^{2d}\right)^+(v)=\big\{(\omega_1,\omega_2,v_{1},v_{2})\in\mathbb{S}_{1}^{2d-1}\times B_R^{2d}:b(\omega_1,\omega_2,v_{1}-v,v_{2}-v)>0\big\}.
\end{equation}

We prove the following Proposition, which will be the inductive step of the convergence proof. We then provide the corresponding measure estimate.

Recall that given $m\in\mathbb{N}$ and $Z_m\in\mathbb{R}^{2dm}$ we denote as
$Z_m(t)=\left(X_m\left(t\right),V_m\left(t\right)\right)=(X_m-tV_m,V_m)$
the backwards evolution in time of $Z_m$.
In particular, $Z_m(0)=Z_m$.
Recall also  the notation from \eqref{phase space interior} 
$$\mathring{\mathcal{D}}_{m+2,\epsilon}=\left\{Z_{m+2}=(X_{m+2},V_{m+2})\in\mathbb{R}^{2d(m+2)}:d^2(x_i;x_j,x_k)>2\epsilon^2,\quad\forall i<j<k\in\left\{1,...,m+2\right\}\right\}.$$
\begin{proposition}\label{bad set} Consider parameters $\alpha,\epsilon_0,R,\eta,\delta$ as in \eqref{choice of parameters} and $\epsilon<<\alpha$.  Let $m\in\mathbb{N}$, $\bar{Z}_m=(\bar{X}_m,\bar{V}_m)\in G_m(\epsilon_0,0)$, $\ell\in\{1,...,m\}$ and $X_m\in B_{\alpha/2}^{dm}(\bar{X}_m)$.  Then there is a subset $\mathcal{B}_{\ell}(\bar{Z}_m)\subseteq (\mathbb{S}_1^{2d-1}\times B_R^{2d})^+(\bar{v}_\ell)$ such that:
\begin{enumerate}[(i)]
\item For any $(\omega_1,\omega_2,v_{m+1},v_{m+2})\in (\mathbb{S}_1^{2d-1}\times B_R^{2d})^+(\bar{v}_{\ell})\setminus\mathcal{B}_{\ell}(\bar{Z}_m)$, one has:
\begin{align}
Z_{m+2}(t)&\in\mathring{\mathcal{D}}_{m+2,\epsilon},\quad\forall t\geq 0,\label{pre-0}\\
Z_{m+2}&\in G_{m+2}(\epsilon_0/2,\delta),\label{pre-delta}\\
\bar{Z}_{m+2}&\in G_{m+2}(\epsilon_0,\delta),\label{pre-delta-}
\end{align}
where 
\begin{equation}\label{pre-notation}
\begin{aligned}
Z_{m+2}&=(x_1,...,x_{\ell},...,x_m,x_{m+1},x_{m+2},\bar{v}_1,...,\bar{v}_{\ell},...,\bar{v}_m,v_{m+1},v_{m+2}),\\
x_{m+1}&=x_{\ell}-\sqrt{2}\epsilon\omega_1,\quad x_{m+2}=x_{\ell}-\sqrt{2}\epsilon\omega_2,\\
\bar{Z}_{m+2}&=(\bar{x}_1,...,\bar{x}_{\ell},...,\bar{x}_m,\bar{x}_{\ell},\bar{x}_{\ell},\bar{v}_1,...,\bar{v}_{\ell},...,\bar{v}_m,v_{m+1},v_{m+2}).
\end{aligned}
\end{equation}

\item For any $(\omega_1,\omega_2,v_{m+1},v_{m+2})\in (\mathbb{S}_1^{2d-1}\times B_R^{2d})^+(\bar{v}_{\ell})\setminus\mathcal{B}_{\ell}(\bar{Z}_m)$, one has:
\begin{align}
Z_{m+2}^*(t)&\in\mathring{\mathcal{D}}_{m+2,\epsilon},\quad\forall t\geq 0,\label{post-0}\\
Z_{m+2}^*&\in G_{m+2}(\epsilon_0/2,\delta),\label{post-delta}\\
\bar{Z}_{m+2}^*&\in G_{m+2}(\epsilon_0,\delta),\label{post-delta-}
\end{align}
where 
\begin{equation}\label{post-notation}
\begin{aligned}
Z_{m+2}^*&=(x_1,...,x_{\ell},...,x_m,x_{m+1},x_{m+2},\bar{v}_1,...,\bar{v}_{\ell}^*,...,\bar{v}_m,v_{m+1}^*,v_{m+2}^*),\\
x_{m+1}&=x_{\ell}+\sqrt{2}\epsilon\omega_1,\quad x_{m+2}=x_{\ell}+\sqrt{2}\epsilon\omega_2,\\
\bar{Z}_{m+2}^*&=(\bar{x}_1,...,\bar{x}_{\ell},...,\bar{x}_m,\bar{x}_{\ell},\bar{x}_{\ell},\bar{v}_1,...,\bar{v}_{\ell}^*,...,\bar{v}_m,v_{m+1}^*,v_{m+2}^*),\\
(\bar{v}_{\ell}^*,v_{m+1}^*,v_{m+2}^*)&=T_{\omega_1,\omega_2}(\bar{v}_{\ell},v_{m+1},v_{m+2}).
\end{aligned}
\end{equation}
\end{enumerate}
\end{proposition}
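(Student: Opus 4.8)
The plan is to construct $\mathcal{B}_\ell(\bar Z_m)$ as a finite union of ``bad'' sets, one for each potential future encounter between the pair of added particles (labeled $m+1,m+2$) and the existing particles, and one for the encounter of $m+1$ with $m+2$ themselves; then to verify that removing these sets leaves precisely the collision-free, well-separated behavior claimed in \eqref{pre-0}--\eqref{pre-delta-} and \eqref{post-0}--\eqref{post-delta-}. The starting observation is that since $\bar Z_m\in G_m(\epsilon_0,0)$, all pairs among the first $m$ particles remain $\epsilon_0$-separated for all backwards times, and since $X_m\in B_{\alpha/2}^{dm}(\bar X_m)$ with $\epsilon\ll\alpha\ll\epsilon_0$, the same holds (with a slightly degraded constant) for the perturbed positions $X_m$; so the only pairs that can possibly collide or come close under the backwards flow are those involving $m+1$ or $m+2$. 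This reduces everything to controlling, for each $i\in\{1,\dots,m\}$, the pair $(i,m+1)$, the pair $(i,m+2)$, and the pair $(m+1,m+2)$, in both the pre-collisional configuration $Z_{m+2}$ and the post-collisional configuration $Z_{m+2}^*$.

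For the pre-collisional configuration \eqref{pre-notation}, the added particles have positions $x_{m+1}=x_\ell-\sqrt2\epsilon\omega_1$, $x_{m+2}=x_\ell-\sqrt2\epsilon\omega_2$ and velocities $v_{m+1},v_{m+2}$ which are \emph{unchanged} from the integration variables. Here I would apply Lemma \ref{adjuction of 1} directly: for each $i\ne\ell$, taking $\bar y_1=\bar x_i$, $\bar y_2=\bar x_\ell$ (which satisfy $|\bar y_1-\bar y_2|>\epsilon_0$ since $\bar Z_m\in G_m(\epsilon_0,0)$) and the velocity $v_{m+1}$ (resp.\ $v_{m+2}$), the lemma produces a $d$-cylinder $K_\eta^d$ in the $v_{m+1}$-variable (resp.\ $v_{m+2}$) outside of which the relevant two-particle subconfiguration lies in $G_2(\sqrt2\epsilon,0)\cap G_2(\epsilon_0,\delta)$. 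The pair $(\ell,m+1)$ needs separate attention since $|x_\ell-x_{m+1}|=\sqrt2\epsilon|\omega_1|$ may be small; but $d^2(x_\ell;x_{m+1},\cdot)$ involves $|x_\ell-x_{m+1}|^2+|x_\ell-\cdot|^2$ and the pre-collisional condition $b(\omega_1,\omega_2,v_{m+1}-\bar v_\ell,v_{m+2}-\bar v_\ell)<0$ — which holds on the complement of the post-collisional domain — forces this quantity to \emph{increase} under the backwards flow, exactly as in \eqref{ijk>sigma}; I would extract the grazing locus (a null set) and treat the rest by that monotonicity. The pair $(m+1,m+2)$: their relative velocity is $v_{m+1}-v_{m+2}$, relative position $\sqrt2\epsilon(\omega_2-\omega_1)$, and one again removes a cylinder in velocity space via Lemma \ref{adjuction of 1}-type reasoning. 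Collecting all these cylinders gives the pre-collisional part of $\mathcal{B}_\ell(\bar Z_m)$; the statements about $\bar Z_{m+2}$ follow identically, since $\bar Z_{m+2}$ is just the $\epsilon\to0$ limit with $\bar x_{m+1}=\bar x_{m+2}=\bar x_\ell$ and the separations are governed by the same velocity exclusions.

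The post-collisional case \eqref{post-notation} is the genuinely harder one, because now the velocities $(\bar v_\ell^*,v_{m+1}^*,v_{m+2}^*)=T_{\omega_1,\omega_2}(\bar v_\ell,v_{m+1},v_{m+2})$ depend nonlinearly on $(\omega_1,\omega_2)$, so excluding a cylinder ``in velocity space'' is not the same as excluding a small set of $(\omega_1,\omega_2)$. The strategy here is to fix $v_{m+1},v_{m+2}$ and view the bad set in the $(\omega_1,\omega_2)$-variables: the conditions that $(\ell,m+1),(\ell,m+2),(i,m+1),(i,m+2),(m+1,m+2)$ avoid collisions and stay $\epsilon_0/2$-separated after time $\delta$ translate, after the substitution $(\omega_1,\omega_2)\mapsto(\nu_1,\nu_2)=\mathcal{J}_{\bar v_\ell,v_{m+1},v_{m+2}}(\omega_1,\omega_2)$ of Proposition \ref{transition lemma}, into the requirement that the post-collisional relative velocities (which are linear images of $(\nu_1,\nu_2)$, by Lemma \ref{matrix lemma}) avoid certain cylinders — i.e.\ that $(\nu_1,\nu_2)$ avoids a preimage of a cylinder inside the ellipsoid $\mathbb{E}_1^{2d-1}$. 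One then takes the union over $i\in\{1,\dots,m\}$ of the corresponding exclusion sets and pulls back under $\mathcal{J}$ to define the post-collisional part of $\mathcal{B}_\ell(\bar Z_m)$; here the key geometric input that the transition map is a diffeomorphism with controlled Jacobian (Proposition \ref{transition lemma}(i),(ii)) guarantees the set is measurable and genuinely of the desired shape. Monotonicity of $d^2$ for the pair $(\ell,m+1)$ in the post-collisional configuration again follows from \eqref{ijk>sigma} since now $b>0$. I expect the main obstacle to be the bookkeeping for the $(\ell,m+1)$ and $(\ell,m+2)$ pairs in the post-collisional setting: these are the pairs whose separation at time $0$ is only $O(\epsilon)$, so one cannot invoke a clean two-particle lemma, and one must combine the post-collisional monotonicity (to handle small times) with the transition-map/ellipsoid machinery (to handle times past $\delta$), making sure the two regimes overlap. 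The actual measure bound asserting $|\mathcal{B}_\ell(\bar Z_m)|$ is small is deferred to Proposition \ref{measure of B}, which feeds on the spherical and ellipsoidal estimates of Section \ref{sec_geometric}; for the present Proposition it suffices to exhibit the set and verify the inclusions, which is what the above construction does.
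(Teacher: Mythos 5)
Your construction follows the same architecture as the paper's proof: the good separation of the first $m$ particles is inherited from $\bar{Z}_m\in G_m(\epsilon_0,0)$ up to the $\alpha$-perturbation; each pair $(i,m+1)$, $(i,m+2)$ with $i$ among the original particles is controlled by Lemma \ref{adjuction of 1}, applied to the untransformed velocities in the pre-collisional case and to the transformed velocities in the post-collisional case (the bad set being the preimage of the resulting cylinders); and the one triple that cannot be reduced to a well-separated pair, namely $(\ell;m+1,m+2)$, is handled by monotonicity of the ternary distance under the backwards flow. This is exactly the paper's decomposition, so the proposal is essentially correct in outline, but there is one genuine (though repairable) error and two places where the mechanism you invoke is not the one that works.

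First, the sign in your monotonicity argument is backwards. The proposition restricts to the domain $(\mathbb{S}_1^{2d-1}\times B_R^{2d})^+(\bar{v}_\ell)$ of \eqref{pre-post notation}, on which $b(\omega_1,\omega_2,v_{m+1}-\bar{v}_\ell,v_{m+2}-\bar{v}_\ell)>0$ strictly; it is precisely this positivity, combined with the minus sign in the placement $x_{m+1}=x_\ell-\sqrt{2}\epsilon\omega_1$, $x_{m+2}=x_\ell-\sqrt{2}\epsilon\omega_2$, that makes the cross term $2\sqrt{2}\epsilon t\,b$ in $d^2(x_\ell(t);x_{m+1}(t),x_{m+2}(t))$ positive and hence keeps the ternary distance above $2\epsilon^2$ for all $t>0$ (this is \eqref{pre negative}; the post-collisional version reduces to the same inequality via micro-reversibility \eqref{symmetrid}). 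Your statement that the relevant condition is $b<0$ ``on the complement of the post-collisional domain'' places you outside the set on which the claim must be proved, and there is no grazing locus to excise since $b>0$ strictly there. Second, two smaller points: (a) the transition map of Proposition \ref{transition lemma} is not needed to \emph{construct} the post-collisional bad set --- defining it as the preimage of the cylinders under $(\omega_1,\omega_2,v_{m+1},v_{m+2})\mapsto(\bar{v}_\ell^*,v_{m+1}^*,v_{m+2}^*)$ suffices, and the transition map enters only in the measure estimate of Proposition \ref{measure of B}; (b) for the $t\geq\delta$ separation \eqref{pre-delta} of the pairs $(\ell,m+1)$, $(\ell,m+2)$, $(m+1,m+2)$, Lemma \ref{adjuction of 1} is not applicable, since their initial separation is $O(\epsilon)$ rather than $>\epsilon_0$; instead one excludes the sets where the relative velocities $|v_{m+1}-\bar{v}_\ell|$, $|v_{m+2}-\bar{v}_\ell|$, $|v_{m+1}-v_{m+2}|$ (or their starred versions) are $\leq\eta$, after which $|x_i(t)-x_j(t)|\geq\eta\delta-2\sqrt{2}\epsilon>\epsilon_0$ for $t\geq\delta$ by the scaling \eqref{choice of parameters}. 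There is also no need to worry about matching ``two regimes'' for these pairs: \eqref{pre-0} and \eqref{post-0} only require the ternary phase-space condition for all times, and the sole triple in which both close pairs occur is $(\ell;m+1,m+2)$, which the monotonicity already covers.
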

\begin{proof} By symmetry, we may assume without loss of generality that $\ell=m$. For convenience, let us define the set of indices:
$$
\mathcal{F}_{m+2}=\left\{(i,j)\in\left\{1,...,m+2\right\} \times \left\{1,...,m+2\right\} : \; i<\min\left\{j,m\right\}\right\}.
$$

{\bf{Proof of \it{(i)} }} Here we use the notation from \eqref{pre-notation}. We start by formulating the following claim, which will imply \eqref{pre-0}.


\begin{lemma} \label{lemma-rec-mainprop-claim1}
Under the hypothesis of Proposition \ref{bad set}, there is a set $\mathcal{B}_{m}^{0,-}(\bar{Z}_m)\subseteq \mathbb{S}_1^{2d-1}\times B_R^{2d}$ such that for any $(\omega_1,\omega_2,v_{m+1},v_{m+2})\in \left(\mathbb{S}_1^{2d-1}\times B_R^{2d}\right)^+(\bar{v}_{m})\setminus\mathcal{B}_{m}^{0,-}(\bar{Z}_m)$, there holds:
\begin{align}
|x_i(t)-x_j(t)|>\sqrt{2}\epsilon,\quad\forall t\geq 0,\quad\forall (i,j)\in\mathcal{F}_{m+2},&\label{pre-0-1}\\
d^2\left(x_{m}\left(t\right);x_{m+1}\left(t\right),x_{m+2}\left(t\right)\right)>2\epsilon^2,\quad\forall t\geq 0.&\label{pre-0-2}
\end{align}
\end{lemma}

We observe that \eqref{pre-0-1}-\eqref{pre-0-2} imply  \eqref{pre-0}.

\steps{Proof of Lemma \ref{lemma-rec-mainprop-claim1} }\\

{\it{Step 1 - the proof of  \eqref{pre-0-1}}}:
Fix $(i,j)\in\mathcal{F}_{m+2}$. We distinguish the following cases:\\
$\bullet$ $j\leq m$: Since $\bar{Z}_m\in G_m(\epsilon_0,0)$ and $i<j\leq m$, we have
$|\bar{x}_i-\bar{x}_j-t(\bar{v}_i-\bar{v}_j)|>\epsilon_0$ for all $t\geq 0$.
Hence, triangle inequality implies
\begin{equation}\label{triang j<=s}
\begin{aligned}
|x_i(t)-x_j(t)|=|x_i-x_j-t(\bar{v}_i-\bar{v}_j)|
\geq |\bar{x}_i-\bar{x}_j-t(\bar{v}_i-\bar{v}_j)|-\alpha
\geq \epsilon_0-\alpha 
\geq\frac{\epsilon_0}{2} 
> \sqrt{2}\epsilon,
\end{aligned}
\end{equation} 
since $\epsilon<<\alpha<<\epsilon_0$. Therefore, \eqref{pre-0-1} holds for any $(\omega_1,\omega_2,v_{m+1},v_{m+2})\in \mathbb{S}_1^{2d-1}\times B_R^{2d}$.\\
$\bullet$ $j=m+1$: Since $(i,j) \in\mathcal{F}_{m+2}$ we have $i\leq m-1$. 
Then for $\bar{Z}_m\in G_m(\epsilon_0,0)$ and $X_m\in B_{\alpha/2}^{dm}(\bar{X}_m)$, we conclude
\begin{align*}
|\bar{x}_i-\bar{x}_{m}|>\epsilon_0,\quad |x_{m+1}-\bar{x}_{m}| \leq |x_{m}-\bar{x}_{m}|+|x_{m+1}-x_{m}|\leq\frac{\alpha}{2}+\sqrt{2}\epsilon|\omega_1|\leq\frac{\alpha}{2}+\sqrt{2}\epsilon<\alpha.
\end{align*}
Applying part \textit{(i)} of Lemma \ref{adjuction of 1} with $\bar{y}_1=\bar{x}_i$, $\bar{y}_2=\bar{x}_{m}$, $y_1=x_i$, $y_2=x_{m+1}$,
 we can find a cylinder $K_\eta^{d,i}$ such that for any $v_{m+1}\in B_R^d\setminus K_\eta^{d,i}$  we have:
$|x_i(t)-x_{m+1}(t)|>\sqrt{2}\epsilon,$ for all $t\geq 0$.
Hence \eqref{pre-0-1} holds for any $(\omega_1,\omega_2,v_{m+1},v_{m+2})\in (\mathbb{S}_1^{2d-1}\times B_R^{2d})\setminus U_{m+1}^i$, where
\begin{equation}\label{V_s+1-i}
 U_{m+1}^i= \mathbb{S}_1^{2d-1}\times K_\eta^{d,i}\times\mathbb{R}^d.
 \end{equation}
$\bullet$ $j=m+2:$ Since $(i,j)\in\mathcal{F}_{m+2}$, we obtain $i<m$. Hence, a similar argument to the previous case yields that \eqref{pre-0-1} holds for any $(\omega_1,\omega_2,v_{m+1},v_{m+2})\in (\mathbb{S}_1^{2d-1}\times B_R^{2d})\setminus U_{m+2}^i$, where
\begin{equation}\label{V_s+2-i}
 U_{m+2}^i= \mathbb{S}_1^{2d-1}\times\mathbb{R}^d\times K_\eta^{d,i}.
 \end{equation}
We conclude that \eqref{pre-0-1} holds for any 
$
(\omega_1,\omega_2,v_{m+1},v_{m+2})\in (\mathbb{S}_1^{2d-1}\times B_R^{2d})\setminus\displaystyle\bigcup_{i=1}^{m-1}(U_{m+1}^i\cup U_{m+2}^{i}).
$

{\it{Step 2 - the proof of  \eqref{pre-0-2}}:} Let us recall notation from \eqref{pre-post notation}. Fixing $t\geq 0$ and considering $(\omega_1,\omega_2,v_{m+1},v_{m+2})\in \left(\mathbb{S}_1^{2d-1}\times B_R^{2d}\right)^+(\bar{v}_{m})$, we have
\begin{align}
d^2\left(x_{m}\left(t\right);x_{m+1}\left(t\right),x_{m+2}\left(t\right)\right)&=|\sqrt{2}\epsilon\omega_1+t(v_{m+1}-\bar{v}_{m})|^2+|\sqrt{2}\epsilon\omega_2+t(v_{m+2}-\bar{v}_{m})|^2\nonumber\\
&\geq 2\epsilon^2(|\omega_1|^2+|\omega_2|^2)+2\sqrt{2}\epsilon t b(\omega_1,\omega_2,v_{m+1}-\bar{v}_{m},v_{m+2}-\bar{v}_{m})\nonumber\\
&> 2\epsilon^2\label{pre negative}, 
\end{align}
where to obtain \eqref{pre negative} we use the fact that $(\omega_1,\omega_2,v_{m+1},v_{m+2})\in(\mathbb{S}_1^{2d-1}\times B_R^{2d})^+(\bar{v}_m)$.

Defining  the set 
$\mathcal{B}_{m}^{0,-}(\bar{Z}_m)=\bigcup_{i=1}^{m-1}(U_{m+1}^i\cup U_{m+2}^{i}),$
Lemma \ref{lemma-rec-mainprop-claim1} is proved, and  \eqref{pre-0} follows.

Let us now find a set $\mathcal{B}_{m}^{\delta,-}(\bar{Z}_m)\subseteq\mathbb{S}_1^{2d-1}\times B_R^{2d}$ such that \eqref{pre-delta}  holds in the complement. We distinguish the following cases\\
$\bullet$ $(i,j)\in\mathcal{F}_{m+2}$, $j\leq m$: We use the same argument as in \eqref{triang j<=s} to obtain the lower bound $\epsilon_0/2$. \\
$\bullet$ $(i,j)\in\mathcal{F}_{m+2}$, $j\in\{m+1,m+2\}$: \eqref{pre-delta} holds for $(\omega_1,\omega_2,v_{m+1},v_{m+2})\in \left(\mathbb{S}_1^{2d-1}\times B_R^{2d}\right)\setminus\mathcal{B}_{m}^{0,-}(\bar{Z}_m)$, using part \textit{(ii)} of Lemma \ref{adjuction of 1} and similar arguments to the corresponding cases in the proof of Lemma  \ref{lemma-rec-mainprop-claim1}. Let us note that the lower bound is in fact $\epsilon_0$.\\
$\bullet$ $(i,j)=(m,m+1)$: Triangle inequality implies that for $t\geq\delta$ and $(\omega_1,\omega_2,v_{m+1},v_{m+2})\in\mathbb{S}_1^{2d-1}\times B_R^{2d}$, such that $|v_{m+1}-\bar{v}_{m}|>\eta$, 
we have
 \begin{align}
|x_{m}(t)-x_{m+1}(t)|&=|\sqrt{2}\epsilon\omega_1-t(\bar{v}_{m}-v_{m+1})|\geq |\bar{v}_{m}-v_{m+1}|t-\sqrt{2}\epsilon|\omega_1|  \nonumber \\
& \geq |\bar{v}_{m}-v_{m+1}|\delta-\sqrt{2}\epsilon> \eta\delta-\sqrt{2}\epsilon>\epsilon_0\label{eta bound and parameters},
\end{align}
where to obtain \eqref{eta bound and parameters} we use the fact that $\epsilon<<\epsilon_0<<\eta\delta$. Let us note that the lower bound is in fact $\epsilon_0$.
Therefore, \eqref{pre-delta} holds for $(\omega_1,\omega_2,v_{m+1},v_{m+2})\in(\mathbb{S}_1^{2d-1}\times B_R^{2d})\setminus V_{m,m+1}$, where
\begin{equation}\label{V_m_s s+1}
V_{m,m+1}=\mathbb{S}_1^{2d-1}\times B_\eta^d\left(\bar{v}_{m}\right)\times \mathbb{R}^d.
\end{equation}
$\bullet$ $(i,j)=(m,m+2)$: Same arguments as in the case $(i,j)=(m,m+1)$ yield that \eqref{pre-delta} holds for $(\omega_1,\omega_2,v_{m+1},v_{m+2})\in(\mathbb{S}_1^{2d-1}\times B_R^{2d})\setminus V_{m,m+2}$, where 
\begin{equation}\label{V_m_s s+2}
V_{m,m+2}=\mathbb{S}_1^{2d-1}\times \mathbb{R}^d\times B_\eta^d\left(\bar{v}_{m}\right).
\end{equation}
The lower bound is in fact $\epsilon_0$.\\
$\bullet$ $(i,j) = (m+1,m+2)$. Triangle inequality implies that for $t\geq\delta$ and $(\omega_1,\omega_2,v_{m+1},v_{m+2})\in\mathbb{S}_1^{2d-1}\times B_R^{2d}$, such that $|v_{m+1}-v_{m+2}|>\eta$, 
we have
\begin{align}
|x_{m+1}(t)-x_{m+2}(t)|&=|\sqrt{2}\epsilon(\omega_2-\omega_1)-t(v_{m+1}-v_{m+2})|\nonumber\\&\geq |v_{m+1}-v_{m+2}|t-\sqrt{2}\epsilon|\omega_1 - \omega_2|\nonumber\\
&\geq|v_{m+1}-v_{m+2}|\delta-\sqrt{2}\epsilon(|\omega_1|+|\omega_2|)\nonumber\\
&\geq |v_{m+1}-v_{m+2}|\delta-2\sqrt{2}\epsilon\nonumber\\
&>\eta\delta-2\sqrt{2}\epsilon>\epsilon_0,\label{triangle i=s+1, j=s+2}
\end{align}
where to obtain \eqref{triangle i=s+1, j=s+2}  we use the fact that $\epsilon<<\epsilon_0<<\eta\delta$.
Recalling from \eqref{strip general} the $2d$-strip
\begin{equation}\label{strip 1}
W_{\eta,1,1}^{2d}=\{(w_1,w_2)\in\mathbb{R}^{2d}:|w_1-w_2|\leq\eta\},
\end{equation}
we obtain that \eqref{pre-delta} holds for $(\omega_1,\omega_2,v_{m+1},v_{m+2})\in(\mathbb{S}_1^{2d-1}\times B_R^{2d})\setminus U_{m+1,m+2}$, where
\begin{equation}\label{V_s+1 s+2}
U_{m+1,m+2}=\mathbb{S}_1^{2d-1}\times W_{\eta,1,1}^{2d}.
\end{equation}
Notice that the lower bound is in fact $\epsilon_0$ again.

Defining
\begin{equation}\label{B_delta_-}
\mathcal{B}_{m}^{\delta,-}(\bar{Z}_m)=\mathcal{B}_{m}^{0,-}(\bar{Z}_m)\cup V_{m,m+1}\cup V_{m,m+2}\cup U_{m+1,m+2},
\end{equation}
we conclude that \eqref{pre-delta} holds  for 
$(\omega_1,\omega_2,v_{m+1},v_{m+2})\in(\mathbb{S}_1^{2d-1}\times B_R^{2d})\setminus \mathcal{B}_{m}^{\delta,-}(\bar{Z}_m).$

Let us note that the only case which prevents  $Z_{m+2}\in G_{m+2}(\epsilon_0,\delta)$ is the case $1\leq i<j\leq m$, where we obtain a lower bound of $\epsilon_0/2$. In all other cases we can obtain lower bound $\epsilon_0$. 

A similar argument shows that, for $(\omega_1,\omega_2,v_{m+1},v_{m+2})\in(\mathbb{S}_1^{2d-1}\times B_R^{2d})\setminus \mathcal{B}_{m}^{\delta,-}(\bar{Z}_m)$, \eqref{pre-delta-} holds for all $1\leq i<j\leq m+2$ except the case $1\leq i<j\leq m$. However in this case, for any $1\leq i<j\leq m$, we have
$
|\bar{x}_i(t)-\bar{x}_j(t)|=|\bar{x}_i-\bar{x}_j-t(\bar{v}_i-\bar{v}_j)|>\epsilon_0,
$
since $\bar{Z}_m\in G_m(\epsilon_0,0)$. This observation shows that  \eqref{pre-delta-} holds for 
$(\omega_1,\omega_2,v_{m+1},v_{m+2})\in(\mathbb{S}_1^{2d-1}\times B_R^{2d})\setminus \mathcal{B}_{m}^{\delta,-}(\bar{Z}_m),$
as well.

We conclude that the set
\begin{equation}\label{B- explicit}
\begin{aligned}
\mathcal{B}_{m}^-(\bar{Z}_m)=&(\mathbb{S}_1^{2d-1}\times B_R^{2d})^+(\bar{v}_m)\cap\left[ V_{m,m+1}\cup V_{m,m+2}\cup U_{m+1,m+2}\cup\bigcup_{i=1}^{m-1}(U_{m+1}^i\cup U_{m+2}^{i})\right],
\end{aligned}
\end{equation}
is the set we need for the pre-collisional case.

{\bf{Proof of \it{(ii)} }}  Here we use the notation from \eqref{post-notation}. 
The proof follows the steps of the pre-collisional case, but we replace the velocities $(\bar{v}_m,v_{m+1},v_{m+2})$ by the transformed velocities $(\bar{v}_{m}^*,v_{m+1}^*,v_{m+2}^*)$ and then pull-back. For details see \cite{thesis}. It is worth mentioning that the $m$-particle  needs special treatment since its velocity is transformed to $\bar{v}_{m}^*$. Following similar arguments to the precollisional case, we conclude that the appropriate set for the postcollisional case is given by
\begin{equation}\label{B^+ explicit}
\begin{aligned}
\mathcal{B}_{m}^+(\bar{Z}_m)=&(\mathbb{S}_1^{2d-1}\times B_R^{2d})^+(\bar{v}_m)\cap\left[V_{m,m+1}^*\cup V_{m,m+2}^*\cup U_{m+1,m+2}^*\cup \bigcup_{i=1}^{m-1}(V_{m}^{i,*}\cup U_{m+1}^{i,*}\cup U_{m+2}^{i,*})\right],
\end{aligned}
\end{equation}
where
\begin{equation}\label{post sets}
\begin{aligned}
V_{m}^{i,*}&=\left\{(\omega_1,\omega_2,v_{m+1},v_{m+2})\in\mathbb{S}_1^{2d-1}\times\mathbb{R}^{2d} :\bar{v}_{m}^*\in K_\eta^{d,i}\right\},\\
U_{m+1}^{i,*}&= \left\{(\omega_1,\omega_2,v_{m+1},v_{m+2})\in\mathbb{S}_1^{2d-1}\times\mathbb{R}^{2d}: v_{m+1}^*\in K_\eta^{d,i}\right\},\\
U_{m+2}^{i,*}&= \left\{(\omega_1,\omega_2,v_{m+1},v_{m+2})\in\mathbb{S}_1^{2d-1}\times\mathbb{R}^{2d}: v_{m+2}^*\in K_\eta^{d,i}\right\},\\
V_{m,m+1}^*&=\left\{(\omega_1,\omega_2,v_{m+1},v_{m+2})\in\mathbb{S}_1^{2d-1}\times\mathbb{R}^{2d}: (v_{m}^*,v_{m+1}^*)\in W_{\eta,1,1}^{2d}\right\},\\
V_{m,m+2}^*&=\left\{(\omega_1,\omega_2,v_{m+1},v_{m+2})\in\mathbb{S}_1^{2d-1}\times\mathbb{R}^{2d}: (v_{m}^*,v_{m+2}^*)\in W_{\eta,1,1}^{2d}\right\},\\
U_{m+1,m+2}^*
&=\left\{(\omega_1,\omega_2,v_{m+1},v_{m+2})\in\mathbb{S}_1^{2d-1}\times\mathbb{R}^{2d}: (v_{m+1}^*,v_{m+2}^*)\in W_{\eta,1,1}^{2d}\right\}.
\end{aligned}
\end{equation}

Therefore, the set we need is 
\begin{equation}\label{B}
\mathcal{B}_{m}(\bar{Z}_m)=\mathcal{B}_{m}^{-}(\bar{Z}_m)\cup\mathcal{B}_{m}^{+}(\bar{Z}_m).
\end{equation} 
\end{proof}

We now use the results of Section \ref{sec_geometric} to estimate the measure of this set, up to the parameters chosen.
\begin{proposition}\label{measure of B} Consider parameters  $\alpha,\epsilon_0,R,\eta,\delta$  as in \eqref{choice of parameters} and $\epsilon<<\alpha$.  Let $m\in\mathbb{N}$, $\bar{Z}_m\in G_m(\epsilon_0,0)$, $\ell\in\{1,...,m\}$ and $\mathcal{B}_{\ell}(\bar{Z}_m)$ the set given in the statement of Proposition \ref{bad set}. Denoting by $|\cdot|$  the product measure on $\mathbb{S}_1^{2d-1}\times B_R^{2d}$, the following estimate holds:
\begin{equation*}
\left|\mathcal{B}_{\ell}(\bar{Z}_m)\right|\lesssim mR^{2d}\eta^{\frac{d-1}{4d+2}}.
\end{equation*}
\end{proposition}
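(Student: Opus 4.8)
The plan is to bound the measure of each of the pieces that make up $\mathcal{B}_\ell(\bar Z_m) = \mathcal{B}_\ell^-(\bar Z_m) \cup \mathcal{B}_\ell^+(\bar Z_m)$ as constructed in the proof of Proposition \ref{bad set}, using the spherical and ellipsoidal estimates of Section \ref{sec_geometric}. By symmetry we may assume $\ell = m$. Recall from \eqref{B_-} and \eqref{B_+} that both sets are built from the cylindrical and strip-type pieces $U_{m+1}^i, U_{m+2}^i$ (and their starred versions $V_m^{i,*}, U_{m+1}^{i,*}, U_{m+2}^{i,*}$), $V_{m,m+1}, V_{m,m+2}, U_{m+1,m+2}$ (and their starred versions). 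There are $O(m)$ of these pieces, so it suffices to bound each one by $C R^{2d} \eta^{\frac{d-1}{4d+2}}$ and conclude by subadditivity.

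For the \textbf{pre-collisional pieces} the estimates are velocity estimates: each of $U_{m+1}^i$, $U_{m+2}^i$, $V_{m,m+1}$, $V_{m,m+2}$ has the form $\mathbb{S}_1^{2d-1} \times (\text{cylinder or ball of radius} \sim \eta \text{ in one } \mathbb{R}^d \text{ factor}) \times \mathbb{R}^d$, intersected with $B_R^{2d}$, while $U_{m+1,m+2} = \mathbb{S}_1^{2d-1}\times W_\eta^{2d}$ involves the strip. Using Proposition \ref{spherical estimate}, parts (i)–(iii), together with $|\mathbb{S}_1^{2d-1}| \simeq 1$ and the fact that a ball embeds in a cylinder, each such set has measure $\lesssim R^{2d}\eta^{\frac{d-1}{2}}$ (the strip $W_\eta^{2d}$ embeds in a product of two $d$-cylinders of radius $\eta$, giving the same bound). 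Since $\eta < 1$ and $\frac{d-1}{2} \geq \frac{d-1}{4d+2}$, these are all $\lesssim R^{2d}\eta^{\frac{d-1}{4d+2}}$, which is more than enough.

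The \textbf{post-collisional pieces} are the main obstacle, since here we cannot simply integrate over ``small'' velocity balls: the constraint is on the \emph{transformed} velocities $(\bar v_m^*, v_{m+1}^*, v_{m+2}^*) = T_{\omega_1,\omega_2}(\bar v_m, v_{m+1}, v_{m+2})$, which depend on $(\omega_1,\omega_2)$. The standard device is to fix $v_{m+1}, v_{m+2}$ first and view the pathological set as a subset of $(\omega_1,\omega_2)$-space (a subset of $\mathcal{S}^+_{\bar v_m, v_{m+1}, v_{m+2}}$), then apply the transition map $\mathcal{J}_{\bar v_m, v_{m+1}, v_{m+2}}$ of Proposition \ref{transition lemma} to convert the position/velocity description into a surface integral over the ellipsoid $\mathbb{E}_1^{2d-1}$ via \eqref{substitution estimate}. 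One must first distinguish added particles by the size of their cross-section: when $b = b(\omega_1,\omega_2,v_{m+1}-\bar v_m, v_{m+2}-\bar v_m)$ is small (of order $\le \eta^{\theta}$ for a suitable exponent $\theta$), the Jacobian estimate \eqref{jacobian} shows $\jac \mathcal{J} \thickapprox r^{-2d} b^{2d}$ is tiny, and one bounds the contribution directly by the measure of $\{b \text{ small}\}$ on the sphere, which is controlled by Lemma \ref{Ryan's lemma} (the sublevel set of a linear functional on $\mathbb{S}_1^{2d-1}$ lies in a slab, i.e. a cylinder). When $b$ is bounded below by $\eta^\theta$, the change of variables is non-degenerate, and using Lemma \ref{matrix lemma} one rewrites each constraint $\{(v_a^*, v_b^*) \in K_\rho\}$ or $\{(v_a^*,v_b^*)\in W_\eta\}$ in terms of $\bm{\nu} \in \bm{S}^{-1}(\bar K_{\rho/r})$; then \eqref{substitution estimate} reduces the estimate to a surface measure on $\mathbb{E}_1^{2d-1}$ of a set of the form $\mathbb{E}_1^{2d-1}\cap(K^d_{c\eta/r}\times\mathbb{R}^d)$ or $\mathbb{E}_1^{2d-1}\cap W_{c\eta/r}^{2d}$, which is $\lesssim \min\{1,(\eta/r)^{\frac{d-1}{2}}\}$ by Proposition \ref{ellipsoid estimates} (here $r \simeq 1$ on $B_R^{2d}$, up to $R$-dependent constants absorbed into $C$). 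Optimizing the split threshold $\eta^\theta$ between the ``$b$ small'' contribution $\lesssim \eta^{\theta \cdot \frac{d-1}{2}}$-type bound and the ``$b$ large'' contribution, where one loses a factor $b^{-2d} \lesssim \eta^{-2d\theta}$ in the Jacobian, one is led to choosing $\theta$ so that the exponents balance, producing the stated exponent $\frac{d-1}{4d+2}$; integrating the remaining $(v_{m+1}, v_{m+2}) \in B_R^{2d}$ contributes the factor $R^{2d}$. Summing the $O(m)$ contributions gives $|\mathcal{B}_m(\bar Z_m)| \lesssim m R^{2d}\eta^{\frac{d-1}{4d+2}}$.
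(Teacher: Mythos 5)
Your overall architecture is the paper's: decompose $\mathcal{B}_m=\mathcal{B}_m^-\cup\mathcal{B}_m^+$ into the pieces from Proposition \ref{bad set}, handle the pre-collisional pieces by the spherical estimates of Proposition \ref{spherical estimate} (this part of your argument is correct), and handle the post-collisional pieces by splitting according to the size of the cross-section and changing variables under the transition map onto the ellipsoid. However, two steps in your post-collisional argument do not work as written.

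First, the claim that ``$r\simeq 1$ on $B_R^{2d}$, up to $R$-dependent constants absorbed into $C$'' is false: $r^2=|v_{m+1}-\bar v_m|^2+|v_{m+2}-\bar v_m|^2+|v_{m+1}-v_{m+2}|^2$ ranges down to $0$, and the ellipsoidal bound $\min\{1,(\eta/r)^{\frac{d-1}{2}}\}$ degenerates to $1$ there. The paper resolves this by foliating the velocity ball with the level sets of $\Phi_{\bar v_m}$ via the co-area formula and integrating $\int_0^{4R}r^{2d-1}\min\{1,(\eta/r)^{\frac{d-1}{2}}\}\,dr\lesssim R^{2d}\eta^{\frac{d-1}{2}}$; moreover it takes the splitting threshold \emph{relative} to $|\bm v|$, namely $b\leq\beta|\bm v|$ rather than $b\leq\eta^\theta$, which is what makes $\jac^{-1}\mathcal{J}\lesssim r^{2d}b^{-2d}\lesssim\beta^{-2d}$ uniform in $r$ and yields the stated power $R^{2d}$ (an absolute threshold leaves an uncontrolled factor $r^{2d}\leq(4R)^{2d}$). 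Second, the small-cross-section set $\{0<b\leq\beta|\bm v|\}$ is a band about the equator $\{\langle\bm\omega,\bm v\rangle=0\}$ of $\mathbb{S}_1^{2d-1}$, i.e.\ a \emph{slab} between two parallel hyperplanes; this is not a cylinder about a line in the sense of Lemma \ref{Ryan's lemma}, and that lemma does not apply. The correct estimate is the direct spherical-cap computation $\lesssim\arcsin\beta\simeq\beta$, and it is exactly this linear-in-$\beta$ bound that, balanced against the $\beta^{-2d}\eta^{\frac{d-1}{2}}$ loss from the Jacobian, forces $\beta=\eta^{\frac{d-1}{4d+2}}$ and produces the stated exponent; had the cap measure been of cylinder type $\beta^{\frac{d-1}{2}}$, the optimization would give a different (and for $d=2$ strictly worse) exponent. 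So the claimed exponent does not actually follow from the estimates as you have assembled them; both gaps must be repaired along the lines above.
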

\begin{proof} Without loss of generality, we may assume that $\ell=m$.

{\bf{Estimate of $\mathcal{B}_{m}^-(\bar{Z}_m)$}.} We recall \eqref{B- explicit}.\\
$\bullet$ Estimate of the terms corresponding to $V_{m,m+1}$, $V_{m,m+2}$, $U_{m+1,m+2}$:
Recalling \eqref{V_m_s s+1} , we have
$ 
V_{m,m+1}=\mathbb{S}_1^{2d-1}\times B_\eta^d\left(\bar{v}_{m}\right)\times \mathbb{R}^d.
$ 
We have
$(\mathbb{S}_1^{2d-1}\times B_R^{2d})^+(\bar{v}_m)\cap V_{m,m+1}\subseteq \mathbb{S}_1^{2d-1}\times \left(B_R^d\cap B_\eta^d\left(\bar{v}_m\right)\right)\times B_R^d,$
so
\begin{equation}\label{estimate V_s,s+1}
\begin{aligned}
|(\mathbb{S}_1^{2d-1}\times B_R^{2d})^+(\bar{v}_m)\cap V_{m,m+1}| &\leq |\mathbb{S}_1^{2d-1}|_{\mathbb{S}_1^{2d-1}} |B_R^d\cap B_\eta^d(\bar{v}_m)|_d |B_R^d|_d\lesssim R^d\eta^d.
\end{aligned}
\end{equation}
 
In a similar way, we obtain 
\begin{equation}\label{estimate V_s,s+2}
|(\mathbb{S}_1^{2d-1}\times B_R^{2d})^+(\bar{v}_m)\cap V_{m,m+2}|\lesssim R^d\eta^d.
\end{equation}  
Recalling \eqref{V_s+1 s+2}, we have 
$U_{m+1,m+2}=\mathbb{S}_1^{2d-1}\times W_{\eta,1,1}^{2d},$
thus
$(\mathbb{S}_1^{2d-1}\times B_R^{2d})^+(\bar{v}_m)\cap U_{m+1,m+2}\subseteq \mathbb{S}_1^{2d-1}\times\left[\left(B_R^d\times B_R^d\right)\cap W_{\eta,1,1}^{2d}\right],$
hence
\begin{equation}\label{estimate V_s+1,s+2}
\begin{aligned}
|(\mathbb{S}_1^{2d-1}\times B_R^{2d})^+(\bar{v}_m)\cap U_{m+1,m+2}|&\leq |\mathbb{S}_1^{2d-1}|_{\mathbb{S}_1^{2d-1}}|(B_R^{d}\times B_R^d)\cap W_{\eta,1,1}^{2d}|_{2d}\\
&\lesssim\int_{B_R^d}\int_{B_R^d}\mathds{1}_{B_\eta^d(v_{m+1})}(v_{m+2})\,dv_{m+2}\,dv_{m+1}\\
&\lesssim R^d\eta^d.
\end{aligned}
\end{equation}
$\bullet$ Estimate of the terms corresponding to $U_{m+1}^i$, $U_{m+2}^i$ , $i\in\{1,...,m-1\}$: Fix $i\in\{1,...,m-1\}$.
Recalling the set $U_{m+1}^i=\mathbb{S}_1^{2d-1}\times K_\eta^{d,i}\times\mathbb{R}^d,$ from \eqref{V_s+1-i},
we have
$(\mathbb{S}_1^{2d-1}\times B_R^{2d})^+(\bar{v}_m)\cap U_{m+1}^i\subseteq\mathbb{S}_1^{2d-1}\times \left[B_R^{2d}\cap \left(K_\eta^{d,i}\times\mathbb{R}^d\right)\right].$
Since $\eta<<1<<R$,  Proposition \ref{spherical estima}  implies that 
\begin{equation}\label{estimate V_s+1^i}
|(\mathbb{S}_1^{2d-1}\times B_R^{2d})^+\cap U_{m+1}^i|\leq|\mathbb{S}_1^{2d-1}|_{\mathbb{S}_1^{2d-1}}|B_R^{2d}\cap \left(K_\eta^{d,i}\times\mathbb{R}^d\right)|_{2d}\lesssim |\left(B_R^{d}\cap K_\eta^{d,i}\right)\times B_R^{d}|_{2d} \lesssim R^{2d}\eta^{\frac{d-1}{2}}.
\end{equation}
In a similar way, we obtain
\begin{equation}\label{estimate V_s+2^i}
|(\mathbb{S}_1^{2d-1}\times B_R^{2d})^+\cap U_{m+2}^i|\lesssim R^{2d}\eta^{\frac{d-1}{2}}.
\end{equation}
Therefore, recalling \eqref{B- explicit}, using estimates \eqref{estimate V_s,s+1}-\eqref{estimate V_s+2^i} and the facts that $s\geq 1$, $\eta<<1<<R$, sub-additivity implies
\begin{equation}\label{measure B^-}
|\mathcal{B}_{m}^-(\bar{Z}_m)|\lesssim mR^{2d}\eta^{\frac{d-1}{2}}< mR^{2d}\eta^{\frac{d-1}{4d+2}},\quad\text{since }\eta<<1.
\end{equation}

{\bf{Estimate of $\mathcal{B}_m^+(\bar{Z}_m)$:}} We recall \eqref{B^+ explicit}.
To estimate the measure of $\mathcal{B}_{m}^+(\bar{Z}_m)$, we will strongly rely on the properties of the transition map defined in Proposition \ref{transition lemma}. 

Let us define $\Phi_{\bar{v}_m}:\mathbb{R}^{2d}\to\mathbb{R}$  by
$
\Phi_{\bar{v}_m}(v_{m+1},v_{m+2})=|v_{m+1}-\bar{v}_m|^2+|v_{m+2}-\bar{v}_m|^2+|v_{m+1}-v_{m+2}|^2.
$
We can easily see that given $r>0$ and $(v_{m+1},v_{m+2})\in\Phi^{-1}_{\bar{v}_m}(\{r^2\})$, we have
\begin{equation}\label{nabla of level sets}
2r\leq |\nabla\Phi_{\bar{v}_m}(v_{m+1},v_{m+2})|\leq 4r.
\end{equation}
Let also define the set 
$G_R^{2d}(\bar{v}_m):=[0\leq \Phi_{\bar{v}_m}\leq 16R^2].$
Notice that by triangle inequality and the fact that $\bar{v}_m\in B_R^{d}$, we have 
\begin{equation}\label{bound on ball}
B_R^{2d}\subseteq G_R^{2d}(\bar{v}_m).
\end{equation}
Recall from \eqref{set of post angles} the set $\mathcal{S}_{\bar{v}_{m},v_{m+1},v_{m+2}}^+$.
Then, Fubini's Theorem and the co-area formula yield 
\begin{align}
&|\mathcal{B}_{m}^+(\bar{Z}_m)|=
\int_{(\mathbb{S}_1^{2d-1}\times B_R^{2d})^+(\bar{v}_m)}\mathds{1}_{\mathcal{B}_{m}^+(\bar{Z}_m)}\,d\omega_1\,d\omega_2\,dv_{m+1}\,dv_{m+2}\nonumber\\
&\leq\int_{G_R^{2d}(\bar{v}_m)}\int_{\mathcal{S}_{\bar{v}_m,v_{m+1},v_{m+2}}^+}\mathds{1}_{\mathcal{B}_{m}^+(\bar{Z}_m)}\,d\omega_1\,d\omega_2\,dv_{m+1}\,dv_{m+2}\label{bound of ball use}\\
&=\int_0^{16R^2}\int_{\Phi^{-1}_{\bar{v}_m}(\{s\})}|\nabla\Phi_{\bar{v}_m}(v_{m+1},v_{m+2})|^{-1}\int_{\mathcal{S}_{\bar{v}_m,v_{m+1},v_{m+2}}^+}\mathds{1}_{\mathcal{B}_{m}^+(\bar{Z}_m)}\,d\omega_1\,d\omega_2\,dv_{m+1}\,dv_{m+2}\,ds\nonumber\\
&=\int_0^{4R}2r\int_{\Phi^{-1}_{\bar{v}_m}(\{r^2\})}|\nabla\Phi_{\bar{v}_m}(v_{m+1},v_{m+2})|^{-1}\int_{\mathcal{S}_{\bar{v}_m,v_{m+1},v_{m+2}}^+}\mathds{1}_{\mathcal{B}_{m}^+(\bar{Z}_m)}\,d\omega_1\,d\omega_2\,dv_{m+1}\,dv_{m+2}\,dr\nonumber\\
&\lesssim\int_0^{4R}\int_{\Phi^{-1}_{\bar{v}_m}(\{r^2\})}\int_{\mathcal{S}_{\bar{v}_m,v_{m+1},v_{m+2}}^+}\mathds{1}_{\mathcal{B}_{m}^+(\bar{Z}_m)}(\omega_1,\omega_2)\,d\omega_1\,d\omega_2\,dv_{m+1}\,dv_{m+2}\,dr,\label{integral expression for post}
\end{align}
where to obtain \eqref{bound of ball use}, we use \eqref{bound on ball}, and to obtain \eqref{integral expression for post} we use the lower bound of \eqref{nabla of level sets}.

We estimate the integral
$\displaystyle\int_{\mathcal{S}_{\bar{v}_{m},v_{m+1},v_{m+2}}^+}\hspace{-0.7cm}\mathds{1}_{\mathcal{B}_{m}^+(\bar{Z}_m)}(\omega_1,\omega_2)\,d\omega_1\,d\omega_2,$
for fixed $0<r\leq 4R$ and $(v_{m+1},v_{m+2})\in \Phi^{-1}_{\bar{v}_m}(\{r^2\})$. Let us introduce a  parameter $0<\beta<1$, which will be chosen later in terms of $\eta$.
Writing
\begin{equation}\label{notation for M}
\bm{\omega}=
(\omega_1,\omega_2),\quad
\bm{v}=
(v_{m+1}-\bar{v}_{m},
v_{m+2}-\bar{v}_{m}),
\end{equation}
we have
$
b(\bm{\omega},\bm{v})=\langle\bm{\omega},\bm{v}\rangle.
$
Inspired in part by \cite{denlinger} (Proposition $1$), we decompose
$$
\mathcal{S}^+_{\bar{v}_{m},v_{m+1},v_{m+2}}=\mathcal{S}_{\bar{v}_{m},v_{m+1},v_{m+2}}^{1,+}\cup \mathcal{S}_{\bar{v}_{m},v_{m+1},v_{m+2}}^{2,+},
$$
where
\begin{align}
\mathcal{S}_{\bar{v}_{m},v_{m+1},v_{m+2}}^{1,+}&=\left\{\bm{\omega}\in\mathcal{S}_{\bar{v}_{m},v_{m+1},v_{m+2}}^+:\langle\bm{\omega},\bm{v}\rangle>\beta |\bm{v}|\right\},\label{S1}\\
\mathcal{S}_{\bar{v}_{m},v_{m+1},v_{m+2}}^{2,+}&=\left\{\bm{\omega}\in\mathcal{S}_{\bar{v}_{m},v_{m+1},v_{m+2}}^+:0<\langle\bm{\omega},\bm{v}\rangle\leq\beta |\bm{v}|\right\}.\label{S2}
\end{align}
Notice that $\mathcal{S}_{\bar{v}_{m},v_{m+1},v_{m+2}}^{2,+}$ is the union of  two unit $(2d-1)$-spherical caps of angle $\pi/2-\arccos\beta$. Thus integrating in spherical coordinates, we have
\begin{equation}\label{estimate on S2}\int_{\mathcal{S}_{\bar{v}_{m},v_{m+1},v_{m+2}}^{2,+}}\mathds{1}_{\mathcal{B}_{m}^+(\bar{Z}_m)}(\omega_1,\omega_2)\,d\omega_1\,d\omega_2\lesssim \frac{\pi}{2}-\arccos\beta=\arcsin\beta.
\end{equation}
Let us estimate the terms corresponding to  $\mathcal{S}_{\bar{v}_{m},v_{m+1},v_{m+2}}^{1,+}$.  Our purpose is to change variables under the transition map $\mathcal{J}_{\bar{v}_m,v_{m+1},v_{m+2}}$, and use part \textit{(iv)} of Proposition \ref{transition lemma}.

Notice that for $\bm{\omega}\in\mathcal{S}_{\bar{v}_{m},v_{m+1},v_{m+2}}^{1,+}$, the lower estimate of \eqref{jacobian} and \eqref{S1} imply
\begin{equation}\label{estimate on inverse jacobian}
\jac^{-1}(\mathcal{J}_{\bar{v}_{m},v_{m+1},v_{m+2}})(\bm{\omega})\lesssim r^{2d}b^{-2d}(\bm{\omega},\bm{v})\leq r^{2d}\beta^{-2d} |\bm{v}|^{-2d}\lesssim\beta^{-2d},
\end{equation}
since by triangle inequality and Young's inequality, we have $$r^2=|\bar{v}_{m}-v_{m+1}|^2+|\bar{v}_{m}-v_{m+2}|^2+|v_{m+1}-v_{m+2}|^2\leq 3(|\bar{v}_{m}-v_{m+1}|^2+|\bar{v}_{m}-v_{m+2}|^2)= 3 |\bm{v}|^2.$$
$\bullet$ Estimate of $V_{m,m+1}^*$, $V_{m,m+2}^*$, $U_{m+1,m+2}^*$ terms:
By recalling  \eqref{post sets} 
$$V_{m,m+1}^*=\left\{(\omega_1,\omega_2,v_{m+1},v_{m+2})\in\mathbb{S}_1^{2d-1}\times B_R^{2d}:\bar{v}_{m}^*-v_{m+1}^*\in B_\eta^d\right\},$$
and \eqref{definition of v}, given $\bm{\omega}=(\omega_1,\omega_2)\in \mathcal{S}_{\bar{v}_{m},v_{m+1},v_{m+2}}^{1,+}$, we have 
 \begin{equation}\label{V_s,s+1 iff}\bar{v}_{m}^*-v_{m+1}^*\in B_\eta^d\Leftrightarrow \bm{\nu}=(\nu_1,\nu_2)\in B_{\eta/r}^d\times\mathbb{R}^d.\end{equation}
 Therefore,  we obtain
\begin{align}
&\int_{\mathcal{S}_{\bar{v}_m,v_{m+1},v_{m+2}}^{1,+}}\mathds{1}_{V_{m,m+1}^*}(\bm{\omega})\,d\bm{\omega}=\int_{\mathcal{S}_{\bar{v}_m,v_{m+1},v_{m+2}}^{1,+}}(\mathds{1}_{B_{\eta/r}^d\times\mathbb{R}^d}\circ\mathcal{J}_{\bar{v}_m,v_{m+1},v_{m+2}})(\bm{\omega})\,d\bm{\omega}\nonumber\\
&\lesssim\beta^{-2d}\int_{\mathcal{S}_{\bar{v}_m,v_{m+1},v_{m+2}}^{1,+}}(\mathds{1}_{B_{\eta/r}^d\times\mathbb{R}^d}\circ\mathcal{J}_{\bar{v}_m,v_{m+1},v_{m+2}})(\bm{\omega})\jac\mathcal{J}_{\bm{v}_m,v_{m+1},v_{m+2}}(\bm{\omega})\,d\bm{\omega}\label{use of bounds}\\
&\lesssim\beta^{-2d}\int_{\mathbb{E}_1^{2d-1}}\mathds{1}_{B_{\eta/r}^d\times\mathbb{R}^{d}}(\bm{\nu})\,d\bm{\nu}\lesssim \beta^{-2d}\min\left\{1,\left(\frac{\eta}{r}\right)^{\frac{d-1}{2}}\right\},\label{ball estimate}
\end{align}
where to obtain \eqref{use of bounds} we use \eqref{estimate on inverse jacobian},  to obtain \eqref{ball estimate} we use part $\textit{(iv)}$ of Proposition \ref{transition lemma} and  part \textit{(iii)} of Proposition \ref{ellipsoid estimates}. 
Thus
\begin{equation}\label{estimate on S1-Vs_s+1}
\int_{\mathcal{S}_{\bar{v}_{m},v_{m+1},v_{m+2}}^{1,+}}\mathds{1}_{V_{m,m+1}^*}(\omega_1,\omega_2)\,d\omega_1\,\,d\omega_2\lesssim \beta^{-2d}\min\left\{1,\left(\frac{\eta}{r}\right)^{\frac{d-1}{2}}\right\}.
\end{equation}
In a similar manner, recalling from \eqref{post sets} the sets $V_{m,m+2}^*, U_{m+1,m+2}^*$ respectively,
and parts \textit{(iv)}, \textit{(v)} of Proposition \ref{ellipsoid estimates} respectively, we  obtain the corresponding estimates.\\
$\bullet$ Estimate of $V_m^{i,*}$, $U_{m+1}^{i,*}$, $U_{m+2}^{i,*}$, $i\in\{1,...,m-1\}$ terms: Consider $i\in\{1,...,m-1\}$. By recalling \eqref{post sets}, the set $V_{m}^{i,*}$ can be equivalently written as
\begin{align*}
V_{m}^{i,*}=\{(\omega_1,\omega_2,v_{m+1},v_{m+2})\in\mathbb{S}_1^{2d-1}\times B_R^{2d}: (\bar{v}_m^*,v_{m+1}^*)\in K_\eta^{d,i}\times\mathbb{R}^d\}.
\end{align*}
 Recalling also the operator $\bm{S_{12}}$ defined in \eqref{matrix}, Lemma \ref{matrix lemma} implies
\begin{equation}\label{V_s^* iff}
(\bar{v}_m^*,v_{m+1}^*)\in K_\eta^{d,i}\times\mathbb{R}^d\Leftrightarrow \left(\bm{S_{12}}\circ\mathcal{J}_{\bar{v}_m,v_{m+1},v_{m+2}}\right)(\omega_1,\omega_2)\in \bar{K}_{\eta/r}^{d,i}\times\mathbb{R}^d,
\end{equation}
where $K_{\eta}^{d,i}$ is a $d$-cylinder of radius $\eta$  and $\bar{K}_{\eta/r}^{d,i}$ is a $d$-cylinder of radius $\eta/r$.
Recalling $\mathcal{S}=\bm{S_{12}}(\mathbb{E}_1^{2d-1})$ from \eqref{S-ellipsoid}, 
and using the same reasoning to change variables under $\mathcal{J}_{\bar{v}_m,v_{m+1},v_{m+2}}$ as in the estimate for $V_{m,m+1}^*$, we have
\begin{align}
&\int_{\mathcal{S}_{\bar{v}_{m},v_{m+1},v_{m+2}}^{1,+}}\mathds{1}_{V_{m}^{i,*}}(\omega_1,\omega_2)\,d\omega_1\,d\omega_2=\int_{\mathcal{S}_{\bar{v}_{m},v_{m+1},v_{m+2}}^{1,+}}\mathds{1}_{(\bar{v}_m^*,v_{m+1}^*)\in K_{\eta}^{d,i}\times\mathbb{R}^d}(\omega_1,\omega_2)\,d\omega_1\,d\omega_2\nonumber\\
&=\int_{\mathcal{S}_{\bar{v}_{m},v_{m+1},v_{m+2}}^{1,+}}(\mathds{1}_{\bar{K}_{\eta/r}^{d,i}\times\mathbb{R}^d}\circ\bm{S_{12}}\circ \mathcal{J}_{\bar{v}_m,v_{m+1},v_{m+2}})(\omega_1,\omega_2)\,d\omega_1\,d\omega_2\label{V_s^* iff inside}\\
&\lesssim\beta^{-2d}\int_{\mathbb{E}_1^{2d-1}}(\mathds{1}_{\bar{K}_{\eta/r}^{d,i}\times\mathbb{R}^{d}}\circ\bm{S_{12}})(\nu_1,\nu_2)\,d\nu_1\,d\nu_2\label{jac and change V_s^*}\\
&\lesssim\beta^{-2d}\int_{\mathcal{S}}\mathds{1}_{\bar{K}_{\eta/r}^{d,i}\times\mathbb{R}^{d}}(\theta_1,\theta_2)\,d\theta_1\,d\theta_2\label{estimate on S1-Vs^*}\\
&\lesssim \beta^{-2d}\min\left\{1,\left(\frac{\eta}{r}\right)^{\frac{d-1}{2}}\right\},\label{use of Lemma estimate review}
\end{align}
where to obtain \eqref{V_s^* iff inside} we use \eqref{V_s^* iff}, to obtain \eqref{jac and change V_s^*} we use  estimate \eqref{estimate on inverse jacobian} and part \textit{(iv)} of Proposition \ref{transition lemma}, to obtain \eqref{estimate on S1-Vs^*} we make the linear transformation $(\theta_1,\theta_2)=\bm{S_{12}}(\nu_1,\nu_2)$ and use the fact that $\mathcal{S}=\bm{S_{12}}(\mathbb{E}_1^{2d-1})$, and  to obtain \eqref{use of Lemma estimate review} we use part \textit{(i)} of Proposition \ref{ellipsoid estimates}. 

Recalling $U_{m+1}^{i,*}$, $U_{m+2}^{i,*}$ from \eqref{post sets}, and using respectively the map $\bm{S_{12}}$ from Lemma \ref{matrix lemma} and estimate \textit{(ii)} from Proposition \ref{ellipsoid estimates}, the map $\bm{S_{13}}$ from Lemma \ref{matrix lemma} and estimate \textit{(ii)} from Proposition \ref{ellipsoid estimates}, we obtain the corresponding estimates in a similar way.

We conclude that
\begin{equation}\label{on S_1}
\int_{\mathcal{S}_{\bar{v}_{m},v_{m+1},v_{m+2}}^{1,+}}\mathds{1}_{\mathcal{B}_{m}^+(\bar{Z}_m)}(\omega_1,\omega_2)\,d\omega_1\,d\omega_2\lesssim m\beta^{-2d} \min\left\{1,\left(\frac{\eta}{r}\right)^{\frac{d-1}{2}}\right\}
\end{equation}

Therefore, recalling  $\mathcal{S}_{\bar{v}_{m},v_{m+1},v_{m+2}}^+=\mathcal{S}_{\bar{v}_{m},v_{m+1},v_{m+2}}^{1+}\cup\mathcal{S}_{\bar{v}_{m},v_{m+1},v_{m+2}}^{2+}$, and using estimates \eqref{estimate on S2}, \eqref{on S_1}, we obtain the estimate:
\begin{equation}\label{estimate on post integral}
\int_{\mathcal{S}_{\bar{v}_{m},v_{m+1},v_{m+2}}^+}\mathds{1}_{\mathcal{B}_{m}^+(\bar{Z}_m)}(\omega_1,\omega_2)\,d\omega_1\,d\omega_2\lesssim \arcsin\beta+m\beta^{-2d}\min\left\{1,\left(\frac{\eta}{r}\right)^{\frac{d-1}{2}}\right\}.
\end{equation}
Hence, \eqref{integral expression for post} yields
\begin{equation}\label{post collisional estimate with beta}
\begin{aligned}
|\mathcal{B}_{m}^{+}(\bar{Z}_m)|&\lesssim \int_0^{4R}\int_{\Phi_{\bar{v}_m}^{-1}(\{r^2\})}\left(\arcsin\beta+m\beta^{-2d}\min\left\{1,\left(\frac{\eta}{r}\right)^{\frac{d-1}{2}}\right\}\right)\,dv_{m+1}\,dv_{m+2}\,dr\\
&\lesssim \int_0^{4R}r^{2d-1}\left(\arcsin\beta+m\beta^{-2d}\min\left\{1,\left(\frac{\eta}{r}\right)^{\frac{d-1}{2}}\right\}\right)\,dr\\
&\lesssim mR^{2d}\left(\arcsin\beta+\beta^{-2d}\eta^{\frac{d-1}{2}}\right)\\
&\lesssim mR^{2d}\left(\beta+\beta^{-2d}\eta^{\frac{d-1}{2}}\right),
\end{aligned} 
\end{equation}
after using an estimate similar to \eqref{estimate with min} and the fact that $m\geq 1$, $\beta<<1$. Choosing $\beta=\eta^{\frac{d-1}{4d+2}}<<1$,  since $d\geq 2$, we obtain
\begin{equation}\label{measure B+}
|\mathcal{B}_{m}^{+}(\bar{Z}_m)|\lesssim m R^{2d}\eta^{\frac{d-1}{4d+2}}.
\end{equation}

Combining \eqref{B}, \eqref{measure B^-}, \eqref{measure B+}, we obtain the required estimate.
\end{proof}

\section{Elimination of recollisions}\label{sec_elimination}
In this section we reduce the convergence proof to comparing truncated elementary observables. We first restrict to good configurations and then inductively  reduce the convergence proof to truncated elementary observables, which will be comparable in the scaled limit.
\subsection{Restriction to good configurations}
 Throughout this subsection,  we consider $\beta_0>0$, $\mu_0\in\mathbb{R}$,  $T>0$ given in Theorems \ref{well posedness BBGKY}, \ref{lwp Boltzmann hier}, the functions $\bm{\beta},\bm{\mu}:[0,T]\to\mathbb{R}$ defined by \eqref{beta mu}, $(N,\epsilon)$ in the scaling \eqref{scaling} and initial data $F_{N,0}\in X_{N,\beta_0,\mu_0}$, $F_0\in X_{\infty,\beta_0,\mu_0}$. Let $\bm{F_N}\in\bm{X}_{N,\bm{\beta},\bm{\mu}}$, $\bm{F}\in\bm{X}_{\infty,\bm{\beta},\bm{\mu}}$ be the mild solutions of the corresponding BBGKY  and Boltzmann hierarchies in $[0,T]$, given by Theorem \ref{well posedness BBGKY} and Theorem \ref{lwp Boltzmann hier} respectively.

For the convenience of a reader we recall the notation from Section \ref{sec_stability}. Specifically, given $m\in\mathbb{N}$, $\sigma>0$ and $t_0>0$, we denote
\begin{align*}
\Delta_m^X(\sigma)&=\{\widetilde{X}_m\in\mathbb{R}^{dm}: |\widetilde{x}_i-\widetilde{x}_j|>\sigma,\quad\forall 1\leq i<j\leq m\},\quad m\geq 2,\quad \Delta_1(\sigma)=\mathbb{R}^{d},\\
\Delta_m(\sigma)&=\Delta_m^X(\sigma)\times\mathbb{R}^{dm},\\
G_m(\sigma,t_0)&=\left\{Z_m=(X_m,V_m)\in\mathbb{R}^{2dm}: Z_m(t)\in\Delta_m(\sigma),\quad\forall t\geq t_0\right\},
\end{align*}
where  $Z_m(t)$ denotes the backwards free flow, given by:
$
Z_m(t)=(X_m-tV_m,V_m),
$
for $t\geq 0$. 
Given $\epsilon,\epsilon_0>0$ with $\epsilon<<\epsilon_0$ and $\delta>0$, we define the new set
\begin{equation}\label{both epsilon-epsilon_0}
G_m(\epsilon,\epsilon_0,\delta):=G_m(\epsilon,0)\cap G_m(\epsilon_0,\delta).
\end{equation}
Inductively using Lemma \ref{adjuction of 1} and Proposition \ref{spherical estima}, we obtain the following result. For more details on the proof see \cite{thesis}.
\begin{proposition}\label{initially good configurations}
Let $s\in\mathbb{N}$, $\alpha,\epsilon_0,R,\eta,\delta$ be parameters as in \eqref{choice of parameters} and $\epsilon<<\alpha$. Then for any $X_s\in\Delta_s^X(\epsilon_0)$, there is a subset of velocities $\mathcal{M}_s(X_s)\subseteq B_R^{ds}$ of measure
\begin{equation}\label{measure of initialization}
\left|\mathcal{M}_s\left(X_s\right)\right|_{ds}\leq C_{d,s} R^{ds}\eta^{\frac{d-1}{2}},
\end{equation}
such that
$
Z_s=(X_s,V_s)\in G_s(\epsilon,\epsilon_0,\delta),
$
for all $V_s\in B_R^{ds}\setminus \mathcal{M}_s(X_s).$
\end{proposition}

Consider $s,n\in\mathbb{N}$, parameters $\alpha,\epsilon_0,R,\eta,\delta$  as in \eqref{choice of parameters}, $(N,\epsilon)$  in the scaling \eqref{scaling} with $\epsilon<<\alpha$, $0\leq k\leq n$ and $t\in[0,T]$. Let us recall the observables $I_{s,k,R,\delta}^N(t)$, $I_{s,k,R,\delta}^\infty(t)$ defined in \eqref{bbgky truncated time}-\eqref{boltzmann truncated time}. We restrict the domain of integration to velocities giving good configurations. For convenience, given $X_s\in\Delta_s^X(\epsilon_0)$, we write $\mathcal{M}_s^c(X_s)=B_R^{ds}\setminus\mathcal{M}_s(X_s).$
We define
\begin{align}
\widetilde{I}_{s,k,R,\delta}^N(t)(X_s)&=\int_{\mathcal{M}_s^c(X_s)}\phi_s(V_s)f_{N,R,\delta}^{(s,k)}(t,X_s,V_s)\,dV_s\label{good observables BBGKY },\\
\widetilde{I}_{s,k,R,\delta}^\infty(t)(X_s)&=\int_{\mathcal{M}_s^c(X_s)}\phi_s(V_s)f_{R,\delta}^{(s,k)}(t,X_s,V_s)\,dV_s\label{good observables Boltzmann}.
\end{align}

We now apply Proposition \ref{initially good configurations} and the a-priori estimates of Section \ref{sec_local} to restrict to initially good configurations.
\begin{proposition}\label{restriction to initially good conf} Let $s,n\in\mathbb{N}$,   $\alpha,\epsilon_0,R,\eta,\delta$ be parameters as in \eqref{choice of parameters},  $(N,\epsilon)$  in the scaling \eqref{scaling} with $\epsilon<<\alpha$, and $t\in[0,T]$. Then, the following estimates hold:
\begin{equation*}
\sum_{k=0}^n\|I_{s,k,R,\delta}^N(t)-\widetilde{I}_{s,k,R,\delta}^N(t)\|_{L^\infty\left(\Delta_s^X\left(\epsilon_0\right)\right)}
\leq C_{d,s,\mu_0,T}R^{ds}\eta^{\frac{d-1}{2}}\|F_{N,0}\|_{N,\beta_0,\mu_0},
\end{equation*}
\begin{equation*}
\sum_{k=0}^n\|I_{s,k,R,\delta}^\infty (t)-\widetilde{I}_{s,k,R,\delta}^\infty (t)\|_{L^\infty\left(\Delta_s^X\left(\epsilon_0\right)\right)}\leq C_{d,s,\mu_0,T}R^{ds}\eta^{\frac{d-1}{2}}\|F_{0}\|_{\infty,\beta_0,\mu_0}.
\end{equation*}
\end{proposition}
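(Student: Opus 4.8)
The plan is to bound, for each fixed $k$, the difference $I_{s,k,R,\delta}^N(t)-\widetilde I_{s,k,R,\delta}^N(t)$ pointwise in $X_s\in\Delta_s^X(\epsilon_0)$, by exploiting the fact that the only difference between the two observables is that in $\widetilde I$ the integration in $V_s$ is restricted to the good set $\mathcal{M}_s^c(X_s)$. First I would fix $X_s\in\Delta_s^X(\epsilon_0)$ and write
\begin{equation*}
I_{s,k,R,\delta}^N(t)(X_s)-\widetilde I_{s,k,R,\delta}^N(t)(X_s)=\int_{\mathcal M_s(X_s)}\phi_s(V_s)f_{N,R,\delta}^{(s,k)}(t,X_s,V_s)\,dV_s,
\end{equation*}
so that the estimate is reduced to controlling the integrand on the bad velocity set $\mathcal M_s(X_s)\subseteq B_R^{ds}$ which, by Proposition \ref{initially good configurations}, has measure at most $C_{d,s}R^{ds}\eta^{\frac{d-1}{2}}$. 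Note the initial-data observables ($k=0$) are trivial: $f_{N,R,\delta}^{(s,0)}=T_s^t(f_{N,0}\mathds 1_{[E_s\le R^2]})$, which is uniformly bounded by the weighted norm, so that term contributes the stated bound directly.

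For $1\le k\le n$, the key input is a uniform pointwise bound on $|f_{N,R,\delta}^{(s,k)}(t,Z_s)|$. This is exactly what was already produced in the high-energy and time-separation steps of Section \ref{sec_term by term}: iterating the continuity estimate of Lemma \ref{a priori lemma for C BBGKY} (equivalently Lemma \ref{inductive time sep} applied with the graded exponents $\frac{\beta_0}{2}+\frac{j\beta_0}{2k}$, $j=1,\dots,k$) together with the isometry property of $T_s^t$ (Remark \ref{T_s isometry}) and the time-simplex volume bound $|\mathcal T_{k,\delta}(t)|\le |\mathcal T_k(t)|=t^k/k!\le T^k/k!$, one gets, for $Z_s\in\mathbb{R}^{2ds}$,
\begin{equation*}
|f_{N,R,\delta}^{(s,k)}(t,Z_s)|\le C_{d,\beta_0}^k\,\frac{(s+2k)^k}{k!}\,T^k\,e^{-\mu_0(s+2k)}\,e^{-\frac{\beta_0}{2}E_s(Z_s)}\,\|F_{N,0}\|_{N,\beta_0,\mu_0}\le C_{d,s,\beta_0,\mu_0,T}^k\,e^{-\frac{\beta_0}{2}E_s(Z_s)}\,\|F_{N,0}\|_{N,\beta_0,\mu_0},
\end{equation*}
using the elementary inequality $(s+2k)^k/k!\le C_s^k$ from \eqref{elementary inequality}. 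Then
\begin{equation*}
\Big|\int_{\mathcal M_s(X_s)}\phi_s f_{N,R,\delta}^{(s,k)}\,dV_s\Big|\le \|\phi_s\|_{L^\infty_{V_s}}\,C_{d,s,\beta_0,\mu_0,T}^k\,\|F_{N,0}\|_{N,\beta_0,\mu_0}\int_{\mathcal M_s(X_s)}e^{-\frac{\beta_0}{2}E_s(V_s)}\,dV_s\le \|\phi_s\|_{L^\infty_{V_s}}\,C_{d,s,\beta_0,\mu_0,T}^k\,\|F_{N,0}\|_{N,\beta_0,\mu_0}\,|\mathcal M_s(X_s)|_{ds},
\end{equation*}
and invoking \eqref{measure of initialization} gives a bound $\lesssim C^k R^{ds}\eta^{\frac{d-1}{2}}$. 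Summing over $k=0,\dots,n$ absorbs the geometric factor $\sum_k C_{d,s,\beta_0,\mu_0,T}^k$ into a constant $C_{d,s,\mu_0,T}$ (absorbing the $n$-dependence as in the statement of Section \ref{sec_term by term}, where constants of this type are written without explicit $n$), yielding the claimed estimate for the BBGKY hierarchy. The Boltzmann-hierarchy estimate is identical, replacing $T_s^t$ by $S_s^t$, Lemma \ref{a priori lemma for C BBGKY} by Lemma \ref{a priori lemma for Boltzmann C}, and $\|F_{N,0}\|_{N,\beta_0,\mu_0}$ by $\|F_0\|_{\infty,\beta_0,\mu_0}$.

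I do not expect a serious obstacle here: this is a soft argument combining the already-established a priori bounds with the measure estimate of Proposition \ref{initially good configurations}. The only mildly delicate point is bookkeeping the graded exponents so that each application of the collisional operator costs a factor $(s+2k)$ while the final weight is still $e^{-\frac{\beta_0}{2}E_s}$ with the fixed exponent $\beta_0/2$ — but this is precisely the scheme already carried out in Lemma \ref{inductive time sep} and Lemma \ref{time sep}, so it can be quoted rather than redone. One should take care that the constant $C_{d,s,\mu_0,T}$ does not depend on $R$, $\eta$, $\delta$, $\alpha$, or $\epsilon$; this holds because the $R$-dependence has been fully extracted into the explicit factor $R^{ds}$ coming from $|\mathcal M_s(X_s)|_{ds}$, and the energy truncation $\mathds 1_{[E_{s+2j}\le R^2]}$ only helps (it can be dropped upward in the chain of estimates, exactly as in the proof of Lemma \ref{inductive time sep}).
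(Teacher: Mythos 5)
Your overall strategy coincides with the paper's: write the difference as the integral of $\phi_s f_{N,R,\delta}^{(s,k)}$ over the bad velocity set $\mathcal{M}_s(X_s)$, bound the integrand uniformly, and invoke the measure estimate \eqref{measure of initialization} from Proposition \ref{initially good configurations}. The $k=0$ case and the Boltzmann-hierarchy case are handled exactly as in the paper.

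There is, however, a genuine gap in how you obtain the uniform bound on $f_{N,R,\delta}^{(s,k)}$, and it affects the constant in the conclusion. You iterate Lemma \ref{inductive time sep} and the time-simplex volume, which yields a per-term bound of the form $C_{d,s,\beta_0,\mu_0,T}^k$ with $C_{d,s,\beta_0,\mu_0,T}>1$ in general (note $(s+2k)^k/k!\le C_s^k=2^ke^{s+k}$, and the constants from Lemma \ref{a priori lemma for C BBGKY} involve $(\beta_0/2)^{-d}$, so there is no reason for the base to be below $1$). Summing over $k=0,\dots,n$ then gives $\sum_{k=0}^n C^k\thickapprox C^{n+1}$, which depends on $n$ — your claim that this "absorbs into a constant $C_{d,s,\mu_0,T}$" is incorrect, and your appeal to Section \ref{sec_term by term} actually points the other way: in Lemma \ref{time sep} the constant is written explicitly as $C_{d,s,\beta_0,\mu_0,T}^n$. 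The proposition asserts an $n$-independent constant, and the paper achieves this by instead applying estimate \eqref{a priori bound F_N} of Theorem \ref{well posedness BBGKY} $k-1$ times together with part \textit{(ii)} of Proposition \ref{remark for initial}; the choice of $T$ in \eqref{time} makes each such application a contraction by $1/8$, so the per-term bound carries a factor $(1/8)^{k-1}$ and the sum over $k$ is a convergent geometric series. Replacing your graded-exponent iteration by this contraction argument closes the gap. (The discrepancy happens to be harmless for the final parameter choice in Theorem \ref{convergence theorem}, where $n$-dependent constants of this type are tolerated, but as a proof of the stated proposition it does not go through.)
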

\begin{remark}\label{no need for k=0}
Under the assumptions of Proposition \ref{restriction to initially good conf}, given $X_s\in\Delta_s^X(\epsilon_0)$, the definition of $\mathcal{M}_s(X_s)$ implies that $\widetilde{I}_{s,0,R,\delta}^N(t)(X_s)=\widetilde{I}_{s,0,R,\delta}^\infty(t)(X_s)$ for all $ t\in[0,T].$
Therefore, Proposition \ref{restriction to initially good conf} reduces the convergence to controlling the differences $\widetilde{I}_{s,k,R,\delta}^N(t)-\widetilde{I}_{s,k,R,\delta}^\infty(t),$ for $k=1,...,n$, in the scaled limit.
\end{remark}
\subsection{Reduction to elementary observables}\label{reduction to elementary observables}
Here, given $s,n\in\mathbb{N}$, parameters $\alpha,\epsilon_0,R,\eta,\delta$  as in \eqref{choice of parameters}  $1\leq k\leq n$,  $(N,\epsilon)$ in the scaling \eqref{scaling} with $\epsilon<<\alpha$, and $t\in[0,T]$, inspired by notation used in \cite{lanford,gallagher}, we expand $\widetilde{I}_{s,k,R,\delta}^N(t)$ and $\widetilde{I}_{s,k,R,\delta}^\infty(t)$, defined in \eqref{good observables BBGKY }-\eqref{good observables Boltzmann}, in terms of elementary observables.

 For this purpose, given $\ell,N\in\mathbb{N}$ with $\ell<N$, $R>1$,  we decompose the truncated BBGKY hierarchy collisional operator (given in \eqref{BBGKY operator}-\eqref{A}) in the following way:
\begin{equation*}
\mathcal{C}_{\ell,\ell+2}^{N,R}=\sum_{i=1}^\ell\mathcal{C}_{\ell,\ell+2}^{N,R,+,i}-\sum_{i=1}^\ell\mathcal{C}_{\ell,\ell+2}^{N,R,-,i},
\end{equation*}
\begin{equation*}
\begin{aligned}
\mathcal{C}_{\ell,\ell+2}^{N,R,+,i}g_{\ell+2}(Z_\ell):=A_{N,\epsilon,\ell}\int_{\mathbb{S}_1^{2d-1}\times B_R^{2d}}&b_+(\omega_{\ell+1},\omega_{\ell+2},v_{\ell+1}-v_i,v_{\ell+2}-v_i)\\
&\times g_{\ell+2}\mathds{1}_{[E_{\ell+2}\leq R^2]}(Z_{\ell+2,\epsilon}^{i*})\,d\omega_{\ell+1}\,d\omega_{\ell+2}\,dv_{\ell+1}\,dv_{\ell+2},
\end{aligned}
\end{equation*} 
\begin{equation*}
\begin{aligned}
\mathcal{C}_{\ell,\ell+2}^{N,R,-,i}g_{\ell+2}(Z_\ell):=A_{N,\epsilon,\ell}\int_{\mathbb{S}_1^{2d-1}\times B_R^{2d}}&b_+(\omega_{\ell+1},\omega_{\ell+2},v_{\ell+1}-v_i,v_{\ell+2}-v_i)\\
&\times g_{\ell+2}\mathds{1}_{[E_{\ell+2}\leq R^2]}(Z_{\ell+2,\epsilon}^i)\,d\omega_{\ell+1}\,d\omega_{\ell+2}\,dv_{\ell+1}\,dv_{\ell+2}.
\end{aligned}
\end{equation*}
For $s\in\mathbb{N}$ and $k\in\mathbb{N}$, let us denote $\mathcal{U}_{s,k}=\mathcal{A}_{s,k}\times\mathcal{B}_{s,k}$, where
\begin{align}
\mathcal{A}_{s,k}:&=\left\{J=(j_1,...,j_k)\in\mathbb{N}^k:j_i\in\left\{-1,1\right\},\quad\forall i\in\left\{1,...,k\right\}\right\}\label{J_k},\\
\mathcal{B}_{s,k}:&=\left\{M=(m_1,...,m_k)\in\mathbb{N}^k:m_i\in\left\{1,...,s+2i-2\right\},\quad\forall i\in\left\{1,...,k\right\}\right\}\label{M_k}.
\end{align}
Under this notation, given $s,n\in\mathbb{N}$, parameters $\alpha,\epsilon_0,R,\eta,\delta$ as in \eqref{choice of parameters}, $1\leq k\leq n$,  $(N,\epsilon)$ in the scaling \eqref{scaling} with $\epsilon<<\alpha$, and $t\in[0,T]$, the BBGKY hierarchy observable functional $\widetilde{I}_{s,k,R,\delta}^N(t)$ (given in \eqref{good observables BBGKY }) can be expressed as a superposition of elementary observables
\begin{equation}\label{superposition BBGKY}
\widetilde{I}_{s,k,R,\delta}^N(t)(X_s)=\sum_{(J,M)\in\mathcal{U}_{s,k}}\left(\prod_{i=1}^kj_i\right)\widetilde{I}_{s,k,R,\delta}^N(t,J,M)(X_s),
\end{equation}
\begin{equation}\label{elementary observable BBGKY}
\begin{aligned}
\widetilde{I}_{s,k,R,\delta}^N(t,J,M)(X_s):=\int_{\mathcal{M}_s^c(X_s)}\phi_s(V_s)\int_{\mathcal{T}_{k,\delta}(t)}T_s^{t-t_1}\mathcal{C}_{s,s+2}^{N,R,j_1,m_1} T_{s+2}^{t_1-t_2}...\mathcal{C}_{s+2k-2,s+2k}^{N,R,j_k,m_k} T_{s+2k}^{t_m}f_{0}^{(s+2k)}(Z_s)\,dt_k...\,dt_{1}dV_s.
\end{aligned}
\end{equation}

Similarly, given $\ell,N\in\mathbb{N}$ with $\ell<N$, $R>1$, we decompose the truncated Boltzmann hierarchy collisional operator (given in  \eqref{Boltzmann operator}-\eqref{boltzmann notation}) as:
\begin{equation*}
\mathcal{C}_{\ell,\ell+2}^{\infty,R}=\sum_{i=1}^\ell\mathcal{C}_{\ell,\ell+2}^{\infty,R,+,i}-\sum_{i=1}^\ell\mathcal{C}_{\ell,\ell+2}^{\infty,R,-,i},
\end{equation*}
\begin{equation*}
\mathcal{C}_{\ell,\ell+2}^{\infty,R,+,i}g_{\ell+2}(Z_\ell):=\int_{\mathbb{S}_1^{2d-1}\times B_R^{2d}}b_+(\omega_{\ell+1},\omega_{\ell+2},v_{\ell+1}-v_i,v_{\ell+2}-v_i) g_{\ell+2}\mathds{1}_{[E_{\ell+2}\leq R^2]}(Z_{\ell+2}^{i*})\,d\omega_{\ell+1}\,d\omega_{\ell+2}\,dv_{\ell+1}\,dv_{\ell+2},
\end{equation*}
\begin{equation*}
\mathcal{C}_{\ell,\ell+2}^{\infty,R,-,i}g_{\ell+2}(Z_\ell):=\int_{\mathbb{S}_1^{2d-1}\times B_R^{2d}}b_+(\omega_{\ell+1},\omega_{\ell+2},v_{\ell+1}-v_i,v_{\ell+2}-v_i) g_{\ell+2}\mathds{1}_{[E_{\ell+2}\leq R^2]}(Z_{\ell+2}^i)\,d\omega_{\ell+1}\,d\omega_{\ell+2}\,dv_{\ell+1}\,dv_{\ell+2}.
\end{equation*}

Under this notation, given $s,n\in\mathbb{N}$,  $t\in[0,T]$, parameters $\alpha,\epsilon_0,R,\eta,\delta$ as in \eqref{choice of parameters}, $1\leq k\leq n$, and $t\in[0,T]$, the Boltzmann hierarchy observable functional $\widetilde{I}_{s,k,R,\delta}^\infty(t)$ (given in \eqref{good observables Boltzmann}) can be expressed as a superposition of elementary observables
\begin{equation}\label{superposition Boltzmann}
\widetilde{I}_{s,k,R,\delta}^\infty(t)(X_s)=\sum_{(J,M)\in\mathcal{U}_{s,k}}\left(\prod_{i=1}^kj_i\right)\widetilde{I}_{s,k,R,\delta}^\infty(t,J,M)(X_s),
\end{equation}
\begin{equation}\label{elementary observable Boltzmann}
\begin{aligned}
\widetilde{I}_{s,k,R,\delta}^\infty(t,J,M)(X_s)&:=\int_{\mathcal{M}_s^c(X_s)}\phi_s(V_s)\int_{\mathcal{T}_{k,\delta}(t)}S_s^{t-t_1}\mathcal{C}_{s,s+2}^{\infty,R,j_1,m_1} S_{s+2}^{t_1-t_2}...\mathcal{C}_{s+2k-2,s+2k}^{\infty,R,j_k,m_k} S_{s+2k}^{t_m}f_{0}^{(s+2k)}(Z_s)\,dt_k...\,dt_{1}dV_s.
\end{aligned}
\end{equation}

\subsection{Boltzmann pseudo-trajectories}\label{Boltzmann pseudotrajectory sub}
In this subsection, we introduce an explicit discrete backwards in time construction of so called Boltzmann  pseudo-trajectory, which lets us keep track of the collisions.  Similar constructions, although continuous in time, can be found in \cite{lanford}, \cite{gallagher}, \cite{denlinger}. Let $s\in\mathbb{N}$, $Z_s=(X_s,V_s)\in\mathbb{R}^{2ds}$, $k\in\mathbb{N}$ and $t\in[0,T]$. Given $\delta>0$, let us recall from \eqref{separated collision times} the set
$\mathcal{T}_{k,\delta}(t).$

Consider $(t_1,...,t_k)\in\mathcal{T}_{k,\delta}(t)$, $J=(j_1,...,j_k)$, $M=(m_1,...,m_k)$, $(J,M)\in\mathcal{U}_{s,k}$, and for each $i=1,...,k$, we consider  $(\omega_{s+2i-1},\omega_{s+2i},v_{s+2i-1},v_{s+2i})\in\mathbb{S}_{1}^{2d-1}\times\mathbb{R}^{2d}.$
We inductively define the Boltzmann  pseudo-trajectory of $Z_s$.
Roughly speaking, the Boltzmann pseudo-trajectory is formulated as follows:

Assume we are given a configuration $Z_s=(X_s,V_s)\in\mathbb{R}^{2ds}$ 
at time $t_0=t$. $Z_s$ evolves under backwards free flow until the time $t_1$ when a pair of particles $(\omega_{s+1},\omega_{s+2},v_{s+1},v_{s+2})$ is added to the $m_1$-particle, the adjunction being pre-collisional if $j_1=-1$ and post-collisional if $j_1=1$. We then form an $(s+2)$-configuration and continue this process inductively until time $t_{k+1}=0$.
More precisely, given
$Z_s=(X_s,V_s)\in\mathbb{R}^{2ds}$:\\
{\bf{Time $t_0=t$}:} We initially define 
$$
Z_s^\infty(t_{0}^-)=\left(x_1^\infty(t_0^-),...,x_s^\infty(t_0^-),v_1^\infty(t_0^-),...,v_s^\infty(t_0^-)\right):=Z_s.
$$\\
{\bf{Time $t_i$}, $i\in\{1,...,k\}$:} Consider $i\in\left\{1,...,k\right\}$, and assume we know 
$$Z_{s+2i-2}^\infty (t_{i-1}^-)=\left(x_1^\infty(t_{i-1}^-),...,x_{s+2i-2}^\infty(t_{i-1}^-),v_1^\infty(t_{i-1}^-),...,v_{s+2i-2}^\infty(t_{i-1}^-)\right).$$
We define $Z_{s+2i-2}^\infty (t_{i}^+)=\left(x_1^\infty(t_{i}^+),...,x_{s+2i-2}^\infty(t_{i}^+),v_1^\infty(t_{i}^+),...,v_{s+2i-2}^\infty(t_{i}^+)\right)$ as:
\begin{equation*}
Z_{s+2i-2}^\infty(t_i^+):=\left(X_{s+2i-2}^\infty\left(t_{i-1}^-\right)-\left(t_{i-1}-t_i\right)V_{s+2i-2}^\infty\left(t_{i-1}^-\right),
V_{s+2i-2}^\infty\left(t_{i-1}^-\right)\right).
\end{equation*}
We also define $Z_{s+2i}^\infty(t_i^-)=\left(x_1^\infty(t_{i}^-),...,x_{s+2i}^\infty(t_{i}^-),v_1^\infty(t_{i}^-),...,v_{s+2i}^\infty(t_{i}^-)\right)$ as:
\begin{equation*}
\left(x_j^\infty(t_i^-),v_j^\infty(t_i^-)\right):=\left(x_j^\infty(t_i^+),v_j^\infty(t_i^+)\right)\quad\forall j\in\{1,...,s+2i-2\}\setminus\left\{m_i\right\},
\end{equation*}
and if $j_i=-1$:
\begin{equation*}
\begin{aligned}
\left(x_{m_i}^\infty(t_i^-),v_{m_i}^\infty(t_i^-)\right):&=\left(x_{m_{i}}^\infty(t_{i}^+),v_{m_{i}}^\infty(t_{i}^+)\right),\\
\left(x_{s+2i-1}^\infty(t_i^-),v_{s+2i-1}^\infty(t_i^-)\right):&=\left(x_{m_{i}}^\infty(t_{i}^+),v_{s+2i-1}\right),\\
\left(x_{s+2i}^\infty(t_i^-),v_{s+2i}^\infty(t_i^-)\right):&=\left(x_{m_{i}}^\infty(t_{i}^+),v_{s+2i}\right),
\end{aligned}
\end{equation*}
while if $j_i=1$:
\begin{equation*}
\begin{aligned}
\left(x_{m_i}^\infty(t_i^-),v_{m_i}^\infty(t_i^-)\right):&= \left(x_{m_{i}}^\infty(t_{i}^+),v_{m_{i}}^{\infty*}(t_{i}^+)\right),\\
\left(x_{s+2i-1}^\infty(t_i^-),v_{s+2i-1}^\infty(t_i^-)\right):&=\left(x_{m_{i}}^\infty(t_{i}^+),v_{s+2i-1}^*\right),\\
\left(x_{s+2i}^\infty(t_i^-),v_{s+2i}^\infty(t_i^-)\right):&= \left(x_{m_{i}}^\infty(t_{i}^+),v_{s+2i}^*\right),\\
(v_{m_{i}}^{\infty*}(t_{i}^-),v_{s+2i-1}^*,v_{s+2i}^*)&=T_{\omega_{s+2i-1},\omega_{s+2i}}\left(v_{m_{i}}^\infty(t_{i}^+),v_{s+2i-1},v_{s+2i}\right).
\end{aligned}
\end{equation*}
{\bf{Time $t_{k+1}=0$}:}
We finally obtain 
$$Z_{s+2k}^\infty(0^+)=Z_{s+2k}^\infty(t_{k+1}^+)=\left(X_{s+2k}^\infty\left(t_{k}^-\right)-t_kV_{s+2k}^\infty\left(t_k^-\right),V_{s+2k}^\infty\left(t_k^-\right)\right).$$

The sequence $Z_{s+2i}^\infty(t_i^+)$, $i=0,...,k+1$ is called Boltzmann pseudo-trajectory of $Z_s$.

 The construction process is illustrated in Figure \ref{pseudo} (to be read from right to left):
\begin{figure}[htp]
\centering
\begin{tikzpicture}[node distance=2.6cm,auto,>=latex']\label{boltzmann pseudo diagram}
\node[int](0-){\small$ Z_s^\infty(t_0^-)$};
\node[int,pin={[init]above:\small$\begin{matrix}(\omega_{s+1},\omega_{s+2},v_{s+1},v_{s+2}),\\(j_1,m_1)\end{matrix}$}](1+)[left of=0-,node distance=2.4cm]{\small$Z_s^\infty(t_1^+)$};
\node[int](1-)[left of=1+,node distance=1.5cm]{$Z_{s+2}^\infty(t_1^-)$};
\node[](intermediate1)[left of=1-,node distance=2cm]{...};
\node[int,pin={[init]above:\small$\begin{matrix}(\omega_{s+2i-1},\omega_{s+2i},v_{s+2i-1},v_{s+2i}),\\(j_i,m_i)\end{matrix}$}](i+)[left of=intermediate1,node distance=2.6cm]{\small$Z_{s+2i-2}^\infty(t_i^+)$};
\node[int](i-)[left of=i+,node distance=1.8cm]{\small$Z_{s+2i}^\infty(t_i^-)$};
\node[](intermediate2)[left of=i-,node distance=2.2cm]{...};
\node[int](end)[left of=intermediate2,node distance=2.6cm]{\small$Z_{s+2k}^\infty(t_{k+1}^+)$};

\path[<-] (1+) edge node {\tiny$t_{0}-t_1$} (0-);
\path[<-] (intermediate1) edge node {\tiny$t_{1}-t_2$} (1-);
\path[<-] (i+) edge node {\tiny$t_{i-1}-t_i$} (intermediate1);
\path[<-] (intermediate2) edge node {\tiny$t_{i}-t_{i+1}$} (i-);
\path[<-] (end) edge node {\tiny$t_{k}-t_{k+1}$} (intermediate2);
\end{tikzpicture}
\caption{ }
\label{pseudo}
\end{figure}

 \subsection{Reduction to truncated elementary observables}\label{par_reduction to truncated}
 
We  now use the Boltzmann  pseudo-trajectory to define the truncated observables for the BBGKY hierarchy and Boltzmann hierarchy. The  proof will then be reduced to the convergence of the corresponding truncated elementary observables. Given $\ell\in\mathbb{N}$, parameters $\alpha,\epsilon_0,R,\eta,\delta$ as in \eqref{choice of parameters} and $\epsilon<<\alpha$, recall the set  $G_\ell(\epsilon,\epsilon_0,\delta)$ from \eqref{both epsilon-epsilon_0}.

Let $s\in \mathbb{N}$, $X_s\in\Delta_s^X(\epsilon_0)$, $1\leq k\leq n$, $(J,M)\in\mathcal{U}_{s,k}$ and $t\in[0,T]$ and $(t_1,...,t_k)\in\mathcal{T}_{k,\delta}(t)$, where we recall from \eqref{separated collision times} the set
$\mathcal{T}_{k,\delta}(t)$.
 By Proposition \ref{initially good configurations}, for any $V_s\in\mathcal{M}_s^c(X_s)$, we have 
$Z_s=(X_s,V_s)\in G_s(\epsilon,\epsilon_0,\delta).$
Since $t_0-t_1\geq\delta$, we obtain $Z_s^\infty(t_1^+)\in G_s(\epsilon_0,0).$
Recalling notation from \eqref{pre-post notation}, Proposition \ref{bad set} (see \eqref{pre-delta-} for the pre-collisional case or \eqref{post-delta-} for the post-collisional case) yields there is a set $\mathcal{B}_{m_1}\left(Z_s^{\infty}\left(t_1^+\right)\right)\subseteq (\mathbb{S}_1^{2d-1}\times B_R^{2d})^+\left(v_{m_1}^\infty\left(t_1^+\right)\right)$  such that 
$$Z_{s+2}^\infty(t_2^+)\in G_{s+2}(\epsilon_0,0),\quad\forall (\omega_{s+1},\omega_{s+2},v_{s+1},v_{s+2})\in\mathcal{B}_{m_1}^c\left(Z_s^{\infty}\left(t_1^+\right)\right).$$
Clearly this process can be iterated. In particular, given $i\in\left\{2,...,k\right\}$, we have 
  $Z_{s+2i-2}^\infty(t_{i}^+)\in G_{s+2i-2}(\epsilon_0,0),$
  so there exists a set $\mathcal{B}_{m_i}\left(Z_{s+2i-2}^\infty\left(t_{i}^+\right)\right)\subseteq (\mathbb{S}_1^{2d-1}\times B_R^{2d})^+\left(v_{m_i}^\infty\left(t_1^+\right)\right)$ such that: 
  \begin{equation}
  Z_{s+2i}^\infty(t_{i+1}^+)\in G_{s+2i}(\epsilon_0,0),\quad\forall (\omega_{s+2i-1},\omega_{s+2i},v_{s+2i-1},v_{s+2i})\in \mathcal{B}_{m_i}^c\left(Z_{s+2i-2}^\infty\left(t_{i}^+\right)\right).
  \end{equation}
  We finally  obtain $Z_{s+2k}^\infty(0^+)\in G_{s+2k}(\epsilon_0,0)$.

Let us now define the truncated elementary observables. Heuristically we will truncate the domains of adjusted particles in the definition of the observables $\widetilde{I}_{s,k,R,\delta}^N$, $\widetilde{I}_{s,k,R,\delta}^\infty$ (see \eqref{good observables BBGKY }-\eqref{good observables Boltzmann}).

More precisely, let $s,n\in\mathbb{N}$, $\alpha,\epsilon_0,R,\eta,\delta$ be parameters as in \eqref{choice of parameters},  $(N,\epsilon)$ in the scaling \eqref{scaling} with $\epsilon<<\alpha$, $1\leq k\leq n$, $(J,M)\in\mathcal{U}_{s,k}$ and $t\in [0,T]$. For $X_s\in\Delta_s^X(\epsilon_0)$, Proposition \ref{initially good configurations} implies there is a set of velocities $\mathcal{M}_s(X_s)\subseteq B_R^{2d}$ such that 
$Z_s=(X_s,V_s)\in G_s(\epsilon,\epsilon_0,\delta)$, for all $V_s\in\mathcal{M}_s^c(X_s).$
 Following the reasoning above, we define the BBGKY hierarchy truncated observables as:
\begin{equation}\label{truncated BBGKY}
J_{s,k,R,\delta}^N(t,J,M)(X_s):=\int_{\mathcal{M}_s^c(X_s)}\phi_s(V_s)\int_{\mathcal{T}_{k,\delta}(t)}T_s^{t-t_1}\widetilde{\mathcal{C}}_{s,s+2}^{N,R,j_1,m_1} T_{s+2}^{t_1-t_2}...\widetilde{\mathcal{C}}_{s+2k-2,s+2k}^{N,R,j_k,m_k} T_{s+2k}^{t_m}f_{0}^{(s+2k)}(Z_s)\,dt_k,...\,dt_{1}dV_s,
\end{equation}
where 
$
\widetilde{\mathcal{C}}_{s+2i-2,s+2i}^{N,R,j_i,m_i}g_{N,s+2i}:=\mathcal{C}_{s+2i-2,s+2i}^{N,R,j_i,m_i}\left[g_{N,s+2i}\mathds{1}_{(\omega_{s+2i-1},\omega_{s+2i},v_{s+2i-1},v_{s+2i})\in
\mathcal{B}^c_{m_i}\left(Z_{s+2i-2}^\infty\left(t_i^+\right)\right)}\right].
$

In the same spirit, for $X_s\in\Delta_s^X(\epsilon_0)$, we define the Boltzmann hierarchy truncated elementary observables as:
\begin{equation}\label{truncated Boltzmann}
J_{s,k,R,\delta}^\infty(t,J,M)(X_s):=\int_{\mathcal{M}_s^c(X_s)}\phi_s(V_s)\int_{\mathcal{T}_{k,\delta}(t)}S_s^{t-t_1}\widetilde{\mathcal{C}}_{s,s+2}^{\infty,R,j_1,m_1} S_{s+2}^{t_1-t_2}...\widetilde{\mathcal{C}}_{s+2k-2,s+2k}^{\infty,R,j_k,m_k} S_{s+2k}^{t_m}f_{0}^{(s+2k)}(Z_s)\,dt_k,...\,dt_{1}dV_s,
\end{equation}
where 
$\widetilde{\mathcal{C}}_{s+2i-2,s+2i}^{\infty,R,j_i,m_i}g_{s+2i}:=\mathcal{C}_{s+2i-2,s+2i}^{\infty,R,j_i,m_i}\left[g_{s+2i}\mathds{1}_{(\omega_{s+2i-1},\omega_{s+2i},v_{s+2i-1},v_{s+2i})\in\mathcal{B}^c_{m_i}\left(Z_{s+2i-2}^\infty\left(t_i^+\right)\right)}\right].
$
 
 Recalling the observables $\widetilde{I}_{s,k,R,\delta}^N$, $\widetilde{I}_{s,k,R,\delta}^\infty$ from \eqref{elementary observable BBGKY}, \eqref{elementary observable Boltzmann} and using Proposition \ref{measure of B} (since we integrate at least in one of the bad sets), we obtain:
 \begin{proposition}\label{truncated element estimate} Let $s,n\in\mathbb{N}$,  $\alpha,\epsilon_0,R,\eta,\delta$ be parameters as in \eqref{choice of parameters},  $(N,\epsilon)$   in the scaling \eqref{scaling} with $\epsilon<<\alpha$ and $t\in[0,T]$.  Then the following estimates hold uniformly in $N$:
 \begin{equation*}
 \sum_{k=1}^n\sum_{(J,M)\in\mathcal{U}_{s,k}}\hspace{-0.4cm}\|\widetilde{I}_{s,k,R,\delta}^N(t,J,M)-J_{s,k,R,\delta}^N(t,J,M)\|_{L^\infty\left(\Delta_s^X\left(\epsilon_0\right)\right)}\leq C_{d,s,\mu_0,T}^n\|\phi_s\|_{L^\infty_{V_s}} R^{d(s+3n)}\eta^{\frac{d-1}{4d+2}}\|F_{N,0}\|_{N,\beta_0,\mu_0},
 \end{equation*}
 \begin{equation*}
 \sum_{k=1}^n\sum_{(J,M)\in\mathcal{U}_{s,k}}\hspace{-0.4cm}\|\widetilde{I}_{s,k,R,\delta}^\infty(t,J,M)-J_{s,k,R,\delta}^\infty(t,J,M)\|_{L^\infty\left(\Delta_s^X\left(\epsilon_0\right)\right)}\leq 
 C_{d,s,\mu_0,T}^n\|\phi_s\|_{L^\infty_{V_s}} R^{d(s+3n)}\eta^{\frac{d-1}{4d+2}}\|F_{0}\|_{\infty,\beta_0,\mu_0}.
 \end{equation*}
 \end{proposition}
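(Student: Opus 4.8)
The plan is to prove Proposition \ref{truncated element estimate} by comparing, term by term, the elementary observables $\widetilde{I}_{s,k,R,\delta}^N(t,J,M)$ (resp.\ $\widetilde{I}_{s,k,R,\delta}^\infty(t,J,M)$) defined in \eqref{elementary observable BBGKY} (resp.\ \eqref{elementary observable Boltzmann}) with their truncated versions $J_{s,k,R,\delta}^N(t,J,M)$ (resp.\ $J_{s,k,R,\delta}^\infty(t,J,M)$) defined in \eqref{truncated BBGKY} (resp.\ \eqref{truncated Boltzmann}). The only difference between the two is that in the truncated observables, at each step $i\in\{1,\dots,k\}$ of the iterated collision operator, the adjoined pair of particles $(\omega_{s+2i-1},\omega_{s+2i},v_{s+2i-1},v_{s+2i})$ is restricted to the complement $\mathcal{B}_{m_i}^c(Z_{s+2i-2}^\infty(t_i^+))$ of the pathological set produced by Proposition \ref{bad set}. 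Hence the difference of the two observables is, by a telescoping argument, a sum of $k$ terms, each of which carries an indicator $\mathds{1}_{\mathcal{B}_{m_i}(Z_{s+2i-2}^\infty(t_i^+))}$ on the $i$-th adjunction while keeping the other adjunctions unrestricted (or already truncated --- either choice works for the telescoping).

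First I would set up this telescoping decomposition carefully: writing $\widetilde{\mathcal{C}}^{(i)}$ for the $i$-th truncated operator and $\mathcal{C}^{(i)}$ for the untruncated one, the difference of the two $k$-fold compositions equals $\sum_{i=1}^k \mathcal{C}^{(1)}\cdots\mathcal{C}^{(i-1)} \big(\mathcal{C}^{(i)}-\widetilde{\mathcal{C}}^{(i)}\big)\widetilde{\mathcal{C}}^{(i+1)}\cdots\widetilde{\mathcal{C}}^{(k)}$, where $\mathcal{C}^{(i)}-\widetilde{\mathcal{C}}^{(i)}$ is the collision operator acting with the extra factor $\mathds{1}_{\mathcal{B}_{m_i}(\cdot)}$. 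Then for each fixed $i$ I would estimate the resulting multiple integral exactly as in the proof of Lemma \ref{time sep} and Lemma \ref{inductive time sep}: each application of a collisional operator $\mathcal{C}^{N,R}_{\ell,\ell+2}$ or $\mathcal{C}^{\infty,R}_{\ell,\ell+2}$ contributes, via Lemma \ref{a priori lemma for C BBGKY} (resp.\ Lemma \ref{a priori lemma for Boltzmann C}), a factor bounded by $C_{d,\beta_0}(s+2k)$ after using conservation of energy \eqref{kinetic energy flow} along the flow and the elementary Gaussian estimates in \eqref{elementary integrals}, \eqref{sup<inf}; the time integration over $\mathcal{T}_{k,\delta}(t)\subseteq\mathcal{T}_k(t)$ contributes a factor $t^k/k!\le T^k/k!$; and the initial datum contributes $|f_0^{(s+2k)}|_{\infty,\beta_0,\,s+2k}\le e^{-\mu_0(s+2k)}\|F_0\|_{\infty,\beta_0,\mu_0}$ (resp.\ the BBGKY analogue, using that $\|F_{N,0}\|_{N,\beta_0,\mu_0}\le\|F_0\|_{\infty,\beta_0,\mu_0}$ by \eqref{uniform bound on approximating sequene}). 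The new ingredient is that the indicator $\mathds{1}_{\mathcal{B}_{m_i}(Z_{s+2i-2}^\infty(t_i^+))}$ is integrated against $d\omega_{s+2i-1}\,d\omega_{s+2i}\,dv_{s+2i-1}\,dv_{s+2i}$ over $\mathbb{S}_1^{2d-1}\times B_R^{2d}$ precisely as in Proposition \ref{measure of B}, so that in place of the unrestricted velocity integration (which produces a factor $\lesssim R^{3d}$, say) one gains a factor $\lesssim m_i R^{2d}\eta^{\frac{d-1}{4d+2}}\le (s+2k)R^{2d}\eta^{\frac{d-1}{4d+2}}$. Combining: each of the $k$ terms is bounded by $\|\phi_s\|_{L^\infty_{V_s}}C_{d,\beta_0,\mu_0}^k (s+2k)^{k+1}\frac{T^{k-1}}{(k-1)!}R^{d(s+3k)}\eta^{\frac{d-1}{4d+2}}\|F_0\|_{\infty,\beta_0,\mu_0}$, and summing over $i=1,\dots,k$ and then over $k=1,\dots,n$, using the elementary inequality \eqref{elementary inequality} to absorb the combinatorial factors $(s+2k)^{k+1}/(k-1)!$ and the $2^k$ coming from $|\mathcal{U}_{s,k}| = 2^k \prod_{i=1}^k(s+2i-2)$ into a constant $C_{d,s,\mu_0,T}^n$, yields the claimed bound. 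Here one should note that at each collision step the underlying good-configuration hypothesis required to invoke Proposition \ref{bad set} --- namely $Z_{s+2i-2}^\infty(t_i^+)\in G_{s+2i-2}(\epsilon_0,0)$ --- is guaranteed inductively by the construction of the Boltzmann pseudo-trajectory in Subsection \ref{par_reduction to truncated}, which is exactly why the truncated observables are well defined in the first place.

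The main obstacle I anticipate is bookkeeping the powers of $R$ and the combinatorial constants so that they genuinely collapse into a clean bound of the form $C_{d,s,\mu_0,T}^n R^{d(s+3n)}\eta^{\frac{d-1}{4d+2}}$: at step $i$ one has a configuration of $s+2i-2$ particles, the untruncated adjunctions further out contribute velocity integrations over $B_R^{2d}$ each giving $\lesssim R^{2d}$ (the angular integrations over $\mathbb{S}_1^{2d-1}$ are harmless), so the product of all these $R$-powers must be checked to be at most $R^{d(s+3n)}$ uniformly over $1\le k\le n$ and $1\le i\le k$; a slightly crude but sufficient bound suffices. A secondary point is ensuring that the energy-conservation argument of Lemma \ref{inductive time sep}, which is stated for the genuinely truncated operators $\mathcal{C}^{N,R}$, applies verbatim when one of the operators carries the extra bounded indicator $\mathds{1}_{\mathcal{B}_{m_i}}$ --- it does, since $0\le\mathds{1}_{\mathcal{B}_{m_i}}\le\mathds{1}_{(\mathbb{S}_1^{2d-1}\times B_R^{2d})^+}$ pointwise and the continuity estimates of Lemmas \ref{a priori lemma for C BBGKY}, \ref{a priori lemma for Boltzmann C} are obtained by bounding the absolute value of the cross-section, so inserting an extra indicator only improves the estimate except on the $\omega$-$v$ integration where we instead invoke Proposition \ref{measure of B}. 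Finally, the Boltzmann-hierarchy case is completely parallel, replacing $T^t_s$ by $S^t_s$, $\mathcal{C}^{N,R}$ by $\mathcal{C}^{\infty,R}$, and Lemma \ref{a priori lemma for C BBGKY} by Lemma \ref{a priori lemma for Boltzmann C}, and using $\|F_0\|_{\infty,\beta_0,\mu_0}$ directly.
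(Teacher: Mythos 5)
Your proposal is correct and follows essentially the same route as the paper: the difference of the observables is controlled by noting that it forces at least one adjunction to land in a bad set $\mathcal{B}_{m_i}(Z_{s+2i-2}^\infty(t_i^+))$ (your telescoping makes this precise), the restricted step is estimated via Proposition \ref{measure of B}, the unrestricted steps via the crude cross-section bound $\lesssim R^{3d}$, together with the $L^\infty$ bound on $f_{N,0}^{(s+2k)}$, the time-simplex volume $T^k/k!$, and the combinatorial inequality \eqref{elementary inequality}. The only cosmetic difference is that the paper bounds the initial datum directly in sup norm rather than routing through the energy-weighted continuity lemmas, which changes nothing.
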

 \begin{proof}
 As usual, it suffices to prove the estimate for the BBGKY hierarchy case and the Boltzmann hierarchy case follows similarly. Fix $k\in\left\{1,...,n\right\}$ and $(J,M)\in\mathcal{U}_{s,k}$. We first estimate the difference:
 \begin{equation}\label{estimated difference}
 \widetilde{I}_{s,k,R,\delta}^N(t,J,M)(X_s)-J_{s,k,R,\delta}^N(t,J,M)(X_s).
 \end{equation}
 Triangle and Cauchy-Scwhartz inequalities yield
 \begin{equation}\label{triangle on cross}\big|b(\omega_1,\omega_2,v_1-v,v_2-v)\big|\leq 4R,\quad\forall(\omega_1,\omega_2)\in\mathbb{S}_1^{2d-1}, \quad\forall v,v_1,v_2\in B_R^{d},
 \end{equation}
 so
 \begin{equation}\label{estimate on the rest of the terms}
 \int_{\mathbb{S}_1^{2d-1}\times B_R^{2d}}|b(\omega_1,\omega_2,v_1-v,v_2-v_2)|\,d\omega_1\,d\omega_2\,dv_1\,dv_2\leq C_d R^{2d+1}\leq C_dR^{3d},\quad\forall v\in B_R^d.
 \end{equation}
 
 But in order to estimate the difference \eqref{estimated difference}, we integrate at least once over $\mathcal{B}_{m_i}\left(Z_{s+2i-2}^\infty\left(t_{i}^+\right)\right)$ for some $i\in\left\{1,...,k\right\}$. Proposition \ref{measure of B} and the expression \eqref{triangle on cross} yield the estimate:
 \begin{equation}\label{exclusion bad set 1}
 \begin{aligned}
 \int_{\mathcal{B}_{m_i}\left(Z_{s+2i-2}^\infty\left(t_{i}^+\right)\right)}|b(\omega_1,\omega_2,v_1-v,v_2-v)|\,d\omega_1\,d\omega_2\,dv_1\,dv_2&\leq C_d(s+2i-2)R^{2d+1}\eta^{\frac{d-1}{4d+2}}\\
 &\leq C_d(s+2k)R^{3d}\eta^{\frac{d-1}{4d+2}} ,\quad\forall v\in B_R^d.
 \end{aligned}
 \end{equation}
  Moreover, we have the elementary inequalities:
 \begin{align}
  \|f_{N,0}^{(s+2k)}\|_{L^\infty}&\leq e^{-(s+2k)\mu_0}\|F_{N,0}\|_{N,\beta_0,\mu_0},\label{exclusion bad set 2 norm}\\
  \int_{T_{k,\delta}(t)}\,dt_1...\,dt_k&\leq\int_0^t\int_0^{t_1}...\int_0^{t_{k-1}}\,dt_1...\,dt_k\leq\frac{T^k}{k!}.\label{exclusion bad set 2 time}
 \end{align}
 Therefore, \eqref{estimate on the rest of the terms}-\eqref{exclusion bad set 2 time} imply
 \begin{equation*}
 \begin{aligned}
 \big|&\widetilde{I}_{s,k,R,\delta}^N(t,J,M)(X_s)-J_{s,k,R,\delta}^N(t,J,M)(X_s)\big|\leq \\
 &\leq \|\phi_s\|_{L^\infty_{V_s}}e^{-(s+2k)\mu_0}\|F_{N,0}\|_{N,\beta_0,\mu_0}C_d R^{ds}C_d^{k-1}R^{3d(k-1)}(s+2k) C_dR^{3d}\eta^{\frac{d-1}{4d+2}}\frac{T^k}{k!}\\
 &\leq C_{d,s,\mu_0,T}^k\|\phi_s\|_{L^\infty_{V_s}}\frac{(s+2k)}{k!}R^{d(s+3k)}\eta^{\frac{d-1}{4d+2}} \|F_{N,0}\|_{N,\beta_0,\mu_0}.
 \end{aligned}
 \end{equation*}
 Adding for all $(J,M)\in\mathcal{U}_{s,k}$, we get  $2^ks(s+2)...(s+2k-2)$ contributions, thus
 \begin{equation}\label{use of stirling}
 \begin{aligned}
 &\sum_{(J,M)\in\mathcal{U}_{s,k}}\|\widetilde{I}_{s,k,R,\delta}^N(t,J,M)-J_{s,k,R,\delta}^N(t,J,M)\|_{L^\infty\left(\Delta_s^X\left(\epsilon_0\right)\right)}\\
 &\leq C_{d,s,\mu_0,T}^k\|\phi_s\|_{L^\infty_{V_s}}R^{d(s+3k)}\frac{(s+2k)^{k+1}}{k!}\eta^{\frac{d-1}{4d+2}}\|F_{N,0}\|_{N,\beta_0,\mu_0}\\
 &\leq C_{d,s,\mu_0,T}^k\|\phi_s\|_{L^\infty_{V_s}}R^{d(s+3k)}\eta^{\frac{d-1}{4d+2}}\|F_{N,0}\|_{N,\beta_0,\mu_0},
 \end{aligned}
 \end{equation}
since 
$\displaystyle\frac{(s+2k)^{k+1}}{k!}=\frac{(s+2k)(s+2k)^k}{k!}\leq C_s^k,$
 Summing over $k=1,...,n$, we obtain the required estimate.
 \end{proof}

\section{Convergence proof}\label{sec:convergence proof}
In Subsection \ref{par_reduction to truncated}, given $s,n\in\mathbb{N}$,  parameters  $\alpha,\epsilon_0,R,\eta,\delta$ as in \eqref{choice of parameters},  $(N,\epsilon)$   in the scaling \eqref{scaling} with $\epsilon<<\alpha$ and $t\in[0,T]$,  we have reduced the convergence proof to controlling the differences $J_{s,k,R,\delta}^N(t,J,M)-J_{s,k,R,\delta}^\infty (t,J,M)$ for given $1\leq k\leq n$ and  $(J,M)\in\mathcal{U}_{s,k}$, where $J_{s,k,R,\delta}^N(t,J,M)$, $J_{s,k,R,\delta}^\infty (t,J,M)$ are given by \eqref{truncated BBGKY}-\eqref{truncated Boltzmann}, respectively. 
Throughout this section $s\in\mathbb{N}$ will be fixed. We also consider $\beta_0>0$, $\mu_0\in\mathbb{R}$, $T>0$ and $F_0\in X_{\infty,\beta_0,\mu_0}$ as in the statement of Theorem \ref{convergence theorem}.

\subsection{BBGKY  pseudo-trajectories and proximity to the Boltzmann  pseudo-trajectories}
Consider $s\in\mathbb{N}$, $(N,\epsilon)$ in the scaling \eqref{scaling}, $k\in\mathbb{N}$ and $t\in[0,T]$. Given $\delta>0$ recall from \eqref{separated collision times} the set
$\mathcal{T}_{k,\delta}(t)$.
Let $Z_s=(X_s,V_s)\in\mathbb{R}^{2ds}$, $(t_1,...,t_k)\in\mathcal{T}_k(t)$, $J=(j_1,...,j_k)$, $M=(m_1,...,m_k)$, $(J,M)\in\mathcal{U}_{s,k}$, and for each $i=1,...,k$, we consider  $(\omega_{s+2i-1},\omega_{s+2i},v_{s+2i-1},v_{s+2i})\in\mathbb{S}_{1}^{2d-1}\times B_R^{2d}.$

In the same spirit as in Subsection \ref{Boltzmann pseudotrajectory sub} where we introduced the Boltzmann pseudo-trajectory, we define the BBGKY  pseudo-trajectory, the main difference being that we take into account the interaction zone of the adjusted particles in each step. 
More precisely, given
$Z_s=(X_s,V_s)\in\mathbb{R}^{2ds}$:\\
{\bf{Time $t_0=t$}:} We initially define 
$$
Z_s^N(t_{0}^-)=\left(x_1^N(t_0^-),...,x_s^N(t_0^-),v_1^N(t_0^-),...,v_s^N(t_0^-)\right):=Z_s.
$$\\
{\bf{Time $t_i$}, $i\in\{1,...,k\}$:} Consider $i\in\left\{2,...,k\right\}$, and assume we know 
$$Z_{s+2i-2}^N (t_{i-1}^-)=\left(x_1^N(t_{i-1}^-),...,x_{s+2i-2}^N(t_{i-1}^-),v_1^N(t_{i-1}^-),...,v_{s+2i-2}^N(t_{i-1}^-)\right).$$
We define $Z_{s+2i-2}^N (t_{i}^+)=\left(x_1^N(t_{i}^+),...,x_{s+2i-2}^N(t_{i}^+),v_1^N(t_{i}^+),...,v_{s+2i-2}^N(t_{i}^+)\right)$ as:
\begin{equation*}
Z_{s+2i-2}^N(t_i^+):=\left(X_{s+2i-2}^N\left(t_{i-1}^-\right)-\left(t_{i-1}-t_i\right)V_{s+2i-2}^N\left(t_{i-1}^-\right),
V_{s+2i-2}^N\left(t_{i-1}^-\right)\right).
\end{equation*}
We also define $Z_{s+2i}^N(t_i^-)=\left(x_1^N(t_{i}^-),...,x_{s+2i}^N(t_{i}^-),v_1^N(t_{i}^-),...,v_{s+2i}^N(t_{i}^-)\right)$ as:
\begin{equation*}
\left(x_j^N(t_i^-),v_j^N(t_i^-)\right):=\left(x_j^N(t_i^+),v_j^N(t_i^+)\right),\quad\forall j\in\{1,...,s+2i-2\}\setminus\left\{m_i\right\},
\end{equation*}
and if $j_i=-1$:
\begin{equation*}
\begin{aligned}
\left(x_{m_i}^N(t_i^-),v_{m_i}^N(t_i^-)\right):&=\left(x_{m_{i}}^N(t_{i}^+),v_{m_{i}}^N(t_{i}^+)\right),\\
\left(x_{s+2i-1}^N(t_i^-),v_{s+2i-1}^N(t_i^-)\right):&=\left(x_{m_{i}}^N(t_{i}^+)-\sqrt{2}\epsilon\omega_{s+2i-1},v_{s+2i-1}\right),\\
\left(x_{s+2i}^N(t_i^-),v_{s+2i}^N(t_i^-)\right):&=\left(x_{m_{i}}^N(t_{i}^+)-\sqrt{2}\epsilon\omega_{s+2i},v_{s+2i}\right),
\end{aligned}
\end{equation*}
while if $j_i=1$:
\begin{equation*}
\begin{aligned}
\left(x_{m_i}^N(t_i^-),v_{m_i}^N(t_i^-)\right):&=\left(x_{m_{i}}^N(t_{i}^+),v_{m_{i}}^{N*}(t_{i}^+)\right),\\
\left(x_{s+2i-1}^N(t_i^-),v_{s+2i-1}^N(t_i^-)\right):&=\left(x_{m_{i}}^N(t_{i}^+)+\sqrt{2}\epsilon\omega_{s+2i-1},v_{s+2i-1}^*\right),\\
\left(x_{s+2i}^N(t_i^-),v_{s+2i}^N(t_i^-)\right):&=\left(x_{m_{i}}^N(t_{i}^+)+\sqrt{2}\epsilon\omega_{s+2i},v_{s+2i}^*\right),\\
(v_{m_{i}}^{N*}(t_{i}^-),v_{s+2i-1}^*,v_{s+2i}^*)&=T_{\omega_{s+2i-1},\omega_{s+2i}}\left(v_{m_{i}}^N(t_{i}^+),v_{s+2i-1},v_{s+2i}\right).
\end{aligned}
\end{equation*}
{\bf{Time $t_{k+1}=0$}:}
We finally obtain 
$$Z_{s+2k}^N(0^+)=Z_{s+2k}^N(t_{k+1}^+)=\left(X_{s+2k}^N\left(t_{k}^-\right)-t_kV_{s+2k}^N\left(t_k^-\right),V_{s+2k}^N\left(t_k^-\right)\right).$$
The sequence $Z_{s+2i}^N(t_i^+)$, $i=0,...,k+1$ is called BBGKY pseudo-trajectory of $Z_s$. The construction  can be illustrated by an analogous diagram to Figure \ref{pseudo}.

We now state a  proximity result for the corresponding BBGKY  and Boltzmann pseudo-trajectories. The proof of this result follows inductively from the definition of the pseudo-trajectories, for more details see \cite{thesis}.
 \begin{lemma}\label{proximity}
  Let $s,n\in\mathbb{N}$, $(N,\epsilon)$ in the scaling \eqref{scaling}, $1\leq k\leq n$, $(J,M)\in
\mathcal{U}_{s,k}$, $t\in[0,T]$ and $(t_1,...,t_k)\in\mathcal{T}_{k}(t)$. Fix $Z_s=(X_s,V_s)\in\mathbb{R}^{2ds}$. For each $i=1,...,k$, consider $(\omega_{s+2i-1},\omega_{s+2i},v_{s+2i-1},v_{s+2i})\in\mathbb{S}_{1}^{2d-1}\times\mathbb{R}^{2d}$. Then for all $i=1,...,k+1$ and $\ell=1,...,s+2i-2$, we have
 \begin{equation}\label{proximity claim}
 |x_{\ell}^N(t_{i}^+)-x_{\ell}^\infty(t_{i}^+)|\leq \sqrt{2}\epsilon (i-1),\quad v_\ell^N(t_{i}^+)=v_\ell^\infty(t_{i}^+).
 \end{equation}
 In particular, if $s<n$, there holds:
 \begin{equation}\label{total proximity}
 \left|X_{s+2i-2}^N(t_i^+)-X_{s+2i-2}^\infty(t_i^+)\right|\leq \sqrt{6}n^{3/2}\epsilon,\quad\forall i=1,...,k+1.
 \end{equation}
 \end{lemma}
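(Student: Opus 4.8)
The plan is to prove the proximity estimate \eqref{proximity claim} by induction on $i\in\{1,\dots,k+1\}$, following the recursive structure of the two pseudo-trajectory constructions in Subsections \ref{Boltzmann pseudotrajectory sub} and \ref{par_reduction to truncated}. The key observation is that the only place where the BBGKY and Boltzmann constructions differ is in the adjunction step: at each time $t_i$, the added particles $s+2i-1,s+2i$ are placed exactly at $x_{m_i}^\infty(t_i^+)$ in the Boltzmann case, but at $x_{m_i}^N(t_i^+)\mp\sqrt2\epsilon\omega_{s+2i-1},\ x_{m_i}^N(t_i^+)\mp\sqrt2\epsilon\omega_{s+2i}$ in the BBGKY case. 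Since $|\omega_{s+2i-1}|,|\omega_{s+2i}|\le1$ (they lie on $\mathbb{S}_1^{2d-1}$), each adjunction introduces a position discrepancy of at most $\sqrt2\epsilon$ for the two newly added particles, while the velocities assigned to the added particles are \emph{identical} in both constructions (both are $v_{s+2i-1},v_{s+2i}$ if $j_i=-1$, or $v_{s+2i-1}^*,v_{s+2i}^*$ if $j_i=1$, where the starred velocities are computed by $T_{\omega_{s+2i-1},\omega_{s+2i}}$ applied to the \emph{same} triple of velocities once the inductive hypothesis guarantees $v_{m_i}^N(t_i^+)=v_{m_i}^\infty(t_i^+)$).

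First I would set up the base case $i=1$: here $Z_s^N(t_1^+)=(X_s-(t-t_1)V_s,V_s)=Z_s^\infty(t_1^+)$ exactly, so \eqref{proximity claim} holds with the bound $\sqrt2\epsilon\cdot 0=0$. For the inductive step, assume \eqref{proximity claim} holds at time $t_i^+$ for all $\ell\in\{1,\dots,s+2i-2\}$. The passage from $t_i^+$ to $t_{i+1}^+$ proceeds in two substeps. In the adjunction substep $Z^N_{s+2i-2}(t_i^+)\to Z^N_{s+2i}(t_i^-)$: for $\ell\notin\{m_i,s+2i-1,s+2i\}$ nothing changes, so the discrepancy and the velocity identity persist; for $\ell=m_i$, the position is unchanged and the velocity is either unchanged or transformed by $T_{\omega_{s+2i-1},\omega_{s+2i}}$ applied to $(v_{m_i}^N(t_i^+),v_{s+2i-1},v_{s+2i})=(v_{m_i}^\infty(t_i^+),v_{s+2i-1},v_{s+2i})$ (using the inductive velocity identity), hence $v_{m_i}^N(t_i^-)=v_{m_i}^\infty(t_i^-)$ and the position discrepancy is still $\le\sqrt2\epsilon(i-1)$; for $\ell\in\{s+2i-1,s+2i\}$, the velocity is by definition the same in both constructions, and the position is $x_{m_i}^N(t_i^+)\mp\sqrt2\epsilon\omega_\ell$ versus $x_{m_i}^\infty(t_i^+)$, so by triangle inequality and the inductive hypothesis at index $m_i$ the discrepancy is at most $\sqrt2\epsilon(i-1)+\sqrt2\epsilon=\sqrt2\epsilon i$. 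In the free-flow substep $Z_{s+2i}^N(t_i^-)\to Z_{s+2i}^N(t_{i+1}^+)$: both trajectories evolve by the \emph{same} backwards free flow for time $t_i-t_{i+1}$ since all velocities agree, so $x_\ell^N(t_{i+1}^+)-x_\ell^\infty(t_{i+1}^+)=x_\ell^N(t_i^-)-x_\ell^\infty(t_i^-)$ and $v_\ell^N(t_{i+1}^+)=v_\ell^\infty(t_{i+1}^+)$; taking the worst index (a newly added one) gives the bound $\sqrt2\epsilon i=\sqrt2\epsilon((i+1)-1)$, completing the induction.

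For \eqref{total proximity}, I would simply sum: for $i\le k+1\le n+1$ and any $\ell\in\{1,\dots,s+2i-2\}$, $|x_\ell^N(t_i^+)-x_\ell^\infty(t_i^+)|\le\sqrt2\epsilon(i-1)\le\sqrt2\epsilon n$, and since $s<n$ there are $s+2i-2\le s+2k\le 3n$ coordinates, so
\begin{equation*}
\left|X_{s+2i-2}^N(t_i^+)-X_{s+2i-2}^\infty(t_i^+)\right|\le\sqrt{(s+2i-2)}\cdot\sqrt2\epsilon n\le\sqrt{3n}\cdot\sqrt2\epsilon n=\sqrt6\,n^{3/2}\epsilon.
\end{equation*}
I do not anticipate a genuine obstacle here: the statement is essentially bookkeeping once one notices the velocity-identity propagation, which is what makes the two free-flow evolutions coincide. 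The only mildly delicate point is being careful that at the adjunction step the collisional transformation $T_{\omega_{s+2i-1},\omega_{s+2i}}$ sees the same input velocities in both constructions — this is exactly guaranteed by the inductive velocity identity at the index $m_i$ — and that $m_i$ ranges only over $\{1,\dots,s+2i-2\}$, which is covered by the inductive hypothesis. So the main care required is organizing the induction cleanly across the two substeps and verifying that no position discrepancy is ever amplified by the free flow (it is not, since free flow is an isometry in the position difference when velocities agree).
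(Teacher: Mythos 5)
Your proof is correct and follows essentially the same route as the paper's: an induction on $i$ whose base case is the coincidence of the two pseudo-trajectories at $t_1^+$, whose inductive step uses the velocity identity at index $m_i$ to ensure $T_{\omega_{s+2i-1},\omega_{s+2i}}$ receives the same inputs in both constructions (so velocities propagate identically and the free flow preserves position discrepancies), and whose adjunction step adds at most $\sqrt{2}\epsilon$ to the discrepancy of the two new particles; the final bound \eqref{total proximity} is obtained by the same $\ell^2$ summation over the $s+2i-2\le 3n$ coordinates. The paper merely writes out the two cases $j_i=\pm1$ in full rather than organizing the step into adjunction and free-flow substeps, but the content is identical.
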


 \subsection{Reformulation in terms of pseudo-trajectories}
 We will now re-write the Boltzmann hierarchy truncated elementary observables, defined in \eqref{truncated Boltzmann}, and the BBGKY hierarchy truncated elementary observables, defined in \eqref{truncated BBGKY}, in terms of pseudo-trajectories. 
 
Let $s,n\in\mathbb{N}$ with $s<n$, parameters  $\alpha,\epsilon_0,R,\eta,\delta$ as in \eqref{choice of parameters}.  For the Boltzmann hierarchy case, there is always free flow between the collision times. Therefore, for $X_s\in \Delta_s^X(\epsilon_0)$, $1\leq k\leq n$, $(J,M)\in\mathcal{U}_{s,k}$ and $t\in[0,T]$, the Boltzmann hierarchy truncated elementary observable can be written 
\begin{equation}\label{truncated elementary boltzmann}
\begin{aligned}
&J_{s,k,R,\delta}^\infty(t,J,M)(X_s)=\int_{\mathcal{M}_s^c(X_s)}\phi_s(V_s)\int_{\mathcal{T}_{k,\delta}(t)}\int_{\mathcal{B}_{m_1}^c\left(Z_{s}^\infty\left(t_1^+\right)\right)}...\int_{\mathcal{B}_{m_k}^c\left(Z_{s+2k-2}^\infty\left(t_{k}^+\right)\right)}\\
&\prod_{i=1}^{k}b_{+}\left(\omega_{s+2i-1},\omega_{s+2i},v_{s+2i-1}-v_{m_{i}}^\infty\left(t_i^+\right),
v_{s+2i}-v_{m_{i}}^\infty\left(t_i^+\right)\right)
f_{0}^{(s+2k)}\left(Z_{s+2k}^\infty
\left(0^+\right)\right)\\
&\hspace{0.5cm}\times\prod_{i=1}^{k}\left(\,d\omega_{s+2i-1}
\,d\omega_{s+2i}\,dv_{s+2i-1}\,dv_{s+2i}\right)\,dt_k...\,dt_1\,dV_s.
\end{aligned}
\end{equation}

It is not immediate to obtain a comparable expansion at the BBGKY level because of the recollisions. However, thanks to Proposition \ref{bad set} and Lemma \ref{proximity}, this is possible  for $N$ large enough.

More precisely, fix $X_s\in \Delta_s^X(\epsilon_0)$, $1\leq k\leq n$, $(J,M)\in\mathcal{U}_{s,k}$, $t\in[0,T]$ and $(t_1,...,t_k)\in\mathcal{T}_{k,\delta}(t)$. Consider $(N,\epsilon)$ in the scaling \eqref{scaling} with $N$ large enough such that $n^{3/2}\epsilon<<\alpha$. By Proposition \ref{initially good configurations}, given $V_s\in\mathcal{M}_s^c(X_s)$, we have $Z_s=(X_s,V_s)\in G_s(\epsilon,\epsilon_0,\delta)$.  By the definition of the set $G_s(\epsilon,\epsilon_0,\delta)$, see \eqref{both epsilon-epsilon_0}, we have
$
Z_s\in G_s(\epsilon,\epsilon_0,\delta)\Rightarrow Z_s(\tau)\in\mathring{\mathcal{D}}_{s,\epsilon},
$
for all $\tau\geq 0$,
thus
\begin{equation}\label{equality of the flows k=0}
 \Psi_{s}^{\tau-t_0}Z_{s}^N\left(t_0^-\right)=\Phi_{s}^{\tau-t_0}Z_{s}^N\left(t_0^-\right),\quad\forall \tau\in [t_{1},t_{0}],
\end{equation}
 where $\Psi_{s}$, given in \eqref{liouville operator},  denotes the  $\epsilon$-interaction zone  flow of $s$-particles  and  $\Phi_{s}$, given in \eqref{free flow operator}, denotes  the free flow of $s$-particles. We also have
$
Z_s=(X_s,V_s)\in G_s(\epsilon,\epsilon_0,\delta)\Rightarrow Z_{s}^\infty(t_1^+)\in G_s(\epsilon_0,0).
$
Moreover, for all $i\in\{1,...,k\}$, we have seen that for all $(\omega_{s+2i-1},\omega_{s+2i},v_{s+2i-1},v_{s+2i})\in \mathcal{B}_{m_i}^c(Z_{s+2i-2}^\infty(t_i^+))$
\begin{equation}\label{final condition on Boltz pseudo k=1,,,}
Z_{s+2i}^\infty(t_{i+1}^+)\in G_{s+2i}(\epsilon_0,0).
\end{equation}
Since  $s<n$ and $n^{3/2}\epsilon<<\alpha$,  \eqref{total proximity}  from Lemma \ref{proximity} implies
$$
\left|X_{s+2i-2}^N(t_{i}^+)-X_{s+2i-2}^\infty(t_{i}^+)\right|\leq\displaystyle\frac{\alpha}{2},\quad\forall i=1,...,k.
$$
 Then, Proposition \ref{bad set} yields that for any $i=1,...,k$, we have
\begin{equation}\label{equality of the flows}
\Psi_{s+2i}^{\tau-t_i}Z_{s+2i}^N\left(t_i^-\right)=\Phi_{s+2i}^{\tau-t_i}Z_{s+2i}^N\left(t_i^-\right),\quad\forall \tau\in [t_{i+1},t_{i}].
\end{equation}
 Moreover, Lemma \ref{proximity} also implies  that
$v_{m_i}^N(t_i^+)=v_{m_i}^\infty(t_i^+)$, for all $i=1,...,k$.
 Therefore, for $N$ large enough such that $n^{3/2}\epsilon<<\alpha$, \eqref{equality of the flows k=0}, \eqref{equality of the flows} yield the expansion
\begin{equation}\label{truncated elementary bbgky}
\begin{aligned}
&J_{s,k,R,\delta}^N(t,J,M)(X_s)=\bm{A_{N,\epsilon}^{s,k}}\int_{\mathcal{M}_s^c(X_s)}\phi_s(V_s)\int_{\mathcal{T}_{k,\delta}(t)}\int_{\mathcal{B}_{m_1}^c\left(Z_{s}^\infty\left(t_1^+\right)\right)}...\int_{\mathcal{B}_{m_k}^c\left(Z_{s+2k-2}^\infty\left(t_k^+\right)\right)}\\
&\prod_{i=1}^{k}b_{+}\left(\omega_{s+2i-1},\omega_{s+2i},v_{s+2i-1}-v_{m_{i}}^\infty\left(t_i^+\right),
v_{s+2i}-v_{m_{i}}^\infty\left(t_i^+\right)\right)
f_{N,0}^{(s+2k)}\left(Z_{s+2k}^N
\left(0^+\right)\right)\\
&\hspace{0.5cm}\times\prod_{i=1}^{k}\left(\,d\omega_{s+2i-1}
\,d\omega_{s+2i}\,dv_{s+2i-1}\,dv_{s+2i}\right)\,dt_k...\,dt_1\,dV_s,
\end{aligned}
\end{equation}
where, recalling \eqref{A}, we denote
\begin{equation}\label{k-A}
\bm{A_{N,\epsilon}^{s,k}}:=\prod_{i=1}^kA_{N,\epsilon,s+2i-2}=2^{k(d-2)}\epsilon^{k(2d-1)}\prod_{i=1}^k(N-s-2i+2)(N-s-2i+1).
\end{equation}
\begin{remark}\label{inductive scaled limit}Notice that for fixed $s,k\in\mathbb{N}$, $(N,\epsilon)$ in the scaling \eqref{scaling}, there holds the estimate
\begin{align}\label{estimate on scaling}
0<1-\bm{A_{N,\epsilon}^{s,k}}\leq 2^{\frac{d+1}{2}}\epsilon^{d-1/2}k(s+2k-1).
\end{align}
In particular $\bm{A_{N,\epsilon}^{s,k}}\nearrow 1$, as $N\to\infty$ and $\epsilon\to 0$ in the scaling \eqref{scaling}.

\end{remark}
Let us approximate the BBGKY hierarchy initial data by Boltzmann hierarchy initial data defining some auxiliary functionals. Let $s\in\mathbb{N}$ and $X_s\in\Delta_s^X(\epsilon_0)$. For $ 1\leq k\leq n$, $(J,M)\in\mathcal{U}_{s,k}$ and $t\in[0,T]$, we define the auxiliary functional $\widehat{J}_{s,k,R,\delta}^N(t,J,M)$ which differs from $J_{s,k,R,\delta}^N(t,J,M)$ by the absence of the scaling factor $\bm{A_{N,\epsilon}^{s,k}}$ and the use of Boltzmann hierarchy initial data: 
\begin{equation}\label{auxiliary functionals}
\begin{aligned}
&\widehat{J}_{s,k,R,\delta}^N(t,J,M)(X_s):=\int_{\mathcal{M}_s^c(X_s)}\phi_s(V_s)\int_{\mathcal{T}_{k,\delta}(t)}\int_{\mathcal{B}_{m_1}^c\left(Z_{s}^\infty\left(t_1^+\right)\right)}...\int_{\mathcal{B}_{m_k}^c\left(Z_{s+2k-2}^\infty\left(t_k^+\right)\right)}\\
&\prod_{i=1}^{k}b_{+}\left(\omega_{s+2i-1},\omega_{s+2i},v_{s+2i-1}-v_{m_{i}}^\infty\left(t_i^+\right),
v_{s+2i}-v_{m_{i}}^\infty\left(t_i^+\right)\right)
f_{0}^{(s+2k)}\left(Z_{s+2k}^N
\left(0^+\right)\right)\\
&\hspace{0.5cm}\times\prod_{i=1}^{k}\left(\,d\omega_{s+2i-1}
\,d\omega_{s+2i}\,dv_{s+2i-1}\,dv_{s+2i}\right)\,dt_k...\,dt_1\,dV_s.
\end{aligned}
\end{equation}

Due to the scaling \eqref{scaling} and convergence of the initial data, we conclude that the auxiliary functionals approximate the BBGKY hierarchy truncated elementary observables $J_{s,k,R,\delta}^N$, defined in \eqref{truncated elementary bbgky}.
 \begin{proposition}\label{aux prop 1} Let $s,n\in\mathbb{N}$, with $s<n$,  $\alpha,\epsilon_0,R,\eta,\delta$ be parameters as in \eqref{choice of parameters},  and $t\in[0,T]$. Then for any
  $\zeta>0$, there is $N^*=N^*(\zeta)\in\mathbb{N}$, such that for all $(N,\epsilon)$ in the scaling \eqref{scaling} with $N>N^*$, there holds:
 \begin{equation}\label{estimate 1}
\sum_{k=1}^n\sum_{(J,M)\in\mathcal{U}_{s,k}}\|J_{s,k,R,\delta}^N(t,J,M)-\widehat{J}_{s,k,R,\delta}^N(t,J,M)\|_{L^\infty\left(\Delta_s^X\left(\epsilon_0\right)\right)}\leq
 C_{d,s,\mu_0,T}^n\|\phi_s\|_{L^\infty_{V_s}}R^{d(s+3n)}\zeta ^2.
 \end{equation}
In the case of tensorized initial data and approximation by conditioned BBGKY initial data (see Proposition \ref{conditioned proposition}), the estimate can be improved to
 \begin{equation}\label{estimate 1 on rate}
 \sum_{k=1}^n\sum_{(J,M)\in\mathcal{U}_{s,k}}\|J_{s,k,R,\delta}^N(t,J,M)-\widehat{J}_{s,k,R,\delta}^N(t,J,M)\|_{L^\infty\left(\Delta_s^X\left(\epsilon_0\right)\right)}\leq
 C_{d,s,\beta_0,\mu_0,T}^n\|\phi_s\|_{L^\infty_{V_s}}R^{d(s+3n)}\epsilon^{1/2},
 \end{equation}
 for all $(N,\epsilon)$ in the scaling \eqref{scaling} with $N$ large enough.

 \end{proposition}
 \begin{proof}
 Fix $1\leq k\leq n$ and $(J,M)\in\mathcal{U}_{s,k}$. Consider $(N,\epsilon)$ in the scaling \eqref{scaling} with $N$ large enough such that $n^{3/2}\epsilon<<\alpha$. Triangle inequality and the  fact that $\Delta_s^X(\epsilon_0)\subseteq\Delta_s^X(\epsilon_0/2)$ yield
 \begin{align}
 &\|J_{s,k,R,\delta}^N(t,J,M)-\widehat{J}_{s,k,R,\delta}^N(t,J,M)\|_{L^\infty\left(\Delta_s^X\left(\epsilon_0\right)\right)}\nonumber\\
 &\leq \|J_{s,k,R,\delta}^N(t,J,M)-\bm{A_{N,\epsilon}^{s,k}}\widehat{J}_{s,k,R,\delta}^N(t,J,M)\|_{L^\infty\left(\Delta_s^X\left(\epsilon_0/2\right)\right)}+(1-\bm{A_{N,\epsilon}^{s,k}})\|\widehat{J}_{s,k,R,\delta}^N(t,J,M)\|_{L^\infty\left(\Delta_s^X\left(\epsilon_0\right)\right)}.\label{auxiliary estimate 2}
 \end{align}

 We estimate each of the terms in \eqref{auxiliary estimate 2}. For the first term, let us fix $(t_1,...,t_k)\in\mathcal{T}_{k,\delta}(t)$. Applying \eqref{final condition on Boltz pseudo k=1,,,} for  $i=k-1$, we obtain 
 $Z_{s+2k-2}^\infty(t_k^+)\in G_{s+2k-2}(\epsilon_0,0).$
Since $s<n$ and $n^{3/2}\epsilon<<\alpha$, \eqref{total proximity}, applied for $i=k$, implies
 $|X_{s+2k-2}^N(t_k^+)-X_{s+2k-2}^\infty(t_k^+)|\leq\frac{\alpha}{2}.$
 Therefore, Proposition \ref{bad set} (precisely expression \eqref{pre-delta} for the pre-collisional case, \eqref{post-delta} for the post-collisional case) implies
$
 Z_{s+2k}^N(0^+)\in G_{s+2k}(\epsilon_0/2,0)\subseteq\Delta_{s+2k}(\epsilon_0/2).
 $
 Thus  \eqref{estimate on the rest of the terms}, \eqref{exclusion bad set 2 norm}-\eqref{exclusion bad set 2 time},  \eqref{truncated elementary bbgky}-\eqref{auxiliary functionals}  imply 
 \begin{equation}\label{final estimate 2}
 \begin{aligned}
 \|J_{s,k,R,\delta}^N(t,J,M)-\bm{A_{N,\epsilon}^{s,k}}\widehat{J}_{s,k,R,\delta}^N(t,J,M)\|_{L^\infty\left(\Delta_s^X\left(\epsilon_0/2\right)\right)}&\leq  \frac{C_{d,s,T}^k}{k!}\|\phi_s\|_{L^\infty_{V_s}}R^{d(s+3k)}\|f_{N,0}^{(s+2k)}-f_0^{(s+2k)}\|_{L^\infty(\Delta_{s+2k}(\epsilon_0/2))}\\
 &\leq \frac{C_{d,s,T}^k}{k!}\|\phi_s\|_{L^\infty_{V_s}}R^{d(s+3k)}\|f_{N,0}^{(s+2k)}-f_0^{(s+2k)}\|_{L^\infty(\mathcal{D}_{s+2k,\epsilon})},
 \end{aligned}
 \end{equation}
 as long as $\epsilon<\epsilon_0/2\sqrt{2}$ (i.e. $N$ large enough) so that $\Delta_{s+2k}(\epsilon_0/2)\subseteq\mathcal{D}_{s+2k,\epsilon}$.
 For the second term, using  \eqref{estimate on the rest of the terms}
 we obtain
 \begin{equation}\label{final estimate 3}
 \begin{aligned}
 \|\widehat{J}_{s,k,R,\delta}^N(t,J,M)\|_{L^\infty\left(\Delta_s^X\left(\epsilon_0\right)\right)}\leq \frac{C_{d,s,\mu_0,T}^k}{k!}\|\phi_s\|_{L^\infty_{V_s}}R^{d(s+3k)}\|F_0\|_{\infty,\beta_0,\mu_0}.
 \end{aligned}
 \end{equation}
 
 Adding over all $(J,M)\in\mathcal{U}_{s,k}$, $k=1,...,n$, using \eqref{auxiliary estimate 2}-\eqref{final estimate 3}, \eqref{estimate on scaling} and an argument similar to \eqref{use of stirling} to control the summation over $k=1,...,n$ , for $N$ large enough, we obtain the estimate
 \begin{equation*}
 \begin{aligned}
\sum_{k=1}^n\sum_{(J,M)\in\mathcal{U}_{s,k}}\|&J_{s,k,R,\delta}^N(t,J,M)-\widehat{J}_{s,k,R,\delta}^N(t,J,M)\|_{L^\infty\left(\Delta_s^X\left(\epsilon_0\right)\right)}\leq
C_{d,s,\mu_0,T}^n\|\phi_s\|_{L^\infty_{V_s}}R^{d(s+3n)}\\
&\hspace{-1cm}\times\left(\sup_{k\in\{1,...,n\}}\|(f_{N,0}^{(s+2k)}-f_0^{(s+2k)})\|_{L^\infty(\mathcal{D}_{s+2k,\epsilon})}+\|F_0\|_{\infty,\beta_0,\mu_0}\epsilon^{d-1/2}\right).
 \end{aligned}
 \end{equation*}
Since $n$ is fixed, the result follows from convergence in the level of initial data and the scaling estimate \eqref{estimate on scaling}. 
 
  In the case of tensorized initial data and approximation by conditioned BBGKY initial data, the estimate can be improved to \eqref{estimate 1 on rate} using \eqref{tensorized data approximating estimate}.
  
 \end{proof}
 Due to the proximity Lemma \ref{proximity} and the uniform continuity assumption \eqref{continuity assumption} on the Boltzmann hierarchy initial data, we also obtain the following
 \begin{proposition}\label{aux prop 2} Let $s,n\in\mathbb{N}$ with $s<n$,  $\alpha,\epsilon_0,R,\eta,\delta$ be parameters as in \eqref{choice of parameters} and $t\in[0,T]$. Then for any  $\zeta>0$, there is $N^*=N^*(\zeta)\in\mathbb{N}$, such that for all $(N,\epsilon)$ in the scaling \eqref{scaling} with
 $N>N^*$, there holds
 \begin{equation}\label{estimate 2}
\sum_{k=1}^n\sum_{(J,M)\in\mathcal{U}_{s,k}}\|\widehat{J}_{s,k,R,\delta}^N(t,J,M)-J_{s,k,R,\delta}^\infty(t,J,M)\|_{L^\infty\left(\Delta_s^X\left(\epsilon_0\right)\right)}\leq C_{d,s,\mu_0,T}^n\|\phi_s\|_{L^\infty_{V_s}}R^{d(s+3n)}\zeta^2.
 \end{equation}
  
  In the case of H\"older continuous $C^{0,\gamma}$, $\gamma\in(0,1]$ tensorized initial data  (see Remark \ref{remark on conditioned}), the estimate can be improved to
   \begin{equation}\label{estimate 2 rate}
\sum_{k=1}^n\sum_{(J,M)\in\mathcal{U}_{s,k}}\|\widehat{J}_{s,k,R,\delta}^N(t,J,M)-J_{s,k,R,\delta}^\infty(t,J,M)\|_{L^\infty\left(\Delta_s^X\left(\epsilon_0\right)\right)}\leq C_{d,s,\mu_0,T}^n\|\phi_s\|_{L^\infty_{V_s}}R^{d(s+3n)}\epsilon^\gamma,
 \end{equation}
 for all $(N,\epsilon)$ in the scaling \eqref{scaling}.

 \end{proposition}

 \begin{proof}
Let $\zeta>0$. Fix $1\leq k\leq n$ and $(J,M)\in\mathcal{U}_{s,k}$. Since $s<n$, Lemma \ref{proximity} yields
\begin{equation}\label{continuity for integral}
|Z_{s+2k}^N(0^+)-Z_{s+2k}^\infty(0^+)|\leq \sqrt{6}n^{3/2}\epsilon,\quad\forall Z_s\in\mathbb{R}^{2ds}.
\end{equation}
Thus the continuity assumption \eqref{continuity assumption} on $F_0$, \eqref{continuity for integral} and the scaling \eqref{scaling} imply that there exists $N^*=N^*(\zeta)\in\mathbb{N}$, such that for all $N>N^*$, we have
\begin{equation}\label{continuity satisfied}
|f_0^{(s+2k)}(Z_{s+2k}^N(0^+))-f_0^{(s+2k)}(Z_{s+2k}^\infty(0^+))|\leq C^{s+2k-1}\zeta^2,\quad\forall Z_s\in\mathbb{R}^{2ds}.
\end{equation}
In the same spirit as in the proof of Proposition \ref{aux prop 1}, using \eqref{continuity satisfied}, \eqref{estimate on the rest of the terms}, \eqref{exclusion bad set 2 time}, and summing over $(J,M)\in\mathcal{U}_{s,k}$, $k=1,...,n$,  we obtain estimate \eqref{estimate 2}.

In the case of tensorized $C^{0,\gamma}$ data, one can easily see by induction that for any $Z_{s+2k},Z_{s+2k}'\in\mathbb{R}^{2d(s+2k)}$, we have
\begin{align*}
|f_0^{\otimes (s+2k)}(Z_{s+2k})-f_0^{\otimes (s+2k)}(Z_{s+2k}')|&\leq \|f_0\|_{L^\infty}^{s+2k-1}[f_0]_{C^{0,\gamma}}\sqrt{2d(s+2k)}|Z_{s+2k}-Z_{s+2k}'|^\gamma\\
&\leq C^{s+2k-1}|Z_{s+2k}-Z_{s+2k}'|^\gamma.
\end{align*}
Thus by \eqref{continuity for integral} we have
$$ |f_0^{(s+2k)}(Z_{s+2k}^N(0^+))-f_0^{(s+2k)}(Z_{s+2k}^\infty(0^+))|\leq C^{s+2k-1}\epsilon^\gamma,$$
and the estimate \eqref{estimate 2 rate} follows in a similar manner as estimate \eqref{estimate 2}.

 \end{proof}

 \subsection{Proof of Theorem \ref{convergence theorem}}
 
 We are now in the position to prove Theorem \ref{convergence theorem}. Fix $\sigma>0$, $s\in\mathbb{N}$, $\phi_s\in C_c(\mathbb{R}^{ds})$ and $t\in[0,T]$. Consider $n\in\mathbb{N}$ with $s<n$, and  parameters $\alpha,\epsilon_0,R,\eta,\delta$ satisfying \eqref{choice of parameters}. Let $\zeta>0$ small enough. Triangle inequality, Propositions \ref{reduction}, \ref{restriction to initially good conf}, \ref{truncated element estimate}, Remark \ref{no need for k=0}, estimates \eqref{estimate 1}, \eqref{estimate 2} and part \textit{(i)} of Definition  \ref{convergence of initial data}, yield that there is $N^*(\zeta)\in\mathbb{N}$ such that for all $N>N^*$,  we have
 \begin{equation}\label{final final bounds}
 \begin{aligned}
 & \|I_s^N(t)-I_s^\infty(t)\|_{L^\infty\left(\Delta_s^X\left(\epsilon_0\right)\right)}\leq C\left(2^{-n}+e^{-\frac{\beta_0}{3}R^2}+ \delta C^n\right)+C^n R^{4dn}\eta^{\frac{d-1}{4d+2}}+C^nR^{4dn}\zeta^2,
 \end{aligned}
 \end{equation}
 where 
$C>1$ is an appropriate  constant.

  We now choose  parameters satisfying \eqref{choice of parameters}, depending only on $\zeta$, such that the right hand side of \eqref{final final bounds} becomes less than $\zeta$. 

\textit{Choice of parameters}: For $\zeta$ sufficiently small, we choose $n\in\mathbb{N}$ and the parameters $\delta,\eta,R,\epsilon_0,\alpha $ in the following order:
\begin{align}
&\max\left\{s,\log_2(C\zeta^{-1})\right\}<< n,\quad \delta<< \zeta C^{-(n+1)}, \quad  \max\left\{1,\sqrt{3}\beta_0^{-1/2}\ln^{1/2}(C\zeta^{-1})\right\}< <R<<\zeta^{-1/4dn}C^{-1/4d},\nonumber\\
& \eta<<\zeta^{\frac{8d+4}{d-1}},\quad \epsilon_0<<\min\{\sigma,\eta\delta\},\quad\alpha<<\epsilon_0\min\{1,R^{-1}\eta\}.\label{final parameters}
\end{align}
 Relations \eqref{final parameters} imply the parameters chosen satisfy \eqref{choice of parameters} and depend only on $\zeta$.
Then,  \eqref{final final bounds}-\eqref{final parameters} imply that we may find $N_0(\zeta)\in\mathbb{N}$, such that for all $(N,\epsilon)$ in the scaling \eqref{scaling} with $N>N_0$, there holds
\begin{equation*}
\|I_s^N(t)-I_s^\infty(t)\|_{L^\infty\left(\Delta_s^X\left(\sigma\right)\right)}\overset{\epsilon_0<\sigma}\leq\|I_s^N(t)-I_s^\infty(t)\|_{L^\infty\left(\Delta_s^X\left(\epsilon_0\right)\right)}<\zeta,
\end{equation*}
and Theorem \ref{convergence theorem} is proved.

\textbf{Proof of Corollary \ref{derivation corollary}} 

By  Theorem \ref{theorem propagation of chaos} we have that $\bm{F}=(f^{\otimes s})_{s\in\mathbb{N}}$, where $f$ is the mild solution of the ternary Boltzmann equation. Therefore, in the same spirit as before (using estimates \eqref{estimate 1 on rate}, \eqref{estimate 2 rate} instead of \eqref{estimate 1}, \eqref{estimate 2}), for $N$ large enough we have
\begin{align}\label{final estimates tensorized}
\|I_{\phi_s}f_N^{(s)}(t)-I_{\phi_s}f^{\otimes s}(t)\|_{L^\infty(\Delta^X_s(\epsilon_0))}&\leq C\left(2^{-n}+e^{-\frac{\beta_0}{3}R^2}+ \delta C^n\right)+C^n R^{4dn}\eta^{\frac{d-1}{4d+2}}+C^nR^{4dn}\epsilon^{\gamma_*},
\end{align}
where $\gamma_*=\min\{1/2,\gamma\}\in (0,\frac{1}{2}]$ and $\gamma$ is the H\"older regularity of $f_0$. Consider $0<r<\gamma_*$.

\textit{Choice of parameters}: For $N$ large enough (or equivalently for $\epsilon$ small enough), we choose $n\in\mathbb{N}$ and the parameters $\delta,\eta,R,\epsilon_0,\alpha $ in the following order:
\begin{align}
&\max\left\{s,\log_2(C\epsilon^{\gamma_*})\right\}<< n,\quad \delta<< \epsilon^{\gamma_*} C^{-(n+1)}, \quad  \max\left\{1,\sqrt{3}\beta_0^{-1/2}\ln^{1/2}(C\epsilon^{-\gamma_*})\right\}< <R<<\epsilon^{\frac{r-\gamma_*}{4dn}}C^{-1/4d},\nonumber\\
& \eta<<\epsilon^{\frac{4d+2)}{d-1}\gamma_*},\quad \epsilon_0<<\min\{\sigma,\eta\delta\},\quad\alpha<<\epsilon_0\min\{1,R^{-1}\eta\}.\label{final parameters tensorized}
\end{align}
Then by \eqref{final estimates tensorized}, for $N$ large enough, we take
$$\|I_{\phi_s}f_N^{(s)}(t)-I_{\phi_s}f^{\otimes s}(t)\|_{L^\infty(\Delta^X_s(\sigma))}\overset{\epsilon_0<\sigma}\leq\|I_{\phi_s}f_N^{(s)}(t)-I_{\phi_s}f^{\otimes s}(t)\|_{L^\infty(\Delta^X_s(\epsilon_0))}<\epsilon^{r},$$
and Corollary \ref{derivation corollary} is proved.

\appendix
\section{Auxiliary results}
In this appendix, we state two auxiliary results. For the proofs, see \cite{thesis}.
\begin{lemma}\label{linear algebra lemma} Let $n\in\mathbb{N}$, $\lambda\neq 0$ and $w,u\in\mathbb{R}^n$. Denoting by $I_n$ the $n\times n$ identity matrix, we have
$$
\det(\lambda I_n+w u^T)=\lambda^n(1+\lambda^{-1}\langle w,u\rangle).
$$
\end{lemma}
\begin{lemma}\label{substitution lemma} 
Let $n\in\mathbb{N}$, $\Psi:\mathbb{R}^n\to\mathbb{R}$ be a $C^1$ function and $\gamma\in\mathbb{R}$. Assume there is $\delta>0$ with $\nabla\Psi(\bm{\omega})\neq 0$ for $\bm{\omega}\in [\gamma-\delta<\Psi<\gamma+\delta]$. Let $\Omega\subseteq\mathbb{R}^n$ be a domain and consider a $C^1$ map $F:\Omega\to\mathbb{R}^n$  of non-zero Jacobian in $\Omega
 $. Then for any measurable $g:\mathbb{R}^n\to[0,+\infty]$ or $g:\mathbb{R}^n\to\ [-\infty,+\infty]$ integrable
\begin{equation}\label{change of variables formula}
\int_{[\Psi=\gamma]}g(\nu)\mathcal{N}_F(\nu,[\Psi\circ F=\gamma])\,d\sigma(\nu)=\int_{[\Psi\circ F=\gamma]}(g\circ F)(\omega)|\jac F(\omega)|\frac{|\nabla\Psi(F(\omega))|}{|\nabla(\Psi\circ F)(\omega)|}\,d\sigma(\omega),
\end{equation}
where given $\nu\in\mathbb{R}^n$ and  $A\subseteq\Omega$, $\mathcal{N}_F(\nu,A):=\card(\{\omega\in A:F(\omega)=\nu\})$ is the Banach indicatrix of $A$.

\end{lemma}

\end{document}